\documentclass[12pt,a4paper]{amsart}

\usepackage[usenames,dvipsnames]{xcolor}
\usepackage[bookmarksopen,bookmarksdepth=2]{hyperref}
\hypersetup{colorlinks=true,citecolor=NavyBlue,linkcolor=NavyBlue,urlcolor=Green} 

\usepackage{amsmath,amsthm,amssymb,amscd,mathtools}
\usepackage{amsfonts}
\usepackage{lastpage}
\usepackage{fancyhdr}
\usepackage{enumitem}

\usepackage{mathrsfs}
\usepackage{newtxtext}
\usepackage{newtxmath}
\usepackage{tikz-cd}
\usepackage{microtype}     
\usepackage{stmaryrd}
\usepackage[T1]{fontenc}
\usepackage{setspace}
\usepackage[backend=biber, style=alphabetic,maxbibnames=99,maxalphanames=99]{biblatex}
\newcommand{\mbb}[1]{\mathbb{#1}}

\newcommand{\NN}{\mathbb{N}}
\newcommand{\RR}{\mathbb{R}}
\newcommand{\ZZ}{\mathbb{Z}}
\newcommand{\QQ}{\mathbb{Q}}
\newcommand{\CC}{\mathbb{C}}
\newcommand{\FF}{\mathbb{F}}

\newcommand{\Gal}{\mathrm{Gal}}

\newcommand{\Ind}{\mathrm{Ind}}
\DeclareMathOperator{\im}{Im}

\DeclareMathOperator{\Hom}{Hom}
\DeclareMathOperator{\Ext}{Ext}

\DeclareMathOperator{\Nm}{Nm}

\DeclareMathOperator{\Spec}{Spec}

\DeclareMathOperator{\Lie}{Lie}

\DeclareMathOperator{\Ad}{Ad}

\newcommand{\GL}{\mathrm{GL}}
\newcommand{\PGL}{\mathrm{PGL}}

\newcommand{\p}{\mathfrak{p}}

\newcommand{\mO}{\mathcal{O}}

\newcommand{\mA}{\mathbb{A}}

\DeclareMathOperator{\SL}{SL}

\DeclareMathOperator{\End}{End}

\newtheorem{thm}{Theorem}[section]
\newtheorem{prop}[thm]{Proposition}

\newtheorem{lem}[thm]{Lemma}
\newtheorem{cor}[thm]{Corollary}

\theoremstyle{remark}
\newtheorem{rmk}[thm]{Remark}
\newtheorem{eg}[thm]{Example}
\theoremstyle{definition}
\newtheorem{dfn}[thm]{Definition}

\numberwithin{equation}{section}

\newcommand{\et}{\mathrm{\acute{e}t}}

\newcommand{\TT}{\mbb{T}}

\newcommand{\la}{{\mathrm{la}}}
\newcommand{\an}{{\mathrm{an}}}

\newcommand{\mrm}[1]{\mathrm{#1}}

\newcommand{\Sym}{\mrm{Sym}}

\newcommand{\Sh}{\mathrm{Sh}}
\newcommand{\diag}{\mathrm{diag}}

\newcommand{\oInd}{\otimes\!-\!\mrm{Ind}}

\DeclareMathOperator{\Fr}{Fr}

\newcommand{\CH}{\mrm{CH}}

\DeclareMathOperator{\oIndFQ}{\otimes-\mrm{Ind}^{\QQ}_{F}}

\usepackage{mathtools}

\newcommand{\ti}[1]{\tilde{#1}}

\begin{document}
\title{Plectic Lie Algebra Action on Hilbert modular varieties}
 \author{Yuanyang Jiang}
 \author{Lue Pan}
\begin{abstract}
We construct some part of the plectic Galois action on the locally analytic $p$-adically completed cohomology of Hilbert modular varieties, predicted by the plectic conjecture of Nekov\'a\v r-Scholl. As a byproduct, we prove a vanishing result for the completed cohomology of Hilbert modular varieties below middle degree. To do this, we make use of the partial Sen operators constructed earlier by a geometric method.
\end{abstract}


\maketitle

\setcounter{tocdepth}{10}
 \tableofcontents

\section{Introduction}

Let $p$ be a prime number. This work concerns the $p$-adic cohomology of Hilbert modular varieties. We fix some notation first.
Let \( F \) be a totally real field of degree \( d \) over \( \QQ \) and $\mathrm{G}=\mathrm{Res}_{F/\QQ}\GL_2$ the restriction of scalars of $\GL_2$ from $F$ to $\QQ$. For a neat open compact subgroup \( K\subset \mathrm{G}(\mA_{f}) \), we denote by $\mrm{Sh}_{K} $ the Hilbert modular variety of level \( K \) over \( \QQ \). See \S \ref{subsec-Main result} below for more details. Let $S_{0}$ be a finite set of places of $\QQ$ such that $p\in S_0$ and $K$ contains $\GL_2(\widehat{\ZZ}^{S_{0}}\otimes_{\ZZ} \mathcal{O}_F)$.  There is a natural action of \( \TT^{S_{0}}\times G_{\QQ} \) on \( H^{i}_{\et}(\Sh_{K,\overline{\QQ}},\ZZ/p^{n}) \), where \( G_{\QQ}=\Gal(\overline{\QQ}/\QQ) \) denotes the absolute Galois group of $\QQ$ and $\TT^{S_{0}}$ denotes the spherical Hecke algebra over $\ZZ_p$ at places away from $S_0$.

The actions of \( \TT^{S_{0}} \) and \( G_{\QQ} \) are related as follows:  let \( \rho:\Gal(\overline{F}/F) \to \GL_{2}(\overline{\QQ_p}) \) be an absolutely irreducible continuous representation unramified outside of places above $S_0$,  which determines a character \( \lambda_{\rho}:\TT^{S_{0}}\to \overline{\QQ_p} \) via the usual Eichler-Shimura relation. 
It follows from the Langlands-Kottwitz method (\cite{Langlands1979Zeta}, \cite{ReimannHarry1997Tszf}, \cite{BrylinskiLabesse1984cohomologie}) that there is an isomorphism of \(G_{\QQ}\)-representations \begin{align}\label{alignLangKott}
H^{d}_{\et}(\Sh_{K,\overline{\QQ}}, \overline{\QQ_p})[\lambda_{\rho}]^{\mrm{ss}}\cong (\oInd_{F}^{\QQ}\rho)^{\oplus n}
\end{align} for some \(n\in \NN\), where \( [\lambda_{\rho}] \) denotes the \( \lambda_{\rho} \)-eigenspace for the \( \TT^{S_{0}} \)-action, \((-)^{\mrm{ss}}\) denotes the semisimplification, and \( \oIndFQ\rho \) denotes the tensor induction from \( \Gal(\overline{F}/F) \) to \( G_{\QQ} \) cf. Remark \ref{rmk-tenindvsplecind} below. 
A highly non-trivial result of Nekov\'a\v r \cite{Nekovar-2018} says that the Galois action on $H^{d}_{\et}(\Sh_{K,\overline{\QQ}}, \overline{\QQ_p})[\lambda_{\rho}]$ is semisimple, and hence the superscript \((-)^{\mrm{ss}}\) can be omitted. In fact, Nekov\'a\v r even allowed coefficient systems in his work and proved the semisimplicity for the intersection cohomology.

In our work, we obtain a similar result in the context of the completed cohomology introduced by Emerton (\cite{Emerton-2006}).   Fix a neat tame level \( K^{p}\subset G(\mA_{f}^{p}) \). The completed cohomology of tame level $K^p$ is defined as
\[
\tilde{H}^i(K^p):=\varprojlim_n\varinjlim_{K_p\subseteq \mathrm{G}(\QQ_p)} H^i_{\acute{e}t}(\Sh_{K^pK_p,\overline{\QQ}},\ZZ/p^n),
\]
where $K_p$ runs through all open compact subgroups of $\mathrm{G}(\QQ_p)$. 
It  carries an action of \( \TT^{S_{0}}\times \mathrm{G}(\QQ_{p})\times G_{\QQ} \). 
We define the rational completed cohomology $\tilde{H}^i(K^p)_{\QQ_p}:=\tilde{H}^i(K^p)\otimes_{\ZZ_p} \QQ_p$, and denote by \( \ti{H}^{i}(K^{p})^{\la} \) the subspace of \(\mathrm{G}(\QQ_{p}) \)-locally analytic vectors.  For the purpose of introduction, we state here a simplified version of our main result.

\begin{thm} \label{thm-intro-plec1}
Let \( \rho:\Gal(\overline{F}/F) \to \GL_{2}(\overline{\QQ_p}) \) be a strongly irreducible representation (i.e. the connected component of the Zariski closure of its image acts irreducibly on $\overline{\QQ_p}^2$). Assume that $\rho$ has distinct \( \tau \)-Hodge-Tate-Sen weights for some \( \tau:F\to\overline{\QQ_p} \). Denote by \( \lambda_{\rho}:\TT^{S_{0}}\to \overline{\QQ_p} \) the associated character. 

\begin{enumerate}
\item There is a natural isomorphism of $G_{\QQ}\times \mathrm{G}(\QQ_p)$-representations:
 \[
\left(\ti{H}^{d}(K^{p})^{\la}\otimes_{\QQ_p} \overline{\QQ_p}\right)[\lambda_{\rho}]
\cong \oIndFQ\rho\otimes_{\overline{\QQ_p}}\Pi^{\la}(\rho),
\] 
for some representation \( \Pi^{\la}(\rho) \) of \( \mathrm{G}(\QQ_{p}) \).
\item The localization $\ti{H}^{<d}(K^{p})_{\ker \lambda_{\rho}}=0$,  in particular, $\left(\ti{H}^{<d}(K^{p})\otimes_{\ZZ_p} \overline{\QQ_p}\right)[\lambda_{\rho}]=0$. 
\end{enumerate}
\end{thm}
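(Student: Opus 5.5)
The plan follows the strategy suggested by the abstract: work on the locally analytic completed cohomology through the Hodge--Tate period map, use the partial Sen operators $\Theta_\tau$ ($\tau:F\to\overline{\QQ_p}$) constructed geometrically, and compare them with the infinitesimal characters of the factors $\GL_2(F_v)$, $v\mid p$. First we establish the vanishing in (2), then we prove the tensor-product decomposition in (1) by constructing part of the plectic action.

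\textbf{Step 1: geometric description.} We realize $\ti{H}^i(K^p)^{\la}\hat\otimes C$ as $H^i$ of the infinite-level Hilbert modular Shimura variety with coefficients in the sheaf $\mathcal{O}^{\la}$ of locally analytic functions. We then push forward along the Hodge--Tate period map $\pi_{HT}$ to the flag variety $\prod_{\tau}\mathbb{P}^1$. On $\mathcal{O}^{\la}$ we have the partial Sen operators $\theta_\tau$; each $\theta_\tau$ is expressed through the horizontal action of $\Lie\mathrm{G}$ and is linked to the $\tau$-component of the Casimir / horizontal Cartan action. The Sen operator of $G_{\QQ_p}$ on $\ti H^{\la}$ is the sum $\sum_\tau\theta_\tau$. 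The key fact we aim for is that $\theta_\tau$ satisfies a quadratic relation determined by the $\tau$-infinitesimal character.

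\textbf{Step 2: vanishing below middle degree.} We localize at $\ker\lambda_\rho$ and use $\rho$ irreducible (so the localization is non-Eisenstein). We then compute cohomology via the Čech/Leray spectral sequence for $\pi_{HT}$. The fibres of $\pi_{HT}$ are affinoid-like (perfectoid Igusa-type), so higher direct images vanish and $H^i=H^i(\mathcal{F}\ell,\pi_{HT*}\mathcal{O}^{\la})$. We decompose along the $\tau$-weights using the $\theta_\tau$. Where the $\tau$-weights are distinct, each $\mathbb{P}^1_\tau$ factor contributes only in degree $1$ after localization, because the degree-$0$ part is killed by a weight argument: it would produce a Galois representation with Hodge--Tate--Sen weights that are incompatible with $\rho$. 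Applying this factor by factor (Künneth-type for the product of flag varieties), nonvanishing is forced into degree $d$. The integral statement $\ti H^{<d}(K^p)_{\ker\lambda_\rho}=0$ then follows from the rational locally analytic vanishing by a standard Emerton-style argument: the locally analytic vectors are dense, and one inducts on degree with the Hochschild--Serre spectral sequence. \textbf{Main obstacle.} This is the step I expect to be hardest. The hypothesis gives distinct weights only for \emph{some} $\tau$, so for the other $\tau$ I would first try to use strong irreducibility, via a Sen-theoretic/Zariski-closure argument, to show that $\theta_\tau$ still acts semisimply with controlled eigenvalues.

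\textbf{Step 3: the tensor decomposition.} Set $W=(\ti H^d(K^p)^{\la}\otimes\overline{\QQ_p})[\lambda_\rho]$. First, using the Eichler--Shimura relation in completed cohomology (Caraiani--Scholze style pseudo-representations plus Step 2, which places everything in degree $d$), the Frobenii satisfy polynomial identities. These show that $W$, as a $G_{\QQ}$-module, is $\oIndFQ\rho$-isotypic up to semisimplification. Next, the partial Sen operators give a Lie algebra action of $\prod_\tau\mathfrak{gl}_2$, the plectic Lie algebra, extending the Sen operator. This is the ``part of the plectic action'' that lets us separate the $\tau$-tensor factors. Strong irreducibility guarantees that $\End_{G_\QQ}(\oIndFQ\rho)$ is the scalars. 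We then set $\Pi^{\la}(\rho)=\Hom_{G_\QQ}(\oIndFQ\rho,W)$ and show that the evaluation map is an isomorphism. Injectivity is formal. For surjectivity we need semisimplicity, which we derive by lifting extension classes: a nonsplit self-extension of $\oIndFQ\rho$ inside $W$ would produce a nilpotent part of some $\theta_\tau$, contradicting the quadratic relation of Step 1 when the weights are distinct. This gives the $G_\QQ\times\mathrm{G}(\QQ_p)$-equivariant isomorphism.
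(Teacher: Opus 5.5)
Both halves of your plan have genuine gaps, and the order is reversed: in the paper, (2) is deduced from (1).

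\textbf{Part (1).} Step 3 rests on the claim that strong irreducibility makes $\End_{G_\QQ}(\oIndFQ\rho)$ scalar, and this is false. Take $F$ real quadratic and $\rho=\rho_0|_{G_F}$, where $\rho_0$ is attached to a non-CM elliptic eigenform of weight $\geq 2$. Then $\rho$ is strongly irreducible with distinct $\tau$-weights, but $\oIndFQ\rho\cong\Sym^2\rho_0\oplus(\det\rho_0\otimes\epsilon_{F/\QQ})$, with $\epsilon_{F/\QQ}$ the quadratic character. (Proposition~\ref{proptenindirred} gives irreducibility only when all $\pm k_\tau$ are distinct.) The real content of (1) is that all $G_\QQ$-constituents of $\oIndFQ\rho$ occur with one common multiplicity space. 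Your $\Hom_{G_\QQ}(\oIndFQ\rho,W)$ cannot detect this, because nothing in the image of $G_\QQ$ links different constituents. Your semisimplicity argument also fails. A non-split extension need not have a non-semisimple Sen operator: a non-split unramified extension has $\theta=0$. Moreover, the quadratic relation pins down $\theta_\tau$ only at the $\tau$ with distinct weights. Finally, the partial Sen operators are $d$ commuting operators, not an action of $\prod_\tau\mathfrak{gl}_2$. What is needed is an action of the Lie algebras $\Lie(I_{\tau(F)_{\tau_p}},r)$ of the individual $\tau$-inertia groups, and you never construct one.

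The paper constructs it as an action of the algebra $A^{(r)}$ generated by $G_\QQ$ and these Lie algebras, in two steps.
First, on the dense subspace of locally algebraic vectors of very regular weight (Theorem~\ref{thm-(h)dense}), $\oIndFQ\rho_\lambda$ is irreducible with $2^d$ distinct Sen weights. So the $G_\QQ$-action extends uniquely to $A^{(r)}$ there, and the algebraic partial Sen operators are forced to equal the geometric $\theta^{\la}_\tau$.
Second, Sen's theorem in families says $\Lie(I_K,r)$ is the smallest Lie subalgebra whose $\CC_p$-extension contains $\theta^{(r)}$ (Theorem~\ref{thmfamSen}, Corollary~\ref{cor-famSen}). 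This lets the action extend continuously to the whole space (Lemma~\ref{lem-extSen}, Corollary~\ref{cor-extplecSen}).
On the $\lambda$-eigenspace, $A^{(r)}$ then acts through the simple algebra $\End(\Ind^{plec}_F\rho_\lambda)$ (Proposition~\ref{prop-suppA(r)}). This is where strong irreducibility enters, together with a non-scalar $\Lie\rho(I_{F_v})$ coming from the distinct weights at one $\tau$. Semisimplicity and the tensor decomposition follow together.

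\textbf{Part (2).} Step 2 has no working mechanism. There is no K\"unneth decomposition of $\pi_{HT*}\mathcal{O}^{\la}$ along the factors $\mathbb{P}^1_\tau$. The geometric input actually available below middle degree (Lan--Pan) is that a power of the ideal $\mathcal{I}_{\mathfrak h}$, generated by $\prod_\tau(x_\tau-y_\tau)$, kills $\ti H^{<d}(K^p)^{\la}\widehat\otimes\CC_p$. In general this only excludes certain Sen weights, for example the smallest one compatible with a given infinitesimal character. To conclude that the $\lambda$-part vanishes, you need to know that, if non-zero, it carries all Sen weights of the largest constituent $V$ of $\oIndFQ\rho$. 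That is a Galois-structure statement in degree $<d$. Your phrase ``it would produce a Galois representation with incompatible weights'' assumes it, but nothing available in degree $<d$ supplies it.

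This is why the paper proves (1) first and transports it down. The Poincar\'e duality spectral sequence (Proposition~\ref{prop-Poincaredualss}), Pan's codimension bound, vanishing above degree $d$, and backward induction control $\ti H^i_c$ for $i<d$ by $H^{d-i}_{\mathrm{cont}}(\widetilde{K_p},\ti H^{d,(h)}\widehat\otimes D_h)$. Inside this one gets $0\to Y\to X\to Z\to 0$ with $\iota_{Sen}(\mathcal{I}_{Sen})^nY=0=\mathcal{I}_{Sen}^nZ$. The plectic structure on $\ti H^d$ then yields elements of both annihilator ideals that map to $1$ in $\End(V)$; their product lies in $I_Y\cap I_Z$, which squares to zero, yet maps to $1$, a contradiction. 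Finally, $\ti H^{<d}(K^p)_{\ker\lambda_\rho}=0$ does not follow from vanishing of eigenspaces of locally analytic vectors plus Hochschild--Serre. The paper proves the stronger statement that $I_{nsi,\TT}^n$ annihilates $\ti H^{<d}(K^p)_{\QQ_p}$ (Theorem~\ref{thm-belmidcnsi}), using closedness, the Baire category theorem and Schneider--Teitelbaum density.
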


\begin{rmk}
By the known relationship between the completed cohomology and the usual cohomology of Shimura varieties, we thus reprove Nekov\'a\v r's result when $\rho$ is strongly irreducible, equivalently the corresponding automorphic form is not CM.
\end{rmk}

\begin{rmk}
The second part of the main theorem implies that the completed cohomology below middle degree can be annihilated by certain elements in the Hecke algebra. Our actual theorem \ref{thm-belmidnsi} below make these elements more explicit.
\end{rmk}

\begin{rmk}
Nekov\'a\v r's argument also works for the Hecke eigenspace of the completed cohomology and gives the semisimplicity of the Galois action on $\left(\ti{H}^{d}(K^{p})\otimes_{\ZZ_p} \overline{\QQ_p}\right)[\lambda_{\rho}]$. Our work implies that all the simple factors of the tensor induction show up in the Hecke eigenspace, if it's non-zero, which does not seem to follow from the method of Nekov\'a\v r. This will be used by the first-named author in his thesis \cite{Jiang2025HMF} to prove classicality results for completed cohomology of Hilbert modular varieties. 
\end{rmk}

To discuss our proof strategy, it's helpful to recall some ideas behind Nekov\'a\v r's work. In \cite{NS-2016}, 
Nekov\'a\v r-Scholl conjectured that the \'etale cohomology carries a larger symmetry, called the plectic symmetry. More precisely, let \( G_{F}^{plec}:=\mrm{Aut}_{F}(F\otimes_{\QQ}\overline{\QQ}) \) be the plectic Galois group, which naturally contains \( G_{\QQ} \) as a subgroup. The work  \cite{NS-2016} conjectured that the \( G_{\QQ} \)-action can be extended to an action of $G_{F}^{plec}$ in some natural way. It was known that the tensor induction can be naturally extended to an \textit{irreducible} representation of $G_F^{plec}$, cf. Definition \ref{dfnPlecticInduction}. So the existence of the plectic symmetry on the cohomology will imply both Nekov\'a\v r's result and the first part of Theorem \ref{thm-intro-plec1}.

In his work \cite{Nekovar-2018},  Nekov\'a\v r constructed actions of the local plectic group $G_{F,\ell}^{plec}:=\mrm{Aut}_{F\otimes_{\QQ}\QQ_{\ell}}(F\otimes_{\QQ}\overline{\QQ_\ell})$ on the cohomology for rational primes $\ell\notin S_0$ completely split in $F$. Since we expect the local plectic action to be unramified in this case, essentially this boils down to construction of $d$ commuting Frobenus operators (called the partial Frobenii) satisfying the Eichler-Shimura relations on the \'etale cohomology. Nekov\'a\v r achieved this by exploring the moduli interpretation of some PEL type Shimura variety closely related to the Hilbert modular varieties. 

Our work instead  focuses on the local plectic symmetry at $\ell=p$ and constructs a plectic Galois group action on the locally analytic completed cohomology after localization.

\begin{thm} \label{thm-introd-plec-2}
Let \( \rho:\Gal(\overline{F}/F) \to \GL_{2}(\overline{\QQ_p}) \) be a strongly irreducible representation such that $\rho$ has distinct \( \tau \)-Hodge-Tate-Sen weights for some \( \tau:F\to\overline{\QQ_p} \).  There is an action of $G_{F}^{plec}$ on $\ti{H}^{d}(K^{p})^{\la}_{\ker \lambda_{\rho}}$ extending the action of $G_{\QQ}$.
\end{thm}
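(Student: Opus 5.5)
The plan is to reduce the construction of a $G_F^{plec}$-action to an \emph{isotypic decomposition} of $\ti{H}^{d}(K^{p})^{\la}_{\ker\lambda_\rho}$ as a $G_{\QQ}$-representation. Write $\sigma:=\oIndFQ\rho$ and $\Pi^{plec}(\rho)$ for its extension to $G_F^{plec}$ from Definition \ref{dfnPlecticInduction}. I aim to show that the natural evaluation map
\[
\sigma\otimes_{\overline{\QQ_p}}\Hom_{G_{\QQ}}\bigl(\sigma,\ti{H}^{d}(K^{p})^{\la}_{\ker\lambda_\rho}\otimes\overline{\QQ_p}\bigr)\longrightarrow \ti{H}^{d}(K^{p})^{\la}_{\ker\lambda_\rho}\otimes\overline{\QQ_p}
\]
is an isomorphism. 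Once this holds, $G_F^{plec}$ acts on the left-hand side through $\Pi^{plec}(\rho)$ on the first factor and trivially on the multiplicity space, and this extends the $G_{\QQ}$-action by construction. Descent from $\overline{\QQ_p}$ to the coefficients of $\lambda_\rho$ is routine, since $\lambda_\rho$ is defined over a finite extension.

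\textbf{Step 1 (irreducibility).} Strong irreducibility of $\rho$ implies that $\sigma|_{G_{L}}$ is absolutely irreducible for every open $G_L\subset G_{\QQ}$, in particular for $G_{\widetilde F}$ with $\widetilde F$ the Galois closure. Indeed, the Zariski closure of the image of $\rho^{\otimes d}$-twists is then a product of copies of an irreducible connected group acting on $\bigotimes_\tau \overline{\QQ_p}^2$. Consequently $\mathrm{End}_{G_{\QQ}}(\sigma)=\overline{\QQ_p}$, and the map above is injective.

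\textbf{Step 2 (every subquotient is $\sigma$).} The localization is a module over the completed Hecke algebra $\TT_{\mathfrak m}$. I would use a determinant or pseudo-representation argument: via the Eichler--Shimura congruence relations, lifted to completed cohomology in the usual way, each Frobenius $\Fr_v$ for $v\notin S_0$ satisfies the polynomial whose roots are the $d$-fold products of the roots of the Hecke polynomials. Together with Chebotarev, this shows that every $G_{\QQ}$-stable finite-dimensional subquotient has semisimplification a sum of copies of $\sigma$, after passing to generalized eigenspaces over $\TT_{\mathfrak m}$.

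\textbf{Step 3 (splitting of extensions; the main obstacle).} This is where the partial Sen operators constructed earlier come in. The partial Sen operators $\Theta_\tau$ commute with the $\mathrm{G}(\QQ_p)$-action, and their sum is the Sen operator of the $G_{\QQ_p}$-action. They therefore generate an action of the Lie algebra of the Sen torus of $G_{F,p}^{plec}$, which is the ``some part'' of the plectic Lie algebra.

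On $\sigma$, the operator $\Theta_\tau$ must act through the $\tau$-tensor factor. The hypothesis of distinct $\tau$-Hodge--Tate--Sen weights makes $\Theta_\tau$ semisimple with two distinct eigenvalues on the relevant pieces. I would use this to show that $\Theta_\tau$ acts semisimply on $\ti{H}^{d}(K^{p})^{\la}_{\ker\lambda_\rho}$, for example via an explicit relation of the form $(\Theta_\tau-a)(\Theta_\tau-b)=0$ coming from the geometric construction on the flag variety. I would then rule out non-split self-extensions of $\sigma$ compatible with these operators. Concretely, a self-extension of $\sigma$ gives a class in $H^1(G_{\QQ},\mathrm{ad}\,\sigma)$. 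The extra $\Theta_\tau$-equivariance, combined with the fact that the extension already lives in a Hecke eigenspace (the generalized eigenspace is handled by showing $\TT_{\mathfrak m}$ acts semisimply on the isotypic part), should force this class to vanish. This is where strong irreducibility and distinct weights are both essential.

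Finally, I would check that the resulting $G_F^{plec}$-action is independent of choices and commutes with $\mathrm{G}(\QQ_p)$ and $\TT^{S_0}$. This is automatic from the functoriality of the isotypic decomposition.
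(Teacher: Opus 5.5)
There is a genuine gap, and it starts in Step 1. Strong irreducibility of $\rho$, even with distinct Hodge--Tate--Sen weights, does \emph{not} make $\sigma=\oInd^{\QQ}_F\rho$ irreducible.

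\textbf{Counterexample to Step 1.} Take $F$ real quadratic and $\rho=\rho_0|_{G_F}$, with $\rho_0$ attached to a non-CM elliptic curve over $\QQ$. Then $\rho$ is strongly irreducible with distinct Hodge--Tate weights at both embeddings. However $\sigma|_{G_F}\cong(\Sym^2\rho_0\oplus\det\rho_0)|_{G_F}$, and by Clifford theory (the constituents have dimensions $3$ and $1$) $\sigma$ is reducible. Your Zariski-closure claim is exactly what fails. By the Goursat-type lemma in the proof of Proposition \ref{proptenindirred}, the Lie algebra of the image is, up to centre, a product of copies of $\mathfrak{sl}_2$ indexed by a partition of $\Sigma_F$ and embedded diagonally. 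It is the full product only under extra hypotheses such as those of Proposition \ref{proptenindirred}(2), which the theorem does not assume.

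\textbf{Why the isotypic strategy cannot be repaired here.} Once $\sigma=V_1\oplus V_2\oplus\cdots$ is reducible, nothing built from the $G_{\QQ}$-module structure alone can work. The evaluation map is not injective, and $G_{\QQ}$ gives no way to identify the multiplicity spaces $\Hom_{G_{\QQ}}(V_i,-)$ with one another. Any $G_F^{plec}$-action must make that identification, because $\Ind^{plec}_F\rho$ is irreducible; that all constituents of $\sigma$ occur with the same multiplicity space is genuinely new information.

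\textbf{What is missing.} You need an action of the full inertia Lie algebras $\Lie(I_{\tau(F)_{\tau_p}},r)$, $\tau$ by $\tau$, i.e.\ of the algebra $A^{(r)}$ of Definition \ref{defn-A(r)}. A commuting ``Sen torus'' is not enough. By Proposition \ref{prop-suppA(r)}, the $G_{\QQ}$-conjugates of a non-scalar inertia Lie algebra generate every tensor factor $\End(\rho_{f,\tau})$. Hence $A^{(r)}$ agrees with the plectic Cayley--Hamilton algebra $E^{plec,(r)}$ after localizing at a strongly irreducible point. This is where the distinct-weights hypothesis enters, not through semisimplicity of $\Theta_\tau$.

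Constructing the $A^{(r)}$-action is the real work, and it goes as follows.
\begin{enumerate}
\item On a dense subspace of very regular locally algebraic vectors, the Hecke eigenspaces are isotypic for an irreducible tensor induction with $2^d$ distinct weights, so $A^{(r)}$ acts there.
\item On that subspace the Galois-theoretic partial Sen operators are forced to coincide with the geometric $\theta^{\la}_\tau$.
\item Sen's theorem in families (Theorem \ref{thmfamSen}, Corollary \ref{cor-famSen}) then extends the action continuously to the whole space (Corollary \ref{cor-extplecSen}).
\end{enumerate}
Your Step 3 contains no substitute for this.

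\textbf{Second gap: localization versus eigenspace.} Even when $\sigma$ is irreducible, the decomposition you aim for is false, because $\ti{H}^{d}(K^{p})^{\la}_{\ker\lambda_\rho}$ is a localization, not an eigenspace. Here is an explicit non-split extension inside it.
\begin{enumerate}
\item Let $v$ be a $\lambda_\rho$-eigenvector, and let $\ell$ be a $p$-adic logarithm of the norm character on $\pi_0(K^p)$. Then $g\cdot\ell=\ell+c(g)$, with $c$ a non-zero multiple of $\log\chi_{\mathrm{cyc}}$.
\item Cup product with $\ell\in\ti{H}^0(K^p)$, the same mechanism as in Proposition \ref{prop-twist}, gives a generalized eigenvector $v\ell$ that is not killed by $\ker\lambda_\rho$.
\item The $G_{\QQ}$-span of $\sigma v$ and $\sigma v\ell$ is $\sigma\otimes U$, where $U$ is the non-split unipotent extension defined by $c$. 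On it even the total Sen operator is not semisimple.
\end{enumerate}
So $\TT$ does not act semisimply on the localization. Moreover $\Theta_\tau$ only satisfies its Sen polynomial $\chi^{(r)}_\tau$, whose coefficients lie in $R^{(r)}_{\bar{D}}\widehat\otimes_{\QQ_p}\CC_p$ rather than being constants. No equivariance argument can kill such extension classes.

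The target must be a family statement: the action factors through a Cayley--Hamilton algebra which, after localization, is the full plectic one. When $\sigma$ is absolutely irreducible, a family version of your idea should already suffice: prove Cayley--Hamilton for $D^{plec}|_{G_{\QQ}}$ on the analytic vectors by density and continuity, then compare the two localized Azumaya algebras. The partial Sen operators are what handle the reducible case.
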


The key player in our construction is the so-called \textit{partial Sen operators}. It was first observed by the second-named author in \cite{Pan-2022} that when $d=1$,  the Sen operator on $\ti{H}^{*}(K^{p})^{\la}\widehat\otimes_{\QQ_p}\CC_p$ can be constructed by studying the $p$-adic geometry of modular curves at infinite level. This was later generalized in \cite{RC-2025} to all Shimura varieties. Explicitly in the Hilbert case, the geometric construction produces $d$ commuting operators, whose sum is the Sen operator. We call them the partial Sen operators and justify its name below. 

\begin{rmk}
It was pointed out by George Boxer to us that for general Shimura varieties, e.g.  unitary Shimura varieties for $U(m,n)$ with $m\geq n\geq 2$, there exist more  operators (of higher-order) than these partial Sen operators. These extra symmetries should also be considered as some sort of plectic symmetry which seems to be beyond the original scope of \cite{NS-2016}.
\end{rmk}

To show that the partial Sen operators we construct behave in the desired manner, by the continuity we only need to do this for a dense subspace of $\ti{H}^{d}(K^{p})^{\la}$. Here we look at the subspace of locally algebraic vectors of \textit{very general} weights, meaning that the corresponding tensor induction representation has $2^d$ distinct Hodge-Tate weights, i.e. regular. The key observation is that when the  tensor induction representation is regular, the partial Sen operators are determined by the Sen operator.

Once these partial Sen operators are constructed and shown to have the desired property,  we apply a result of Sen  \cite{Sen-1973} to get an action of the Lie algebra of the inertia subgroup of $G_{F,p}^{plec}$ on $\ti{H}^{d}(K^{p})^{\la}$. Under the strongly irreducible assumption, it turns out that this Lie algebra action is enough to extend the $G_{\QQ}$-action to a $G_F^{plec}$-action. 

Ideally if we can construct a plectic Galois group action on $\ti{H}^{<d}(K^{p})^{\la}$, then the first part of Theorem \ref{thm-intro-plec1} holds for $\ti{H}^{<d}(K^{p})^{\la}$ as well. This forces $\left(\ti{H}^{<d}(K^{p})^{\la}\otimes_{\QQ_p} \overline{\QQ_p}\right)[\lambda_{\rho}]
\cong \oIndFQ\rho\otimes_{L}\Pi^{\la}(\rho)=0$ because not all Hodge-Tate-Sen weights  of $\oIndFQ\rho$ can appear in  $\left(\ti{H}^{<d}(K^{p})^{\la}\otimes_{\QQ_p} \overline{\QQ_p}\right)[\lambda_{\rho}]
$ by the main result of \cite{Lan-Pan}. This would give the vanishing result in the second part of Theorem \ref{thm-intro-plec1}. In practice, we deduce the vanishing result by combining Theorem \ref{thm-introd-plec-2} and the Poincar\'e duality for completed cohomology to relate the plectic symmetry we construct to $\ti{H}^{<d}(K^{p})^{\la}$.

In order to talk about the ``naturalness" of the plectic Galois action, for example satisfying the Eichler-Shimura relations, we find it convenient to consider Galois (pseudo) deformation rings and the Cayley-Hamilton quotients. This will be the content of Section \ref{sec-GDR}. We will also introduce the Sen operator in this context and explain how to reinterpret Sen's results there.
In Section \ref{sec-PLA}, we recall some background on the plectic Galois group and introduce the notion of partial Sen operators. Finally in Section \ref{sec-CCHMV}, we carry out our construction of the plectic Galois group action and prove the main theorems. 

Notation: We fix an embedding \( \iota_{p}:\overline{\QQ}\hookrightarrow\overline{\QQ_{p}} \), and let \( \CC_{p} \) denote the completion of \( \overline{\QQ_{p}} \).
We fix \( L \) to be a finite extension of \( \QQ_{p} \) with ring of integer \( \mO_{L} \) and residue field \( \FF \).  In what follows, we work only with continuous representations, so we will omit the adjective ``continuous''. For any field \( K \), we write \( G_{K}:=\Gal(\bar{K}/K) \).

\section{Galois deformation rings} \label{sec-GDR}
In this section, we review some known results about (pseudo) Galois deformation rings and define a notion of Lie algebra of Galois groups over the generic fiber of these deformation rings. 

Recall that $L$ is a finite extension of $\QQ_p$ with ring of integers $\mO_L$ and residue field $\FF$. Throughout this section we fix $n\geq 1$.

\subsection{Determinant and Deformation ring} \label{subsec-Defring}
Let $\Gamma$ be a profinite group satisfying Mazur's condition $\Phi_p$: 
\begin{itemize}
\item for each open subgroup $\Delta$ of $\Gamma$, the pro-$p$ quotient of $\Delta$ is topologically finitely generated.
\end{itemize} 
Let $\bar{D}$ be a continuous $n$-dimensional determinant of $\Gamma$ (in the sense of Chenevier \cite{Chenevier-2014}) valued in $\FF$. Consider the deformation problem parametrizing all continuous lifts of  $\bar{D}$. It follows from our assumption that the deformation problem is represented by a Noetherian completed  local $\mO_L$-algebra $R_{\bar{D}}$ by \cite[Prop.E]{Chenevier-2014} or \cite[Theorem 3.1.4.6]{WE-2013moduli}. 
Denote by $D^u:R_{\bar{D}}[[\Gamma]]\to R_{\bar{D}}$ the universal determinant. 

In \cite[\S  1.17]{Chenevier-2014} Chenevier introduced the notion of Cayley-Hamilton algebra. We briefly recall it here. Suppose $D:R\to A$ is a determinant of dimension $n$. For $B$ a commutative $A$-algebra and $r\in R\otimes_A B$, its characteristic polynomial $\chi(r,t)\in B[t]$ is defined as $D_{B[t]}(t-r)$. We say $(R,D)$ is Cayley-Hamilton if $\chi(r,r)=0$ in $R\otimes_A B$ for any $B$ and $r$. It follows from the usual Cayley-Hamilton theorem that $(M_n(A),\det)$ is Cayley-Hamilton. In general Chenevier defined a two-sided ideal $\CH(D)\subset \ker(D)$ of $R$ such that the resulting determinant of $R/\CH(D)$ is the maximal Cayley-Hamilton quotient of $(R,D)$. We apply this to $(R_{\bar{D}}[[\Gamma]],D^u)$ and get the universal Cayley-Hamilton algebra 
\[E_{\bar{D}}:= R_{\bar{D}}[[\Gamma]]/\CH(D^u).\]

\begin{thm} \label{thmWE}
    \(E_{\bar{D}} \)
    is a finitely generated module of \(R_{\bar{D}}\), in particular, $E_{\bar{D}}$ is Noetherian.
\end{thm}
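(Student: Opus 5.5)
The plan is to reduce finiteness of $E_{\bar{D}}$ to two inputs: the finiteness theorem for Cayley--Hamilton algebras over a Noetherian base generated by finitely many elements (Procesi's theorem in Chenevier's form), and a reduction of $\Gamma$ to a finitely generated situation using condition $\Phi_p$. Since $R_{\bar{D}}$ is Noetherian, the second assertion follows from the first: a finitely generated module over a Noetherian ring is Noetherian as a module, hence Noetherian as a ring (every left or right ideal is an $R_{\bar{D}}$-submodule).

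First I would reduce to a finitely generated group. Let $\bar\rho^{ss}$ be the semisimple representation over $\overline{\FF}$ attached to $\bar{D}$; its image is finite, with kernel $\Delta$ open in $\Gamma$. By the standard argument (Chenevier, Prop.~E, or Wang-Erickson, Thm.~3.1.4.6), the ideal $\ker(D^u)$ modulo the maximal ideal contains the augmentation ideal of $\Delta$ up to nilpotence. Consequently the image of $\Delta$ in $E_{\bar{D}}^\times$ factors, up to a finite-index subgroup, through the maximal pro-$p$ quotient $\Delta^{(p)}$, and in a suitable sense $1+J$ where $J$ is a topologically nilpotent ideal. By $\Phi_p$, $\Delta^{(p)}$ is topologically generated by finitely many elements $\delta_1,\dots,\delta_m$. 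Together with a finite set of coset representatives $g_1,\dots,g_k$ of $\Delta$ in $\Gamma$, the images of these elements topologically generate $E_{\bar{D}}$ as an $R_{\bar{D}}$-algebra. Here one uses that the image of $R_{\bar{D}}[\Gamma]$ is dense and that $E_{\bar{D}}$ is complete for the adic topology induced by $\mathfrak{m}_{R_{\bar{D}}}$ and $J$.

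Next I would apply the Cayley--Hamilton finiteness theorem. If $(S,D)$ is Cayley--Hamilton of dimension $n$ over a Noetherian commutative ring $A$ and $S$ is generated as an $A$-algebra by finitely many elements, then $S$ is a finite $A$-module. The reason is that each element satisfies its degree-$n$ characteristic polynomial, and Shirshov/Procesi-type arguments bound the words needed to span $S$. Apply this to the $R_{\bar{D}}$-subalgebra $S_0$ generated by the finitely many elements chosen above; it is finite over $R_{\bar{D}}$. Since $R_{\bar{D}}$ is a complete Noetherian local ring, $S_0$ is $\mathfrak{m}$-adically complete and hence closed. It also contains a dense subset of $E_{\bar{D}}$, so $S_0=E_{\bar{D}}$.

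The main obstacle is the density step: showing that finitely many elements topologically generate $E_{\bar{D}}$, and that the relevant topology on $E_{\bar{D}}$ is the $\mathfrak{m}$-adic one. The quotient by $\CH(D^u)$ must be shown to make the pro-$p$ part of $\Delta$ act through a nilpotent-filtered structure. My first approach is to use the fact that in a Cayley--Hamilton algebra, $\ker D$ modulo $\mathfrak{m}$ is a nilpotent ideal, with nilpotence bounded in terms of $n$. This forces $\Delta$ to map into $1+(\text{nilpotent ideal})$ modulo $\mathfrak{m}$, so its image is pro-$p$ and generated by the images of $\delta_i$. Alternatively, I would simply cite Wang-Erickson's result that $E_{\bar{D}}$ is finite over $R_{\bar{D}}$ under $\Phi_p$, which is precisely this statement.
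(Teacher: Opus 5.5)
Your fallback is exactly the paper's proof: the paper proves Theorem~\ref{thmWE} only by citing \cite[Prop.~3.6]{WE-2018}, and Noetherianity then follows as you say, because $R_{\bar{D}}$ is central. The self-contained sketch you give first is the standard strategy behind that result, but two of its steps do not hold as written.

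First, the general claim that in a Cayley--Hamilton algebra $\ker D$ mod $\mathfrak{m}$ is nilpotent of index bounded in $n$ is false when $p\le n$, since Nagata--Higman needs $p>n$. Take $p=n=2$, let $S$ be the exterior algebra of an infinite-dimensional $\FF$-vector space, and set $D=\varepsilon^2$ with $\varepsilon$ the augmentation. Then $(S,D)$ is Cayley--Hamilton of dimension $2$, but $\ker D$ is the augmentation ideal, which is not nilpotent. What holds, and is all you need, is that each $x\in\ker\bar{D}$ has characteristic polynomial $t^n$, hence $x^n=0$. So $(\delta-1)^n\in\mathfrak{m}E_{\bar{D}}$ for $\delta\in\Delta$.

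Second, the topology. You assume $E_{\bar{D}}$ is separated and complete for an $\mathfrak{m}$-adic (or $(\mathfrak{m},J)$-adic) topology. But once $E_{\bar{D}}/\mathfrak{m}E_{\bar{D}}$ is finite-dimensional, that property is equivalent to the finite generation you are proving, so the step ``$S_0$ is $\mathfrak{m}$-adically complete, hence closed'' is circular.

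The argument does work for the compact quotient topology from $R_{\bar{D}}[[\Gamma]]$, provided $E_{\bar{D}}$ is Hausdorff for it, i.e.\ $\CH(D^u)$ is closed. Your sketch does not address this. Granting it:
\begin{enumerate}
\item Every open two-sided ideal $U$ has finite quotient, so $\mathfrak{m}^kE_{\bar{D}}\subseteq U$ for some $k$.
\item Hence each $\delta\in\Delta$ has $p$-power order in $(E_{\bar{D}}/U)^\times$. By Cauchy's theorem the image of $\Delta$ is pro-$p$, so it is a quotient of $\Delta^{(p)}$.
\item $S_0$ contains the inverses of its generators, from the Cayley--Hamilton relation with unit constant term.
\item $S_0$ is a continuous image of a finite free $R_{\bar{D}}$-module, hence compact and closed. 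It is therefore all of $E_{\bar{D}}$, by density of the image of $R_{\bar{D}}[\Gamma]$.
\end{enumerate}
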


\begin{proof}
This is due to Wang-Erickson \cite[Prop.3.6]{WE-2018}.
\end{proof}

There is a natural group homomorphism $\rho:\Gamma\to E_{\bar{D}}^\times$. We want to consider the induced morphism of tangent spaces. It's well-known that in order to do this one has to pass to the rigid generic fiber because the usual logarithm function $\log(1-T)=-\sum_{i=1}\frac{T^i}{i}$ is analytic but not bounded on the open unit disc. 

Fix a presentation of $R_{\bar{D}}$ as a quotient
\[R_{\bar{D}}=\mO_L[[x_1,\cdots,x_g]]/I\]
for some ideal $I$.

\begin{dfn} \label{dfnE(r)}
For $r\in p^{\QQ}\cap (0,1)$, we denote by 
\[R_{\bar{D}}^{(r)}:=R_{\bar{D}}\otimes_{\mO_L[[x_1,\cdots,x_g]]} L\langle r{x_1},\cdots,r{x_g}\rangle,\]
\[E_{\bar{D}}^{(r)}:=E_{\bar{D}}\otimes_{R_{\bar{D}}} R_{\bar{D}}^{(r)}=E_{\bar{D}}\otimes_{\mO_L[[x_1,\cdots,x_g]]} L\langle r{x_1},\cdots,r{x_g}\rangle\]
where $L\langle r{x_1},\cdots,r{x_g}\rangle$ is the completion of $\mO_L[[x_1,\cdots,x_g]]\otimes_{\ZZ_p}\QQ_p$ with respect to the norm 
\[||\sum_{\alpha=(a_1,\cdots,a_g)\in \NN^g}c_\alpha x_1^{a_1}\cdots x_g^{a_g}||_r=\sup_{\alpha=(a_1,\cdots,a_g)} ||c_\alpha|| r^{a_1+\cdots+a_g}\]
where $||c_\alpha||$ denotes the $p$-adic norm of $c_\alpha\in L$ normalized by $||p||=1/p$. Similarly define
\[R_{\bar{D}}^{(r),+}:=\im\left(R_{\bar{D}}\otimes_{\mO_L[[x_1,\cdots,x_g]]} \mO_L\langle r{x_1},\cdots,r{x_g}\rangle\to R_{\bar{D}}^{(r)}\right),\]
\[E_{\bar{D}}^{(r),+}:=\im(E_{\bar{D}}\otimes_{R_{\bar{D}}}R_{\bar{D}}^{(r),+}\to E_{\bar{D}}^{(r)})\]
where $\mO_L\langle r{x_1},\cdots,r{x_g}\rangle$ is the unit ball of $L\langle r{x_1},\cdots,r{x_g}\rangle$. 
\end{dfn}

Note that $\mO_L\langle r{x_1},\cdots,r{x_g}\rangle$ is Noetherian and $p$-adically complete.
By Theorem \ref{thmWE}, $E_{\bar{D}}^{(r),+}$ is a finitely generated $\mO_L\langle r{x_1},\cdots,r{x_g}\rangle$-module, and hence also a $p$-adically complete Noetherian algebra (by the Artin-Rees lemma). Thus $E_{\bar{D}}^{(r)}$ is an $L$-Banach space with unit ball $E_{\bar{D}}^{(r),+}$. We will frequently use the fact that any finitely generated $R_{\bar{D}}^{(r)}$-submodule of $E_{\bar{D}}^{(r)}$ is  closed by the Artin-Rees Lemma, and we will always equip it with the subspace topology.

If $r'\in p^{\QQ}\cap (0,1)$ and $r<r'$, there are natural maps $R_{\bar{D}}^{(r')}\to R_{\bar{D}}^{(r)}$ and $E_{\bar{D}}^{(r')}\to E_{\bar{D}}^{(r)}$. Both have dense images.

Now for $m\geq 1$ consider the composite morphism
\[\bar{\rho}^m_r:\Gamma\xrightarrow{\rho} E_{\bar{D}}^\times\to (E_{\bar{D}}^{(r),+})^\times \to (E_{\bar{D}}^{(r),+}/(p^m))^\times.\]

\begin{lem} \label{lemlaGalact}
$\bar{\rho}^m_r$ has finite image, equivalently $\Gamma\to E_{\bar{D}}^{(r)}$ is continuous.
\end{lem}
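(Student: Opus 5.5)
The plan is to reduce to the structure of $E_{\bar{D}}$ as a finite module over $R_{\bar{D}}$, and then control powers of the maximal ideal after base change to the unit ball. Let $\mathfrak{m}$ denote the maximal ideal of $R_{\bar{D}}$.

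First I will check that $\Gamma\to E_{\bar{D}}$ is continuous for the $\mathfrak{m}$-adic topology on $E_{\bar{D}}$. This follows from the construction: $\rho$ is the composite $\Gamma\to R_{\bar{D}}[[\Gamma]]\to E_{\bar{D}}$. By Theorem \ref{thmWE}, $E_{\bar{D}}$ is a finite $R_{\bar{D}}$-module, so $E_{\bar{D}}/\mathfrak{m}^kE_{\bar{D}}$ is a finite set. It is a quotient of $R_{\bar{D}}[[\Gamma]]/\mathfrak{m}^k$, and in the latter $\Gamma$ factors through the image of finitely many open normal subgroups. Concretely, the kernel of $\Gamma\to (E_{\bar{D}}/\mathfrak{m}^k)^\times$ is open, because the completed group ring $R_{\bar{D}}[[\Gamma]]=\varprojlim R_{\bar{D}}/\mathfrak{m}^k[\Gamma/U]$ maps continuously to the finite discrete ring $E_{\bar{D}}/\mathfrak{m}^k$. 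If the paper's $R_{\bar{D}}[[\Gamma]]$ is defined differently, I would instead use that $E_{\bar{D}}/\mathfrak{m}^k$ is finite and that the universal determinant is continuous.

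Second, and this is the key step, I will show that for each $m$ there is $k$ such that the image of $\mathfrak{m}^kE_{\bar{D}}$ in $E_{\bar{D}}^{(r),+}$ lies in $p^mE_{\bar{D}}^{(r),+}$. Granting this, $\bar\rho^m_r$ factors through the finite image of $\Gamma$ in $(E_{\bar{D}}/\mathfrak{m}^k)^\times$, which proves the lemma.

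The proof of the key step goes as follows. Since $\mathfrak{m}$ is generated by $p$ and the images of $x_1,\dots,x_g$, it suffices to show that a monomial $p^{a_0}x^{\alpha}$ with $a_0+|\alpha|\ge k$ has $\|\cdot\|_r$-norm at most $p^{-m}$ in $\mO_L\langle rx\rangle$. Here the image of $x_i$ has norm $r<1$. Write $r=p^{-s}$ with $s>0$ rational. The norm is then $p^{-a_0}r^{|\alpha|}\le p^{-\min(1,s)k}$. Taking $k\ge m/\min(1,s)$ and noting that the unit ball is $\{\|f\|_r\le1\}$, this gives $p^{-m}$ times the unit ball, up to the discrete value group. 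This is where I expect the only subtlety: $\mO_L\langle rx\rangle$ must be read as the unit ball of the norm, so that an element of norm $\le p^{-m}$ is $p^m$ times an element of the unit ball. That holds because $\|p^{-m}f\|_r=p^m\|f\|_r$. Passing to $E_{\bar{D}}^{(r),+}$ as the image of the tensor product preserves the inclusion.

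Finally, for the equivalence with continuity of $\Gamma\to E_{\bar{D}}^{(r)}$, I use that the Banach topology on $E_{\bar{D}}^{(r)}$ is the $p$-adic topology on the unit ball $E_{\bar{D}}^{(r),+}$. Note that $\rho(\Gamma)\subset E_{\bar{D}}^{(r),+}$. Continuity of $\rho$ into this $p$-adically complete ring is then equivalent to continuity modulo every $p^m$, with discrete targets. Since $\Gamma$ is compact and each $\bar\rho^m_r$ is a group homomorphism, continuity into the discrete target is the same as having open kernel, which is the same as finite image.
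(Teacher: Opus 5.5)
Your argument is correct and is essentially the paper's: both show that an ideal of definition of $R_{\bar{D}}$ dies in $R_{\bar{D}}^{(r),+}/(p^m)$, and then use that $E_{\bar{D}}$ is finite over $R_{\bar{D}}$. You take $\mathfrak{m}^k$ via a norm estimate, while the paper takes $(p^m,x_1^{im},\dots,x_g^{im})$ with $r^i\in p^{\ZZ}$; your factorization through $E_{\bar{D}}/\mathfrak{m}^kE_{\bar{D}}$ also makes the openness of the kernel (the ``equivalently'' part) explicit, granted the standard continuity of $\Gamma\to E_{\bar{D}}^\times$ for the $\mathfrak{m}$-adic topology. One small fix: when $L/\QQ_p$ is ramified, $\mathfrak{m}$ is generated by a uniformizer $\varpi_L$ and the $x_i$ rather than by $p$, so your exponent $\min(1,s)$ should be $\min(1/e,s)$ with $e$ the ramification index, which changes nothing.
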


\begin{proof}
 Suppose $r^i\in p^\ZZ$ for some integer $i>0$. Note that the image $J$ of the map
 \[R_{\bar{D}}/(p^m)\to   R_{\bar{D}}^{(r),+}/(p^m)\]
 is a quotient of 
 $\mO_L[[x_1,\cdots,x_g]]/(p^m,x_1^{im},\cdots,x_g^{im})$
  which is clearly finite. By Theorem \ref{thmWE} the image of
   \[E_{\bar{D}}/(p^m)\to   E_{\bar{D}}^{(r),+}/(p^m)\]
 is a finite module of $J$ and hence is finite as well.
\end{proof}

If $h\in \ker\bar{\rho}^1_r$, we have $1-h\in pE_{\bar{D}}^{(r),+}$ and hence
\[\log h:=-\sum_{i=1}\frac{(1-h)^i}{i}\]
converges in $E_{\bar{D}}^{(r),+}$.

\begin{dfn} \label{defnconstLiealg}
For $\Delta$ a closed subgroup of $\Gamma$, we define  $\Lie(\Delta,r)\subseteq E_{\bar{D}}^{(r)}$ as the $R_{\bar{D}}^{(r)}$-submodule generated by $\log (\ker\bar{\rho}^1_r\cap\Delta)$. 
\end{dfn}

\begin{rmk}
$\Lie(\Delta,r)=\Lie(\Delta',r)$ if $\Delta'$ is an open subgroup of $\Delta$. Indeed $h^{[\Delta:\Delta']}\in \Delta'$ if $h\in\Delta$ and $\log h^{[\Delta:\Delta']}=[\Delta:\Delta']\log h$ if $h\in\ker\bar{\rho}^1_r$.
\end{rmk}

\begin{lem}
$\Lie(\Delta,r)\subseteq E_{\bar{D}}^{(r)}$ is a Lie subalgebra with respect to the standard Lie bracket of $E_{\bar{D}}^{(r)}$, i.e. $[X,Y]=XY-YX$.
\end{lem}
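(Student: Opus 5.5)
We have to show that $\Lie(\Delta,r)$ is closed under the commutator bracket. The plan is to reduce, via the Baker--Campbell--Hausdorff formalism (equivalently, via group commutators), to the fact that $H:=\ker\bar{\rho}^1_r\cap\Delta$ is a group stable under commutators, and that $\log$ of group commutators approximates the Lie bracket of logarithms. Since $R_{\bar{D}}^{(r)}$ is central in $E_{\bar{D}}^{(r)}$ and the bracket is bilinear, it suffices to show $[\log g,\log h]\in\Lie(\Delta,r)$ for $g,h\in H$.

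First, note that $\Lie(\Delta,r)$ is a finitely generated $R_{\bar{D}}^{(r)}$-submodule of $E_{\bar{D}}^{(r)}$, since $E_{\bar{D}}^{(r)}$ is finite over the Noetherian ring $R_{\bar{D}}^{(r)}$ by Theorem \ref{thmWE}. It is therefore closed, as remarked after Definition \ref{dfnE(r)}. It will thus be enough to write $[\log g,\log h]$ as a convergent limit of elements of $\Lie(\Delta,r)$.

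Second, exploit $p$-power maps. For $g\in H$ put $X=\log g\in pE_{\bar{D}}^{(r),+}$. Then $g=\exp(X)$, with $\exp$ converging on $pE^{(r),+}$ (for $p=2$ one passes to $H^{2}$, which is harmless by the Remark, since $\Lie(\Delta,r)=\Lie(\Delta',r)$ for open $\Delta'$). Similarly write $h=\exp(Y)$. Consider $c_n:=g^{p^n}h^{p^n}g^{-p^n}h^{-p^n}\in H$. Since $g^{p^n}=\exp(p^nX)$ and $p^nX,p^nY\in p^{n+1}E^{(r),+}$, a direct expansion of the exponentials in the noncommutative Banach algebra $E_{\bar{D}}^{(r)}$ gives
\[c_n=1+p^{2n}[X,Y]+O(p^{3n}\cdot p^{3})\]
in $E^{(r),+}$, where the error term consists of words of length $\ge3$ in $p^nX,p^nY$. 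Hence
\[\log c_n=p^{2n}[X,Y]+O(p^{3n}),\]
and therefore $p^{-2n}\log c_n\to[X,Y]$. Each $p^{-2n}\log c_n$ lies in $\Lie(\Delta,r)$ because $\Lie(\Delta,r)$ is an $L$-vector space. Closedness then yields $[X,Y]\in\Lie(\Delta,r)$.

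The main obstacle is making the error estimate rigorous in the noncommutative setting. Concretely, one must check that $\exp$ and $\log$ are mutually inverse on $1+pE^{(r),+}$ and on $pE^{(r),+}$ respectively, and that the commutator expansion has its remainder in $p^{3n}\cdot p^{c}E^{(r),+}$ for a fixed constant $c$. Both follow from the standard formal identities in the ring of noncommutative power series $\QQ\langle\langle X,Y\rangle\rangle$, which specialize to $E^{(r),+}\otimes\QQ$. The only subtlety is the denominators $1/k!$, which are controlled by $v_p(k!)\le k/(p-1)$, since the arguments are divisible by $p$ (or by $4$ when $p=2$). An alternative route, if the estimates become messy, is to use the BCH series directly: $\log(gh)=X+Y+\tfrac12[X,Y]+\dots$. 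This shows that $\Lie(\Delta,r)$ contains $\log(g^{p^n}h^{p^n})-\log g^{p^n}-\log h^{p^n}$, and rescaling by $p^{-2n}$ gives the same limit argument.
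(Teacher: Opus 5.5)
Your argument is correct, but it takes a different route from the paper.

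\textbf{The paper's route.} The paper does no analysis inside $E_{\bar{D}}^{(r)}$. It lets $\Lie'$ be the $R_{\bar{D}}^{(r)}$-span of all $\log h_3$ and $[\log h_1,\log h_2]$. It then reduces modulo $\mathfrak{m}^k$ for every maximal ideal $\mathfrak{m}$ of $R_{\bar{D}}^{(r)}$; there the image of $\Delta$ is a compact $p$-adic Lie group in some $\GL_d(L)$. It quotes the standard fact that $\log H_0$ is a $\ZZ_p$-Lie algebra for a uniform open subgroup $H_0$, so $\Lie'$ and $\Lie(\Delta,r)$ have the same image modulo $\mathfrak{m}^k$. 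Finally it lifts the inclusion $\Lie'\subseteq\Lie(\Delta,r)$ via Lemma \ref{leminccomp}.

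\textbf{Your route.} You prove the commutator-limit formula $[X,Y]=\lim_n p^{-2n}\log(g^{p^n}h^{p^n}g^{-p^n}h^{-p^n})$ directly in the Banach algebra. This is essentially the mechanism behind the uniform-group fact the paper quotes. You then replace the local comparison lemma by two facts: finitely generated $R_{\bar{D}}^{(r)}$-submodules of $E_{\bar{D}}^{(r)}$ are closed, and $R_{\bar{D}}^{(r)}$ is central.

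\textbf{What each buys.} Your version is self-contained and avoids the black box, at the price of the noncommutative $\exp/\log$ bookkeeping. Your bound $v_p(m!)\le (m-1)/(p-1)$ on the coefficients of words of length $m\ge 3$ is enough for that. The paper's version needs no estimates. It also sets up the ``check modulo $\mathfrak{m}^k$, then apply Lemma \ref{leminccomp}'' template, which it reuses in Theorem \ref{thmfamSen} and Corollary \ref{cor-famSen}. There the key input, Sen's minimality of $\Lie\rho(I_K)$, is only available for finite-dimensional representations, so a direct Banach-algebra argument like yours would not be enough.

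\textbf{One small fix at $p=2$.} Do not pass to ``$H^2$'': the set of squares need not be a subgroup. Either use $[\log g,\log h]=\tfrac14[\log g^2,\log h^2]$ with $g^2,h^2\in 1+4E_{\bar{D}}^{(r),+}$, or replace $\Delta$ by the open subgroup $\Delta\cap\ker\bar{\rho}^2_r$.
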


We will regard $\Lie(\Delta,r)$ as the Lie algebra of $\Delta$ in $E_{\bar{D}}^{(r)}$. It's a finitely generated $R_{\bar{D}}^{(r)}$-module.

\begin{proof}
Let $\Lie'$ denote the $R_{\bar{D}}^{(r)}$-submodule of $E_{\bar{D}}^{(r)}$ generated by all $[\log h_1,\log h_2]$, $\log h_3$, $h_1,h_2,h_3\in\ker\bar{\rho}^1_r\cap \Delta$. It's enough to show the natural inclusion $\Lie(\Delta,r)\subseteq \Lie'$ is an equality. 

Let $\mathfrak{m}$ be a maximal ideal of $R_{\bar{D}}^{(r)}$. The residue field $k(\mathfrak{m})$ is a finite extension of $L$. Hence for any $k\geq 0$,  $E_{\bar{D}}^{(r)}/\mathfrak{m}^k E_{\bar{D}}^{(r)}$ is a finite-dimensional $L$-algebra. Fix $E_{\bar{D}}^{(r)}/\mathfrak{m}^k E_{\bar{D}}^{(r)}\cong L^d$. The left multiplication gives an embedding of $L$-algebras $E_{\bar{D}}^{(r)}/\mathfrak{m}^k E_{\bar{D}}^{(r)}\to M_d(L)$. It follows from Lemma \ref{lemlaGalact} that the composite map 
\[\Gamma \to (E_{\bar{D}}^{(r)})^\times \to (E_{\bar{D}}^{(r)}/\mathfrak{m}^k E_{\bar{D}}^{(r)})^\times \subseteq \GL_d(L) \]
is continuous. Therefore the image $H$ of $\Delta$ in $(E_{\bar{D}}^{(r)}/\mathfrak{m}^k E_{\bar{D}}^{(r)})^\times$ is a compact $p$-adic Lie group. Take a uniform pro-$p$ open subgroup $H_0$ of $H$. By standard results in theory of $p$-adic Lie groups \cite{DdSM-1999},  $\log H_0$ forms a Lie algebra over $\ZZ_p$. Hence both $\Lie'$  and $\Lie(\Delta,r)$ have the same images in $E_{\bar{D}}^{(r)}/\mathfrak{m}^k E_{\bar{D}}^{(r)}$.
We finish the proof by invoking the following standard result. 
\end{proof}

\begin{lem} \label{leminccomp}
Suppose that $R$ is a Noetherian ring, and $N_1$ and $N_2$ are submodules of a finitely generated $R$-module $N$. If for any maximal ideal $\mathfrak{m}$ of $R$ and $k\geq 0$, the image of $N_1$  in $N/\mathfrak{m}^k N$ contains the image of $N_2$ in $N/\mathfrak{m}^k N$, then $N_1\supseteq N_2$.
\end{lem}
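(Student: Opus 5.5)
Replace $N_1$ by $N_1' := N_1 + N_2$. Then $N_1 \supseteq N_2$ holds exactly when $N_1'/N_1 = 0$. Set $M := N_1'/N_1$. Since $R$ is Noetherian and $N$ is finitely generated, $N_1'$ is finitely generated, and hence so is $M$.

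First we translate the hypothesis into a statement about $M$. Fix a maximal ideal $\mathfrak{m}$ and $k\geq 0$. The hypothesis says that the image of $N_1'$ in $N/\mathfrak{m}^kN$ equals the image of $N_1$, that is,
\[
N_1' \subseteq N_1 + \mathfrak{m}^k N, \qquad\text{equivalently}\qquad N_1' \subseteq N_1 + (\mathfrak{m}^kN \cap N_1').
\]
By Artin--Rees, applied to the submodule $N_1'\subseteq N$ and the ideal $\mathfrak{m}$, there is a $c$ with
\[
\mathfrak{m}^k N\cap N_1' \subseteq \mathfrak{m}^{k-c}N_1' \qquad\text{for } k\geq c.
\]
It follows that $N_1' \subseteq N_1 + \mathfrak{m}^{j}N_1'$ for every $j\geq 0$. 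Passing to the quotient, this says $M = \mathfrak{m}^j M$ for all $j$; in particular $M=\mathfrak{m}M$.

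Now we localize. We have $M_{\mathfrak{m}} = \mathfrak{m}M_{\mathfrak{m}}$, and $M_{\mathfrak{m}}$ is finitely generated over the local ring $R_{\mathfrak{m}}$. Nakayama's lemma therefore gives $M_{\mathfrak{m}}=0$. Since this holds for every maximal ideal $\mathfrak{m}$, we conclude $M=0$, i.e. $N_2\subseteq N_1$.

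The only non-formal step is the Artin--Rees comparison, which converts the condition modulo $\mathfrak{m}^kN$ into the condition modulo $\mathfrak{m}^jN_1'$. It is also the only place where the Noetherian hypothesis is used in an essential way.
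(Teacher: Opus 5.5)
Your proof is correct. It differs from the paper's in how the local statement is obtained. The paper passes to $\mathfrak{m}$-adic completions: it uses the hypothesis and Artin--Rees to get $\widehat{(N_2)_{\mathfrak{m}}}\subseteq\widehat{(N_1)_{\mathfrak{m}}}$. It then descends this to $(N_2)_{\mathfrak{m}}\subseteq (N_1)_{\mathfrak{m}}$ via $\widehat{(N_i)_{\mathfrak{m}}}=(N_i)_{\mathfrak{m}}\otimes_{R_{\mathfrak{m}}}\widehat{R_{\mathfrak{m}}}$ and the faithful flatness of $\widehat{R_{\mathfrak{m}}}$ over $R_{\mathfrak{m}}$. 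You instead pass to the quotient $M=(N_1+N_2)/N_1$, use Artin--Rees only to extract $M=\mathfrak{m}M$, and conclude with Nakayama. The final local-to-global step is the same in both.

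Your route is more elementary and self-contained: it avoids completions and faithful flatness, which themselves rest on Artin--Rees and Nakayama-type arguments. It also shows that, for each $\mathfrak{m}$, the hypothesis is needed only for the single value $k=c+1$, where $c$ is the Artin--Rees constant for $N_1+N_2\subseteq N$. The paper's version is shorter once standard facts about completions are taken for granted.

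One small correction: your closing comment undersells the Noetherian hypothesis. You also use it, correctly, to know that $M$ is finitely generated, which Nakayama requires.
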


\begin{proof}
For any maximal ideal $\mathfrak{m}$,
by our assumption and the Artin-Rees lemma, 
the $\mathfrak{m}$-adic completion $\widehat{(N_1)_{\mathfrak{m}}}$ of $N_1$ contains the $\mathfrak{m}$-adic completion of $N_2$. Hence $(N_1)_\mathfrak{m} \supseteq (N_2)_\mathfrak{m}$ because $\widehat{(N_i)_{\mathfrak{m}}}=(N_i)_{\mathfrak{m}}\otimes_{A_{\mathfrak{m}}} \widehat{A_{\mathfrak{m}}}$ and $\widehat{A_{\mathfrak{m}}}$ is faithfully flat over $A_{\mathfrak{m}}$. Since this is true for all maximal ideals, we get $N_1\supseteq N_2$.
\end{proof}

At the end of this subsection, we discuss a result about the tensor product of determinants that will be needed in the next section.
\begin{thm} \label{thm-tensorprod}
Let $A$ be a commutative ring. For $i=1,2$, let $D_i$ be an $A$-valued determinant on $\Gamma$ of dimension $d_i$. Then there is a naturally defined $A$-valued $d_1d_2$-dimensional determinant $D_1\otimes D_2$ on $\Gamma$, such that for any algebra homomorphism $f:A\to B$ and group homomorphisms $\rho_i:\Gamma\to \GL_{d_i}(B)$ satisfying $\det\circ\rho_i=f\circ D_i$ for $i=1,2$,
\[f\circ (D_1\otimes D_2)=\det \circ (\rho_1\otimes\rho_2).\]
Moreover if both $D_1$ and $D_2$ are continuous, $D_1\otimes D_2$ is continuous as well.
\end{thm}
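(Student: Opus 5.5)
We reduce to the universal case and build $D_1\otimes D_2$ from a polynomial identity. By Chenevier's formalism, an $A$-valued determinant of dimension $d_i$ on $\Gamma$ is a multiplicative polynomial law $A[\Gamma]\to A$, homogeneous of degree $d_i$. It is determined by its characteristic polynomial coefficients $\Lambda_k^{(i)}(g)$, which satisfy Amitsur-type relations. So it suffices to say what $D_1\otimes D_2$ does on any commutative $A$-algebra $B$ and any element $x=\sum_j b_j g_j\in B[\Gamma]$, in a way compatible with base change.

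The key input is the classical fact, used by Chenevier, that there is a universal Cayley-Hamilton representation. Concretely, by \cite[\S 1.17]{Chenevier-2014}, $D_i$ factors through the Cayley-Hamilton quotient $E_i=A[\Gamma]/\CH(D_i)$. For a determinant that is a product of representations the recipe is clear: $\det(\rho_1\otimes\rho_2)(x)$ is a polynomial expression in the characteristic-polynomial coefficients of the $\rho_i$ evaluated on words in the $g_j$. The universal form of this expression comes from invariant theory (Procesi/Donkin–Zubkov): for generic matrices $X_1,\dots,X_m\in M_{d_1}$ and $Y_1,\dots,Y_m\in M_{d_2}$, and indeterminates $t_j$, the polynomial $\det(\sum_j t_j X_j\otimes Y_j)$ is $\GL_{d_1}\times\GL_{d_2}$-conjugation invariant. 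Over $\ZZ$, it therefore lies in the algebra generated by the coefficients of characteristic polynomials of monomials in the $X$'s, tensored with the analogous algebra for the $Y$'s. This is Donkin's theorem, valid over $\ZZ$.

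Fix one such expression $P$. Define $(D_1\otimes D_2)_B(\sum_j b_jg_j)$ by substituting into $P$: replace $t_j$ by $b_j$, and replace the coefficient of $\chi(X$-monomial$)$ by the corresponding $\Lambda^{(1)}$ of the word in $\Gamma$, and likewise for the $Y$-monomials with $\Lambda^{(2)}$. The main obstacle is well-definedness. Different invariant expressions for the same polynomial must give the same value, and the definition must be independent of how $x$ is written. We handle this by Donkin's description of the relations: the relations among these invariants are generated by the Amitsur and Cayley–Hamilton relations. These hold for $\Lambda^{(i)}$ by \cite{Chenevier-2014}, where a determinant is equivalent to the data of its $\Lambda_k$ satisfying exactly those relations. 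Alternatively, we can reduce to the universal determinant ring of $\Gamma$ over $\ZZ$ (the ring of $\GL_{d_i}$-invariants of the representation variety of the free group, by Chenevier's comparison $C(\Gamma,d)\cong$ invariant ring). There the identity is checked after mapping into generic matrices.

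Once the law is defined, we check the remaining properties. Multiplicativity and homogeneity of degree $d_1d_2$ follow because both hold for $\det(\rho_1\otimes\rho_2)$ on generic matrices, and hence in the invariant ring. The compatibility $f\circ(D_1\otimes D_2)=\det\circ(\rho_1\otimes\rho_2)$ holds by construction. For continuity, each characteristic coefficient of $D_1\otimes D_2$ at $g$ is a fixed integral polynomial in finitely many values $\Lambda^{(i)}_k(w)$, where the $w$ are words in $g$ (indeed just powers of $g$). These are continuous functions $\Gamma\to A$, so $D_1\otimes D_2$ is continuous in Chenevier's sense.
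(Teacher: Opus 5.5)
Your route is correct in outline but genuinely different from the paper's. The paper constructs nothing by hand. It invokes the Emerson--Morel bijection between Chenevier determinants and Lafforgue's $\GL_n$-pseudocharacters \cite{EM-2023}, together with Quast's continuous version \cite{Quast-2026}. In Lafforgue's framework the tensor product is formal: one pulls back along $\GL_{d_1}\times\GL_{d_2}\to\GL_{d_1d_2}$, as in \cite[\S 3.4]{Quast-2026}. You stay inside Chenevier's framework and build the polynomial law directly from the invariant theory of several matrices over $\ZZ$. This essentially re-derives the piece of the Emerson--Morel comparison that is needed. It buys a self-contained argument and a transparent continuity proof, since the characteristic polynomial of $D_1\otimes D_2$ at $g$ is a universal integral polynomial in the $\Lambda^{(1)}_k(g)$ and $\Lambda^{(2)}_k(g)$. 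The paper's route is shorter and makes the functoriality conceptual.

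Three points in your write-up should be made precise.

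(i) The input that makes your substitution well defined is the Vaccarino--Donkin theorem: $\ZZ[M_d^m]^{\GL_d}$ represents $d$-dimensional determinants on the free algebra $\ZZ\{X_1,\dots,X_m\}$. Pulling $D_i$ back along $X_j\mapsto g_j$ yields a ring map $\psi_i\colon\ZZ[M_{d_i}^m]^{\GL_{d_i}}\to A$. It sends the characteristic-polynomial coefficients of a monomial $w(X)$ to $\Lambda^{(i)}_k(w(g))$. This is the precise form of your ``relations are generated by Amitsur and Cayley--Hamilton relations'' step.

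(ii) Donkin's theorem concerns one factor. You also need $\ZZ[M_{d_1}^m\times M_{d_2}^m]^{\GL_{d_1}\times\GL_{d_2}}=\ZZ[M_{d_1}^m]^{\GL_{d_1}}\otimes\ZZ[M_{d_2}^m]^{\GL_{d_2}}$. This holds because all modules involved are $\ZZ$-torsion-free, hence flat, so taking invariants commutes with the tensor product.

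(iii) Your alternative via ``$C(\Gamma,d)\cong$ invariant ring'' is only known for free $\Gamma$. For a general $\Gamma$ you therefore cannot check identities on generic matrices in its universal ring, so this alternative should be dropped. Two facts must instead come from naturality of the $\psi_i$ under substitutions sending generators to monomials, applied to both factors simultaneously:
\begin{itemize}
\item independence of the chosen tuple when some $g_j$ are repeated, using $X_2\mapsto X_1$;
\item multiplicativity when products $g_jh_k$ coincide in $\Gamma$, using $Z_{jk}\mapsto X_jX'_k$ together with the generic identity $\det(\sum_{j,k}b_jc_kX_jX'_k\otimes Y_jY'_k)=\det(\sum_j b_jX_j\otimes Y_j)\det(\sum_k c_kX'_k\otimes Y'_k)$.
\end{itemize}
This naturality is exactly what Lafforgue's definition builds in, which is why the paper routes through it.
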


\begin{proof}
There is a bijection between Chenvier's determinant and Lafforgue's pseudocharacters for $\GL_n$ cf. \cite{EM-2023} and \cite[\S 5.4.3]{Quast-2026} for a continuous version. See  \cite[\S  3.4]{Quast-2026} which constructed the tensor product of determinants in Lafforgue's setup.
\end{proof}

\subsection{A result of Sen} \label{subsecSen}
In this subsection we focus on Galois deformation rings and  reinterpret a result of Sen in terms of the Lie algebra introduced in the previous subsection. First we recollect some facts about the Sen operator. Fix a $p$-adic field extension $K$ of $\QQ_p$ in an algebraic closure $\overline{\QQ_p}$ of $\QQ_p$, i.e. the $p$-adic valuation on $K$ is discrete. Examples of $K$ include finite extensions of $\QQ_p$ and their maximal unramified extensions. Let  $\CC_p$ be the ($p$-adic) completion of $\overline{\QQ_p}$ and $G_K=\Gal(\overline{\QQ_p}/K)$.  

The following is a summary of results in \cite{Sen-1973}.

\begin{thm}[Sen] \label{thm-Sen}
For each finite-dimensional $\QQ_p$-representation $\rho:G_K\to \GL(V)$,  there is an element $\theta_V\in \End_{\CC_p}(V\otimes_{\QQ_p} \CC_p)$ (called the Sen operator) satisfying the following 
\begin{enumerate}
\item $\theta_V$ is functorial in $V$ and commutes with the semilinear action of $G_K$ on $V\otimes_{\QQ_p} \CC_p$.
\item $\theta_{V_1\oplus V_2}=(\theta_{V_1},\theta_{V_2})$.
\item $\theta_{V_1\otimes V_2}=\theta_{V_1}\otimes 1 +1\otimes \theta_{V_2}$.
\item $\theta_{\QQ_p(1)}=1$, where $\QQ_p(1)$ denotes the Tate twist.
\item  $\theta_V\in \Lie\rho(I_K)\otimes_{\QQ_p}\CC_p$ where $I_K$ denotes the inertia subgroup of $G_K$. Moreover $\Lie\rho(I_K)$ is the smallest such Lie subalgebra, i.e. if $\Lie'\subseteq \End_{\QQ_p}(V)$ is a  Lie subalgebra (over $\QQ_p$) and $\theta_V\in \Lie'\otimes_{\QQ_p} \CC_p$, then $\Lie\rho(I_K)\subseteq \Lie'$. (In fact, Sen showed that $\Lie\rho(I_K)$ is even the smallest $\QQ_p$-subspace having this property.)
\item If $L$ is a $p$-adic field extension of $K$ in $\overline{\QQ_p}$, the Sen operator obtained from $\rho|_{G_L}$ agrees with $\theta_V$.
\end{enumerate}
\end{thm}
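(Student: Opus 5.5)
This statement is a summary of Sen's theory, so the plan is to extract each item from \cite{Sen-1973}, reorganized around the standard construction via decompletion, rather than to reprove it from scratch.

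\emph{Construction.} Let $K_\infty=K(\mu_{p^\infty})$, $H_K=\Gal(\overline{\QQ_p}/K_\infty)$ and $\Gamma_K=\Gal(K_\infty/K)$. I would first apply Sen's descent along $\CC_p^{H_K}=\widehat{K_\infty}$, which is a consequence of the Ax--Sen--Tate estimates. Together with Tate's normalized traces, it yields a unique finite-dimensional $K_\infty$-subspace $D_{\mathrm{Sen}}(V)\subseteq (V\otimes\CC_p)^{H_K}$ with three properties: it is stable under $\Gamma_K$, it consists of $\Gamma_K$-finite vectors, and it spans $V\otimes\CC_p$ over $\CC_p$. On this subspace the action of a small open subgroup of $\Gamma_K$ is analytic. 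I then define $\theta_V$ as the derivative $\log(\gamma)/\log\chi(\gamma)$ for $\gamma$ close to $1$, where $\chi$ is the cyclotomic character, and extend it $\CC_p$-linearly.

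\emph{Formal properties (1)--(4) and (6).} These follow from uniqueness of $D_{\mathrm{Sen}}$. Because the construction is unique, it is compatible with direct sums and tensor products, since $D_{\mathrm{Sen}}(V_1\otimes V_2)=D_{\mathrm{Sen}}(V_1)\otimes D_{\mathrm{Sen}}(V_2)$. Differentiating a tensor product of group actions gives the Leibniz rule in (3). Uniqueness also gives functoriality in $V$. Since $\theta_V$ is the derivative of the $\Gamma_K$-action, it commutes with that action, and $G_K$-equivariance in (1) follows. For (4), $\QQ_p(1)$ has basis $\zeta$ with $\gamma\zeta=\chi(\gamma)\zeta$, so $\theta=1$. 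For (6), a finite extension $L/K$ only shrinks $\Gamma$ to an open subgroup, and a derivative does not depend on passing to an open subgroup. The maximal unramified extension is handled the same way, using that $D_{\mathrm{Sen}}$ descends over $\widehat{K^{ur}}$.

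\emph{Lie algebra statement (5).} This is the substantive part; it is Sen's theorem in \cite{Sen-1973}. I would follow his argument:
\begin{enumerate}
\item Set $\mathfrak g=\Lie\rho(I_K)$. Restricting to $\widehat{K^{ur}}$ by (6), we may assume $\rho(G_K)=\rho(I_K)$ up to finite index.
\item \textbf{Containment.} Show $\theta_V\in\mathfrak g\otimes\CC_p$. Use Sen's theorem that for a $p$-adic Lie extension the higher ramification filtration is comparable to the Lie filtration. This gives a description of $\theta_V$ as a limit of $\log\rho(g)/\log\chi(g)$, normalized by traces, with $g$ running through elements of small ramification. In particular $\theta_V$ is a $\CC_p$-linear combination of elements of $\mathfrak g$.
\item \textbf{Minimality.} Suppose $\theta_V\in\mathfrak g'\otimes\CC_p$ for a Lie subalgebra $\mathfrak g'$. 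Consider $W=\End(V)/\mathfrak g'$ with $\mathfrak g'$ acting by adjoint-type operations, or more precisely the algebraic subgroup $H'$ whose Lie algebra contains $\mathfrak g'$. Then the Sen operator of $\End(V)$ kills the image of $\mathrm{id}$ in the relevant quotient representation.
\item Sen's second theorem says that $\theta_V=0$ if and only if $I_K$ acts through a finite quotient. Applied to suitable algebraic representations of $\GL(V)$ defined via Chevalley's theorem, for which $\mathfrak g'$ is the stabilizer Lie algebra of a line, it shows that an open subgroup of $\rho(I_K)$ stabilizes that line. Hence $\mathfrak g\subseteq\mathfrak g'$. The stronger $\QQ_p$-subspace version in the parenthetical is again Sen's theorem.
\end{enumerate}

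\textbf{Main obstacle.} The hard step is the containment in (5): identifying the infinitesimal generator of the $\Gamma_K$-action with an element of $\mathfrak g\otimes\CC_p$. This needs Sen's ramification estimates for $p$-adic Lie extensions, so I would cite \cite{Sen-1973} for it directly rather than redo it.
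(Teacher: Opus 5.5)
The paper gives no argument for this statement. It records it as a summary of Sen's results and cites \cite{Sen-1973}. Your plan is therefore the paper's route and is fine as a citation: build $\theta_V$ from $D_{\mathrm{Sen}}(V)$, get (1)--(4) and (6) from uniqueness, and cite Sen for (5). Citing the stronger subspace form of (5), as you do at the end, already implies minimality among Lie subalgebras. What does not work is offering steps 3--4 as a reconstruction of Sen's minimality argument; carried out as written, they would not prove it.

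There are two concrete problems.

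\emph{Chevalley only reaches algebraic subalgebras.} Chevalley's theorem describes algebraic subgroups, but $\Lie'$ is an arbitrary Lie subalgebra. Replacing it by the Lie algebra of an algebraic $H'\supseteq\Lie'$ can at best give $\Lie\rho(I_K)\subseteq\Lie H'$, which is weaker. For $V=\QQ_p(1)\oplus\QQ_p(\langle\chi_{\mathrm{cyc}}\rangle^{a})$ with $a\in\ZZ_p\setminus\QQ$, one has $\theta_V=\diag(1,a)$. The theorem asserts $\Lie\rho(I_K)\subseteq\QQ_p\theta_V$, but the smallest algebraic subgroup whose Lie algebra contains $\theta_V$ is the whole diagonal torus.

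\emph{The vanishing criterion does not apply to a line.} Even for algebraic $\Lie'$, knowing that $\theta_W$ preserves $\ell\otimes\CC_p$ for a $\QQ_p$-line $\ell\subseteq W$ cannot be fed into the criterion ``$\theta=0$ iff inertia acts through a finite quotient''. That criterion needs a Galois-stable subquotient, and $\ell$ is not one. The implication ``$\theta_W$ stabilizes $\ell\otimes\CC_p$ $\Rightarrow$ an open subgroup of $\rho_W(I_K)$ stabilizes $\ell$'' is itself a special case of (5), so this step is circular.

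Here is what does go through formally. The smallest $\QQ_p$-subspace $S$ with $\theta_V\in S\otimes\CC_p$ is well defined. It lies in $\mathfrak{g}=\Lie\rho(I_K)$ by the containment. It is $\Ad\rho(G_K)$-stable by the $G_K$-equivariance in (1), hence an ideal of $\mathfrak{g}$. Applying your vanishing criterion to the honest representation $\mathfrak{g}/S$, whose Sen operator is induced by $\mathrm{ad}(\theta_V)$ and so is zero, even gives $[\mathfrak{g},\mathfrak{g}]\subseteq S$.

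To conclude $S=\mathfrak{g}$ you still need two further inputs:
\begin{enumerate}
\item an \emph{intrinsic} Sen operator, i.e.\ an element of $\CC_p\otimes\Lie$ of the Galois group of the $p$-adic Lie extension cut out by $\rho$, compatible with arbitrary locally analytic quotients and not just with the linear-algebra operations (1)--(3);
\item the fact that this operator is nonzero for any $p$-adic Lie extension with infinite inertia.
\end{enumerate}
With these, the quotient of $\rho(I_K)$ by a closed normal subgroup with Lie algebra $S$ would be an infinitely ramified extension with vanishing Sen operator, which forces $S=\mathfrak{g}$. Both inputs come from the same ramification theory you already cite for the containment. So minimality should be cited from Sen as well, not derived as in steps 3--4.
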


In \cite[27]{Sen-1993} Sen defined the Sen operator for a large class of infinite-dimensional representations.

\begin{thm}[Sen] \label{thm-Sen-inf}
Let $A$ be a $\QQ_p$-Banach unital associative algebra (not necessarily commutative) and $\rho: G_K\to A^\times$ a continuous homomorphism with respect to the spectral topology on $A^\times$, i.e. a family of open neighborhood of $1\in A^\times$ is given by $1+p^nA^o$, where $A^o$ denotes the unit ball. Then there is an element $\theta_{(\rho,A)}\in A\widehat\otimes_{\QQ_p}\CC_p$ that is functorial in the pair $(\rho,A)$ and recovers the Sen operator for finite-dimensional representations when $A$ is a finite-dimensional matrix algebra.
\end{thm}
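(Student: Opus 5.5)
The plan is to reproduce Sen's construction from \cite{Sen-1993}, which proceeds by decompletion along the cyclotomic tower. Let $K_\infty=K(\mu_{p^\infty})$, $H_K=\Gal(\overline{\QQ_p}/K_\infty)$ and $\Gamma_K=\Gal(K_\infty/K)$.

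First, we use continuity of $\rho$ for the spectral topology. After replacing $K$ by a finite extension $K'$, the image $\rho(G_{K'})$ lies in $1+p^nA^o$ with $n$ large. Part (6) of Theorem \ref{thm-Sen}, which we will reprove in this setting, says that such a replacement is harmless. We then view $A$ as a Banach representation of $G_K$ through left multiplication by $\rho$. We apply the Tate--Sen decompletion: the normalized Tate traces $R_{K_m}$ and the vanishing of continuous cohomology $H^1(H_K,\mathrm{GL}_1(\widehat{\overline{\QQ_p}}\widehat\otimes A))$ for cocycles close to $1$. These are proved by the same successive approximation as in the finite-dimensional case, because all the estimates only involve norms and $A$ is complete.

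With these inputs we find $m$ and an element $M\in 1+p A^o\widehat\otimes \mO_{\CC_p}$ conjugating $\rho|_{G_{K_m}}$ into a cocycle trivial on $H_{K_m}$ with values in $A\otimes_{\QQ_p} K_m$. That is, we obtain $\gamma\mapsto U_\gamma\in (A\otimes K_m)^\times$ on $\Gamma_{K_m}$, with $U_\gamma$ close to $1$. Next, for $\gamma$ sufficiently close to $1$ we set
\[\theta:=\frac{\log U_\gamma}{\log\chi(\gamma)}.\]
The cocycle condition together with the triviality of the $\Gamma$-action on $K_m$ shows that this is independent of $\gamma$, because $\gamma\mapsto U_\gamma$ becomes a homomorphism. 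Conjugating back by $M$ gives $\theta_{(\rho,A)}\in A\widehat\otimes\CC_p$.

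Independence of the choices ($m$, $M$ and $K'$) follows as in Sen's argument. Two such decompletions differ by an element fixed by the twisted action, and Tate's lemma on $\Gamma$-finite vectors forces it to lie in $A\otimes K_{m'}$, where it commutes with the logarithm. Functoriality in $(\rho,A)$ is immediate from the construction, since every step is compatible with continuous algebra maps $A\to B$. For $A=M_n(\QQ_p)$ the construction coincides verbatim with the finite-dimensional definition, so we recover Theorem \ref{thm-Sen}.

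The main obstacle is the decompletion step for an infinite-dimensional, noncommutative $A$. There the usual finite-basis arguments are unavailable. We would handle it by working uniformly with the operator norm on $A$ and solving the cocycle equation $M^{-1}\rho(\sigma)\sigma(M)\in A\otimes K_m$ through Tate's normalized traces applied coefficientwise in $A\widehat\otimes\CC_p$, with geometric convergence coming from the constant in Tate's estimates.
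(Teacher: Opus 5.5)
Your proposal is correct and is essentially the paper's construction, written in Sen's original cocycle language. The module $M\cdot(A\otimes_{\QQ_p}K_m)$ you produce plays the role of the paper's decompleted module $(A\widehat\otimes_{\QQ_p}\CC_p)^{H_K,\Gamma_m-\an}$. Your element $M\bigl(\log U_\gamma/\log\chi(\gamma)\bigr)M^{-1}$ is exactly the value at $1$ of the $\CC_p$-linear extension of the $\Lie\Gamma_m$-action, which the paper turns into an element of $A\widehat\otimes_{\QQ_p}\CC_p$ by noting that this operator commutes with right multiplication.

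The paper's intrinsic (Berger--Colmez) formulation makes independence of choices automatic, whereas you must check it via $B=M^{-1}M'$. There, since $A$ is infinite-dimensional, the appeal to $\Gamma$-finite vectors does not literally apply. Replace it with the normalized-trace argument you already have: $B-R_n(B)$ satisfies the same twisted equation and lies in $\ker R_n$, where $\gamma_n-1$ has bounded inverse, while $U_{\gamma_n},U'_{\gamma_n}\to 1$. This forces $B=R_n(B)$, and then $U'_\gamma=B^{-1}U_\gamma B$ gives the same element.
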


We give a sketch of the construction here. We view $A$ as a Banach space representation of $G_K$ by left multiplication. Let $K_m=K(\zeta_{p^m})$ and $K_\infty=K(\zeta_{p^\infty})$ in $\overline{\QQ_p}$. Denote by $H_K=\Gal(\overline{\QQ_p}/K_\infty)$ and $\Gamma_m=\Gal(K_\infty/K_m)$. Sen showed that \footnote{This reformulation is due to Berger-Colmez.} for $m$ sufficiently large, there is a natural isomorphism
\[(A\widehat\otimes_{\QQ_p}\CC_p)^{H_K,\Gamma_m-\an} \widehat\otimes_{K_m} \CC_p = A\widehat\otimes_{\QQ_p}\CC_p\]
where the superscript $\Gamma_m-\an$ denotes taking the $\Gamma_m$-analytic vectors. The logarithm of $p$-adic cyclotomic character gives a natural isomorphism $\Lie \Gamma_m\cong \Lie \ZZ_p^\times=\QQ_p$. The element $1\in\QQ_p\cong\Lie \Gamma_m$ acts $K_m$-linearly on $(A\widehat\otimes_{\QQ_p}\CC_p)^{H_K,\Gamma_m-\an}$ and its $\CC_p$-linear extension to $A\widehat\otimes_{\QQ_p}\CC_p$ commutes with the right multiplication by $A\widehat\otimes_{\QQ_p}\CC_p$, and hence it gives an element $\theta_{(\rho,A)}\in A\widehat\otimes_{\QQ_p}\CC_p$. This is independent of the choice of $m$.

In the rest of this section, we assume that $K$ is a finite extension of $\QQ_p$. Fix a continuous $n$-dimensional determinant of $G_K$ valued in $\FF$. We can apply constructions in the previous subsection and get $R^{(r)}_{\bar{D}},E^{(r)}_{\bar{D}}$.

\begin{thm} \label{thmexistSenop}
There is a unique element $\theta^{(r)}\in E_{\bar{D}}^{(r)}\widehat\otimes_{\QQ_p}\CC_p$ (the $p$-adic completion of $E_{\bar{D}}^{(r)}\otimes_{\QQ_p} \CC_p$) having the following universal property: for any $N\geq 1$ and continuous homomorphism of $\QQ_p$-algebras
\[f: E_{\bar{D}}^{(r)}\to M_N(\QQ_p),\]
the $N$-dimensional representation $G_K\xrightarrow{\rho} E_{\bar{D}}^{(r)}\xrightarrow{f} M_N(\QQ_p)$ has Sen operator  $f\otimes 1(\theta^{(r)})$, where $f\otimes 1: E_{\bar{D}}^{(r)}\widehat\otimes_{\QQ_p}\CC_p\to M_N(\QQ_p)\otimes_{\QQ_p} \CC_p$ is the completed extension of scalars of $f$ to $\CC_p$.
\end{thm}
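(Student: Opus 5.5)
Existence comes from Theorem \ref{thm-Sen-inf} applied to $A=E_{\bar{D}}^{(r)}$; uniqueness comes from showing that continuous matrix-valued homomorphisms separate points of $E_{\bar{D}}^{(r)}\widehat\otimes_{\QQ_p}\CC_p$.

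For existence, by Definition \ref{dfnE(r)}, $E_{\bar{D}}^{(r)}$ is a unital associative $\QQ_p$-Banach algebra with unit ball $E_{\bar{D}}^{(r),+}$. By Lemma \ref{lemlaGalact}, $\rho:G_K\to (E_{\bar{D}}^{(r)})^\times$ is continuous for the spectral topology, because $\bar\rho^m_r$ has open kernel and hence $\rho$ sends an open subgroup into $1+p^mE_{\bar{D}}^{(r),+}$. Theorem \ref{thm-Sen-inf} therefore produces $\theta^{(r)}:=\theta_{(\rho,E_{\bar{D}}^{(r)})}$.

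Now take a continuous $f:E_{\bar{D}}^{(r)}\to M_N(\QQ_p)$. The pair $(f\circ\rho,M_N(\QQ_p))$ receives a morphism from $(\rho,E_{\bar{D}}^{(r)})$. Functoriality in Theorem \ref{thm-Sen-inf} gives $\theta_{(f\circ\rho,M_N(\QQ_p))}=(f\otimes1)(\theta^{(r)})$. Theorem \ref{thm-Sen-inf} also says this element is the usual Sen operator of the $N$-dimensional representation, where $M_N(\QQ_p)$ acts on itself by left multiplication. Concretely, the Sen operator of $M_N(\QQ_p)$ under left multiplication is left multiplication by $\theta_{\QQ_p^N}$, since $M_N\cong(\QQ_p^N)^{\oplus N}$ as left modules (Theorem \ref{thm-Sen}(2)). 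This proves the universal property.

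For uniqueness, it suffices to show that if $x\in E_{\bar{D}}^{(r)}\widehat\otimes_{\QQ_p}\CC_p$ satisfies $(f\otimes1)(x)=0$ for all such $f$, then $x=0$. I would use the quotients from the proof that $\Lie(\Delta,r)$ is a Lie subalgebra. For a maximal ideal $\mathfrak m$ of $R_{\bar{D}}^{(r)}$ and $k\ge0$, the quotient $E_{\bar{D}}^{(r)}/\mathfrak m^kE_{\bar{D}}^{(r)}$ is a finite-dimensional $\QQ_p$-algebra. Left multiplication embeds it into some $M_N(\QQ_p)$, and the composite $f_{\mathfrak m,k}$ is continuous. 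It is continuous because every $\QQ_p$-linear map to a finite-dimensional space that kills the closed subspace $\mathfrak m^kE_{\bar{D}}^{(r)}$ is continuous; closedness holds because this subspace is finitely generated, by Artin--Rees. Since $f_{\mathfrak m,k}$ is injective on the quotient, $(f_{\mathfrak m,k}\otimes 1)(x)=0$ means exactly that $x$ maps to zero in $(E_{\bar{D}}^{(r)}/\mathfrak m^k)\otimes_{\QQ_p}\CC_p$.

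The main obstacle is passing from these quotients back to $x$, because of the completed tensor product with $\CC_p$. My plan is to choose an orthonormalizable Schauder basis $\{c_j\}$ of $\CC_p$ over $\QQ_p$ with $c_0=1$. Then every $x$ can be written uniquely as $\sum_j e_j\otimes c_j$ with $e_j\in E_{\bar{D}}^{(r)}$ and $e_j\to0$. Since $E_{\bar{D}}^{(r)}/\mathfrak m^k$ is finite-dimensional, its base change to $\CC_p$ decomposes the same way. So $x\mapsto0$ in every quotient forces each $e_j\mapsto0$ in $E_{\bar{D}}^{(r)}/\mathfrak m^kE_{\bar{D}}^{(r)}$ for all $\mathfrak m,k$.

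I then apply Lemma \ref{leminccomp} with $R=R_{\bar{D}}^{(r)}$, $N=E_{\bar{D}}^{(r)}$, $N_1=0$ and $N_2=R e_j$. Its hypotheses hold: $R_{\bar{D}}^{(r)}$ is Noetherian as a quotient of a Tate algebra, and $E_{\bar{D}}^{(r)}$ is finitely generated over it by Theorem \ref{thmWE}. The lemma gives $e_j=0$ for every $j$, hence $x=0$.
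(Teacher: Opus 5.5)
Your proposal is correct and follows the paper's proof. Existence comes from Theorem~\ref{thm-Sen-inf} using the continuity in Lemma~\ref{lemlaGalact}, and uniqueness comes from testing against the finite-dimensional quotients $E_{\bar{D}}^{(r)}/\mathfrak{m}^kE_{\bar{D}}^{(r)}$, embedded into matrix algebras by left multiplication. The only difference is cosmetic: you inline the orthonormal-basis argument of Lemma~\ref{lemtensorCp}, and you justify the injectivity of $E_{\bar{D}}^{(r)}\to\prod_{\mathfrak{m},k}E_{\bar{D}}^{(r)}/\mathfrak{m}^kE_{\bar{D}}^{(r)}$ explicitly via Lemma~\ref{leminccomp}, whereas the paper treats it as an immediate consequence of finiteness over $R_{\bar{D}}^{(r)}$.
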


\begin{proof}
The existence follows from Sen's result by noting that $G_K\to (E_{\bar{D}}^{(r)})^\times$ is continuous by Lemma \ref{lemlaGalact}. 

For the uniqueness, note that since $E_{\bar{D}}^{(r)}$ is finite over $R_{\bar{D}}^{(r)}$, the natural map 
\[E_{\bar{D}}^{(r)} \to \prod_{\mathfrak{m}}\prod_{k\geq 0} E_{\bar{D}}^{(r)}/(\mathfrak{m}^k)\]
is injective, where $\mathfrak{m}$ runs through all maximal ideals of $R_{\bar{D}}^{(r)}$. Each $E_{\bar{D}}^{(r)}/(\mathfrak{m}^k)$ is a finite-dimensional $\QQ_p$-representation of $G_K$. It follows from the universal property
that the composite map
\[E_{\bar{D}}^{(r)}\widehat\otimes_{\QQ_p} \CC_p \to E_{\bar{D}}^{(r)}/(\mathfrak{m}^k)\otimes_{\QQ_p}\CC_p \to \End_{\QQ_p}(E_{\bar{D}}^{(r)}/(\mathfrak{m}^k))\otimes_{\QQ_p}\CC_p\]
sends $\theta^{(r)}$ to $\theta_{E_{\bar{D}}^{(r)}/(\mathfrak{m}^k)}$, where the second map is induced by left multiplication of $ E_{\bar{D}}^{(r)}/(\mathfrak{m}^k)$. The uniqueness can be deduced from  the following lemma by taking $W=E_{\bar{D}}^{(r)}$ and $W_i=E_{\bar{D}}^{(r)}/(\mathfrak{m}^k)$.
\end{proof}

\begin{lem} \label{lemtensorCp}
Suppose $W$ is a $\QQ_p$-Banach space and $\{f_i:W\to W_i\}_{i\in I}$ is a family of continuous $\QQ_p$-linear maps to finite-dimensional $\QQ_p$-vector spaces such that
\[\prod f_i: W\to \prod_{i\in I} W_i\]
is injective. Then the natural map
\[\prod f_i\otimes 1: W\widehat\otimes_{\QQ_p} \CC_p \to  \prod_{i\in I} (W_i\otimes_{\QQ_p}\CC_p)\]
is injective as well.
\end{lem}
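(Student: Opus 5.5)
Choose a Schauder basis $\{e_j\}_{j\in J}$ of $\CC_p$ over $\QQ_p$. Such a basis exists: one can take an orthonormalizable basis up to a bounded constant, since $\CC_p$ is a $\QQ_p$-Banach space with discretely valued base field, so it is of countable type and has a Schauder basis, e.g. by \cite{Serre}-type results or Schikhof. Then any element of $W\widehat\otimes_{\QQ_p}\CC_p$ can be written uniquely as a convergent sum $x=\sum_j w_j\otimes e_j$ with $w_j\in W$ and $w_j\to 0$. Thus $W\widehat\otimes_{\QQ_p}\CC_p\cong c_0(J,W)$.

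For each $i$, the space $W_i$ is finite-dimensional, so $W_i\otimes_{\QQ_p}\CC_p\cong c_0(J,W_i)$ in the same way. The map $f_i\otimes 1$ is compatible with these descriptions and sends $\sum_j w_j\otimes e_j$ to $\sum_j f_i(w_j)\otimes e_j$. This uses continuity of $f_i$ and that $f_i(w_j)\to 0$. By the uniqueness of expansions in $W_i\otimes\CC_p$, if $(f_i\otimes1)(x)=0$ for all $i$, then $f_i(w_j)=0$ for every $i$ and every $j$. Injectivity of $\prod f_i$ then gives $w_j=0$ for all $j$, hence $x=0$.

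The main point to check carefully is the first step: that the completed tensor product really identifies with $c_0(J,W)$, with unique coefficients $w_j$. This requires the basis $\{e_j\}$ to be orthogonal up to a constant, so that the norm of $\sum w_j\otimes e_j$ is equivalent to $\sup\|w_j\|$. If orthonormality is awkward, I would instead work with a topological basis after rescaling the norm on $\CC_p$ to an equivalent one. Equivalent norms do not change completed tensor products, and for a discretely valued base field every Banach space of countable type admits an orthonormal basis for some equivalent norm. $\CC_p$ has countable type because $\overline{\QQ}_p$ is dense and countable-dimensional. Once this identification is in place, the rest is purely formal.
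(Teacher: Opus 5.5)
Your proof is correct and is essentially the paper's argument. The paper fixes an orthonormal basis $\{e_j\}$ of $\CC_p$ over $\QQ_p$ (which requires an equivalent renorming, as you note), writes elements of $W\widehat\otimes_{\QQ_p}\CC_p$ uniquely as $\sum_j w_j\otimes e_j$ with $w_j\to 0$, and concludes coefficientwise from the injectivity of $\prod f_i$; your extra care about renorming $\CC_p$ so its norm takes values in $|\QQ_p^\times|$ is a detail the paper leaves implicit.
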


\begin{proof}
Choose an orthonormal basis $\{e_j\}_{j\in\ZZ_{\geq 0}}$ of $\CC_p$ over $\QQ_p$. Every element of $\CC_p$ can be written uniquely as $\sum_{j\geq 0} a_je_j$ for $(a_j)_j\in\prod_{j\geq 0} \QQ_p$ such that $\lim a_j=0 $. Similarly every element of $W\widehat\otimes_{\QQ_p} \CC_p$ is written uniquely as $\sum_{j\geq 0} b_j\otimes e_j$ for $(b_j)_j\in\prod_{j\geq 0} W$, $\lim b_j=0$. The claim follows easily from this description.  
\end{proof}

From the existence of $\theta^{(r)}$ we can easily construct the Sen polynomial over the generic fiber of $R_{\bar{D}}$ over $\CC_p$.

\begin{dfn}[Sen polynomial] \label{dfn-Senpoly}
Recall that $D^u:R_{\bar{D}}[[\Gamma]]\to R_{\bar{D}}$ denotes the universal determinant. We get a determinant of dimension $n$ 
\[D_{R_{\bar{D}}^{(r)}\widehat\otimes_{\QQ_p}\CC_p}: E_{\bar{D}}^{(r)}\widehat\otimes_{\QQ_p}\CC_p \to R_{\bar{D}}^{(r)}\widehat\otimes_{\QQ_p}\CC_p.\]
The characteristic polynomial $\chi(\theta^{(r)},t)=D_{R_{\bar{D}}^{(r)}\widehat\otimes_{\QQ_p}\CC_p}(t-\theta^{(r)}) \in R_{\bar{D}}^{(r)}\widehat\otimes_{\QQ_p}\CC_p[t]$ of $\theta^{(r)}$ is called the \textit{Sen polynomial} over $R_{\bar{D}}^{(r)}\widehat\otimes_{\QQ_p}\CC_p$. The negatives of its roots should be regarded as the Hodge-Tate-Sen weights over $\Spec R_{\bar{D}}^{(r)}$.

\begin{rmk}
Using that the Sen operator commutes with the semilinear action of $G_K$, one can further show that  $\chi(\theta^{(r)},t)\in R_{\bar{D}}^{(r)}\otimes_{\QQ_p}K[t]$.
\end{rmk}
\end{dfn}

\begin{rmk}
This gives a new proof of the main result of \cite{PQ-2025} for $\GL_n$. Note that our approach doesn't require any knowledge on the normality of the pseudo-deformation ring.
\end{rmk}

As in Definition \ref{defnconstLiealg}, we defined Lie algebra $\Lie(I_K,r)$ in $E_{\bar{D}}^{(r)}$, which is a finitely generated $R_{\bar{D}}^{(r)}$-submodule hence a closed subspace of $E_{\bar{D}}^{(r)}$.

\begin{thm} \label{thmfamSen}
$\theta^{(r)}\in \Lie(I_K,r)\widehat\otimes_{\QQ_p}\CC_p$. Moreover $\Lie(I_K,r)$ is the smallest $R_{\bar{D}}^{(r)}$-Lie subalgebra of  $E_{\bar{D}}^{(r)}$ having this property, i.e. if $\Lie'\subseteq E_{\bar{D}}^{(r)}$ is an $R_{\bar{D}}^{(r)}$-Lie subalgebra of $E_{\bar{D}}^{(r)}$ such that $\Lie'\widehat\otimes_{\QQ_p} \CC_p$ contains $\theta^{(r)}$, then $\Lie(I_K,r)\subseteq \Lie'$.
\end{thm}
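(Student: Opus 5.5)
The plan is to reduce both statements to Sen's finite-dimensional result (Theorem \ref{thm-Sen}(5)) by passing to the finite-dimensional quotients $E_{\bar{D}}^{(r)}/\mathfrak{m}^k E_{\bar{D}}^{(r)}$, where $\mathfrak{m}$ runs over maximal ideals of $R_{\bar{D}}^{(r)}$ and $k\geq 0$. We then glue, using Lemma \ref{leminccomp} for inclusions of finitely generated submodules and Lemma \ref{lemtensorCp} for injectivity after $\widehat\otimes\CC_p$.

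\textbf{Step 1: membership.} Set $W=E_{\bar{D}}^{(r)}/\Lie(I_K,r)$. This is a finitely generated $R_{\bar{D}}^{(r)}$-module. It is Banach because $\Lie(I_K,r)$ is closed, and it is separated by the quotients $W/\mathfrak{m}^kW$ by Krull's intersection theorem applied at each localization. Write $V_{\mathfrak{m},k}=E_{\bar{D}}^{(r)}/\mathfrak{m}^k$. By the proof of Theorem \ref{thmexistSenop}, $\theta^{(r)}$ maps to the Sen operator $\theta_{V_{\mathfrak{m},k}}$ in $\End_{\QQ_p}(V_{\mathfrak{m},k})\otimes\CC_p$. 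More precisely, since left multiplication is injective, it maps to an element of $V_{\mathfrak{m},k}\otimes\CC_p$.

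Let $H$ be the image of $I_K$ in $V_{\mathfrak{m},k}^\times$. As in the proof that $\Lie(\Delta,r)$ is a Lie algebra, the $\QQ_p$-span of $\log$ of a small open subgroup of $H$ is $\Lie H$. This span is exactly the image of $\Lie(I_K,r)$ in $V_{\mathfrak{m},k}$: one inclusion is clear, and the other holds because $\Lie H$ is stable under the $R^{(r)}_{\bar D}$-multiplication via the $R$-linearity of the image (or we simply take the $R$-span, which still lies in the associative algebra and is contained in the image). By Theorem \ref{thm-Sen}(5), $\theta_{V_{\mathfrak{m},k}}\in \Lie H\otimes\CC_p$, so the image of $\theta^{(r)}$ in $W/\mathfrak{m}^kW\otimes\CC_p$ is zero. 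Lemma \ref{lemtensorCp}, applied to $W$ with the family $W\to W/\mathfrak{m}^kW$, shows that the image of $\theta^{(r)}$ in $W\widehat\otimes\CC_p$ vanishes. Exactness of $\widehat\otimes_{\QQ_p}\CC_p$ on closed subspaces, using orthonormal bases, then gives $\theta^{(r)}\in\Lie(I_K,r)\widehat\otimes\CC_p$.

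\textbf{Step 2: minimality.} Let $\Lie'$ be as in the statement. We first replace $\Lie'$ by its closure. Is it finitely generated? Yes, since $R^{(r)}_{\bar D}$ is Noetherian and $E^{(r)}_{\bar D}$ is finite over it, so $\Lie'$ is automatically closed. Project to $V_{\mathfrak{m},k}$. The image $\Lie'_{\mathfrak{m},k}$ is a $\QQ_p$-Lie subalgebra of $\End(V_{\mathfrak{m},k})$ (via left multiplication), and its $\CC_p$-extension contains $\theta_{V_{\mathfrak{m},k}}$. By Theorem \ref{thm-Sen}(5), $\Lie H\subseteq \Lie'_{\mathfrak{m},k}$. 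Since $\Lie'$ is an $R$-module, the $R$-span of $\Lie H$, which is the image of $\Lie(I_K,r)$, lies in the image of $\Lie'$. Lemma \ref{leminccomp} then yields $\Lie(I_K,r)\subseteq\Lie'$.

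\textbf{Main obstacle.} The delicate point is the identification of the image of $\Lie(I_K,r)$ in $V_{\mathfrak{m},k}$ with $R$-linear combinations of $\Lie H$, and hence compatibility with Sen's minimality, which is phrased over $\QQ_p$ rather than $R$. I would handle this by applying Sen's statement to the representation $V_{\mathfrak{m},k}$ and noting that $R$-spans of $\QQ_p$-subalgebras remain Lie subalgebras that are still contained in the relevant images.
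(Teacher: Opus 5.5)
Your proposal is correct and follows the paper's own argument: pass to the finite-dimensional quotients $E_{\bar{D}}^{(r)}/\mathfrak{m}^k$, apply Sen's Theorem \ref{thm-Sen}(5) there, and glue back using Lemma \ref{leminccomp}. Your treatment of the membership claim, via the quotient $W=E_{\bar{D}}^{(r)}/\Lie(I_K,r)$ and Lemma \ref{lemtensorCp}, just makes explicit the paper's terse ``again by Lemma \ref{leminccomp}''. One small correction: $\Lie H$ itself need not be stable under $R_{\bar{D}}^{(r)}$-multiplication in general, so the image of $\Lie(I_K,r)$ is $R_{\bar{D}}^{(r)}\cdot\Lie H$ rather than $\Lie H$. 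That is exactly your fallback, it is what Step 2 uses, and Step 1 needs only the inclusion $\Lie H\subseteq$ image, so nothing breaks.
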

\begin{proof}
Suppose that $\Lie'\subseteq E_{\bar{D}}^{(r)}$ and $\Lie'\widehat\otimes_{\QQ_p} \CC_p$ contains $\theta^{(r)}$ as in the theorem. Let $\mathfrak{m}$ be a maximal ideal of $R_{\bar{D}}^{(r)}$ and $k\geq 0$. 
Consider 
\[f:E_{\bar{D}}^{(r)}\to E_{\bar{D}}^{(r)}/(\mathfrak{m}^k) \to \End_{\QQ_p}(E_{\bar{D}}^{(r)}/(\mathfrak{m}^k))\]
as in the proof of Theorem \ref{thmexistSenop}. We have shown that $f\otimes 1(\theta^{(r)})=\theta_{E_{\bar{D}}^{(r)}/(\mathfrak{m}^k)}\in E_{\bar{D}}^{(r)}/(\mathfrak{m}^k)\otimes_{\QQ_p}\CC_p$. 
By Sen's result (Theorem \ref{thm-Sen}), $f(\Lie(I_K,r))\subseteq f(\Lie')$, equivalently the image of $\Lie(I_K,r)$ in $E_{\bar{D}}^{(r)}/(\mathfrak{m}^k)$ is contained in the image of $\Lie'$. We conclude that $\Lie(I_K,r)\subseteq \Lie'$ by invoking Lemma \ref{leminccomp}. Note that the same argument also shows that the image of $\Lie(I_K,r)\widehat\otimes_{\QQ_p} \CC_p$ in $E_{\bar{D}}^{(r)}/(\mathfrak{m}^k)\widehat\otimes_{\QQ_p} \CC_p$ contains $f\otimes 1 (\theta^{(r)})$, and hence again by Lemma \ref{leminccomp}, we have $\theta^{(r)}\in \Lie(I_K,r)\widehat\otimes_{\QQ_p}\CC_p$.
\end{proof}

In practice, it's better to work with an algebra than a Lie algebra.

\begin{dfn}
We define $B_{\bar{D}}^{(r)}\subseteq E^{(r)}_{\bar{D}}$ as the $R_{\bar{D}}^{(r)}$-subalgebra generated by $\Lie(I_K,r)$. It is a finitely generated $R_{\bar{D}}^{(r)}$-submodule hence a natural $p$-adic Banach subalgebra of $E_{\bar{D}}^{(r)}$.
\end{dfn}

\begin{lem} \label{lem-famSenthm}
As an $R_{\bar{D}}^{(r)}$-module, $B_{\bar{D}}^{(r)}$ is generated by $\rho(\ker \bar{\rho}_r^2\cap I_K)$, and hence we get a continuous homomorphism
\[\rho_K:=\rho|_{\ker \bar{\rho}_r^2\cap I_K} \ker \bar{\rho}_r^2\cap I_K\to (B_{\bar{D}}^{(r)})^\times. \]
\end{lem}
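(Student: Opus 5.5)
Set $H:=\ker\bar{\rho}_r^2\cap I_K$ and let $M\subseteq E_{\bar{D}}^{(r)}$ be the $R_{\bar{D}}^{(r)}$-submodule generated by $\rho(H)$. The plan is to prove $M=B_{\bar{D}}^{(r)}$ by two inclusions, reducing the harder one to finite-dimensional quotients via Lemma \ref{leminccomp}. Once equality holds, $\rho(H)\subseteq (B_{\bar{D}}^{(r)})^\times$ follows. Indeed, for $h\in H$ we have $\rho(h)=1+p^2x$ with $x\in E_{\bar{D}}^{(r),+}$, so $\rho(h)^{-1}=\rho(h^{-1})\in M$. Continuity is inherited from Lemma \ref{lemlaGalact}, because $B_{\bar{D}}^{(r)}$ is a finitely generated submodule and hence closed with the subspace topology.

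First inclusion, $M\subseteq B_{\bar{D}}^{(r)}$. For $h\in H$ we have $1-h\in p^2E_{\bar{D}}^{(r),+}$. Then $\log h\in p^2E_{\bar{D}}^{(r),+}$ as well, since $\|(1-h)^i/i\|\le p^{-2i}p^{v_p(i)}$ decays. This bound means the exponential series $\exp(\log h)=\sum_k(\log h)^k/k!$ converges in $E_{\bar{D}}^{(r),+}$, because $v_p(k!)<k/(p-1)\le k$. The identity $\exp\log=\mathrm{id}$ holds as formal power series identities over $\QQ$ that converge in this range, so $\rho(h)=\exp(\log h)$. Every partial sum lies in the subalgebra generated by $\log h\in\Lie(I_K,r)$. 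Since $B_{\bar{D}}^{(r)}$ is closed, the limit $\rho(h)$ lies in $B_{\bar{D}}^{(r)}$.

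Second inclusion, $B_{\bar{D}}^{(r)}\subseteq M$. The module $M$ is an $R_{\bar{D}}^{(r)}$-subalgebra, because $H$ is a group containing $1$. So it suffices to show $\Lie(I_K,r)\subseteq M$. By the remark after Definition \ref{defnconstLiealg}, $\Lie(I_K,r)$ is generated by $\log(H)$, since $H$ is open in $\ker\bar{\rho}^1_r\cap I_K$. Now $\log h$ is a convergent limit of elements of $M_0$, the $R_{\bar{D}}^{(r),+}$-span of powers of $\rho(h)$. However, $M$ need not obviously contain such infinite limits, so I would use Lemma \ref{leminccomp} with $R=R_{\bar{D}}^{(r)}$, $N=E_{\bar{D}}^{(r)}$, $N_1=M$ and $N_2=\Lie(I_K,r)$. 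The module $N$ is finitely generated by Theorem \ref{thmWE}.

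Fix a maximal ideal $\mathfrak{m}$ and $k\ge0$. The quotient $Q:=E_{\bar{D}}^{(r)}/\mathfrak{m}^kE_{\bar{D}}^{(r)}$ is a finite-dimensional $L$-algebra, and the image $\bar{M}$ of $M$ in $Q$ is a finite-dimensional subspace, hence closed. For $h\in H$, the partial sums of $\log\rho(h)$ lie in the span of the powers of $\rho(h)$, which is contained in $\bar{M}$. Closedness of $\bar{M}$ then puts the image of $\log h$ in $\bar{M}$. Lemma \ref{leminccomp} gives $\Lie(I_K,r)\subseteq M$.

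The main obstacle is this closure issue in the second inclusion: $M$ is only an abstract module span, and $\log h$ is an infinite series. The plan is to settle it by passing to the finite-dimensional quotients $Q$, where every subspace is closed, and then lifting back with Lemma \ref{leminccomp}. The convergence estimates in the first inclusion are routine by comparison.
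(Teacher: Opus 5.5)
Your proof is correct and follows the paper's argument. One inclusion comes from $\rho(h)=\exp(\log\rho(h))$ converging in the closed subalgebra $B_{\bar{D}}^{(r)}$, and the other from $\log\rho(h)$ being a limit of $\QQ_p$-combinations of powers of $\rho(h)$. The only difference is that the paper simply observes that $M$ itself is closed, so your detour through Lemma \ref{leminccomp} and the quotients $E_{\bar{D}}^{(r)}/\mathfrak{m}^kE_{\bar{D}}^{(r)}$ is valid but unnecessary. Indeed, $M$ is an $R_{\bar{D}}^{(r)}$-submodule of the finite module $E_{\bar{D}}^{(r)}$ over the Noetherian ring $R_{\bar{D}}^{(r)}$, hence finitely generated, hence closed by the same Artin--Rees fact you already use for $B_{\bar{D}}^{(r)}$ and implicitly for $\mathfrak{m}^kE_{\bar{D}}^{(r)}$.
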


\begin{proof} 
For $g\in \ker \bar{\rho}_r^2\cap I_K$, we can write $\rho(g)=\exp(\log(\rho(g)))=\sum_{i=0} \log(\rho(g))^i/i!$ which converges in $B_{\bar{D}}^{(r)}$. On the other hand, the $R_{\bar{D}}^{(r)}$-submodule generated by $\rho(\ker \bar{\rho}_r^2\cap I_K)$ is closed and is a subalgebra. Thus it contains $\log(\rho(\ker \bar{\rho}_r^2\cap I_K))$, which agrees with $\log(\rho( I_K))$ because $\ker \bar{\rho}_r^2\cap I_K$ is an open subgroup of $I_K$ by Lemma \ref{lemlaGalact}.
\end{proof}

Clearly $\theta^{(r)}\in B_{\bar{D}}^{(r)}\widehat\otimes_{\QQ_p}\CC_p$.

\begin{cor} \label{cor-famSen}
Let $\bar{B}$ be a quotient of $B_{\bar{D}}^{(r)}$ by a two-sided ideal and denote by $\bar{\theta}\in\bar{B}\widehat\otimes_{\QQ_p} \CC_p$ the image of $\theta^{(r)}$. For any $R_{\bar{D}}^{(r)}$-subalgebra $\bar{B}'$ of $\bar{B}$, if $\bar{\theta}\in \bar{B}'\widehat\otimes_{\QQ_p} \CC_p$, then $\bar{B}'=\bar{B}$.
\end{cor}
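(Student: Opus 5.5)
Let $\pi: B_{\bar{D}}^{(r)}\to \bar{B}$ be the quotient map. I will pull the problem back to $B_{\bar{D}}^{(r)}$ and apply Theorem \ref{thmfamSen}. Set $B':=\pi^{-1}(\bar{B}')$. This is an $R_{\bar{D}}^{(r)}$-subalgebra of $B_{\bar{D}}^{(r)}$, hence of $E_{\bar{D}}^{(r)}$, and in particular an $R_{\bar{D}}^{(r)}$-Lie subalgebra for the commutator bracket. If I can show $\theta^{(r)}\in B'\widehat\otimes_{\QQ_p}\CC_p$, then Theorem \ref{thmfamSen} gives $\Lie(I_K,r)\subseteq B'$. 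Since $B_{\bar{D}}^{(r)}$ is by definition the $R_{\bar{D}}^{(r)}$-subalgebra generated by $\Lie(I_K,r)$, this forces $B'=B_{\bar{D}}^{(r)}$, and therefore $\bar{B}'=\pi(B')=\bar{B}$.

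The substantive step is lifting the hypothesis $\bar{\theta}\in\bar{B}'\widehat\otimes_{\QQ_p}\CC_p$ to $\theta^{(r)}\in B'\widehat\otimes_{\QQ_p}\CC_p$, which I expect to be the main obstacle because it concerns completed tensor products. I would handle it in three steps.

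\begin{enumerate}
\item Topology. The module $B_{\bar{D}}^{(r)}$ is finitely generated over the Noetherian ring $R_{\bar{D}}^{(r)}$, so its two-sided ideal $\ker\pi$ and the submodule $B'$ are finitely generated and hence closed, by the Artin--Rees remark after Definition \ref{dfnE(r)}. Thus $\bar{B}$ is a Banach space with the quotient norm, and $\bar{B}'=B'/\ker\pi$ is closed in it.
\item Exactness. By the open mapping theorem, the sequence $0\to\ker\pi\to B'\to\bar{B}'\to 0$ and the analogous sequence for $B_{\bar{D}}^{(r)}\to\bar{B}$ are strict exact sequences of $\QQ_p$-Banach spaces. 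Completed tensor product with $\CC_p$ preserves strict exact sequences; one sees this by choosing an orthonormal basis $\{e_j\}$ of $\CC_p$ as in the proof of Lemma \ref{lemtensorCp}, which identifies $W\widehat\otimes_{\QQ_p}\CC_p$ with null sequences in $W$. The same description gives that $B'\widehat\otimes\CC_p\subseteq B_{\bar{D}}^{(r)}\widehat\otimes\CC_p$ is the set of $\sum b_j\otimes e_j$ with all $b_j\in B'$, and similarly for $\bar{B}'\subseteq\bar{B}$.
\item Coefficientwise check. Write $\theta^{(r)}=\sum_j b_j\otimes e_j$ with $b_j\in B_{\bar{D}}^{(r)}$ and $b_j\to 0$. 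Then $\bar{\theta}=\sum_j\pi(b_j)\otimes e_j$, and the hypothesis together with the description in step 2 gives $\pi(b_j)\in\bar{B}'$ for every $j$. Hence $b_j\in\pi^{-1}(\bar{B}')=B'$, so $\theta^{(r)}\in B'\widehat\otimes_{\QQ_p}\CC_p$.
\end{enumerate}

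Here I am using $\theta^{(r)}\in B_{\bar{D}}^{(r)}\widehat\otimes_{\QQ_p}\CC_p$, which was noted just before the corollary. With this lift in hand, the argument in the first paragraph completes the proof.
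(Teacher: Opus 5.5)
Your proof is correct, but it is organized differently from the paper's.

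\textbf{The paper's route.} The paper never pulls $\bar B'$ back to $B_{\bar D}^{(r)}$; it works on the quotient throughout.
\begin{enumerate}
\item It uses Lemma \ref{lem-famSenthm} to make each $\bar B/\mathfrak{m}^k$ a finite-dimensional representation of the open inertia subgroup $\ker\bar\rho_r^2\cap I_K$.
\item It invokes Sen's theory over the fixed field of that subgroup (a finite extension of the maximal unramified extension of $K$). This identifies the image of $\theta^{(r)}$ with the Sen operator of $\bar B/\mathfrak{m}^k$.
\item It applies the minimality statement of Theorem \ref{thm-Sen}(5) there, and concludes with Lemma \ref{leminccomp} inside $\bar B$.
\end{enumerate}
In effect, the paper reruns the proof of Theorem \ref{thmfamSen} at the level of the quotient.

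\textbf{Your route.} You reduce to Theorem \ref{thmfamSen} as a black box via $B'=\pi^{-1}(\bar B')$. The only new inputs are:
\begin{itemize}
\item the functional-analytic fact that $\widehat\otimes_{\QQ_p}\CC_p$ is compatible with closed subspaces and quotients, which is the orthonormal-basis argument of Lemma \ref{lemtensorCp};
\item the observation that every $R_{\bar D}^{(r)}$-submodule of $B_{\bar D}^{(r)}$ is finitely generated, hence closed.
\end{itemize}
Your exactness claim in step 2 is more than you actually use; the coordinatewise description of completed tensor products suffices.

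\textbf{What each approach buys.} Your argument is more formal. It needs neither Lemma \ref{lem-famSenthm} nor the compatibility of $\theta^{(r)}$ (defined via all of $G_K$ acting on $E_{\bar D}^{(r)}$) with the Sen operator of a representation that only carries an action of an open subgroup of inertia. The paper's route, in exchange, shows that $\bar\theta$ is intrinsically the Sen operator of the quotient family. That is the viewpoint used when $\bar B$ arises as the image of $B_{\bar D}^{(r)}$ in $\prod_i\End(W_i)$ in Lemma \ref{lem-extSen}.
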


\begin{proof}
For every maximal ideal $\mathfrak{m}$   of $R_{\bar{D}}^{(r)}$ and $k\geq 0$, consider the continuous homomorphism
\[ f:\ker \bar{\rho}_r^2\cap I_K \xrightarrow{\rho_K} (B^{(r)}_{\bar{D}})^\times \to (\bar{B}/\mathfrak{m}^k)^\times.\]
Since $\overline{\QQ_p}^{\ker \bar{\rho}_r^2\cap I_K}$ is a finite extension of the maximal unramified extension of $K$, we can apply Sen's result and deduce that there is a Sen operator $\theta_{\bar{B}/\mathfrak{m}^k} \in (\bar{B}/\mathfrak{m}^k) \widehat\otimes_{\QQ_p}\CC_p$ which agrees with $f\otimes 1 (\theta^{(r)})$. Now we can repeat the same argument as in the proof of Theorem \ref{thmfamSen} and conclude that $\bar{B}'$ contains the image of $\Lie(I_K,r)$ in $\bar{B}$, and hence is equal to $\bar{B}$ because it is an $R_{\bar{D}}^{(r)}$-subalgebra.
\end{proof}

\subsection{Extension lemma}
In this subsection, we explain a technical lemma which allows one to extend an action of  $B_{\bar{D}}^{(r)}$. This lemma will be used later to construct some plectic Lie algebra action. Our setup will be the same as in the previous subsection.
\begin{dfn}\label{dfn-weakTopology}
Let \( W,V \) be Banach spaces over \( \QQ_{p} \) with unit balls \( W^{\circ},V^{\circ} \). We denote by \( \Hom_{\mrm{cont}}(W,V) \)
 the space of continuous linear homomorphisms from \( W \) to \( V \), which has a lattice \( \Hom(W^{\circ},V^{\circ}) \). We equip \( \Hom(W^{\circ},V^{\circ}) \)
with the weak topology (i.e. the pointwise convergence topology), and \( \Hom_{\mrm{cont}}(W,V)=\Hom(W^{\circ},V^{\circ}) [1/p] \) with the colimit topology.
We write \( \End^{\mrm{cont}}(W):=\Hom_{\mrm{cont}}(W,W) \). 
\end{dfn}
\begin{lem} \label{lem-extSen}
Let $W$ be a Banach $R_{\bar{D}}^{(r)}$-module and $W^a=\bigoplus_{i=0}^\infty W_i$ a dense subspace of $W$. Assume that each $W_i$ is  finite-dimensional over $\QQ_p$ and is a $R_{\bar{D}}^{(r)}$-submodule.
Denote by $\End^a\subseteq \End^{\mathrm{cont}} W$ the closed subspace of continuous $R_{\bar{D}}^{(r)}$-linear endomorphisms preserving each $W_i$. Suppose that 
\begin{itemize}
\item there is an element $\theta_W\in \End^a\widehat\otimes_{\QQ_p}\CC_p$ and
\item there is an  $R_{\bar{D}}^{(r)}$-linear continuous action of $B_{\bar{D}}^{(r)}$ on each $W_i$ such that the induced action of $\theta^{(r)}$ on $W_i\otimes\CC_p$ agrees with $\theta_W|_{W_i\otimes\CC_p}$.
\end{itemize}
Then the action of  $B_{\bar{D}}^{(r)}$ on $W^a$ can be extended (necessarily uniquely) to a continuous action on $W$ and the induced action of $\theta^{(r)}$ on $W\widehat\otimes_{\QQ_p}\CC_p$ is equal to $\theta_W$.
\end{lem}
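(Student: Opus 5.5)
The plan is to show that the image of $B_{\bar{D}}^{(r)}$ in $\prod_i \End_{\QQ_p}(W_i)$ lands inside the closed subspace $\End^a$, viewed inside that product. To do this I would apply Corollary \ref{cor-famSen} to a suitable subalgebra, and then extend the action from $W^a$ to $W$ by continuity.

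First I set up the comparison. For each finite $N$, let $A_N\subseteq \prod_{i\le N}\End_{\QQ_p}(W_i)$ be the image of $B_{\bar{D}}^{(r)}$ under the given actions on $W_0,\dots,W_N$. This is a finite-dimensional algebra and a quotient $\bar{B}_N$ of $B_{\bar{D}}^{(r)}$ by a two-sided ideal. Its image $\bar\theta_N$ of $\theta^{(r)}$ equals the restriction of $\theta_W$ to $\bigoplus_{i\le N}W_i\otimes\CC_p$, by hypothesis. The main obstacle is that $\End^a$ is only a Banach space, not obviously an algebra target for $B^{(r)}_{\bar{D}}$. The restriction map $r_N:\End^a\to\prod_{i\le N}\End(W_i)$ exists, but its image $C_N$ need not be related to $A_N$ a priori. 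However, $C_N$ is a finite-dimensional $R_{\bar{D}}^{(r)}$-subalgebra of $\prod_{i\le N}\End(W_i)$. It is an algebra because $\End^a$ is closed under composition, and it is $R_{\bar{D}}^{(r)}$-stable because the elements of $\End^a$ are $R_{\bar{D}}^{(r)}$-linear and commute with the scalars. Since $r_N\otimes 1$ sends $\theta_W$ into $C_N\otimes\CC_p$, we get $\bar\theta_N\in (A_N\cap C_N)\otimes\CC_p$. This holds because a finite-dimensional intersection commutes with $\otimes\CC_p$, by flatness of $\CC_p$ over $\QQ_p$. Now $\bar B'_N:=A_N\cap C_N$ is an $R^{(r)}_{\bar{D}}$-subalgebra of $\bar B_N=A_N$ containing $\bar\theta_N$ after base change. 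So Corollary \ref{cor-famSen} gives $A_N\subseteq C_N$.

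Thus every $b\in B_{\bar{D}}^{(r)}$ acts on $\bigoplus_{i\le N}W_i$ as the restriction of some element of $\End^a$. I still need a single element of $\End^a$ that works for all $N$ at once, with bounded norm. For this I would use the structure of $B^{(r)}_{\bar{D}}$: by Lemma \ref{lem-famSenthm}, it is generated as an $R^{(r)}_{\bar{D}}$-module, topologically, by $\rho(g)$ with $g\in\ker\bar\rho^2_r\cap I_K$. It is enough to prove uniform boundedness of the action on $W^a$ with respect to the norm of $W$ for a finite set of module generators. Then $B^{(r)}_{\bar{D}}$ acts boundedly on $W^a$, and the action extends by density to $W$. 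For uniform boundedness I would argue as follows. The maps $\End^a\to \prod_{i\le N}\End(W_i)$ are compatible as $N$ varies, and the preimages $r_N^{-1}(b|_{\le N})$ form a decreasing family of nonempty closed affine subspaces. One shows they are bounded by an open mapping argument applied to $\End^a\to\varprojlim_N C_N$. Alternatively, and more cleanly, one can replace $\End^a$ by its image in $\prod_i\End(W_i)$, which is injective since $W^a$ is dense. Running the corollary argument with $N=\infty$, using Lemma \ref{lemtensorCp} to detect membership after $\widehat\otimes\CC_p$, then gives directly that $A_\infty\subseteq\End^a$. This is the step I expect to need the most care, because of the topology and the weak-topology conventions of Definition \ref{dfn-weakTopology}.

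Finally, once each $b$ is realized by an element $\phi(b)\in\End^a$, the map $b\mapsto\phi(b)$ is multiplicative and $R^{(r)}_{\bar{D}}$-linear. It is also continuous, which follows by the closed graph theorem applied to $B^{(r)}_{\bar{D}}\to\End^a$ composed with the injective restrictions. Uniqueness is clear from density. The identity $\phi\otimes1(\theta^{(r)})=\theta_W$ holds after restricting to each $W_i\otimes\CC_p$, and an element of $\End^a\widehat\otimes\CC_p$ is determined by these restrictions, by Lemma \ref{lemtensorCp}.
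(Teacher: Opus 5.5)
Your route (b) is the paper's proof. Apply Corollary \ref{cor-famSen} to the full image $\bar B$ of $B^{(r)}_{\bar D}$ in $\prod_i\End(W_i)$, with $\bar B'=\bar B\cap\End^a$; this is an $R^{(r)}_{\bar D}$-submodule of a finitely generated module, so there are no closure issues. You get $\bar\theta\in\bar B'\widehat\otimes_{\QQ_p}\CC_p$ from the identity $(\bar B\widehat\otimes_{\QQ_p}\CC_p)\cap(\End^a\widehat\otimes_{\QQ_p}\CC_p)=(\bar B\cap\End^a)\widehat\otimes_{\QQ_p}\CC_p$ inside $\prod_i\End(W_i)\otimes_{\QQ_p}\CC_p$, checked coefficientwise in an orthonormal basis of $\CC_p$. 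This makes your finite-$N$ step superfluous.

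Discard alternative (a), because it fails. The sets $r_N^{-1}(b|_{\le N})$ are translates of $\ker r_N$, hence unbounded. Moreover $\End^a\to\varprojlim_N C_N$ is in general not surjective: when $W$ is the completed orthogonal sum of the $W_i$, the image of $\End^a$ consists only of the uniformly bounded block families. So no open mapping argument is available, and $A_N\subseteq C_N$ for all $N$ does not by itself give $A_\infty\subseteq\End^a$.
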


\begin{proof} 
All the $\End$'s below are considered as the algebra of continuous endomorphisms. So we will drop superscripts $\mathrm{cont}$ from now on. 
Consider $\prod_{i=0} \End_{R_{\bar{D}}^{(r)}}(W_i)$, viewed as a Fr\'echet space by expressing it as a projective limit of Banach spaces $\varprojlim_i (\End_{R_{\bar{D}}^{(r)}}(W_0)\oplus \cdots\oplus \End_{R_{\bar{D}}^{(r)}}(W_i))$.
Since $W^a$ is dense, we have a natural continuous inclusion of $R_{\bar{D}}^{(r)}$-algebras
\[\End^a \subseteq \prod_{i=0} \End_{R_{\bar{D}}^{(r)}}(W_i). \]
On the other hand, the action of $B_{\bar{D}}^{(r)}$  on each $W_i$ gives a continuous $R_{\bar{D}}^{(r)}$-linear homomorphism
\[ B_{\bar{D}}^{(r)} \to \prod_{i=0} \End_{R_{\bar{D}}^{(r)}}(W_i).\]
Let $\bar{B}$ be its image equipped with the quotient topology. It's enough to show that $\bar{B}\subseteq \End^a$. Set $\bar{B}'=\bar{B}\cap \End^a$. It is an $R_{\bar{D}}^{(r)}$-subalgebra of $\bar{B}$. By Corollary \ref{cor-famSen}, it suffices to show $\bar\theta\in \bar{B}'\widehat\otimes_{\QQ_p}\CC_p$ where  $\bar\theta\in \bar{B}\widehat\otimes_{\QQ_p}\CC_p$ denotes the image of $\theta^{(r)}$. Our assumption implies that
\[\bar\theta\in \bar{B}\widehat\otimes_{\QQ_p}\CC_p\cap \End^a\widehat\otimes_{\QQ_p}\CC_p\]
agrees with $\theta_W$ in $\displaystyle\left(\prod_{i}\End_{R_{\bar{D}}^{(r)}}(W_i)\right)\widehat\otimes_{\QQ_p}\CC_p=\prod_{i}(\End_{R_{\bar{D}}^{(r)}}(W_i)\otimes_{\QQ_p}\CC_p)$. We finish the proof by noting that
\[\bar{B}\widehat\otimes_{\QQ_p}\CC_p\cap \End^a\widehat\otimes_{\QQ_p}\CC_p=(\bar{B}\cap \End^a)\widehat\otimes_{\QQ_p}\CC_p,\]
which can be proved directly by choosing an orthonormal basis of $\CC_p$ over $\QQ_p$ and arguing as  in the proof of Lemma \ref{lemtensorCp}.
\end{proof}

\begin{rmk}
Let $\End^a_{\CC_p}\subseteq \End^{\mathrm{cont}} W\widehat\otimes_{\QQ_p}\CC_p =\End_{\CC_p}^{\mathrm{cont}}(W\widehat\otimes_{\QQ_p}\CC_p)$ be the subspace of endomorphisms preserving each $W_i\otimes_{\QQ_p}\CC_p$ and commuting with $R^{(r)}_{\bar{D}}$. There is a natural isomorphism
\[\End^a\widehat\otimes_{\QQ_p}\CC_p \cong \End^a_{\CC_p}.\]
Indeed by choosing topological generators $g_1,\cdots, g_n$ of $R^{(r)}_{\bar{D}}$, we can express $\End^a$ as follows
\[\End^a=\ker\left( \End(W) \xrightarrow{([\bullet,g_1],\cdots,[\bullet,g_n];g)}\bigoplus_{i=1}^{n} \End (W)\oplus \prod_{i=0}^{\infty} \Hom_{\QQ_p}(W_i,W/W_i)\right)
\]
where $[\bullet,g_i]$ sends $f\in\End(W)$ to $fg_i-g_if$ 
and $g$ is the product of natural maps $\End(W)=\Hom(W,W)\to \Hom_{\QQ_p}(W_i,W/W_i)$. The desired isomorphism follows upon taking the completed tensor products with $\CC_p$.
\end{rmk}

\section{Plectic Lie algebra} \label{sec-PLA}

The goal of this section is to study the Lie algebra of the plectic Galois group introduced by Nekov\'a\v{r}-Scholl \cite{NS-2016} using results from the previous section. Throughout this section, $F$ denotes a finite extension of $\QQ$ of degree $d$, and we fix an algebraic closure $\overline{\QQ}$ of $\QQ$ and an embedding $ \iota_p:\overline{\QQ}\to\overline{\QQ_p} $ into an algebraic closure of $\QQ_p$. 

\subsection{Plectic Galois group} 
\begin{dfn}
The \emph{plectic Galois group} of $F$ is defined as 
\(G^{plec}_{F}:=\mrm{Aut}_{F}(F\otimes_{\QQ}\overline{\QQ})\) the group of automorphisms of $F$-algebra $F\otimes_{\QQ}\overline{\QQ}$.
\end{dfn}

The absolute Galois group $G_{\QQ}=\Gal(\overline{\QQ}/\QQ)$ is a natural subgroup of $G^{plec}_{F}$ by acting on the $\overline{\QQ}$ in $F\otimes_{\QQ}\overline{\QQ}$. There is a natural isomorphism
\[F\otimes_{\QQ}\overline{\QQ}\cong\prod_{\tau:F\to \overline{\QQ}}\overline{\QQ}\]
mapping $a\otimes b\in F\otimes_{\QQ}\overline{\QQ}$ to $(\tau(a)b)_{\tau}$, where $\tau$ runs through all embeddings of $F$ into $\overline{\QQ}$. It follows that $\displaystyle \prod_{\tau:F\to \overline{\QQ}} \Gal(\overline{\QQ}/\tau(F))$ is a normal subgroup of $G^{plec}_{F}$. The conjugate action of $\sigma\in G_{\QQ}\subseteq G^{plec}_{F}$ on $\displaystyle \prod \Gal(\overline{\QQ}/\tau(F))$ will be denoted by $\Ad_\sigma$. Explicitly for $(g_\tau)_\tau\in \prod \Gal(\overline{\QQ}/\tau(F))$,
\[\Ad_\sigma(g_\tau)_\tau=(\sigma g_{\sigma^{-1}\circ\tau} \sigma^{-1})_{\tau}.\]
Hence $\Ad_\sigma$ sends $\Gal(\overline{\QQ}/\tau(F))$ to $\Gal(\overline{\QQ}/\sigma\circ \tau(F))$.

Fix an algebraic closure $\overline{F}$ of $F$. Let $E$ be a nonarchimedean field extension of $\QQ_p$.

\begin{dfn}[Plectic induction]\label{dfnPlecticInduction}
Given a continuous  representation \( \rho:\Gal(\overline{F}/F)\to \GL_{n}(E)\), the \textit{plectic induction} $\Ind^{plec}_{F}\rho: G_{F}^{plec}\to \GL_{n^d}(E)$ is defined as follows: $\rho$ defines an $n$-dimensional $E$-local system $\mathbb{E}$ on the pro-\'etale site of $\Spec F$. The plectic group $G_F^{plec}$ acts naturally on the pro-\'etale cover $\Spec (F\otimes_{\QQ} \overline\QQ)=\bigcup_\tau \Spec F\otimes_{F,\tau}\overline\QQ$ of $\Spec F$, permuting its connected components, hence we get a natural action $\Ind^{plec}_{F}\rho$ of $G_F^{plec}$ on the $n^d$-dimensional $E$-vector space
\[\bigotimes_{\tau:F\to \overline\QQ}\Gamma(\Spec F\otimes_{F,\tau}\overline\QQ,\mathbb{E}).\]
\end{dfn}

\begin{rmk} \label{rmk-tenindvsplecind}
$\Ind^{plec}_{F}\rho|_{G_{\QQ}} \cong\oInd^{\QQ}_{F}\rho$, where $\oInd^{\QQ}_{F}\rho$ is the \textit{tensor induction} of $\rho$ from $F$ to $\QQ$.
\end{rmk}

\begin{rmk} \label{rmk-indcomptensor}
Given two continuous  representations \( \rho_i:\Gal(\overline{F}/F)\to \GL_{n_i}(E)\), $i=1,2$, there is a natural isomorphism of representations of $G_{F}^{plec}$
\[\Ind^{plec}_{F}(\rho_1\otimes\rho_2)\cong \Ind^{plec}_{F}\rho_1\otimes \Ind^{plec}_{F}\rho_2.\]
This can be seen easily from the construction.
\end{rmk}

\begin{rmk} \label{rmk-resprodGtauF}
For each $\tau:F\to \overline\QQ$, fix an isomorphism $\tilde\tau:\overline{F}\to\overline\QQ$ extending $\tau$. We thus get an identification $\Gal(\overline{\QQ}/\tau(F))\cong \Gal(\overline{F}/F)$ and, under this isomorphism $\rho$ gives rise to a representation 
\[\rho_{\tau}:\Gal(\overline{\QQ}/\tau(F))\to\GL_d(E)\] 
whose isomorphism class doesn't depend on the choice of $\tilde\tau$. Indeed a different choice of $\tilde\tau$ only changes $\Gal(\overline{\QQ}/\tau(F))\cong \Gal(\overline{F}/F)$ by an inner automorphism. It follows from the construction that
\[\Ind^{plec}_{F}\rho|_{\prod_{\tau}\Gal(\overline{\QQ}/\tau(F))}\cong \boxtimes_{\tau} \rho_\tau.\] 
In particular, $\Ind^{plec}_{F}\rho$ is irreducible if $\rho$ is irreducible. Let $\tilde{F}$ denote the Galois closure of $F$ in $\overline{\QQ}$, then $\Gal(\overline{\QQ}/\tilde{F})= G_\QQ\cap \prod_{\tau}\Gal(\overline{\QQ}/\tau(F))$. Hence
\begin{equation} \label{tenind|_GQ}
\oInd^{\QQ}_{F} \rho|_{\Gal(\overline{\QQ}/\tilde{F})}=\Ind^{plec}_{F}\rho|_{\Gal(\overline{\QQ}/\tilde{F})}\cong \otimes_{\tau} \rho_\tau|_{\Gal(\overline{\QQ}/\tilde{F})}.
\end{equation}
\end{rmk}

Let $\Sigma_F$ be the set of embeddings $F\to \overline{\QQ}$ and $S_{\Sigma_F}$ denote the symmetric group on the set $\Sigma_F$. After fixing an isomorphism $\tilde\tau:\overline{F}\to\overline\QQ$ for  each $\tau\in\Sigma_F$, we get an isomorphism of $F$-algebras
\[F\otimes_{\QQ}\overline{\QQ}\cong\prod_{\tau:F\to \overline{\QQ}}\overline{F}=\overline{F}^{\Sigma_F},\]
and this gives an isomorphism
\[G_{F}^{plec}\cong\Gal(\overline{F}/F)^{\Sigma_F}\rtimes S_{\Sigma_F}.\]
Now let $R$ be a commutative ring. Given a $R[\Gal(\overline{F}/F)]$-module $V$, the tensor product (over $R$)
\[\bigotimes_{\tau\in\Sigma_F} V\]
carries a natural action of $\Gal(\overline{F}/F)^{\Sigma_F}$ and an action of $S_{\Sigma_F}$ by permuting $\Sigma_F$, which defines a natural representation of $\Gal(\overline{F}/F)^{\Sigma_F}\rtimes S_{\Sigma_F}$ over $R$. This is nothing but the plectic induction.

\begin{rmk} \label{rmk-transfer}
Fix an isomorphism $\overline{F}\cong \overline{\QQ}$, and hence  $\Gal(\overline{F}/F)$ can be viewed as a subgroup of $G_\QQ$ of finite index.
If $\psi: \Gal(\overline{F}/F)\to E^\times $ is a character, then $\Ind^{plec}_{F}\psi$ is a character as well. The tensor induction $\oInd^{\QQ}_{F} \psi: G_\QQ \to E^\times$ is nothing but  $\psi\circ Ver$ where $Ver: G_{\QQ} \to \Gal(\overline{F}/F)^{\mathrm{ab}}$ denotes the \textit{transfer} map.
\end{rmk}

We end this subsection with some miscellaneous results that will be used  in the next section. 

\begin{prop} \label{proptenindirred}
Let  \( \rho:\Gal(\overline{F}/F)\to \GL_{2}(\CC_p)\) be a continuous representation such that $\Lie\rho(\Gal(\overline{F}/F))$ contains an $sl_2$; equivalently, the Lie algebra representation of $\Lie\rho(\Gal(\overline{F}/F))$  on $\CC_p^{2}$
is absolutely irreducible. 
\begin{enumerate}
\item The tensor induction $\oInd^{\QQ}_{F} \rho$ has a unique irreducible $G_{\QQ}$-subrepresentation of the largest dimension among all the subrepresentations that are irreducible $\Lie(\oInd^{\QQ}_{F} \rho(G_{\QQ}))$-representations. It has the same Hodge-Tate-Sen weights as those of $\oInd^{\QQ}_{F} \rho$ without counting multiplicities.
\item For each $\tau:F\to\overline{\QQ_p}$, we denote the $\tau$-Hodge-Tate-Sen weights of $\rho$ by $a_\tau,b_\tau$ and let $k_\tau=a_\tau-b_\tau$, cf. Definition \ref{defn-partSenpoly} below. Then $\oInd^{\QQ}_{F} \rho$ is absolutely irreducible if the following holds
\begin{itemize}
\item all $k_\tau$'s and $-k_\tau$'s are distinct. In particular, $k_\tau\neq 0$.
\end{itemize}
\end{enumerate}
\end{prop}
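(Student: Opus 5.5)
We restrict to the finite-index normal subgroup $\Gal(\overline{\QQ}/\tilde F)$, where by (\ref{tenind|_GQ}) the tensor induction becomes $\otimes_\tau \rho_\tau|_{\Gal(\overline{\QQ}/\tilde F)}$, and we work throughout at the level of Lie algebras. Let $\mathfrak{g}=\Lie\rho(\Gal(\overline F/F))$. By hypothesis $\mathfrak g\supseteq sl_2$, so $\mathfrak g=sl_2$ or $gl_2$. Each $\rho_\tau$ is $\rho$ composed with an isomorphism of Galois groups, so $\Lie\rho_\tau(\Gal(\overline\QQ/\tau F))\cong\mathfrak g$. Restriction to the open subgroup $\Gal(\overline\QQ/\tilde F)$ does not change these Lie algebras.

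Let $\mathfrak h$ be the Lie algebra of the image of $\prod_\tau\Gal(\overline\QQ/\tau F)$ acting on $\boxtimes_\tau\rho_\tau$. This is $\prod_\tau\mathfrak g$ modulo identifications of scalars. Let $\mathfrak h'=\Lie(\oInd\rho(\Gal(\overline\QQ/\tilde F)))$, which is the image of the diagonal $\Gal(\overline\QQ/\tilde F)\to\prod_\tau \Gal(\overline\QQ/\tau F)$. Its semisimple part is a subalgebra of $\prod_\tau sl_2$ projecting onto each factor.

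By Goursat's lemma for products of simple Lie algebras, this semisimple part is $\prod_{c} sl_2$ over the classes $c$ of a partition of $\Sigma_F$. The embedding is diagonal on each class, twisted by automorphisms of $sl_2$; these automorphisms are inner, hence realized by conjugation. Two embeddings $\tau,\tau'$ lie in the same class exactly when $\rho_\tau$ and $\rho_{\tau'}$ are Lie-algebra isogenous up to twist, i.e. $\mathrm{ad}\rho_\tau\cong\mathrm{ad}\rho_{\tau'}$ on an open subgroup. Establishing this Goursat step cleanly for $p$-adic Lie groups is routine.

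\textbf{Part (1).} As a representation of $\prod_c sl_2$, the space $\otimes_\tau \mathrm{Std}_\tau$ is $\boxtimes_c (\mathrm{Std}^{\otimes |c|})$. Each $\mathrm{Std}^{\otimes m}$ decomposes into $\Sym^j$ pieces, and the unique largest is the Cartan component $\Sym^m$, with multiplicity one. So there is a unique largest Lie-irreducible constituent $\boxtimes_c \Sym^{|c|}$ of multiplicity one. The $G_\QQ$-action permutes the $\tau$, normalizes $\mathfrak h'$, and preserves the class partition. Hence it preserves the isotypic component of this Cartan constituent, and multiplicity one makes that component a $G_\QQ$-stable subspace $W$. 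An irreducible $G_\QQ$-subrepresentation which is Lie-irreducible of maximal dimension must, after restriction, be a Lie-irreducible summand; hence it equals $W$, which gives uniqueness. The subspace $W$ is $G_\QQ$-irreducible because it is already Lie-irreducible.

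For the Hodge–Tate–Sen weights: by Theorem \ref{thm-Sen}(5), the Sen operator $\theta$ of $\oInd\rho$ lies in $\mathfrak h'\otimes\CC_p$. Its image in each $\mathfrak g_\tau\otimes\CC_p$ is the $\tau$-partial Sen operator, with eigenvalues $a_\tau,b_\tau$. The weights on the full space are the sums $\sum_\tau x_\tau$ with $x_\tau\in\{a_\tau,b_\tau\}$. The weights on $\Sym^{|c|}$ of a diagonal element form the same set of sums as on $\mathrm{Std}^{\otimes|c|}$, without multiplicity, once the weight of each factor is transported through the inner twist. This transport is the main obstacle: the diagonal identification is only up to conjugation, and the Sen operators in the identified factors must match eigenvalue-wise. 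I would show that the eigenvalue sets of the traceless parts coincide for all $\tau$ in a class, using that they are the images of one element of $\mathfrak h'$.

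\textbf{Part (2).} Suppose $\tau\neq\tau'$ are in the same class. Then the traceless parts of their Sen operators are conjugate under the identification, so $\{\pm k_\tau/2\}=\{\pm k_{\tau'}/2\}$, giving $k_\tau=\pm k_{\tau'}$. This contradicts the hypothesis. So every class is a singleton and $\mathfrak h'$ contains $\prod_\tau sl_2$, which acts irreducibly on $\otimes_\tau\mathrm{Std}$. Therefore $\oInd\rho$ is absolutely irreducible, even as a Lie-algebra representation.
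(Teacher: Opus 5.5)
Your proposal is correct and follows essentially the same route as the paper. Both arguments use Nekov\'a\v{r}'s Goursat-type description of Lie subalgebras of $\prod_\tau sl_2(\CC_p)$ surjecting onto each factor, together with the identity $\theta=\sum_\tau\theta_\tau$, which forces equal eigenvalue sets $\{\pm k_\tau/2\}$ within each class: this gives part (2), and in part (1) it matches the weights of $\oInd^{\QQ}_{F}\rho$ with those of the multiplicity-one Cartan component $\boxtimes_j \Sym^{n_j}\mathrm{Std}$. The differences are cosmetic: the paper twists $\rho$ so that $\det\rho$ has finite order and everything lies in $sl_2$, whereas you pass to traceless parts; and, as the paper does with $M$ and $M_\tau$, you should state explicitly that you work with the $\CC_p$-spans of the Lie algebras, so that the Goursat isomorphisms are $\CC_p$-linear and hence inner, which makes your ``transport'' step immediate.
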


\begin{proof}
We may twist $\rho$ with a character of $\Gal(\overline{F}/F)$ and assume that $\det \rho$ is of finite order. Hence, $\Lie\rho(\Gal(\overline{F}/F))\subseteq sl_2(\CC_p)$.

\begin{lem}
Let $I$ be a finite set. Let $\mathfrak{g}$ be a Lie subalgebra of $\prod_{i\in I}sl_2(\CC_p)$ which projects surjectively onto each factor $sl_2(\CC_p)$. Then there exist:
\begin{itemize}
\item a partition $I=I_1\cup\cdots\cup I_m$ for non-empty subsets $I_j$ of $I$;
\item for each $j\in\{1,\cdots,m\}$ and each $i\in I_j$ an isomorphism of Lie algebras $f_{ji}:sl_2(\CC_p)\xrightarrow{\sim} sl_2(\CC_p)$
\end{itemize}
such that 
\begin{eqnarray} \label{eqn-Lemma2.1Nekovar}
\mathfrak{g}=\im\left( \prod_{j=1}^msl_2(\CC_p) \xrightarrow{\Delta} \prod_{j=1}^m(sl_2(\CC_p))^{I_j} \xrightarrow{(f_{ji})} \prod_{j=1}^m \prod_{i\in I_j} sl_2(\CC_p) =\prod_{i\in I}sl_2(\CC_p)
\right)
\end{eqnarray}
where $\Delta=(\Delta_j)_{j\in J}$ and each $\Delta_j: sl_2(\CC_p)\to (sl_2(\CC_p))^{I_j} $ is the diagonal map.
\end{lem}

\begin{proof}
\cite[Prop. 2.1]{Nekovar-2018}.
\end{proof}

Denote by $M$ the $\CC_p$-Lie subalgebra generated by $\Lie(\oInd^{\QQ}_{F} \rho(G_{\QQ}))$ in $\End_{\CC_p}(\CC_p^{2^d}) =gl_{2^d}(\CC_p)$. Using the notation in Remark \ref{rmk-resprodGtauF}, for each $\tau:F\to\overline{\QQ_p}$, we denote by $M_\tau\subseteq gl_{2}(\CC_p)$ the $\CC_p$-Lie subalgebra generated by $\Lie\rho_\tau(\Gal(\overline{\QQ}/\tau(F)))$, and $M_\tau= sl_{2}(\CC_p)$ by our assumption. There is a natural inclusion map
\[M\subseteq \prod_{\tau} M_\tau\]
which  projects surjectively onto each factor. Hence, by the previous lemma,  there exist a partition $\Sigma_F=I_1\cup\cdots\cup I_n$ for non-empty subsets $I_j$ of $I$ and isomorphisms $f_{ji}:sl_2(\CC_p)\xrightarrow{\sim} sl_2(\CC_p)$, $j=1,\cdots,m$, $i\in I_j$
such that $M$ is of the form in \eqref{eqn-Lemma2.1Nekovar}. By Theorem \ref{thm-Sen-inf}, we have the Sen operator in $M\widehat\otimes\CC_p$ for $\rho$, and we let $\theta\in M$ be its base change along the natural map \( \CC_{p}\widehat{\otimes}\CC_{p}\to \CC_{p} \). Similarly let $\theta_\tau\in M_\tau$ be the base change of the $\tau$-Hodge-Tate-Sen operator, i.e. the Sen operator for $\rho_\tau$ at the $p$-adic place induced by $F\xrightarrow{\tau}\overline{\QQ}\to \overline{\QQ_p}$. See also Definition \ref{defn-partSenpoly} below.
 The negatives of the eigenvalues of $\theta_\tau\in sl_2(\CC_p)$ are $a_\tau,b_\tau$. It follows from \eqref{tenind|_GQ} that in $\prod_{\tau} M_\tau$,
\[\theta=\sum_\tau \theta_\tau.\] 
Hence, for each $I_j$, the set of eigenvalues of $\theta_\tau$ for $\tau$ in $I_j$ must be the same and will be denoted by $a_j,b_j$. This gives the second part of Proposition \ref{proptenindirred} because by our assumption $M=\prod_{\tau} M_\tau$ and acts irreducibly on $\CC_p^{2^d}$.

In general, after identifying $M$ with  $sl_2(\CC_p)^{m}$ as in  \eqref{eqn-Lemma2.1Nekovar}, we see that $\oInd^{\QQ}_{F} \rho$ as a representation of $M$ is
\[\mathrm{Std}^{\otimes n_1}\boxtimes \mathrm{Std}^{\otimes n_2}\boxtimes \cdots \boxtimes \mathrm{Std}^{\otimes n_m}\]
where $n_j=|I_j|$, $j=1,\cdots,m$. It has an irreducible subrepresentation $V:=\Sym^{\otimes n_1}\boxtimes  \cdots \boxtimes \Sym^{\otimes n_m}$ with $\Sym^{\otimes n_j}\mathrm{Std}$ denoting the $n_j$-th symmetric power of the standard representation. The eigenvalues of $\theta$ in $\oInd^{\QQ}_{F} \rho$ are
\[(s_1a_1+(n_1-s_1)b_1)+\cdots+(s_ma_m+(n_m-s_m)b_m),\,s_1\in\{0,\cdots,n_1\},\cdots,s_m\in\{0,\cdots,n_m\},\]
This is exactly the set of eigenvalues of $\theta$ in $V$ (without counting multiplicities). A consideration of the highest weight shows that $V$ has multiplicity one and has the largest 
dimension in $\oInd^{\QQ}_{F} \rho$ as an irreducible $M$-subrepresentation. Thus $V$ is $G_{\QQ}$-stable as well.
\end{proof}

\subsection{Partial Sen operators and Lie algebra} \label{subsec-PSoaLa}
Next we consider the deformation ring.   Fix a finite set of primes $S$ of $F$ containing all $p$-adic places and denote by $G_{F,S}$ the Galois group of the maximal extension of $F$ unramified away from $S$.
 Let $\bar{D}$ be a continuous $n$-dimensional determinant of $G_{F,S}$ valued in $\FF$ and $R_{\bar{D}}$ the universal completed local $\mO_L$-algebra  parametrizing all continuous lifts of  $\bar{D}$.  We use $D^u:R_{\bar{D}}[[G_{F,S}]]\to R_{\bar{D}}$ to denote the universal determinant.
 
 For an embedding $\tau: F\to \overline{\QQ}$, we choose an isomorphism $\tilde\tau:\overline{F}\to\overline\QQ$ extending $\tau$ and thus get an isomorphism $\Gal(\overline{\QQ}/\tau(F))\cong \Gal(\overline{F}/F)$. Denote by $G_{\tau(F),\tau(S)}$ the quotient of $\Gal(\overline{\QQ}/\tau(F))$ corresponding to the quotient $G_{F,S}$ of \( \Gal(\bar{F}/F) \). Then $\bar{D}$ gives rise to a continuous $n$-dimensional determinant $\bar{D}_\tau$ of $G_{\tau(F),\tau(S)}$, which is independent of the choice $\tilde\tau$ (because $\bar{D}$ is invariant by conjugation).  The universal deformation ring $R_{\bar{D}_\tau}$ of $\bar{D}_\tau$ is canonically isomorphic to $R_{\bar{D}}$. Now we view each $G_{\tau(F),\tau(S)}$ as a quotient of the product $\prod_{\tau\in\Sigma_F}  G_{\tau(F),\tau(S)}$, and apply repeatedly Theorem \ref{thm-tensorprod} to $D_\tau$ on $G_{\tau(F),\tau(S)}$ over $R_{\bar{D}}$, and get a continuous $n^d$-dimensional determinant
 \[D^{up}=\otimes_{\tau\in\Sigma_F} D_\tau:R_{\bar{D}}[[\prod_{\tau\in\Sigma_F}  G_{\tau(F),\tau(S)}]]\to R_{\bar{D}}.\]
 Note that $\prod_{\tau\in\Sigma_F}  G_{\tau(F),\tau(S)}$ defines a quotient of $\prod\Gal(\overline{\QQ}/\tau(F))$. Let $I_S$ denote the kernel of this quotient map which is a normal subgroup of \( G^{plec}_{F} \), and let 
 \[G_{F,S}^{plec}:=G^{plec}_F/I_S.\]

 \begin{prop} \label{prop-plecinddet}
 There is a continuous $n^d$-dimensional determinant
 \[D^{plec}: R_{\bar{D}}[[G^{plec}_{F,S}]][1/p] \to R_{\bar{D}}[1/p]\]
 extending $D^{up}$ such that for each continuous homomorphism $f:R_{\bar{D}}\to\CC_p$, the  semisimple representation $G^{plec}_{F,S}\to \GL_{n^d}(\CC_p)$ associated to $f\circ D^{plec}$ is isomorphic to $\Ind^{plec}_{F}\rho_f$, where $\rho_f:G_{F,S}\to \GL_d(\CC_p)$ is the semisimple representation associated to $f\circ D^u$. 
 \end{prop}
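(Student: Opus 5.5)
Since $D^{up}$ is the determinant on the normal subgroup $H:=\prod_{\tau}G_{\tau(F),\tau(S)}$ of $G^{plec}_{F,S}$, and this subgroup has finite index with quotient $S_{\Sigma_F}$, I will construct $D^{plec}$ by an induction-type argument. The plan is to produce, after inverting $p$, an honest representation of a suitable algebra on which $G^{plec}_{F,S}$ acts, and to take its determinant. Work over $A:=R_{\bar D}[1/p]$ and the Cayley--Hamilton quotient $E:=E_{\bar D}\otimes_{R_{\bar D}}A$ of $A[G_{F,S}]$ (Theorem \ref{thmWE}, so $E$ is a finite $A$-module). Form the $A$-algebra $\mathcal{E}:=E^{\otimes_A \Sigma_F}$. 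It receives $A[H]$, and $S_{\Sigma_F}$ acts on it by permuting tensor factors, so the semidirect product gives a homomorphism $A[G^{plec}_{F,S}]\to \mathcal{E}\rtimes S_{\Sigma_F}=:\mathcal{E}'$, a twisted group algebra which is still finite over $A$.

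\textbf{Step 1.} Equip $\mathcal{E}$ with the determinant $D^{\otimes}:=\otimes_\tau D_\tau$ from Theorem \ref{thm-tensorprod} (as a determinant of the algebra $\mathcal E$). This determinant is $S_{\Sigma_F}$-invariant because the construction of the tensor product is symmetric.

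\textbf{Step 2.} Extend $D^{\otimes}$ to $\mathcal{E}'$ using the formula one expects from plectic induction. For $\sigma\in S_{\Sigma_F}$ with cycle decomposition $\sigma=\prod_c c$ and $x=(x_\tau)\in E^{\Sigma_F}$, the trace of $x\sigma$ on $\bigotimes V$ equals $\prod_{c}\mathrm{tr}(\prod_{\tau\in c}x_\tau)$. The characteristic polynomial is given by a similar universal polynomial expression in the coefficients of the characteristic polynomials of the products along cycles. Rather than write this by hand, I would reduce to the universal case. The pseudorepresentation $D^{u}$ becomes a true representation after a faithfully flat (or at least injective) extension on a dense open: the image of $A$ in $\prod_{\mathfrak m}\widehat{A_\mathfrak m}$, with each completion handled by a finite extension over which $\rho_f$ is realized. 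On a true representation $\rho:\Gamma\to \GL_n(B)$, the operator $\Ind^{plec}\rho$ on $(B^n)^{\otimes\Sigma_F}$ gives a genuine determinant. I must show its values on $B[G^{plec}_{F,S}]$ lie in $A$ and glue. Since characteristic polynomial coefficients of $\Ind^{plec}\rho(x\sigma)$ are universal polynomials in characteristic polynomial coefficients of cycle products of $\rho$, they are determined by $D^u$. They are therefore independent of the realization and descend to $A$.

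\textbf{Step 3.} Having a polynomial law on each $\mathcal{E}'$-element, check multiplicativity and the compatibility $D^{plec}|_{A[H]}=D^{up}$. Both are identities that can be verified after injecting into a product of rings over which $D^u$ is a true representation, where they hold tautologically. Continuity follows from continuity of $D^{up}$ and finiteness of $S_{\Sigma_F}$. The pointwise statement is that, for $f:R_{\bar D}\to\CC_p$, the associated representation is $\Ind^{plec}_F\rho_f$. This is the specialization of the construction, using that the semisimplification determines the determinant (Chenevier) and that $\Ind^{plec}$ of a semisimple representation is semisimple in characteristic $0$, since $H$ is normal of finite index.

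\textbf{Main obstacle.} The real difficulty is Step 2: representing the pseudocharacter by actual representations densely enough, since $R_{\bar D}$ may not be reduced or normal. I would first try to avoid this with Lafforgue's pseudocharacter formalism, as in \cite{Quast-2026}. There, $\GL_n$-pseudocharacters of $G_{F,S}$ correspond to $\hat G$-pseudocharacters of $G^{plec}_{F,S}$ for the L-group $\GL_n^{\Sigma_F}\rtimes S_{\Sigma_F}$, which is a formal Shapiro-type compatibility. Composing with the tensor-induction representation $\GL_n^{\Sigma_F}\rtimes S_{\Sigma_F}\to\GL_{n^d}$ then yields $D^{plec}$ directly. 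Inverting $p$ is needed because this translation to Chenevier determinants in dimension $n^d$ is only available when $n^d!$ is invertible.
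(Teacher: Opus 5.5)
Your overall shape is the same as the paper's. You take the trace of the plectic induction of an honest lift and use the cycle formula $\mathrm{tr}\,\Ind^{plec}\rho(g,\sigma)=\prod_{c}T(\prod_{\tau\in c}g_\tau)$ to see that the values depend only on $D^u$. You then use that in characteristic $0$ traces determine determinants.

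The gap is the step that carries all the weight: producing an honest representation over a ring into which $A=R_{\bar D}[1/p]$ injects. Your Step 2 realizes $D^u$ over finite extensions of the completions $\widehat{A_{\mathfrak m}}$. This works only at maximal ideals where $\rho_{\mathfrak m}$ is absolutely irreducible. At residually reducible points the Cayley--Hamilton algebra is at best a generalized matrix algebra. There, $D^u\otimes\widehat{A_{\mathfrak m}}$ is in general not the determinant of any representation over a finite extension of $\widehat{A_{\mathfrak m}}$.

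Passing to a ``dense open'' $U$ of good points does not rescue this, because $A$ may be non-reduced. The map $A\to\prod_{\mathfrak m\in U}\widehat{A_{\mathfrak m}}$ need not be injective: an element whose annihilator cuts out a closed set inside the reducible locus maps to zero. Density of the irreducible locus is also not something you can take for granted. Consequently Step 3, which checks multiplicativity and $D^{plec}|_{A[H]}=D^{up}$ through the same injection, is unsupported.

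The Lafforgue fallback is asserted rather than proved. A correspondence between $\GL_n$-pseudocharacters of $G_{F,S}$ and $(\GL_n^{\Sigma_F}\rtimes S_{\Sigma_F})$-pseudocharacters of $G^{plec}_{F,S}$ is not formal. It requires describing the conjugation invariants of $(\GL_n^{\Sigma_F}\rtimes S_{\Sigma_F})^m$ and verifying the axioms, which is exactly the missing input.

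The missing idea is Procesi's embedding theorem, which is what the paper uses. After inverting $p$, the Cayley--Hamilton quotient $(E[1/p],D)$ of $R_{\bar D}[[G_{F,S}]]$ embeds into $(M_n(B),\det)$ for a commutative faithful $R_{\bar D}[1/p]$-algebra $B$, so $R_{\bar D}[1/p]\hookrightarrow B$. This gives a single global honest representation $\rho\colon G_{F,S}\to\GL_n(B)$ whose trace $T$ comes from $D^u$, with no localization or density argument. Then $T^{plec}:=\mathrm{tr}\circ\Ind^{plec}_F\rho$ is an $n^d$-dimensional pseudocharacter in Taylor's sense, since its identities hold in $B$. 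Its values $T^{plec}(g,\sigma)=T_\sigma(g)$ lie in $R_{\bar D}[1/p]$ by the cycle formula, and Chenevier's Prop.~1.27 converts it into $D^{plec}$. The extension of $D^{up}$ and the pointwise description follow by specialization.

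In particular, your twisted algebra $\mathcal E'$ and the polynomial-law bookkeeping are unnecessary: over $\QQ$-algebras it suffices to build the trace function. Alternatively, you could verify the pseudocharacter identities on the finitely many words involved using the universal pseudocharacter of a free group, realized by generic matrices; that is Procesi's theorem in another guise.
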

 
 \begin{rmk}
 It is probably true that $D^{plec}$ exists without inverting $p$ but we are satisfied with this rational statement here as it is enough for the purpose of this paper.
 \end{rmk}
 
\begin{proof}
Note that $ R_{\bar{D}}[1/p]$ is a $\QQ$-algebra. By  \cite[Prop.1.27]{Chenevier-2014}, it's enough to construct a $n^d$-dimensional pseudocharacter $T^{plec}: G^{plec}_{F,S}\to R_{\bar{D}}[1/p]$ satisfying the desired property. Recall that the notion of pseudocharacter was introduced by Taylor in \cite{Taylor-1991} (called pseudorepresentation in the reference). Explicitly, it is a map such that
\begin{itemize}
\item $T^{plec}(1)=n^d$;
\item $T^{plec}(g_1g_2)=T^{plec}(g_2g_1)$ for all $g_1,g_2\in G^{plec}_{F,S}$ and
\item $\sum_{\sigma\in S_{n^d+1}} \mathrm{sgn}(\sigma) T^{plec}_\sigma(g_1,\cdots,g_{n^d+1})=0$ for all $g_1,\cdots,g_{n^d+1}\in G^{plec}_{F,S}$.
\end{itemize}
Here $S_{n^d+1}$ denotes the symmetric group of degree $n^d+1$, $\mathrm{sgn}(\sigma)$ denotes the sign of $\sigma$ and $T^{plec}_\sigma: (G^{plec}_{F,S})^{n^d+1}\to R_{\bar{D}}[1/p]$ is the function 
\[(g_1,\cdots,g_{n^d+1})\mapsto T^{plec}(g_{i_1^{(1)}}\cdots g_{i_{r_1}^{(1)}})T^{plec}(g_{i_1^{(2)}}\cdots g_{i_{r_2}^{(2)}})\cdots T^{plec}(g_{i_1^{(s)}}\cdots g_{i_{r_s}^{(s)}})\]
where $\sigma$ has the cycle decomposition $(i_1^{(1)}\cdots i_{r_1}^{(1)})(i_1^{(2)}\cdots i_{r_2}^{(2)})\cdots (i_1^{(s)}\cdots i_{r_s}^{(s)})$. It's known that the trace of an $n^d$-dimensional representation defines an $n^d$-dimensional pseudocharacter \cite[Th. 1]{Taylor-1991}.

To simplify notation, we will write $R=R_{\bar{D}}$, $H=G_{F,S}$ and identify $G^{plec}_{F,S}$ with $H^{\Sigma_F}\rtimes S_{\Sigma_F}$ as in the previous subsection.  Let $E$ be the maximal Cayley-Hamilton quotient of $R[[H]]$. 
The quotient determinant $\bar{D}^{CH}: E\to R$ is Cayley-Hamilton of dimension $n$  (cf. Section \ref{subsec-Defring}). Hence if we invert $p$ and apply Procesi's result \cite{Procesi-1987} (see also \cite[Rem.1.25]{Chenevier-2014}), we can embed $(E[1/p], \bar{D}^{CH})$ into $(M_{n}(B), \det)$ for some commutative faithful $R[1/p]$-algebra $B$. In particular, by composing with the map $H\to R[[H]]\to E$, we obtain an $n$-dimensional representation $\rho$ of $H$ over $B$ whose trace agrees with the pseudocharacter defined by $D^u$. Consider the plectic induction  $\Ind^{plec}_{F}\rho: G^{plec}_{F,S}\to \GL_{n^d}(B)$ and let
\[T^{plec}=\mathrm{Tr} \circ \Ind_F^{plec} \rho: G^{plec}_{F,S}\to B\]
be its trace.
We claim that $T^{plec}$ takes values in $R[1/p]\subseteq B$ and thus gives the desired pseudocharacter. To see this, it's enough to compute the trace explicitly. Denote the trace function of $\rho$ by $T$. A direct calculation shows that for $(g,\sigma)\in H^{\Sigma_F}\rtimes S_{\Sigma_F}$ with $g=(g_\tau)_{\tau\in\Sigma_F}$ and $\sigma\in S_{\Sigma_F}$, 
\[T^{plec}(g,\sigma)=T_\sigma(g),\]
where $T_\sigma$ is defined similarly using the cycle decomposition of $\sigma$ as above. Since $T$ takes values in $R$, we conclude that $T_\sigma$ takes value in the image of $R$ in $B$. 
\end{proof}
 
We denote by $E^{plec}$ the maximal Cayley-Hamilton quotient of $R_{\bar{D}}[[G^{plec}_{F,S}]][1/p]$.

\begin{prop}
$E^{plec}$ is a finitely generated module over $R_{\bar{D}}[1/p]$.
\end{prop}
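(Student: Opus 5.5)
Write $R=R_{\bar{D}}$, $H=G_{F,S}$, and identify $G^{plec}_{F,S}$ with $H^{\Sigma_F}\rtimes S_{\Sigma_F}$. The plan is to exhibit an explicit finitely generated $R[1/p]$-algebra through which $R[[G^{plec}_{F,S}]][1/p]\to E^{plec}$ factors surjectively.

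\textbf{Step 1: a model for the normal subgroup.} Let $E=E_{\bar{D}}$ be the universal Cayley-Hamilton quotient of $R[[H]]$. By Theorem \ref{thmWE}, $E$ is finite over $R$, and so is
\[
A:=E^{\otimes_R \Sigma_F}=\bigotimes_{\tau\in\Sigma_F,\,R} E .
\]
The symmetric group $S_{\Sigma_F}$ acts on $A$ by permuting factors. Form the twisted group algebra
\[
C:=A[1/p]\rtimes S_{\Sigma_F},
\]
which is a finite $R[1/p]$-module. There is a natural map
\[
R[[H^{\Sigma_F}\rtimes S_{\Sigma_F}]][1/p]\to C,
\qquad (g_\tau)_\tau\,\sigma\mapsto (\otimes_\tau \rho(g_\tau))\cdot\sigma .
\]
Its image is dense. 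Since every finitely generated $R[1/p]$-submodule is closed, the image is in fact all of $C$ provided the map is surjective onto $A[1/p]$, and that holds because each $H\to E$ generates $E$ topologically.

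\textbf{Step 2: show the kernel of $D^{plec}$ contains the kernel of this map.} It suffices to show that $E^{plec}$ is a quotient of $C$, i.e.\ that the kernel of $R[[G^{plec}_{F,S}]][1/p]\to C$ maps to zero in $E^{plec}$. First use the Procesi embedding $E[1/p]\hookrightarrow M_n(B)$ from the proof of Proposition \ref{prop-plecinddet}. Through it, $\Ind^{plec}_F\rho$ factors as
\[
C\to M_n(B)^{\otimes\Sigma_F}\rtimes S_{\Sigma_F}\to \End_B\bigl((B^n)^{\otimes\Sigma_F}\bigr)=M_{n^d}(B).
\]
So $D^{plec}$, being $\det\circ\Ind^{plec}_F\rho$ restricted to $R[1/p]$-values, is pulled back from a determinant on $C$.

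Next use the fact that $\CH(D)$ contains the kernel of any map to an algebra carrying a compatible Cayley-Hamilton determinant. Concretely, for $x$ in the kernel $J$ of $R[[G^{plec}]][1/p]\to C$, every characteristic polynomial coefficient of $xy$ vanishes, since these are computed in $M_{n^d}(B)$ where $xy$ maps to $0$. Hence $J$ lies in $\ker D^{plec}$. I then need $J\subseteq\CH(D^{plec})$, and not merely $J\subseteq\ker D^{plec}$. The cleanest route is to show $C$ surjects onto $E^{plec}$ by proving that the image of $J$ in $E^{plec}$ is a two-sided ideal in the kernel of a Cayley-Hamilton determinant, with $\CH(D^{plec})$ equal to the kernel modulo the Cayley-Hamilton relations.

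\textbf{Main obstacle.} The Cayley-Hamilton quotient can be larger than the image in $M_{n^d}(B)$, since the kernel of the determinant need not be killed. The fallback avoids the embedding entirely and argues by generation. Since $R[1/p]$ is a $\mathbb{Q}$-algebra, Cayley-Hamilton gives that each $g\in H^{\Sigma_F}$ satisfies in $E^{plec}$ a monic polynomial of degree $n^d$ with coefficients in $R[1/p]$. Moreover, the subalgebra generated by each factor $H_\tau$ is a Cayley-Hamilton quotient for the restricted determinant $D_\tau^{\otimes n^{d-1}}$. Its image is therefore a quotient of the Cayley-Hamilton algebra of $H$ for a determinant whose Cayley-Hamilton quotient is finite, by Wang-Erickson's result (\cite[Prop.3.6]{WE-2018} applied to the multiple $n^{d-1}D_\tau$).

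Granting this, the images of the $d$ commuting subalgebras, one per $\tau$, are finite over $R[1/p]$, and their product spans the image of $R[[H^{\Sigma_F}]]$. Adding the finitely many elements of $S_{\Sigma_F}$ then shows $E^{plec}$ is finitely generated over $R[1/p]$, by density together with closedness of finitely generated submodules.
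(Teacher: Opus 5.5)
Your first route (Steps 1--2) does fail, as you suspected. In $E^{plec}$ the Cayley--Hamilton identity for $(g,1,\dots,1)$ gives only $\chi_{D_\tau}(g)^{n^{d-1}}=0$, not $\chi_{D_\tau}(g)=0$. So $\ker\bigl(R[[G^{plec}_{F,S}]][1/p]\to C\bigr)$ need not lie in $\CH(D^{plec})$, and $E^{plec}$ need not be a quotient of $C$. The fallback has the right shape: the image of a subalgebra is a quotient of the Cayley--Hamilton quotient for the restricted determinant. (That restriction is the power $D_\tau^{n^{d-1}}$, the determinant of $\rho_\tau^{\oplus n^{d-1}}$, not a tensor power.) But the fallback has a genuine gap, coming from completed group algebras.

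First, Theorem~\ref{thmWE} gives finiteness only over the universal ring $R_{\bar{D}^{n^{d-1}}}$ of the residual determinant $\bar{D}^{n^{d-1}}$. To conclude that $R[[H]]/\CH(D_\tau^{n^{d-1}})$ is finite over $R$, you must base change along the classifying map $R_{\bar{D}^{n^{d-1}}}\to R$. That requires $R[[H]]=R_{\bar{D}^{n^{d-1}}}[[H]]\otimes R$, i.e.\ that this map is finite. This is true, but you never argue it.

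Second, and more seriously, the assembly step does not work as written. $R[[H^{\Sigma_F}]]$ is the \emph{completed} tensor product of the $R[[H]]$'s. The $R[1/p]$-span of products of the $d$ factor images visibly contains only the image of the algebraic tensor product $\bigotimes_R R[[H]]$, which is a proper dense subring. Your appeal to ``density plus closedness of finitely generated submodules'' needs a topology on $E^{plec}$ in which $R[[G^{plec}_{F,S}]]\to E^{plec}$ is continuous and finitely generated submodules are closed. No such topology is available before you know $E^{plec}$ is finite, so the argument is circular. Equivalently, you would need $\CH(D^{up})$ to be closed in the non-Noetherian ring $R[[H^{\Sigma_F}]]$, and nothing gives you that.

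The paper avoids both problems by not splitting into factors. It treats $\prod_\tau G_{\tau(F),\tau(S)}$ as a single group satisfying $\Phi_p$, with residual determinant $\bar{D}'=\otimes_\tau \bar{D}_\tau$ (Theorem~\ref{thm-tensorprod}). It takes the universal ring $R'$ and the map $i:R'\to R$ classifying $D^{up}$, and proves $i$ is finite. The argument: a point $f$ of $R/\mathfrak{m}_{R'}R$ with non-maximal kernel would make $\boxtimes_\tau\rho_{f,\tau}$ have finite image, although the $\rho_{f,\tau}$ do not. Hence $R[[\prod_\tau G_{\tau(F),\tau(S)}]]=R'[[\prod_\tau G_{\tau(F),\tau(S)}]]\otimes_{R'}R$ holds algebraically. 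So the image of this algebra in $E^{plec}$ is a quotient of $E'\otimes_{R'}R$, where $E'$ is finite over $R'$ by Theorem~\ref{thmWE}. Finally, $S_{\Sigma_F}$ contributes only finitely many further generators.
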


\begin{proof}
Because $\Sigma_F$ is finite, there is an isomorphism of $R_{\bar{D}}$-modules
\[R_{\bar{D}}[[G^{plec}_{F,S}]]\cong R_{\bar{D}}[[\prod_{\tau\in\Sigma_F}  G_{\tau(F),\tau(S)}]]\otimes_{R_{\bar{D}}} R_{\bar{D}}[S_{\Sigma_F}].\]
It's enough to show that the image of $R_{\bar{D}}[[\prod_{\tau\in\Sigma_F}  G_{\tau(F),\tau(S)}]]$ in $E^{plec}$ is a finitely generated $R_{\bar{D}}$-module. By  Theorem \ref{thm-tensorprod}, the tensor product of $\bar{D}_\tau$ defines an $n^d$-dimensional determinant $\bar{D}'$ of  $\prod_{\tau\in\Sigma_F}  G_{\tau(F),\tau(S)}$ valued in  $\FF$. We denote by $(R',D')$ the universal determinant parametrizing all continuous lifts of  $\bar{D}'$.  (We note that $\prod_{\tau\in\Sigma_F}  G_{\tau(F),\tau(S)}$ also satisfies condition  $\Phi_p$.)
Since $D^{up}:R_{\bar{D}}[[\prod_{\tau\in\Sigma_F}  G_{\tau(F),\tau(S)}]]\to R_{\bar{D}}$ is such a lift, it follows from the universal property that there is a natural map
\[i:R'\to R_{\bar{D}}\]
along which $D^{up}$ is the pushforward of $D'$. 

\begin{lem}
$i:R'\to R_{\bar{D}}$ is a finite map. 
\end{lem}

\begin{proof}
Let $J$ be the maximal ideal of $R'$. Since both $R'$ and $R_{\bar{D}}$ are complete local Noetherian algebras, it's enough to show that $R_{\bar{D}}/(J)$ is Artinian. Suppose not, we can find an algebra homomorphism $f:R_{\bar{D}}\to k$ to some algebraically closed field $k$ whose kernel contains $J$ but is not the maximal ideal. Consider the induced determinant $D_k$ on $\prod  G_{\tau(F),\tau(S)}$ valued in $k$. We denote by $\rho_k:\prod  G_{\tau(F),\tau(S)}\to \GL_{n^d}(k)$ the semisimple representation with determinant $D_k$.

Since $D_k=f\circ (D' \mod J)=f\circ \bar{D}'$, representation $\rho_k$ has finite image. On the other hand $D_k=f\circ D^{up}$. Let $\rho_{\tau}: G_{\tau(F),\tau(S)}\to\GL_d(k)$ be the semisimple representation associated to $f\circ D_\tau$.  The box tensor product $\boxtimes_{\tau\in\Sigma_F} \rho_\tau$ is semisimple and isomorphic to $\rho_k$ by Theorem \ref{thm-tensorprod}. This contradicts our choice of $f$ because $\rho_\tau$ has infinite image. ($R_{\bar{D}}$ is topologically generated by the coefficients of the characteristic polynomials by \cite[Rem.3.5]{Chenevier-2014}.)
\end{proof}

Thanks to this lemma, $R_{\bar{D}}[[\prod_{\tau\in\Sigma_F}  G_{\tau(F),\tau(S)}]]=R'[[\prod_{\tau\in\Sigma_F}  G_{\tau(F),\tau(S)}]]\otimes_{R'} R_{\bar{D}}$. We finish the proof by observing that the maximal Cayley-Hamilton quotient of $R'[[\prod_{\tau\in\Sigma_F}  G_{\tau(F),\tau(S)}]]$ is a finite module of $R'$ by Theorem \ref{thmWE}. 
\end{proof}

 Fix a choice of topological generators of $R_{\bar{D}}$ and fix $r\in p^{\QQ}\cap (0,1)$. As in Definition \ref{dfnE(r)}, we can define $R^{(r)}_{\bar{D}}$ and $E^{plec,(r)}:=E^{plec}\otimes_{R_{\bar{D}}} R^{(r)}_{\bar{D}}$.
 
There is a natural group homomorphism $\rho:G_F^{plec} \to E^{plec,\times}$. Recall that $G_{\QQ}$ and $\prod_{\tau\in\Sigma_F}  \Gal(\overline{\QQ}/\tau(F))$ are natural subgroups of $G_F^{plec}$.  The fixed embedding $\iota_p:\overline{\QQ}\to\overline{\QQ_p}$ determines a $p$-adic place $\tau_p$ of $\tau(F)$ and an embedding 
\[\Gal(\overline{\QQ_p}/\tau(F)_{\tau_p})\subseteq \Gal(\overline{\QQ}/\tau(F))\subseteq \prod_{\tau'\in\Sigma_F}  \Gal(\overline{\QQ}/\tau'(F))\subseteq G_F^{plec}.\]

\begin{dfn} \label{defn-A(r)}
We define
$A^{(r)} \subseteq E^{plec,(r)}$
as the $R^{(r)}_{\bar{D}}$-subalgebra generated by $\rho(G_\QQ)$ and 
$\Lie(I_{\tau(F)_{\tau_p}},r)$ for all $\tau\in\Sigma_F$ (cf. Definition \ref{defnconstLiealg}), where $I_{\tau(F)_{\tau_p}}\subseteq \Gal(\overline{\QQ_p}/\tau(F)_{\tau_p})$ denotes the inertia subgroup. Clearly if $r'\in p^{\QQ}\cap (0,1)$ and $r<r'$, there is a natural map $A^{(r')}\to A^{(r)}$ with a dense image.
\end{dfn}

\begin{dfn}  \label{defn-partSen}
As in Subsection \ref{subsecSen}, using $\Gal(\overline{\QQ_p}/\tau(F)_{\tau_p})$ one can construct the Sen operator 
 \[\theta^{(r)}_{\tau}\in E^{plec,(r)}\widehat\otimes_{\QQ_p}\CC_p.\]
 All  these  $\theta^{(r)}_{\tau}$'s are called the ($\tau$-)\textit{partial Sen operators}. Similarly by considering $G_{\QQ_p}\subseteq G_{\QQ}\subseteq G_F^{plec}\stackrel{\rho}{\to} E^{plec,\times}$, one can construct the Sen operator
\[\theta^{(r)}\in E^{plec,(r)}\widehat\otimes_{\QQ_p}\CC_p.\]
\end{dfn}

\begin{lem} \label{lem-Sen=sumpartSen}
All $\theta^{(r)}_{\tau}$'s commutate with each other, and
$\sum_{\tau\in\Sigma_F} \theta^{(r)}_{\tau}=\theta^{(r)}$.
\end{lem}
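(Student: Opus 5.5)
Both claims are identities in $E^{plec,(r)}\widehat\otimes_{\QQ_p}\CC_p$, and I would prove them by reducing to finite-dimensional representations, exactly as in the uniqueness part of Theorem \ref{thmexistSenop}. Since $E^{plec,(r)}$ is finite over $R^{(r)}_{\bar{D}}$, the map
\[E^{plec,(r)}\to\prod_{\mathfrak{m}}\prod_{k\geq 0}E^{plec,(r)}/\mathfrak{m}^kE^{plec,(r)}\]
is injective, where $\mathfrak{m}$ runs over the maximal ideals of $R^{(r)}_{\bar{D}}$. By Lemma \ref{lemtensorCp} this injectivity survives the completed tensor product with $\CC_p$. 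Composing with left multiplication, each $W:=E^{plec,(r)}/\mathfrak{m}^k$ becomes a finite-dimensional continuous representation of $G_F^{plec}$; continuity follows as in Lemma \ref{lemlaGalact}. By the functoriality in Theorem \ref{thm-Sen-inf}, the images of $\theta^{(r)}$ and $\theta^{(r)}_\tau$ in $\End(W)\otimes\CC_p$ are the Sen operators of $W$ restricted to $G_{\QQ_p}$ and to $G_{\tau(F)_{\tau_p}}$ respectively. It therefore suffices to prove the following: for a finite-dimensional continuous representation $W$ of $G^{plec}_F$ (in fact of $G^{plec}_{F,S}$), the partial Sen operators $\theta_{W,\tau}$ commute and $\sum_\tau\theta_{W,\tau}=\theta_W$.

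For the finite-dimensional statement I would restrict to the open subgroup $H:=\prod_{\tau\in\Sigma_F}\Gal(\overline{\QQ_p}/\tau(F)_{\tau_p})$, a product of local Galois groups. Let $\tilde F_p$ be a finite Galois extension of $\QQ_p$ containing all $\tau(F)_{\tau_p}$. The diagonal copy of $G_{\tilde F_p}$ in $H$ is $G_{\QQ_p}\cap H$, the inclusion of $G_\QQ$ into $G_F^{plec}$ being diagonal on $\prod_\tau\Gal(\overline{\QQ}/\tau(F))$. By Theorem \ref{thm-Sen}(6), $\theta_W$ can be computed using $G_{\tilde F_p}$, and similarly each $\theta_{W,\tau}$ can be computed using the $\tau$-factor $G_{\tilde F_p}\subseteq H$. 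So we reduce to the case $H=G_K^{\Sigma}$ with $K=\tilde F_p$. Here $\theta_W$ is the Sen operator of the diagonal restriction, and $\theta_{W,\tau}$ is the Sen operator of the restriction to the $\tau$-th factor.

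The key step, and the main obstacle, is showing that for a representation of a product $G_K^{\Sigma}$ the factorwise Sen operators commute and sum to the diagonal one. I would first try Sen's construction in families (Theorem \ref{thm-Sen-inf}), applied iteratively. Form $W\widehat\otimes\CC_p^{\widehat\otimes\Sigma}$, the completed tensor product of one copy of $\CC_p$ for each factor. Taking $H_K$-invariants and $\Gamma_m$-analytic vectors one factor at a time produces a module over $K_m^{\otimes\Sigma}$ carrying commuting actions of $\Lie\Gamma_m^{\Sigma}$. The $\tau$-th generator acts as $\theta_{W,\tau}$; these commute because the factors $\Gamma_m$ commute. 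The diagonal generator is their sum, and it computes the diagonal Sen operator after base change along the multiplication map $\CC_p^{\widehat\otimes\Sigma}\to\CC_p$, using that the diagonal cyclotomic tower is compatible with this base change.

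A cleaner alternative avoids the product construction. Since $H$ is compact and $W$ is finite-dimensional, its image is a $p$-adic Lie group and one may reduce to $\Lie\rho(H)=\prod_\tau\Lie\rho(I_\tau)$ with commuting factors. The fact $\theta_{W,\tau}\in\Lie\rho(I_\tau)\otimes\CC_p$ from Theorem \ref{thm-Sen}(5) gives commutativity immediately. For the sum, one can use that the $\tau$-factor groups commute in $\GL(W)$. Then Sen's decomposition is compatible under simultaneous decompletion: the $\Gamma_m$-analytic vectors for the diagonal equal the intersection over $\tau$ of the $\Gamma_m$-analytic vectors for each factor. Differentiating the diagonal $\Gamma_m$-action then gives the sum of the partial derivations. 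This last compatibility is where I expect the real work to lie, and I would verify it by passing to the Tate–Sen decompletion for the abelian group $\Gamma_m^\Sigma$.
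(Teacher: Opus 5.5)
Your reductions match the paper's. You pass to the finite-dimensional quotients $E^{plec,(r)}/\mathfrak{m}^k$ via Lemma \ref{lemtensorCp}, and then to a finite extension $K$ containing all $\tau(F)_{\tau_p}$, with $G_K$ sitting diagonally in $\prod_\tau G_K$. Your commutativity argument via Theorem \ref{thm-Sen}(5) is also fine.

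The gap is the step you flag yourself: for a representation $W$ of $G_K^{\Sigma}$ you never prove $\sum_\tau\theta_{W,\tau}=\theta_W$. In your first sketch, three things are only asserted, and together they are the entire content of the claim:
\begin{itemize}
\item the existence of a simultaneous decompletion $D^\Sigma$ over $K_m^{\otimes\Sigma}$ (a product of fields, not a field);
\item the joint analyticity of the $\Gamma_m^\Sigma$-action on it;
\item the identification of its base change along the multiplication map with the diagonal Sen module.
\end{itemize}
Your second sketch is wrong as stated. On $W\otimes_{\QQ_p}\CC_p$ with a single copy of $\CC_p$, the factorwise semilinear actions do not commute. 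Moreover, the diagonal Sen module is not the intersection of the factorwise ones. Take $W$ a character that is unramified of infinite order on the first factor and trivial on the second. The two factorwise Sen lines in $\CC_p$ then meet in $0$, while the diagonal one is nonzero. Also, $\Lie\rho(H)$ is the sum, not the product, of the commuting $\Lie\rho(I_\tau)$.

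The missing idea is that no new decompletion is needed. The identity is a formal consequence of Theorem \ref{thm-Sen}(1)--(3), which is what the paper invokes ("compatibility with tensor product").

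First take $W=V_1\boxtimes\cdots\boxtimes V_d$ an external tensor product.
\begin{itemize}
\item Its restriction to the diagonal $G_K$ is the internal tensor product, so $\theta_W=\sum_\tau 1\otimes\cdots\otimes\theta_{V_\tau}\otimes\cdots\otimes 1$.
\item Its restriction to the $\tau$-th factor is $V_\tau$ tensored with a trivial representation, whose Sen operator is $0$. So $\theta_{W,\tau}=1\otimes\cdots\otimes\theta_{V_\tau}\otimes\cdots\otimes 1$.
\end{itemize}

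A general $W$ reduces to this case. The Zariski closures $\mathcal{G}_\tau\subseteq\GL(W)$ of the images of the factors commute. Hence $W$ is the pullback along $\prod_\tau\rho_\tau$ of an algebraic representation of $\prod_\tau\mathcal{G}_\tau$, acting through multiplication. The regular-representation argument then embeds $W$, $G_K^\Sigma$-equivariantly, into an external tensor product of algebraic representations of the $\mathcal{G}_\tau$. Functoriality and additivity of Sen operators give the identity on $W$.

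Your iterated decompletion could probably be completed, but it would re-derive from scratch what these formal properties already give.
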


\begin{proof}
Since $\Gal(\overline{\QQ_p}/\tau(F)_{\tau_p})$ commutes with each other in $G_F^{plec}$, by the functorial property of the Sen operator, the partial Sen operator commutes as well.
To compute the Sen operator, one can replace the local Galois group by an open subgroup.
Let $K$ be a finite extension of $\QQ_p$ in $\overline{\QQ_p}$ containing all embeddings of $F$. Then $\theta^{(r)}$ is also the Sen operator constructed from $G_{K}\subseteq G_{\QQ_p}\stackrel{\rho}{\to} E^{plec,\times}$. Similarly $\theta^{(r)}_{\tau}$ can be constructed from $G_{K}\subseteq \Gal(\overline{\QQ_p}/\tau(F)_{\tau_p})\stackrel{\rho}{\to} E^{plec,\times}$. The natural inclusion $G_{K}\to G_F^{plec}$ factors through 
\[G_K\to \prod_\tau G_K \subseteq \prod_\tau \Gal(\overline{\QQ_p}/\tau(F)_{\tau_p})\subseteq G_F^{plec}\]
where the first map is the diagonal embedding. Our claim follows from the property that the compatibility between the Sen operator and the tensor product cf. Theorem \ref{thm-Sen}.
\end{proof}

\begin{dfn}  \label{defn-partSenpoly}
For each $\tau:F\to \overline{\QQ}$,
recall that $D_\tau:R_{\bar{D}}[[G_{\tau(F),\tau(S)}]]\to R_{\bar{D}}$ denotes the $n$-dimensional determinant introduced in the beginning of this subsection. We restrict $D_\tau$ to $R_{\bar{D}}[[\Gal(\overline{\QQ_p}/\tau(F)_{\tau_p})]]$ and denote the Sen polynomial in Definition \ref{dfn-Senpoly} by $\chi_\tau^{(r)}(t)\in R_{\bar{D}}^{(r)}\widehat\otimes_{\QQ_p}\CC_p[t]$.

Let $f: R^{(r)}_{\bar{D}}\to\CC_p$ be a continuous homomorphism. The negatives of the roots of $(f\otimes 1)(\chi_\tau^{(r)})\in \CC_p[t]$ are called the \textit{$\tau$-Hodge-Tate-Sen weights} of $\rho_f$, where \( f\otimes 1 \) denotes the composition \( R^{(r)}_{\bar{D}}\widehat\otimes \CC_{p}\xrightarrow{f\otimes 1}\CC_{p}\widehat\otimes \CC_{p}\to \CC_{p} \) by abuse of notation; equivalently, they are the Hodge-Tate-Sen weights of the representation
\[\rho_{f,\tau}|_{\Gal(\overline{\QQ_p}/\tau(F)_{\tau_p})}: \Gal(\overline{\QQ_p}/\tau(F)_{\tau_p})\to \GL_n(\CC_p), \]
where $\rho_{f,\tau}$ was introduced in Remark \ref{rmk-resprodGtauF}.
\end{dfn}

\begin{lem} \label{lem-irredcsa}
Suppose that $\rho_f:G_{F,S}\to\GL_n(\CC_p)$ is irreducible. Then
\[E^{(r)}_{\bar{D}}\otimes_{R^{(r)}_{\bar{D}},f}\CC_p\cong \End_{\CC_p}(\Ind_F^{plec}\rho_f)\cong M_{n^d}(\CC_p).\]
\end{lem}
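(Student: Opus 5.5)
The plan is to identify the fiber of the plectic Cayley--Hamilton algebra at $f$ as a Cayley--Hamilton algebra over the field $\CC_p$ whose determinant is absolutely irreducible, and then apply Chenevier's structure theorem for such algebras. (I read $E^{(r)}_{\bar{D}}$ in the statement as $E^{plec,(r)}$, the algebra on which $\Ind_F^{plec}$ acts.) Write $E_f:=E^{plec,(r)}\otimes_{R^{(r)}_{\bar{D}},f}\CC_p$.

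\emph{Step 1: finiteness and density.} By the proposition preceding Definition \ref{defn-A(r)}, $E^{plec}$ is a finitely generated $R_{\bar{D}}[1/p]$-module. Hence $E_f$ is a finite-dimensional $\CC_p$-algebra, and the algebraic tensor product needs no completion. The image of $R_{\bar{D}}[G^{plec}_{F,S}]$ is dense in $E^{plec,(r)}$. Since $E_f$ is finite-dimensional, so every subspace is closed, the $\CC_p$-span of $\rho(G^{plec}_{F,S})$ is all of $E_f$.

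\emph{Step 2: the Cayley--Hamilton structure.} The determinant $D^{plec}$ descends to $E^{plec}$, which is its maximal Cayley--Hamilton quotient. Base change along $f$ gives a determinant $D_f:E_f\to\CC_p$ of dimension $n^d$. The Cayley--Hamilton property is preserved under scalar extension (Chenevier \cite[\S 1.17]{Chenevier-2014}), since the defining identity $\chi(r,r)=0$ is required for all commutative base changes. So $(E_f,D_f)$ is Cayley--Hamilton. By Proposition \ref{prop-plecinddet}, $D_f$ is the determinant of the semisimple representation $\Ind^{plec}_F\rho_f$.

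\emph{Step 3: irreducibility and conclusion.} By Remark \ref{rmk-resprodGtauF}, the restriction of $\Ind^{plec}_F\rho_f$ to $\prod_\tau \Gal(\overline{\QQ}/\tau(F))$ is $\boxtimes_\tau\rho_{f,\tau}$. An external tensor product of absolutely irreducible representations of a product group is absolutely irreducible. We have $\CC_p$ algebraically closed and $\rho_f$ irreducible, so each $\rho_{f,\tau}$ is absolutely irreducible, and hence so is $\Ind^{plec}_F\rho_f$. Therefore $D_f$ is absolutely irreducible.

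Chenevier's theorem \cite[Thm.~2.16, Thm.~2.22]{Chenevier-2014} then applies: a Cayley--Hamilton algebra over a field whose determinant is absolutely irreducible of dimension $N$ has $\ker D_f=0$ and is isomorphic to $M_N$ of that field. Explicitly, the natural map $E_f\to\End_{\CC_p}(\Ind^{plec}_F\rho_f)$ is surjective by Burnside's theorem, using Step 1 and absolute irreducibility. Its kernel lies in $\ker D_f$, which vanishes in the Cayley--Hamilton, absolutely irreducible case. This gives $E_f\cong\End_{\CC_p}(\Ind^{plec}_F\rho_f)\cong M_{n^d}(\CC_p)$.

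\emph{Main obstacle.} The delicate point is the injectivity in Step 3, namely that no nilpotent radical survives in $E_f$. This is exactly where the Cayley--Hamilton property of the base change is essential, and I would cite Chenevier's result rather than reprove it. A smaller point needing care is that the $1/p$-inverted Cayley--Hamilton quotient is compatible with the base change to $R^{(r)}_{\bar{D}}$ and then along $f$. This should follow from finiteness and the universal property of $\CH(D)$ under scalar extension.
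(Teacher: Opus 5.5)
Your proposal is correct and follows essentially the same route as the paper. The paper's proof just observes that $\Ind_F^{plec}\rho_f$ is irreducible and then cites Chenevier's structure result for Cayley--Hamilton algebras over a field with absolutely irreducible determinant \cite[Definition-Proposition 2.18]{Chenevier-2014}; your Steps 1--3 spell out that result's hypotheses (finiteness, stability of the Cayley--Hamilton property under base change, identification of $D_f$ via Proposition \ref{prop-plecinddet}, irreducibility via Remark \ref{rmk-resprodGtauF}). Your reading of $E^{(r)}_{\bar D}$ as $E^{plec,(r)}$ is the intended one, and the density claim in Step 1 is not needed: Burnside applied directly to the image of $G^{plec}_{F,S}$ already gives surjectivity onto $\End_{\CC_p}(\Ind_F^{plec}\rho_f)$.
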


\begin{proof}
$\Ind_F^{plec}\rho_f$ is irreducible by our assumption. Our claim is \cite[Definition-Proposition 2.18]{Chenevier-2014}.
\end{proof}

\begin{prop} \label{prop-parSenev}
Let $f: R^{(r)}_{\bar{D}}\to\CC_p$ be as in the previous lemma. For $\tau\in\Sigma_F$, the image of $\theta^{(r)}_{\tau}\in E^{(r)}_{\bar{D}}\widehat\otimes_{\QQ_p}\CC_p$ in $E^{(r)}_{\bar{D}}\otimes_{R^{(r)}_{\bar{D}},f}\CC_p\cong M_{n^d}(\CC_p)$, denoted by $\theta_{\tau,f}$, satisfies
\[(f\otimes 1)(\chi_\tau^{(r)})( \theta_{\tau,f})=0.\]
In particular, its eigenvalues are the same as the negatives of the $\tau$-Hodge-Tate-Sen weights of $\rho_f$ (without counting multiplicities).
\end{prop}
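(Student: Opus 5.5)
The plan is to reduce the statement to Sen's theory for the finite-dimensional representation $\Ind_F^{plec}\rho_f$ restricted to the local group $\Gal(\overline{\QQ_p}/\tau(F)_{\tau_p})$. First, by the universal property of the Sen operator (Theorem \ref{thmexistSenop}, applied to the local group $G_{\tau(F)_{\tau_p}}$ mapping into $E^{plec,(r)}$, which is finite over $R^{(r)}_{\bar{D}}$), the image of $\theta^{(r)}_\tau$ under the continuous algebra map $E^{plec,(r)}\to E^{plec,(r)}\otimes_{R^{(r)}_{\bar{D}},f}\CC_p\cong M_{n^d}(\CC_p)$ of Lemma \ref{lem-irredcsa} is the Sen operator of the representation $\Ind^{plec}_F\rho_f|_{\Gal(\overline{\QQ_p}/\tau(F)_{\tau_p})}$. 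There is a small point here: the target is a $\CC_p$-algebra rather than $M_N(\QQ_p)$. I would handle it by descending $f$ to a finite extension $L'$ of $L$, since $f$ is continuous on an affinoid-type algebra and its image is a finite extension field. I would then view $M_{n^d}(L')\subseteq M_{n^d[L':\QQ_p]}(\QQ_p)$ and use the functoriality of Sen's operator (Theorem \ref{thm-Sen}(1),(2)). If $f$ does not land in a finite extension, the same argument goes through by working with the quotients $E/\mathfrak m^k$ as in the proof of Theorem \ref{thmexistSenop}, with $\mathfrak m=\ker f$ when the residue field is finite over $L$.

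Second, I would use Remark \ref{rmk-resprodGtauF} to compute that restriction:
\[\Ind^{plec}_F\rho_f|_{\prod_{\tau'}\Gal(\overline{\QQ}/\tau'(F))}\cong\boxtimes_{\tau'}\rho_{f,\tau'}.\]
So $\Gal(\overline{\QQ_p}/\tau(F)_{\tau_p})$, which sits inside the $\tau$-factor, acts through $\rho_{f,\tau}$ on the $\tau$-tensor factor and trivially on the others. Hence the restriction is $\rho_{f,\tau}|_{\Gal(\overline{\QQ_p}/\tau(F)_{\tau_p})}\otimes(\text{trivial})^{n^{d-1}}$, and by Theorem \ref{thm-Sen}(2),(3) its Sen operator is $\theta_{\rho_{f,\tau}}\otimes 1$. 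Since the action of $\theta_{\tau,f}$ is $\theta_{\rho_{f,\tau}}\otimes 1$, any polynomial killing $\theta_{\rho_{f,\tau}}$ also kills $\theta_{\tau,f}$, and the two operators have the same eigenvalues as sets.

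Third, I need $(f\otimes1)(\chi^{(r)}_\tau)(\theta_{\rho_{f,\tau}})=0$. By definition, $\chi^{(r)}_\tau$ is the characteristic polynomial, with respect to $D_\tau$, of the Sen element in $E^{(r)}_{\bar D}\widehat\otimes\CC_p$ built from the local group. Specializing along $f$, again via the universal property, this element maps to $\theta_{\rho_{f,\tau}}\in M_n(\CC_p)$, and $D_\tau$ specializes to $\det$ of $\rho_{f,\tau}$ (recall that $\rho_f$ is semisimple with determinant $f\circ D^u$). Determinants are compatible with base change, so $(f\otimes1)(\chi_\tau^{(r)})$ is the ordinary characteristic polynomial of $\theta_{\rho_{f,\tau}}$. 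The Cayley–Hamilton theorem in $M_n(\CC_p)$ then gives the vanishing, and the roots are exactly the negatives of the $\tau$-Hodge–Tate–Sen weights, which matches Definition \ref{defn-partSenpoly}.

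I expect the main obstacle to be the compatibility between specialization and the completed tensor product $\widehat\otimes\CC_p$: one must check that $f\otimes1$, followed by $\CC_p\widehat\otimes\CC_p\to\CC_p$, commutes with the identification of Lemma \ref{lem-irredcsa} and with taking $D$-characteristic polynomials. I would try to settle this by passing through the finite-dimensional quotients $E/\mathfrak m^k$ and applying Lemma \ref{lemtensorCp}, exactly as in the proof of Theorem \ref{thmexistSenop}.
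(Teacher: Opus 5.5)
Your proposal is correct and is essentially the paper's argument: the paper likewise uses $\Ind_F^{plec}\rho_f|_{\prod_{\tau'}\Gal(\overline{\QQ}/\tau'(F))}\cong\bigotimes_{\tau'}\rho_{f,\tau'}$ to identify $\theta_{\tau,f}$ with the Sen operator on the factor $\rho_{f,\tau}|_{\Gal(\overline{\QQ_p}/\tau(F)_{\tau_p})}$ and concludes by Cayley--Hamilton, leaving implicit your extra checks that $\theta^{(r)}_\tau$ and $\chi^{(r)}_\tau$ specialize correctly along $f$. One small correction: a continuous $f$ need not land in a finite extension of $L$, since it can send a generator to a transcendental element, so $\ker f$ need not be maximal and your $E/\mathfrak{m}^k$ fallback does not apply either; the fix is to use the functoriality in $(\rho,A)$ of Theorem \ref{thm-Sen-inf} along $E^{plec,(r)}\to M_{n^d}(\CC_p)$ and along $M_n(\CC_p)\to M_{n^d}(\CC_p)$, $x\mapsto x\otimes 1$, followed by base change along $\CC_p\widehat\otimes_{\QQ_p}\CC_p\to\CC_p$, which handles every $f$ at once.
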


\begin{proof}
 It follows from the discussion at the beginning of this subsection that 
\[\Ind_F^{plec}\rho_f|_{\prod_{\tau\in\Sigma_F}\Gal(\overline{\QQ}/\tau(F))}\cong \bigotimes_{\tau\in\Sigma_F} \rho_{f,\tau}, \]
and the operator $\theta_{\tau,f}$ acting on the left hand side is identified with the Sen operator acting on the factor $\rho_{f,\tau}|_{\Gal(\overline{\QQ_p}/\tau(F)_{\tau_p})}$ on the right hand side, which implies our claim by the Cayley-Hamilton theorem.
\end{proof}

The same argument yields the following.

\begin{prop} \label{prop-wT=sumpartialwt}
Let $f: R^{(r)}_{\bar{D}}\to\CC_p$ be a continuous homomorphism. Then $(\oInd^{\QQ}_F\rho_f)|_{G_{\QQ_p}}$  has Hodge-Tate-Sen weights  $\sum_{\tau\in\Sigma_F}a_\tau$ where $a_\tau$ runs through all $\tau$-Hodge-Tate-Sen weights of $\rho_f$ (with multiplicities). 
\end{prop}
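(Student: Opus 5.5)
The plan is to compute the Sen operator of $(\oInd^{\QQ}_F\rho_f)|_{G_{\QQ_p}}$ after restricting to a small open subgroup. This changes neither the Sen operator nor its eigenvalues, by Theorem \ref{thm-Sen}(6). We then decompose that operator as a sum of commuting partial Sen operators, exactly as in the proof of Proposition \ref{prop-parSenev} and Lemma \ref{lem-Sen=sumpartSen}.

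First, replace $\rho_f$ by its semisimplification. The Hodge-Tate-Sen weights with multiplicity depend only on the semisimplification: the Sen polynomial is multiplicative in short exact sequences, and tensor induction commutes with semisimplification up to Jordan--H\"older factors. We may therefore assume $\rho_f$ is the semisimple representation attached to $f\circ D^u$.

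Choose a finite extension $K\subseteq\overline{\QQ_p}$ of $\QQ_p$ containing all the $p$-adic completions $\tau(F)_{\tau_p}$. Also fix a finite Galois extension $\tilde F\subseteq\overline\QQ$ of $\QQ$ containing the Galois closure of $F$, with $K\supseteq \tilde F_{\iota_p}$. As in the proof of Lemma \ref{lem-Sen=sumpartSen}, the inclusion $G_K\subseteq G_{\QQ}\subseteq G_F^{plec}$ factors through the diagonal
\[G_K\to \prod_\tau G_K\subseteq \prod_\tau \Gal(\overline{\QQ_p}/\tau(F)_{\tau_p}).\]
By \eqref{tenind|_GQ} and Remark \ref{rmk-resprodGtauF}, this gives
\[(\oInd^{\QQ}_F\rho_f)|_{G_K}\cong \bigotimes_{\tau\in\Sigma_F}\rho_{f,\tau}|_{G_K}.\]
Theorem \ref{thm-Sen}(3) then says the Sen operator of the left side is $\sum_\tau 1\otimes\cdots\otimes\theta_\tau\otimes\cdots\otimes 1$. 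Here $\theta_\tau$ is the Sen operator of $\rho_{f,\tau}|_{G_K}$, which by Theorem \ref{thm-Sen}(6) equals that of $\rho_{f,\tau}|_{\Gal(\overline{\QQ_p}/\tau(F)_{\tau_p})}$. By Definition \ref{defn-partSenpoly}, the eigenvalues of $\theta_\tau$ are the negatives of the $\tau$-Hodge-Tate-Sen weights, counted with multiplicity.

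It remains to read off eigenvalues with multiplicity. Triangularize each $\theta_\tau$ simultaneously over $\CC_p$ by choosing a basis of $\rho_{f,\tau}\otimes\CC_p$ in which $\theta_\tau$ is upper triangular. The tensor product basis, ordered lexicographically, makes each summand $1\otimes\cdots\otimes\theta_\tau\otimes\cdots\otimes1$ upper triangular. Hence the sum is upper triangular, with diagonal entries $\sum_\tau(-a_\tau)$ as $(a_\tau)_\tau$ ranges over all tuples of $\tau$-weights with multiplicity. This gives the claim.

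The only delicate point is to state precisely that the $\tau$-weights are defined using the factor $\rho_{f,\tau}$ through the fixed $\tilde\tau$ and the place $\tau_p$ determined by $\iota_p$. This has to match the identification \eqref{tenind|_GQ} restricted to $G_K$. The check is a bookkeeping verification that conjugating by the choice of $\tilde\tau$ does not affect Sen weights.
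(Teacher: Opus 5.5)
Your proposal is correct and is essentially the paper's argument. The paper obtains this by the same mechanism as Proposition \ref{prop-parSenev} and Lemma \ref{lem-Sen=sumpartSen}: restrict to $G_K$ with $K$ containing all embeddings of $F$, factor through the diagonal into $\prod_\tau \Gal(\overline{\QQ_p}/\tau(F)_{\tau_p})$, and use the tensor compatibility of Sen operators, so the Sen operator is the sum of commuting partial Sen operators acting on separate tensor factors. You carry this out directly on the specialization $\rho_f$ rather than in the universal algebra $E^{plec,(r)}$, and your semisimplification step is unnecessary because $\rho_f$ is by definition the semisimple representation attached to $f\circ D^u$.
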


Recall that $A^{(r)}$ was introduced in Definition \ref{defn-A(r)}.

\begin{prop} \label{prop-partialSen}
$\theta^{(r)}_{\tau}\in A^{(r)}\widehat\otimes_{\QQ_p}\CC_p$ for all $\tau$. Moreover $A^{(r)}$ is the smallest $R^{(r)}_{\bar{D}}$-subalgebra that contains $\rho(G_\QQ)$ and, after taking the completed tensor product with $\CC_p$, contains all $\theta^{(r)}_{\tau}$.
\end{prop}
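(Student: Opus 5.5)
The plan is to reduce both assertions to the corresponding statements for a single local inertia group, namely Theorem \ref{thmfamSen} and Corollary \ref{cor-famSen}, applied to $\Gal(\overline{\QQ_p}/\tau(F)_{\tau_p})\to E^{plec,\times}$ one $\tau$ at a time.

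\emph{Membership.} Fix $\tau$. The partial Sen operator $\theta^{(r)}_\tau$ is the Sen operator, in the sense of Theorem \ref{thm-Sen-inf}, of the continuous homomorphism $\Gal(\overline{\QQ_p}/\tau(F)_{\tau_p})\to (E^{plec,(r)})^\times$; continuity follows as in Lemma \ref{lemlaGalact}. Since $E^{plec,(r)}$ is finite over $R^{(r)}_{\bar{D}}$, the arguments of Theorem \ref{thmexistSenop} and Theorem \ref{thmfamSen} apply verbatim; they use only finiteness and Lemma \ref{leminccomp}, not the fact that $E$ is the Cayley-Hamilton algebra of this particular group. They show that for every maximal ideal $\mathfrak{m}$ and every $k$, the image of $\theta^{(r)}_\tau$ in $E^{plec,(r)}/\mathfrak{m}^k\otimes\CC_p$ is the finite-dimensional Sen operator. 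By Theorem \ref{thm-Sen}(5), this image lies in the image of $\Lie(I_{\tau(F)_{\tau_p}},r)\otimes\CC_p$. Lemma \ref{leminccomp}, together with the orthonormal-basis argument of Lemma \ref{lemtensorCp}, then gives
\[\theta^{(r)}_\tau\in \Lie(I_{\tau(F)_{\tau_p}},r)\widehat\otimes_{\QQ_p}\CC_p\subseteq A^{(r)}\widehat\otimes_{\QQ_p}\CC_p.\]

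\emph{Minimality.} Let $A'\subseteq E^{plec,(r)}$ be an $R^{(r)}_{\bar{D}}$-subalgebra containing $\rho(G_\QQ)$ with $\theta^{(r)}_\tau\in A'\widehat\otimes\CC_p$ for all $\tau$. It suffices to show $\Lie(I_{\tau(F)_{\tau_p}},r)\subseteq A'$ for each $\tau$. Fix $\tau$, and let $B_\tau\subseteq E^{plec,(r)}$ be the $R^{(r)}_{\bar{D}}$-subalgebra generated by $\Lie(I_{\tau(F)_{\tau_p}},r)$. As in Lemma \ref{lem-famSenthm}, $B_\tau$ is finite over $R^{(r)}_{\bar{D}}$, and $\theta^{(r)}_\tau\in B_\tau\widehat\otimes\CC_p$. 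Set $B'=B_\tau\cap A'$; this is a subalgebra of $B_\tau$. We then need:
\begin{enumerate}
\item $\theta^{(r)}_\tau\in B'\widehat\otimes\CC_p$. This follows from the identity $(B_\tau\widehat\otimes\CC_p)\cap(A'\widehat\otimes\CC_p)=(B_\tau\cap A')\widehat\otimes\CC_p$, proved by the orthonormal-basis argument used at the end of Lemma \ref{lem-extSen}. One must ensure $A'$ is closed, or else replace it by its closure. A finitely generated $R^{(r)}_{\bar{D}}$-submodule is automatically closed, and $A'$ lies in the finite module $E^{plec,(r)}$; since $R^{(r)}_{\bar{D}}$ is Noetherian, $A'$ is itself finitely generated and hence closed.
\item The analogue of Corollary \ref{cor-famSen} for $B_\tau$ (with $\bar B=B_\tau$). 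It has the same proof, namely Sen's minimality theorem applied to each finite quotient $B_\tau/\mathfrak{m}^k$ followed by Lemma \ref{leminccomp}. It gives $B'=B_\tau$, that is, $\Lie(I_{\tau(F)_{\tau_p}},r)\subseteq A'$.
\end{enumerate}

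\emph{Main obstacle.} The delicate step is (1). In the application of Sen's minimality in (2), one must check that the finite-level Sen operators are computed by the same element $\theta^{(r)}_\tau$ regardless of whether one views the image in $E^{plec,(r)}$ or in $B_\tau$; this is functoriality in Theorem \ref{thm-Sen-inf}. One must also check that the relevant open subgroup of inertia fixes a field to which Sen's theorem applies, namely a finite extension of $\widehat{\tau(F)_{\tau_p}^{ur}}$, as in Corollary \ref{cor-famSen}. Everything else is a formal repetition of Section \ref{subsecSen}.
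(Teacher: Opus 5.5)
Your proof is correct. The membership half is the paper's argument: Theorem \ref{thmfamSen}, transported to $E^{plec,(r)}$ and the local Galois group at $\tau_p$. Your remark that its proof uses only finiteness over $R^{(r)}_{\bar D}$ and Lemma \ref{leminccomp}, and not that the ambient algebra is the Cayley--Hamilton algebra of that local group, is what justifies this transport; the paper leaves that step implicit.

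For minimality, however, you take a detour that the paper avoids. The paper uses the second half of Theorem \ref{thmfamSen} directly. An $R^{(r)}_{\bar D}$-subalgebra $A'\subseteq E^{plec,(r)}$ is closed under $[X,Y]=XY-YX$, so it is an $R^{(r)}_{\bar D}$-Lie subalgebra. Hence $\theta^{(r)}_\tau\in A'\widehat\otimes_{\QQ_p}\CC_p$ immediately gives $\Lie(I_{\tau(F)_{\tau_p}},r)\subseteq A'$ for each $\tau$, and together with $\rho(G_\QQ)\subseteq A'$ this yields $A^{(r)}\subseteq A'$.

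You instead pass to the associative envelope $B_\tau$ and an analogue of Corollary \ref{cor-famSen}. This route forces you to establish three extra facts:
the intersection identity $(B_\tau\widehat\otimes_{\QQ_p}\CC_p)\cap(A'\widehat\otimes_{\QQ_p}\CC_p)=(B_\tau\cap A')\widehat\otimes_{\QQ_p}\CC_p$;
the analogue of Lemma \ref{lem-famSenthm} placing an open subgroup of inertia in $B_\tau^\times$;
and the agreement of the Sen operators computed in $B_\tau$ and in $E^{plec,(r)}$.

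All of these steps are valid, and your closedness argument for $A'$ is right. But the step you single out as the main obstacle is an artifact of this route and disappears once you use Lie-minimality. Since $\Lie(I_{\tau(F)_{\tau_p}},r)\subseteq A'$ already implies $B_\tau\subseteq A'$, the associative version gains nothing for this statement. It is, however, the form in which minimality is used later, via Corollary \ref{cor-famSen} in Lemma \ref{lem-extSen}.
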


\begin{proof}
By Theorem \ref{thmfamSen}, $\theta^{(r)}_{\tau}\in \Lie(I_{\tau(F)_{\tau_p}},r)\widehat\otimes_{\QQ_p}\CC_p$ and $\Lie(I_{\tau(F)_{\tau_p}},r)$ is the smallest $R^{(r)}_{\bar{D}}$-Lie subalgebra having this property. The claim follows from the construction of $A^{(r)}$.
\end{proof}

Next we discuss the difference between $A^{(r)}$ and $E^{plec,(r)}$.

\begin{dfn} \label{dfnstrirr}
For each continuous homomorphism $f:R^{(r)}_{\bar{D}}\to\CC_p$,
$\rho_f$ is called \textit{strongly irreducible} if $\rho_f|_{J}$ is irreducible for any open subgroup $J$ of $G_{F,S}$; equivalently, $\CC_p^n$ is an irreducible representation of the Lie algebra $\Lie \rho_f(G_{F,S})$. In this case, we also say that the maximal ideal $\ker f$ is strongly irreducible.
\end{dfn}

\begin{eg}
When $n=2$ and $\rho_f$ is not strongly irreducible, there are $3$ possibilities up to a twist by a  character:
\begin{enumerate}
\item $\rho_f\cong \chi_1\oplus\chi_2$ for some characters $\chi_i$ of $G_{F,S}$, i.e. $\rho_f$ is reducible.
\item $\rho_f(G_{F,S})$ is finite.
\item $\rho_f\cong \Ind_F^{F'} \chi$ for a quadratic extension $F'$ of $F$ and some character $\chi$ of $G_{F'}$.
\end{enumerate}
\end{eg}

\begin{prop} \label{prop-suppA(r)}
Suppose that $n=2$.
If $f:R^{(r)}_{\bar{D}}\to\CC_p$ is a continuous homomorphism and $\ker f$ belongs to 
the support of the $R_{\bar{D}}^{(r)}$-module $E^{plec,(r)}/A^{(r)}$, either $\rho_f$ is not strongly irreducible or $\Lie \rho_f(I_{F_v})\subseteq \mathfrak{gl}_2(\CC_p)$ are scalars for every $p$-adic place $v$ of $F$.
\end{prop}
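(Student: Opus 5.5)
We argue by contraposition. Assume $\rho_f$ is strongly irreducible and that there is a $p$-adic place $v$ of $F$ with $\Lie \rho_f(I_{F_v})$ not contained in the scalars. We will show that $\ker f$ is not in the support of $E^{plec,(r)}/A^{(r)}$.

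\emph{Step 1: reduction to the fibre.} The module $E^{plec,(r)}/A^{(r)}$ is finitely generated over $R^{(r)}_{\bar{D}}$. Let $\mathfrak p=\ker f$. The residue field $k(\mathfrak p)$ embeds into $\CC_p$, and field extensions are faithfully flat. By Nakayama's lemma over the local ring at $\mathfrak p$, it therefore suffices to prove
\[(E^{plec,(r)}/A^{(r)})\otimes_{R^{(r)}_{\bar{D}},f}\CC_p=0.\]
Equivalently, the $\CC_p$-span $C$ of the image of $A^{(r)}$ in $E^{plec,(r)}\otimes_f\CC_p$ must be everything. Since $\rho_f$ is strongly irreducible, it is irreducible. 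By Lemma \ref{lem-irredcsa} (applied to $E^{plec}$), the target is $\End_{\CC_p}(\Ind^{plec}_F\rho_f)=M_{2^d}(\CC_p)$, which we identify with $\bigotimes_{\tau}\End(\rho_{f,\tau})$ via Remark \ref{rmk-resprodGtauF}. So the goal is $C=M_{2^d}(\CC_p)$.

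\emph{Step 2: what $C$ contains.} By construction, $C$ contains $\Ind^{plec}_F\rho_f(G_\QQ)$. By Lemma \ref{lemlaGalact}, the image of $\Lie(I_{\tau(F)_{\tau_p}},r)$ in the fibre contains the $\CC_p$-span of $\log$ of the image of an open subgroup of inertia. Hence $C$ contains $1\otimes\cdots\otimes \Lie\rho_{f,\tau}(I_{\tau(F)_{\tau_p}})\otimes\cdots\otimes 1$, with the nontrivial factor in the $\tau$-slot. The place $\tau_p$ corresponds to the $p$-adic place of $F$ induced by $\iota_p\circ\tau$, and every $p$-adic place of $F$, in particular $v$, arises this way for some $\tau_0$. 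Conjugating by $\rho(\sigma)$ for $\sigma\in G_\QQ$ sends the $\tau_0$-slot to the $\sigma\circ\tau_0$-slot, and $G_\QQ$ acts transitively on $\Sigma_F$. Conjugation by $\rho(\sigma)$ maps $C$ to itself. So for every $\tau$, $C$ contains an element $X_\tau=1\otimes\cdots\otimes x_\tau\otimes\cdots\otimes 1$ in the $\tau$-slot, where $x_\tau\in\mathfrak{gl}_2(\CC_p)$ is non-scalar (a conjugate of a non-scalar element of $\Lie\rho_f(I_{F_v})$).

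\emph{Step 3: generation.} Conjugate $X_\tau$ by $\rho(g)$ for $g\in\Gal(\overline\QQ/\tilde F)$. By \eqref{tenind|_GQ}, this acts as $\bigotimes_{\tau'}\rho_{f,\tau'}(g)$, so it gives $1\otimes\cdots\otimes \rho_{f,\tau}(g)x_\tau\rho_{f,\tau}(g)^{-1}\otimes\cdots\otimes1$. The group $\Gal(\overline\QQ/\tilde F)$ is open in $\Gal(\overline\QQ/\tau(F))$. By strong irreducibility, the Lie algebra of its image contains $sl_2$. The traceless part of $x_\tau$ is nonzero, and $sl_2$ is irreducible under the adjoint action of such a group. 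Hence the span of these conjugates, together with $1$, is all of $\mathfrak{gl}_2(\CC_p)$ in the $\tau$-slot. Doing this for every $\tau$ shows that $C$ contains the subalgebras $1\otimes\cdots\otimes M_2(\CC_p)\otimes\cdots\otimes 1$ for all $\tau$. These generate $M_2(\CC_p)^{\otimes d}=M_{2^d}(\CC_p)$, so $C=M_{2^d}(\CC_p)$, which completes the contraposition.

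The main obstacle is Step 2. First, one must check that the image of $\Lie(I,r)$ in the fibre really contains the honest Lie algebra of $\rho_{f,\tau}$ restricted to inertia; I would do this via Lemma \ref{lemlaGalact} and the definition of $\log$. Second, one must do the bookkeeping that matches slots $\tau$ with $p$-adic places of $F$ under $G_\QQ$-conjugation. This uses that $\Ad_\sigma$ sends $\Gal(\overline\QQ_p/\tau(F)_{\tau_p})$ to a decomposition group of $\sigma\circ\tau(F)$ over the same place of $F$, possibly up to conjugation inside $\Gal(\overline\QQ/\sigma\tau(F))$. That extra conjugation is harmless, because $C$ is already stable under $\rho(G_\QQ)$-conjugation.
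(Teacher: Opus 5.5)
Your proof is correct and follows the paper's proof. Both reduce to the fibre at $\ker f$ and identify it with $\bigotimes_{\tau}\End_{\CC_p}(\rho_{f,\tau})$ via Lemma \ref{lem-irredcsa}, use transitivity of $G_\QQ$ on $\Sigma_F$ to put a non-scalar conjugate of $\Lie\rho_f(I_{F_v})$ into every $\tau$-slot, and conclude because the conjugates span a subspace containing $sl_2$, which together with $1$ gives each $M_2(\CC_p)$ factor. The differences are cosmetic: you conjugate by the open subgroup $\Gal(\overline{\QQ}/\tilde{F})$ and use irreducibility of the adjoint action where the paper calls the span a Lie ideal of $\mathfrak{g}$, and you write out the Nakayama/faithful-flatness reduction that the paper leaves implicit.
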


\begin{rmk}
If $\rho_f$ has distinct $\tau$-Hodge-Tate-Sen weights for some $\tau:F\to\overline{\QQ_p}$, then $\Lie \rho_f(I_{F_v})$ cannot be scalars for every $p$-adic place $v$ of $F$.
Conjecturally, $\rho_f$ cannot be strongly irreducible if all $\Lie \rho_f(I_{F_v})$ are scalars. Indeed, in this case the adjoint representation $\Ad\rho_f$ would be potentially unramified at all $p$-adic places and one would expect it to be an Artin representation. In particular $\Lie \rho_f(G_{F,S})$ are scalars, and $\rho_f$ cannot be strongly irreducible. \end{rmk}

\begin{cor} \label{cor-G^plec_F maps to loc A^r}
Suppose that $f:R^{(r)}_{\bar{D}}\to\CC_p$ is a continuous homomorphism such that $\rho_f$ is strongly irreducible and $\Lie \rho_f(I_{F_v})\subseteq \mathfrak{gl}_2(\CC_p)$ are not scalars for  at least one $p$-adic place $v$ of $F$.
There is a natural isomorphism $E^{plec,(r)}_{\ker f}\cong A^{(r)}_{\ker f}$, and hence we get a homomorphism
\[G^{plec}_F\to (E^{plec,(r)})^\times \to (E^{plec,(r)}_{\ker f})^\times \cong (A^{(r)}_{\ker f})^\times.\]
\end{cor}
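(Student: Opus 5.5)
This is a direct consequence of Proposition \ref{prop-suppA(r)}, and the plan is to apply it through the contrapositive and then localize.

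\textbf{Step 1: $\ker f$ lies outside the support.} Let $\mathfrak{p}=\ker f\subseteq R^{(r)}_{\bar{D}}$. By hypothesis, $\rho_f$ is strongly irreducible. Also $\Lie\rho_f(I_{F_v})$ fails to be scalar for at least one $p$-adic place $v$. Proposition \ref{prop-suppA(r)} (with $n=2$) says that a point in the support of $E^{plec,(r)}/A^{(r)}$ must violate one of these two conditions. Hence $\mathfrak{p}$ is not in the support of the $R^{(r)}_{\bar{D}}$-module $E^{plec,(r)}/A^{(r)}$.

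\textbf{Step 2: pass to the localization.} Since $\mathfrak{p}$ is outside the support, $(E^{plec,(r)}/A^{(r)})_{\mathfrak{p}}=0$. Localization at $\mathfrak{p}$ is exact, so the inclusion $A^{(r)}\subseteq E^{plec,(r)}$ becomes an isomorphism $A^{(r)}_{\mathfrak{p}}\cong E^{plec,(r)}_{\mathfrak{p}}$. This is an isomorphism of $R^{(r)}_{\bar{D}}$-algebras, because $A^{(r)}$ is an $R^{(r)}_{\bar{D}}$-subalgebra and $R^{(r)}_{\bar{D}}$ is central in $E^{plec,(r)}$. Localizing at the central multiplicative set $R^{(r)}_{\bar{D}}\setminus\mathfrak{p}$ therefore gives rings, and the inclusion of rings stays a ring map. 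I would note that $E^{plec,(r)}$ is finite over $R^{(r)}_{\bar{D}}$, since $E^{plec}$ is finite over $R_{\bar{D}}[1/p]$. Consequently the quotient is finitely generated and its support is closed, though this is not strictly needed for the vanishing at $\mathfrak{p}$.

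\textbf{Step 3: the homomorphism.} The map $G^{plec}_F\to (E^{plec})^\times\to (E^{plec,(r)})^\times$ composes with localization, which sends units to units. The isomorphism from Step 2 then transports the image to $(A^{(r)}_{\mathfrak{p}})^\times$. Naturality is automatic, since everything is induced by the canonical inclusion $A^{(r)}\subseteq E^{plec,(r)}$.

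The only real content is Proposition \ref{prop-suppA(r)}, which we may use. Beyond it, the one point needing care is that $\mathfrak{p}$ is a prime, in fact a maximal ideal, of $R^{(r)}_{\bar{D}}$. This holds because $f$ is a continuous homomorphism to $\CC_p$, and for an affinoid algebra its image is a finite extension of $L$. So no real obstacle is expected.
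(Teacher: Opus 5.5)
Your argument is correct and matches the paper, which states this corollary without separate proof as an immediate consequence of Proposition \ref{prop-suppA(r)}: $\ker f$ lies outside the support of $E^{plec,(r)}/A^{(r)}$, so the inclusion $A^{(r)}\subseteq E^{plec,(r)}$ becomes an isomorphism after localizing. One side remark is inaccurate, though harmless: a continuous $f\colon R^{(r)}_{\bar{D}}\to\CC_p$ need not have image finite over $L$ (evaluating $L\langle x\rangle$ at a transcendental point of the disc is injective), so $\ker f$ need not be maximal, but your argument only uses that $\ker f$ is prime, which holds automatically because $\CC_p$ is a field.
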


\begin{proof}[Proof of Proposition \ref{prop-suppA(r)}]
Suppose that $f$ is strongly irreducible and $\Lie \rho_f(I_{F_v})$ contains more than scalars for one $v$. 
By Lemma \ref{lem-irredcsa}, 
\[E^{plec,(r)}\otimes_{R^{(r)}_{\bar{D}},f}\CC_p=\End_{\CC_p}(\Ind_F^{plec}\rho_f) =\bigotimes_{\tau\in\Sigma_F} \End_{\CC_p}(\rho_{f,\tau}).\]
It suffices to show that the $\CC_p$-subspace $B$ spanned by the image of $A^{(r)}$  contains all $ \End_{\CC_p}(\rho_{f,\tau})$. 
Since $G_{\QQ}$ acts transitively on $\Sigma_F$, $B\cap \End_{\CC_p}(\rho_{f,\tau})$ contains $\rho_{f,\tau}(\sigma) \Lie \rho_{f,\tau}(I_{\tau(F)_v}) \rho_{f,\tau}(\sigma)^{-1}$ for all $\sigma\in \Gal(\overline{\QQ}/\tau(F))$ and $p$-adic places $v$ of $\tau(F)$. We complete the proof by the following lemma.
\end{proof}

\begin{lem}
Suppose $\rho_f:G_{F,S}\to\GL_2(\CC_p)$ is strongly irreducible and $\Lie \rho_f(I_{F_v})$ contains more than scalars for a $p$-adic place $v$. The $\CC_p$-subalgebra generated by all $\rho_{f}(\sigma) \Lie \rho_{f}(I_{F_v}) \rho_{f}(\sigma)^{-1}$, $\sigma\in G_{F,S}$ is equal to $M_2(\CC_p)$.
\end{lem}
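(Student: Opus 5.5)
Let $\mathfrak{h}:=\Lie\rho_f(I_{F_v})\subseteq \mathfrak{gl}_2(\CC_p)$ and let $C\subseteq M_2(\CC_p)$ be the $\CC_p$-subalgebra generated by all conjugates $\rho_f(\sigma)\mathfrak{h}\rho_f(\sigma)^{-1}$, $\sigma\in G_{F,S}$. My plan is to show that $C$ is normalized by $\rho_f(G_{F,S})$, hence by the Lie algebra $\mathfrak{g}:=\Lie\rho_f(G_{F,S})\otimes\CC_p$, and then use strong irreducibility to force $C=M_2(\CC_p)$.

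\emph{Step 1: stability.} By construction, $C$ is stable under conjugation by every $\rho_f(\sigma)$. Differentiating along one-parameter subgroups (logarithms of elements of an open pro-$p$ subgroup of the image), $C$ is stable under $\Ad(X)$ for all $X\in\mathfrak{g}$. Thus $[\mathfrak{g},C]\subseteq C$.

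\emph{Step 2: classification of subalgebras.} I would use the standard list of unital-or-not $\CC_p$-subalgebras of $M_2(\CC_p)$. Up to conjugation these are contained in the scalars, in a commutative algebra (diagonal, or $\CC_p[N]$ with $N$ nilpotent), in the Borel $B$ of upper triangular matrices, or are all of $M_2$. (Non-unital ones are handled similarly, e.g. $\CC_p E_{11}$ or strictly upper triangular matrices.) Any proper subalgebra $C$ falls into one of the following cases.
\begin{itemize}
\item $C$ fixes a line, i.e. lies in a Borel. The line is unique unless $C$ is scalar or diagonal.
\item $C$ is commutative, with trace-zero part spanned by one element $Y$.
\end{itemize}
In every nonscalar proper case, $C$ determines a finite set of lines: the stable line of the Borel, the eigenlines of a semisimple $Y$, or the kernel of a nilpotent $Y$. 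This set is preserved by $\rho_f(G_{F,S})$, since $C$ is stable under conjugation by $\rho_f(\sigma)$. Hence an open subgroup $J\subseteq G_{F,S}$ fixes a line, so $\rho_f|_J$ is reducible, contradicting strong irreducibility (Definition \ref{dfnstrirr}).

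\emph{Step 3: finishing.} The remaining proper case is $C$ consisting of scalars. This is excluded because $C\supseteq \mathfrak{h}$, which contains a nonscalar element by hypothesis. Therefore $C=M_2(\CC_p)$.

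The main obstacle I expect is Step 2: making the case analysis of subalgebras of $M_2(\CC_p)$ airtight, including non-unital subalgebras and the subcase where $C$ is semisimple diagonal, where two lines are permuted. To streamline it, I would first work with the $\mathfrak{sl}_2$-part. The trace-zero projection $C_0$ of $C$, or the Lie algebra $\mathfrak{g}'$ generated by all the conjugates of $\mathfrak{h}$, is an $\Ad\rho_f(G_{F,S})$-stable subspace of $\mathfrak{gl}_2$ not contained in the scalars. Strong irreducibility implies that $\Lie\rho_f(G_{F,S})\supseteq\mathfrak{sl}_2$ after extending scalars, so $\mathfrak{sl}_2$ acts on itself irreducibly via $\Ad$. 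Hence $\mathfrak{sl}_2\subseteq C_0+\CC_p$. From this, the associative algebra generated is all of $M_2(\CC_p)$, because $\mathfrak{sl}_2$ generates $M_2$ (for example $E_{12}E_{21}=E_{11}$). This argument bypasses the classification.
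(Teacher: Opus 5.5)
Your proposal is correct. The argument in your last paragraph is essentially the paper's proof. There, the span of the conjugates $\rho_f(\sigma)\Lie\rho_f(I_{F_v})\rho_f(\sigma)^{-1}$ is observed to be a Lie ideal of the $\CC_p$-span $\mathfrak{g}$ of $\Lie\rho_f(G_{F,S})$. By strong irreducibility, $\mathfrak{g}$ is $\mathfrak{gl}_2(\CC_p)$ or $\mathfrak{sl}_2(\CC_p)$. A non-scalar ideal of such a $\mathfrak{g}$ contains $\mathfrak{sl}_2(\CC_p)$, which generates $M_2(\CC_p)$.

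Your primary route (Steps 2--3) is genuinely different. It uses the group-theoretic form of strong irreducibility (irreducibility on every open subgroup) rather than the Lie-algebra form, and it does not need Step 1 at all. The case analysis you are worried about can be replaced by two facts:
\begin{enumerate}
\item a proper subalgebra of $M_2(\CC_p)$ has an invariant line (Burnside);
\item a subalgebra containing a non-scalar $Y$ has at most two invariant lines, since they must be eigenlines of $Y$.
\end{enumerate}
Since $C$ is stable under conjugation by $\rho_f(G_{F,S})$, this nonempty finite set of lines is permuted by $\rho_f(G_{F,S})$. The kernel of the permutation action is a closed subgroup of index at most $2$, hence open. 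It fixes each of these lines, so $\rho_f$ restricted to it is reducible, a contradiction. This route avoids logarithms and the passage from $\Ad$-stability to $\mathrm{ad}$-stability. The paper's route is shorter because it already works with the Lie-algebra characterization of strong irreducibility.

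One small imprecision in your streamlined version: instead of concluding $\mathfrak{sl}_2\subseteq C_0+\CC_p$, argue as follows. The centralizer of $\mathfrak{sl}_2$ in $\mathfrak{gl}_2$ is the scalars, so $[\mathfrak{sl}_2,\mathfrak{g}']$ is nonzero, and it lies in $\mathfrak{g}'\cap\mathfrak{sl}_2$. Irreducibility of the adjoint representation then gives $\mathfrak{sl}_2\subseteq\mathfrak{g}'\subseteq C$ directly, without appealing to $1\in C$.
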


\begin{proof}
Since $\rho_f$ is strongly irreducible, the $\CC_p$-subspace $\mathfrak{g}$ spanned by $\Lie \rho_f(G_{F,S})$ is either $\mathfrak{gl}_2(\CC_p)$ or $sl_2(\CC_p)$. The subspace 
spanned by all $\rho_{f}(\sigma) \Lie \rho_{f}(I_{F_v}) \rho_{f}(\sigma)^{-1}$ is a Lie ideal of $\mathfrak{g}$, and hence contains $sl_2(\CC_p)$ as it contains more than scalars.  Finally we note that $sl_2(\CC_p)$ generates $M_2(\CC_p)$ as an algebra.
\end{proof}

\begin{rmk}
For general $n$, a similar argument shows that conjecturally $\ker f$ belongs to 
the support of the $R_{\bar{D}}^{(r)}$-module $E^{plec,(r)}/A^{(r)}$ only if $\rho_f$ is not strongly irreducible. 
\end{rmk}

The following extension lemma is similar to Lemma \ref{lem-extSen}.
\begin{lem} \label{lem-extplecSen}
Let $W$ be a Banach $R_{\bar{D}}^{(r)}$-module equipped with a continuous \( R^{(r)}_{\bar{D}} \)-linear action $\rho_W$ of $G_\QQ$, and $W^a=\bigoplus_{i=0}^\infty W_i$ a dense subspace of $W$. Assume that each $W_i$ is finite-dimensional over $\QQ_p$ and is an $R_{\bar{D}}^{(r)}[G_\QQ]$-submodule.
Denote by $\End^a\subseteq \End^{\mathrm{cont}}_{R_{\bar{D}}^{(r)}}W$ the closed subspace of continuous $R_{\bar{D}}^{(r)}$-linear endomorphisms preserving each $W_i$. Suppose that 
\begin{itemize}
\item for each $\tau:F\to\overline\QQ$, there is an element $\theta_{W,\tau}\in \End^a\widehat\otimes_{\QQ_p}\CC_p$ and
\item there is an  $R_{\bar{D}}^{(r)}$-linear continuous action of $A^{(r)}$ on each $W_i$. Moreover the induced action of $\theta^{(r)}_\tau$ on $W_i\otimes_{\QQ_p}\CC_p$ agrees with $\theta_{W,\tau}|_{W_i\otimes_{\QQ_p}\CC_p}$ for each $\tau$, and the induced action of $G_{\QQ}$ via $A^{(r),\times}$ on $W_i$ agrees with the action via $\rho_W$.
\end{itemize}
The $A^{(r)}$-module structure on $W^a$ can be extended (necessarily uniquely) to $W$ and the induced action of $\theta^{(r)}_\tau$ on $W\widehat\otimes_{\QQ_p}\CC_p$ is equal to $\theta_{W,\tau}$ for each $\tau$ and the induced action of $G_{\QQ}$ via $A^{(r),\times}$ is equal to that via $\rho_W$.
\end{lem}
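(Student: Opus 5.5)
We follow the proof of Lemma \ref{lem-extSen} essentially verbatim, with Corollary \ref{cor-famSen} replaced by the minimality statement of Proposition \ref{prop-partialSen}.

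First, we record the setup. Put $P:=\prod_{i\geq 0}\End_{R_{\bar{D}}^{(r)}}(W_i)$, a Fr\'echet space. Since $W^a$ is dense, restriction gives a continuous injective homomorphism of $R_{\bar{D}}^{(r)}$-algebras $\End^a\hookrightarrow P$. The given actions on the $W_i$ give a continuous $R_{\bar{D}}^{(r)}$-algebra map $A^{(r)}\to P$. Let $\bar A$ be its image, with the quotient topology; it is a quotient of $A^{(r)}$ by a two-sided ideal. Set $\bar A':=\bar A\cap \End^a$, an $R_{\bar{D}}^{(r)}$-subalgebra of $\bar A$. It suffices to show $\bar A'=\bar A$. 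Then every element of $A^{(r)}$ acts on $W^a$ through a continuous endomorphism of $W$ preserving each $W_i$, which gives the extension. Uniqueness is automatic from density. The identification of the $\theta_\tau^{(r)}$-action with $\theta_{W,\tau}$ and of the $G_\QQ$-action with $\rho_W$ then follows by testing on $W_i\otimes\CC_p$, using the injectivity $\End^a\widehat\otimes\CC_p\hookrightarrow P\widehat\otimes \CC_p=\prod_i \End(W_i)\otimes\CC_p$. This injectivity is proved as in Lemma \ref{lemtensorCp}.

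Next, we check that $\bar A'$ contains the relevant generators. For $\sigma\in G_\QQ$, the image of $\rho(\sigma)$ in $P$ equals the restriction of $\rho_W(\sigma)$ by hypothesis. Since $\rho_W(\sigma)$ is a continuous $R_{\bar{D}}^{(r)}$-linear endomorphism preserving each $W_i$, it lies in $\End^a$. Hence $\bar A'$ contains the image of $\rho(G_\QQ)$. For each $\tau$, the image $\bar\theta_\tau$ of $\theta_\tau^{(r)}$ lies in $\bar A\widehat\otimes\CC_p$. By hypothesis it agrees with $\theta_{W,\tau}\in \End^a\widehat\otimes\CC_p$ inside $P\widehat\otimes\CC_p$. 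Choosing an orthonormal basis of $\CC_p$ over $\QQ_p$ as in Lemma \ref{lemtensorCp}, we get
\[(\bar A\widehat\otimes\CC_p)\cap(\End^a\widehat\otimes\CC_p)=\bar A'\widehat\otimes\CC_p,\]
so $\bar\theta_\tau\in\bar A'\widehat\otimes\CC_p$.

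The remaining point, and the one place where Lemma \ref{lem-extSen} does not transfer directly, is a version of Proposition \ref{prop-partialSen} for the quotient $\bar A$. We need: any $R_{\bar{D}}^{(r)}$-subalgebra of $\bar A$ containing the image of $\rho(G_\QQ)$ and, after $\widehat\otimes\CC_p$, all $\bar\theta_\tau$, is all of $\bar A$. We plan to prove this as in Corollary \ref{cor-famSen}. Fix a maximal ideal $\mathfrak m$ of $R_{\bar{D}}^{(r)}$ and $k\geq 0$. The composite $I_{\tau(F)_{\tau_p}}\cap\ker\bar\rho^2_r\to (\bar A/\mathfrak m^k)^\times$ is a finite-dimensional representation, and its Sen operator is the image of $\bar\theta_\tau$, by the functoriality in Theorem \ref{thmexistSenop}. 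Sen's minimality (Theorem \ref{thm-Sen}(5)) then shows that the image of $\bar A'$ in $\bar A/\mathfrak m^k$ contains the image of $\Lie(I_{\tau(F)_{\tau_p}},r)$. It also contains the image of $\rho(G_\QQ)$. Since $\bar A$ is a finitely generated $R_{\bar{D}}^{(r)}$-module, being a quotient of a submodule of the finite module $E^{plec,(r)}$, Lemma \ref{leminccomp} gives $\bar A'\supseteq$ the images of all $\Lie(I_{\tau(F)_{\tau_p}},r)$ and of $\rho(G_\QQ)$. These generate $\bar A$ as an $R_{\bar{D}}^{(r)}$-algebra by Definition \ref{defn-A(r)}, so $\bar A'=\bar A$.

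The step needing care is this last one, specifically that the images of $\bar A'$ modulo $\mathfrak m^k$ are compatible enough to apply Lemma \ref{leminccomp} to $\bar A'\subseteq\bar A$. Lemma \ref{leminccomp} requires $\bar A'$ to be a submodule of a finitely generated module over the Noetherian ring $R_{\bar{D}}^{(r)}$, and this holds.
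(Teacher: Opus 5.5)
Your proposal is correct and is the paper's argument: the paper's proof is just ``the proof of Lemma \ref{lem-extSen} plus Proposition \ref{prop-partialSen}''. You correctly supply the two adaptations this needs. First, the images of $\rho(G_\QQ)$ lie in $\bar A\cap\End^a$ because they agree with $\rho_W$. Second, a minimality statement for the quotient $\bar A$, which you prove as in Corollary \ref{cor-famSen}, using as in Lemma \ref{lem-famSenthm} that $\rho(I_{\tau(F)_{\tau_p}}\cap\ker\bar\rho^2_r)\subseteq A^{(r)}$. That quotient statement also follows directly from Proposition \ref{prop-partialSen} applied to the preimage of $\bar A'$ in $A^{(r)}$: this preimage is an $R_{\bar{D}}^{(r)}$-subalgebra containing $\rho(G_\QQ)$ and, by the orthonormal-basis argument of Lemma \ref{lemtensorCp}, containing each $\theta^{(r)}_\tau$ after $\widehat\otimes_{\QQ_p}\CC_p$.
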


\begin{proof}
Same argument as in the proof of Lemma \ref{lem-extSen} plus Proposition \ref{prop-partialSen}.
\end{proof}

\begin{cor} \label{cor-extplecSen}
Let $W$ be a Banach $R_{\bar{D}}^{(r)}$-module equipped with a continuous \( R^{(r)}_{\bar{D}} \)-linear action $\rho_W$ of $G_\QQ$. Suppose that there exists a dense $R_{\bar{D}}^{(r)}[G_{\QQ}]$-submodule $W^a\subseteq W$  and a countable subset $I$ of the maximal ideals of $R_{\bar{D}}^{(r)}$ such that
\[W^a=\bigoplus_{\mathfrak{p}\in I} W^a[\mathfrak{p}]\]
where $W^a[\mathfrak{p}]\subseteq W^a$ denotes the subspace annihilated by $\mathfrak{p}$, satisfying that for every $\mathfrak{p}\in I$ and $\lambda:k(\mathfrak{p})(:=R_{\bar{D}}^{(r)}/\mathfrak{p})\to\overline{\QQ_p}$,
\begin{enumerate}
\item  \label{cor-extplecSen-1}The tensor induction $\oInd^{\QQ}_{F}\rho_{\lambda}:G_{\QQ}\to \GL_{n^d}(\overline{\QQ_p})$ is irreducible.
\item  \label{cor-extplecSen-2}$\oInd^{\QQ}_{F}\rho_{\lambda}|_{G_{\QQ_p}}$ has  $n^d$ distinct Hodge-Tate-Sen weights.
\item  \label{cor-extplecSen-3}$W^a[\mathfrak{p}]$ is finite-dimensional over $\QQ_p$ and $W^a[\mathfrak{p}]\otimes_{k(\mathfrak{p}),\lambda}\overline{\QQ_p}$ is $\oInd^{\QQ}_{F}\rho_{\lambda}$-isotypic. \label{assumption3}
\end{enumerate}
Moreover suppose that  for each $\tau:F\to\overline\QQ$, there is an element $\theta_{W,\tau}\in \End^{\mathrm{cont}}_{R_{\bar{D}}^{(r)}}W \widehat\otimes_{\QQ_p}\CC_p$ satisfying
\begin{enumerate}[resume]
\item  \label{cor-extplecSen-4} $\displaystyle (\sum_{\tau\in\Sigma_F} \theta_{W,\tau})|_{W^a[\mathfrak{p}]\otimes_{\QQ_p}\CC_p}$ agrees with the Sen operator on $W^a[\mathfrak{p}]\otimes_{\QQ_p}\CC_p$ for every $\mathfrak{p}$. 
\item  \label{cor-extplecSen-5} The induced action of $\theta_{W,\tau}$ on \( W^{a}[\p]\otimes_{R^{(r)}_{\bar{D}},\lambda}\CC_{p} \) (as a quotient of $W^a[\mathfrak{p}]\otimes_{\QQ_p}\CC_p$) is semisimple and its eigenvalues (without counting multiplicities) are the same as the negatives of the $\tau$-Hodge-Tate-Sen weights of $\rho_{\lambda}$. 
\item  \label{cor-extplecSen-6} $\theta_{W,\tau},\tau\in\Sigma_F$ commute with each other.
\end{enumerate}
Then there is a unique continuous $A^{(r)}$-module structure on $W$ extending the $R_{\bar{D}}^{(r)}[G_{\QQ}]$-action. Moreover the induced action of $\theta^{(r)}_\tau$ on $W\widehat\otimes_{\QQ_p}\CC_p$ is equal to $\theta_{W,\tau}$ for each $\tau$.
\end{cor}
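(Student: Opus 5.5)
The plan is to reduce to Lemma \ref{lem-extplecSen}. Enumerate $I=\{\mathfrak{p}_0,\mathfrak{p}_1,\dots\}$ and take $W_i:=W^a[\mathfrak{p}_i]$. By (\ref{cor-extplecSen-3}) each $W_i$ is finite-dimensional over $\QQ_p$, and it is an $R_{\bar{D}}^{(r)}[G_\QQ]$-submodule. What remains is twofold. First, we must construct on each $W_i$ an $R_{\bar{D}}^{(r)}$-linear continuous $A^{(r)}$-action compatible with $\rho_W$, whose partial Sen operators $\theta^{(r)}_\tau$ act as $\theta_{W,\tau}|_{W_i\otimes\CC_p}$. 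Second, we must check that $\theta_{W,\tau}\in\End^a\widehat\otimes_{\QQ_p}\CC_p$, i.e. that $\theta_{W,\tau}$ preserves each $W_i\otimes\CC_p$. Uniqueness then follows from density of $W^a$ together with Proposition \ref{prop-partialSen}, since $A^{(r)}$ is generated by $\rho(G_\QQ)$ and the Lie algebras whose span contains the $\theta^{(r)}_\tau$.

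For the action on a fixed $W_i=W^a[\mathfrak{p}]$, I would work after base change along $\lambda:k(\mathfrak{p})\to\overline{\QQ_p}$ and descend at the end.

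\begin{itemize}
\item By (\ref{cor-extplecSen-1}) and (\ref{cor-extplecSen-3}), $W^a[\mathfrak{p}]\otimes_{k(\mathfrak{p}),\lambda}\overline{\QQ_p}\cong \oInd^{\QQ}_F\rho_\lambda\otimes U$ for a multiplicity space $U$ with trivial $G_\QQ$-action.
\item Via Proposition \ref{prop-plecinddet}, $E^{plec,(r)}$ acts on $\Ind^{plec}_F\rho_\lambda$, whose restriction to $G_\QQ$ is $\oInd^\QQ_F\rho_\lambda$ by Remark \ref{rmk-tenindvsplecind}. Letting $E^{plec,(r)}$ act on the first factor gives an action of $E^{plec,(r)}\supseteq A^{(r)}$ on $W^a[\mathfrak{p}]\otimes_\lambda\overline{\QQ_p}$ extending the $G_\QQ$-action.
\item Since $\mathfrak{p}$ acts by zero, this factors through $E^{plec,(r)}/\mathfrak{p}$. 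The $A^{(r)}$-action is independent of the choice of isomorphism: $A^{(r)}$ is generated by $G_\QQ$ and the $\Lie(I,r)$, and the image of $A^{(r)}$ lies in $\End(\oInd\rho_\lambda)\otimes 1$, which is the commutant of $\End_{G_\QQ}$ by irreducibility.
\item This intrinsic characterization is $\Gal$-equivariant in $\lambda$, so the action descends to $W^a[\mathfrak{p}]$ over $k(\mathfrak{p})$.
\item Continuity is automatic in finite dimension, given Lemma \ref{lemlaGalact}.
\end{itemize}

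The main obstacle is showing that the induced $\theta^{(r)}_\tau$ equals $\theta_{W,\tau}$ on $W_i\otimes\CC_p$. I expect to argue it via the regularity condition (\ref{cor-extplecSen-2}).

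\begin{itemize}
\item On $W^a[\mathfrak{p}]\otimes_{\QQ_p}\CC_p$, both $\sum_\tau\theta^{(r)}_\tau$ (by Lemma \ref{lem-Sen=sumpartSen}) and $\sum_\tau\theta_{W,\tau}$ (by (\ref{cor-extplecSen-4})) equal the Sen operator $\Theta$.
\item Pass to the quotient $W^a[\mathfrak{p}]\otimes_{\lambda}\CC_p\cong\oInd\rho_\lambda\otimes U\otimes\CC_p$. There the $\theta^{(r)}_\tau$ act through $\End(\oInd\rho_\lambda)$, are semisimple, and commute; their joint eigenvalues are tuples $(a_\tau)$ of negated $\tau$-weights, by Proposition \ref{prop-parSenev} and the tensor decomposition.
\item By Proposition \ref{prop-wT=sumpartialwt} and (\ref{cor-extplecSen-2}), the $n^d$ sums $\sum_\tau a_\tau$ are pairwise distinct. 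Hence each joint eigenspace is exactly an eigenspace of $\Theta$, and each $\theta^{(r)}_\tau$ is a polynomial in $\Theta$, namely the one sending $\sum a_\tau$ to $a_\tau$.
\item The $\theta_{W,\tau}$ commute with each other by (\ref{cor-extplecSen-6}), hence with $\Theta$, so they preserve the $\Theta$-eigenspaces. By (\ref{cor-extplecSen-5}) each is semisimple with eigenvalues among the $\tau$-weights.
\item On a $\Theta$-eigenspace with eigenvalue $s$, the joint eigenvalues $(b_\tau)$ of the $\theta_{W,\tau}$ satisfy $\sum b_\tau=s$ with each $b_\tau$ a $\tau$-weight. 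By distinctness of the sums this forces $b_\tau=a_\tau$. Thus $\theta_{W,\tau}$ agrees with the same polynomial in $\Theta$, i.e. with $\theta^{(r)}_\tau$, on the quotient for each $\lambda$.
\end{itemize}

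To pass from the quotients $\otimes_\lambda\CC_p$ back to $W^a[\mathfrak{p}]\otimes_{\QQ_p}\CC_p$, note that $W^a[\mathfrak{p}]\otimes_{\QQ_p}\CC_p=\prod_{\lambda}W^a[\mathfrak{p}]\otimes_{k(\mathfrak{p}),\lambda}\CC_p$ because $k(\mathfrak{p})\otimes_{\QQ_p}\CC_p$ is a product of copies of $\CC_p$. Both operators respect this product decomposition: $\theta_{W,\tau}$ because it is $R^{(r)}_{\bar{D}}$-linear, and $\theta^{(r)}_\tau$ similarly.

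Preservation of $W_i\otimes\CC_p$ by $\theta_{W,\tau}$ needs separate care. I would first show that $\Theta$ preserves it, since $\Theta$ is functorial for the $G_{\QQ_p}$-stable subspace $W_i$. Then, on $W\widehat\otimes\CC_p$, $\theta_{W,\tau}$ can be written as a polynomial in $\Theta$ after decomposing along the generalized eigenspaces of $\Theta$ inside $W_i\otimes\CC_p$; alternatively, one can use that $\theta_{W,\tau}$ commutes with $\Theta$ and with $R^{(r)}_{\bar{D}}$. This step may require the implicit hypothesis that the $\theta_{W,\tau}$ preserve $W^a$. If that fails, I would take (\ref{cor-extplecSen-5}) as implicitly asserting stability.

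Once these checks are done, Lemma \ref{lem-extplecSen} applies and yields the result.
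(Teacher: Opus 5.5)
Your existence argument is sound and has the same skeleton as the paper's proof. You reduce to Lemma \ref{lem-extplecSen} with $W_i=W^a[\mathfrak{p}_i]$. You then identify $\theta^{(r)}_\tau$ with $\theta_{W,\tau}$ on each $W^a[\mathfrak{p}]\otimes_{k(\mathfrak{p}),\lambda}\CC_p$: both families are commuting, semisimple and sum to the Sen operator, and the regularity hypothesis (\ref{cor-extplecSen-2}) makes such a decomposition unique. That is exactly the paper's closing linear-algebra lemma.

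Where you differ is in producing the $A^{(r)}$-action on $W^a[\mathfrak{p}]$. You choose an isotypic decomposition over $\overline{\QQ_p}$, let $E^{plec,(r)}$ act through $\End(\Ind^{plec}_F\rho_\lambda)$, check independence via Schur, and Galois-descend. The paper instead proves $B\cong E^{plec,(r)}/\mathfrak{p}\cong A^{(r)}/\mathfrak{p}$, where $B$ is the image of $R^{(r)}_{\bar{D}}[G_\QQ]$ in $\End(\oInd^{\QQ}_F\rho_\lambda)$. The argument runs as follows.
\begin{itemize}
\item $E^{plec,(r)}/\mathfrak{p}$ is central simple and is generated by $G_\QQ$, because $\oInd^{\QQ}_F\rho_\lambda$ is irreducible.
\item Hence $A^{(r)}/\mathfrak{p}\to E^{plec,(r)}/\mathfrak{p}$ is surjective, and Nakayama gives $A^{(r)}_{\mathfrak{p}}=E^{plec,(r)}_{\mathfrak{p}}$.
\end{itemize}
By (\ref{cor-extplecSen-3}) the $R^{(r)}_{\bar{D}}[G_\QQ]$-action on $W^a[\mathfrak{p}]$ factors through $B$. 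So it extends at once, over $k(\mathfrak{p})$, with no choices and no descent.

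The gap is in uniqueness. Your one-line justification (density plus Proposition \ref{prop-partialSen}) does not suffice. Take another continuous $A^{(r)}$-structure $\alpha'$ on $W$ extending the $R^{(r)}_{\bar{D}}[G_\QQ]$-action.
\begin{itemize}
\item Proposition \ref{prop-partialSen} only helps once you know $\alpha'(\theta^{(r)}_\tau)=\theta_{W,\tau}$.
\item The linear-algebra argument that would give this needs $\alpha'(\theta^{(r)}_\tau)$ to preserve $W^a[\mathfrak{p}]\otimes\CC_p$.
\item A priori $\alpha'(A^{(r)})$ only preserves the $\mathfrak{p}$-torsion $W[\mathfrak{p}]$, which may be strictly larger than $W^a[\mathfrak{p}]$. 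It acts there through $A^{(r)}/\mathfrak{p}A^{(r)}$, about which you have proved nothing.
\item Your commutant argument concerns the image of your particular construction, not $A^{(r)}/\mathfrak{p}A^{(r)}$ itself.
\end{itemize}
The missing input is the paper's localization statement $A^{(r)}_{\mathfrak{p}}=E^{plec,(r)}_{\mathfrak{p}}$. It implies that $A^{(r)}/\mathfrak{p}A^{(r)}$ is spanned over $k(\mathfrak{p})$ by $\rho(G_\QQ)$. Hence any $R^{(r)}_{\bar{D}}$-linear $A^{(r)}$-action preserves each $W^a[\mathfrak{p}]$ and is determined there by $G_\QQ$, and density finishes. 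You already have the key ingredient, namely Burnside spanning of $\End(\Ind^{plec}_F\rho_\lambda)$ by $G_\QQ$. Adding surjectivity of $A^{(r)}/\mathfrak{p}\to E^{plec,(r)}/\mathfrak{p}$ and Nakayama closes the gap, and also makes your choose-and-descend step unnecessary.

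On stability of $W^a[\mathfrak{p}]\otimes\CC_p$ under $\theta_{W,\tau}$, your proposed derivations do not work. $R^{(r)}_{\bar{D}}$-linearity only gives preservation of $\mathfrak{p}$-torsion, and $\sum_\tau\theta_{W,\tau}=\Theta$ is only assumed on $W^a$. Your fallback is the right reading, though: hypotheses (\ref{cor-extplecSen-4}) and (\ref{cor-extplecSen-5}) presuppose it, and the paper's proof uses it tacitly. In the application it holds because $\theta^{\la}_\tau$ commutes with $\mathrm{G}(\QQ_p)$ and $\TT$.
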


\begin{proof}
In view of the previous lemma, it suffices to show that on each $W^a[\mathfrak{p}]$ there is a unique action of $A^{(r)}$ that extends the action of $G_{\QQ}$, and this unique $A^{(r)}$-action is compatible with  \( \theta_{W,\tau} \), $\tau\in\Sigma_F$. 
Fix a $\mathfrak{p}\in I$ in the rest of the proof.

\begin{lem}
Let $\lambda:R^{(r)}_{\bar{D}} \to\overline{\QQ_p}$  be a homomorphism such that $\oInd^{\QQ}_{F}\rho_{\lambda}$ is irreducible. Denote by $B$ the image of $R^{(r)}_{\bar{D}}[G_{\QQ}]$  in $\End_{\overline{\QQ_p}} (\oInd^{\QQ}_{F}\rho_{\lambda})$.  Let $k=k(\ker\lambda)$. There are  natural isomorphisms of $k[G_{\QQ}]$-modules
\[B\cong E^{plec,(r)}/(\ker\lambda) \cong A^{(r)}/(\ker\lambda).\] 
\end{lem}

\begin{proof}
Consider the $n^d$-dimensional determinant $D_k:k[G_\QQ]\to k$ obtained by restricting $D^{plec}\mod \ker\lambda$ to $k[G_{\QQ}]$ cf. Proposition \ref{prop-plecinddet}. Equivalently it's  the determinant of $\oInd^{\QQ}_{F}\rho_{\lambda}$. Hence by \cite[Definition-Proposition 2.18]{Chenevier-2014} and the irreducibility assumption,  $B=k[G_{\QQ}]/\mathrm{CH}(D_k)$. On the other hand, the plectic induction $\Ind_F^{plec}\rho_\lambda$ is irreducible. So $E^{plec,(r)}/(\ker\lambda)$ is a central simple  $k$-algebra of rank-$(n^d)^2$ and 
\[E^{plec,(r)}/(\ker\lambda)\otimes_{k,\lambda}\overline{\QQ_p} \cong \End_{\overline{\QQ_p}} (\Ind_F^{plec}\rho_\lambda).\] 
Since $(\Ind_F^{plec}\rho_\lambda)|_{G_\QQ}=\oInd^{\QQ}_{F}\rho_{\lambda}$ is irreducible, the image of $G_\QQ$ in $E^{plec,(r)}/(\ker\lambda)$ generates $E^{plec,(r)}/(\ker\lambda)$ as a $k$-algebra. (Indeed, $G_\QQ$ generates $\End_{\overline{\QQ_p}} (\Ind_F^{plec}\rho_\lambda)$ as a $\overline{\QQ_p}$-algebra.) Thus $E^{plec,(r)}/(\ker\lambda)$ is a quotient of $k[G_\QQ]$. The kernel clearly contains $\mathrm{CH}(D_k)$ by the construction of $E^{plec,(r)}$. We get a natural surjective homomorphism $B\to E^{plec,(r)}/(\ker\lambda)$ of simple algebras, which has to be an isomorphism. 

The natural map $A^{(r)}/(\ker\lambda) \to E^{plec,(r)}/(\ker\lambda)$ is surjective as the image contains the image of $G_\QQ$. Hence by Nakayama's lemma, the inclusion $A^{(r)}\subseteq E^{plec,(r)}$ becomes an equality after localizing at $\ker\lambda$. Reduction modulo $\ker\lambda$ gives our claim.
\end{proof}

By our assumption \eqref{assumption3}, the action of $R^{(r)}_{\bar{D}} [G_{\QQ}]$  on $W^a[\mathfrak{p}]$ factors through $B$. Hence  it extends uniquely to an action of $A^{(r)}$ by the previous lemma. To apply Lemma \ref{lem-extplecSen}, we need to check that the induced action of $\theta^{(r)}_{\tau}$ on $W^a[\mathfrak{p}]\otimes_{\QQ_p} \CC_p$ is the same as the action of $\theta_{W,\tau}$. 
Equivalently it suffices to show that for any $\lambda:R^{(r)}_{\bar{D}} \to \overline{\QQ_p}$  factoring through $k(\mathfrak{p})$,  the actions of $\theta^{(r)}_{\tau}$ and $\theta_{W,\tau}$ on $W^a[\mathfrak{p}]\otimes_{R^{(r)}_{\bar{D}} ,\lambda} \CC_p$ are the same (because $W^a[\mathfrak{p}]\otimes_{\QQ_p} \CC_p$ is a direct sum of these $W^a[\mathfrak{p}]\otimes_{R^{(r)}_{\bar{D}},\lambda} \CC_p$). We note that both sums $\sum_{\tau\in\Sigma_F} \theta^{(r)}_{\tau}$ and $\sum_{\tau\in\Sigma_F} \theta_{W,\tau}$ agree with the Sen operator when acting on $W^a[\mathfrak{p}]\otimes_{R^{(r)}_{\bar{D}} ,\lambda} \CC_p$ by assumption \eqref{cor-extplecSen-4} and Lemma \ref{lem-Sen=sumpartSen}.

Since $\oInd^{\QQ}_{F}\rho_{\lambda}|_{G_{\QQ_p}}$ has  $n^d$ distinct Hodge-Tate-Sen weights, Proposition \ref{prop-wT=sumpartialwt} implies that $\rho_f$ has $n$ distinct $\tau$-Hodge-Tate-Sen weights, which we will denote  by $-a_{\tau,1},\cdots,-a_{\tau,n}$. Fix an isomorphism
\[W^a[\mathfrak{p}]\otimes_{R^{(r)}_{\bar{D}} ,\lambda} \overline{\QQ_p}\cong (\oInd^{\QQ}_{F}\rho_{\lambda})^{\oplus n_\lambda}=(\Ind_F^{plec}\rho_\lambda)^{\oplus n_\lambda}.\]
By Proposition \ref{prop-parSenev}, the operator $\theta^{(r)}_{\tau}$ on $W^a[\mathfrak{p}]\otimes_{R^{(r)}_{\bar{D}} ,\lambda} \overline{\CC_p}$ is semisimple with eigenvalues $a_{\tau,1},\cdots,a_{\tau,n}$ (because $(\lambda\otimes 1)(\chi^{(r)}_{\tau})$ has $n$ distinct roots). 
The same is true for $(\theta_{W,\tau})|_{W^a[\mathfrak{p}]\otimes_{R^{(r)}_{\bar{D}} ,\lambda} \CC_p}$ by assumption \eqref{cor-extplecSen-5}. We deduce our claim from the following simple linear algebra lemma.
\end{proof}

\begin{lem}
Suppose we have $n$ numbers $a_{\tau,1},\cdots,a_{\tau,n}\in\CC_p$ for each $\tau\in\Sigma_F$.
Let $V$ be a finite-dimensional vector space over $\CC_p$. Let $\theta\in\End_{\CC_p}(V)$ be a  semisimple operator with eigenvalues $\sum_{\tau\in\Sigma_F} a_{\tau,i_\tau}$ without counting multiplicities for $\{i_\tau\}_{\tau\in\Sigma_F}\in \{1,\cdots,n\}^{\Sigma_F}$. If all such $\sum_{\tau\in\Sigma_F} a_{\tau,i_\tau}$ are distinct, there is a unique way to write $\theta=\sum_{\tau}\theta_\tau$ as a sum of commuting operators $\theta_\tau\in\End_{\CC_p}(V)$ such that each $\theta_\tau$ is semisimple with eigenvalues $a_{\tau,1},\cdots,a_{\tau,n}$.
\end{lem}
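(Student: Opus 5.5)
Since $\theta$ is semisimple, I will decompose $V=\bigoplus_{\mathbf{i}} V_{\mathbf{i}}$, where $\mathbf{i}=(i_\tau)_\tau\in\{1,\dots,n\}^{\Sigma_F}$ and $V_{\mathbf{i}}$ is the eigenspace of $\theta$ for the eigenvalue $\sum_\tau a_{\tau,i_\tau}$. The distinctness hypothesis makes $\mathbf{i}\mapsto \sum_\tau a_{\tau,i_\tau}$ injective, so this decomposition is well defined; some $V_{\mathbf{i}}$ may be zero.

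\emph{Existence.} I define $\theta_\tau$ to act on $V_{\mathbf{i}}$ by the scalar $a_{\tau,i_\tau}$. These operators are simultaneously diagonal, so they commute and are semisimple, and $\sum_\tau\theta_\tau$ acts on $V_{\mathbf{i}}$ by $\sum_\tau a_{\tau,i_\tau}$, which is $\theta$. I read ``eigenvalues $a_{\tau,1},\dots,a_{\tau,n}$'' as ``eigenvalues contained in this set'', and that containment is clear. If equality of sets is required, it follows from the hypothesis that every tuple occurs as an eigenvalue of $\theta$: then each $V_{\mathbf{i}}\neq 0$, so every $a_{\tau,j}$ occurs. This is the situation in the application, where $\theta$ acts on copies of $\Ind^{plec}_F\rho_\lambda$.

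\emph{Uniqueness.} Let $\theta=\sum_\tau\theta'_\tau$ with the $\theta'_\tau$ commuting and semisimple, each with eigenvalues among $a_{\tau,1},\dots,a_{\tau,n}$. Commuting semisimple operators are simultaneously diagonalizable, so I write $V=\bigoplus_{\mathbf{j}} U_{\mathbf{j}}$, where $\theta'_\tau$ acts on $U_{\mathbf{j}}$ by $a_{\tau,j_\tau}$. If the $a_{\tau,j}$ for fixed $\tau$ happen to coincide, I just pick one index for each joint eigenspace; I will note that distinct sums force distinct $a_{\tau,j}$ for fixed $\tau$ when $n\ge2$, though this is not needed. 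On $U_{\mathbf{j}}$ the operator $\theta$ acts by $\sum_\tau a_{\tau,j_\tau}$, so $U_{\mathbf{j}}\subseteq V_{\mathbf{j}}$. Both families decompose $V$, hence $U_{\mathbf{j}}=V_{\mathbf{j}}$, and therefore $\theta'_\tau$ acts on $V_{\mathbf{j}}$ by $a_{\tau,j_\tau}$, which is exactly the definition of $\theta_\tau$.

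The only subtle point is the injectivity of the index map $\mathbf{i}\mapsto\sum_\tau a_{\tau,i_\tau}$, and the distinctness hypothesis supplies it directly. The rest is standard simultaneous diagonalization.
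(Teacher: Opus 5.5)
Your proof is correct and follows essentially the same route as the paper: existence comes from letting $\theta_\tau$ act on the $\theta$-eigenspace $V_{\mathbf{i}}$ by $a_{\tau,i_\tau}$, and uniqueness comes from using commutativity plus distinctness of the sums to force the joint eigenspaces of the $\theta_\tau$ to coincide with the eigenspaces of $\theta$. The paper phrases uniqueness as ``each $\theta_\tau$ preserves the eigenspaces of $\theta$, and its eigenvalue there is forced.'' You phrase it through the containment $U_{\mathbf{j}}\subseteq V_{\mathbf{j}}$ of simultaneous eigenspaces, which is the same argument with the simultaneous diagonalization made explicit.
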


\begin{proof}
Existence is clear by defining $\theta_\tau$ as the unique operator that acts on the ($\sum_{\tau\in\Sigma_F} a_{\tau,i_\tau}$)-eigenspace of $\theta$ by $a_{\tau,i_\tau}$. To see the uniqueness, since $\theta_\tau$ commutes with each other, they commute with $\theta=\sum_{\tau}\theta_\tau$ as well. Hence the ($\sum_{\tau\in\Sigma_F} a_{\tau,i_\tau}$)-eigenspace of $\theta$ is invariant under the action of each $\theta_\tau$. Since all possible sums $\sum_{\tau\in\Sigma_F} a_{\tau,i_\tau}$ are distinct, the eigenvalue of $\theta_\tau$ showing up in this ($\sum_{\tau\in\Sigma_F} a_{\tau,i_\tau}$)-eigenspace has to be $a_{\tau,i_{\tau}}$. This uniquely determines $\theta_\tau$ as it is semisimple.
\end{proof}

\section{Completed cohomology of Hilbert modular varieties} \label{sec-CCHMV}
In this section, we construct an action of $A^{(r)}$ on the non-CM part of the locally analytic completed cohomology of Hilbert modular varieties in middle degree and discuss several applications. 

\subsection{Main result}
\label{subsec-Main result}
First we introduce some notation.
Let \(F\) be a totally real number field of degree \(d\) over $\QQ$.
Let $\mathrm{G}=\mathrm{Res}_{F/\QQ}\GL_2$ the restriction of scalars of $\GL_2$ from $F$ to $\QQ$. We have $\mathrm{G}(\RR)=\prod_{\tau:F\to \RR} \mathrm{GL}_2(\RR)$, where $\tau$ runs through all embeddings of $F$ into $\RR$, and it acts naturally on $\mathrm{X}:=\prod_{\tau:F\to \RR} (\CC\setminus\RR)$ by the usual M\"obius transformation. 
For a neat compact open subgroup $K$ of $\mathrm{G}(\mA_f)$ with $\mA_f$ denoting the ring of finite ad\`eles, we have the usual (ad\`elic) Hilbert modular variety $\mathrm{Sh}_{K}$ over $\QQ$ with $\CC$-points
\[\mathrm{Sh}_{K}(\CC):=\mathrm{G}(\QQ)\setminus (\mathrm{X}\times \mathrm{G}(\mA_f))/K.\]
When $K$ varies,  $\{\Sh_{K}\}_{K\subseteq \mathrm{G}(\mA_f)}$ forms a tower with a natural action of $\mathrm{G}(\mA_f)$.  Fix an open compact subgroup $K^p$  of $\mathrm{G}(\mA_f^p)$, where $\mA_f^p$ denotes the finite ad\`eles away from $p$. 
The completed cohomology with tame level $K^p$ introduced by Emerton \cite{Emerton-2006} is defined as
\[
\tilde{H}^i(K^p):=\varprojlim_n\varinjlim_{K_p\subseteq \mathrm{G}(\QQ_p)} H^i(\Sh_{K^pK_p}(\CC),\ZZ/p^n)
\]
where $K_p$ runs through all open compact subgroups of $\mathrm{G}(\QQ_p)$. The rational completed cohomology $\tilde{H}^i(K^p)_{\QQ_p}:=\tilde{H}^i(K^p)\otimes_{\ZZ_p} \QQ_p$ is a unitary admissible $p$-adic Banach space representation of $\mathrm{G}(\QQ_p)$ with the unit ball given by the torsion-free quotient of $\tilde{H}^i(K^p)$. By identifying $H^i(\Sh_{K^pK_p}(\CC),\ZZ/p^n)$ with the \'etale cohomology of $\Sh_{K^pK_p}\times_{\Spec\QQ}\Spec\overline{\QQ}$ using comparison theorems, we get a natural action of the absolute Galois group $G_{\QQ}$ on the completed cohomology $\tilde{H}^i(K^p)$.

Our convention for the hodge cocharacter is the \textit{cohomological one}. For example when $F=\QQ$,  $\mathrm{Sh}_{K}$ parametrizes elliptic curves with a level structure $K$ on its $H_\et^1$. The action of $G_{\QQ}$ and $\mathrm{G}(\mA_f)$ on the connected components of the tower $\{\mathrm{Sh}_{K}\}_{K\subseteq\mathrm{G}(\mA_f)}$  is compatible with the Artin reciprocity map sending \textit{geometric Frobenii} to uniformizers. More precisely, the action of $G_\QQ$ on $\tilde{H}^0(K^p)$ is abelian, and hence by pre-composing with the Artin map $\QQ^\times_{>0}\setminus \mA_f^\times \to G_{\QQ}^{\mathrm{ab}}$, we get an action of $\QQ^\times_{>0}\setminus \mA_f^\times$ on $\tilde{H}^0(K^p)$. On the other hand, as explained in \cite[\S 2]{Deligne-1971}, $\mathrm{G}(\mA_f)$ acts on $\tilde{H}^0(K^p)$ factoring through the determinant map, inducing an action of $F^\times_{>>0}\setminus(\mA_f\otimes_\QQ F)^\times$, where $F^\times_{>>0}$ denotes the totally positive numbers of $F$.  By pre-composing with the natural map $\QQ^\times_{>0}\setminus \mA_f^\times \to F^\times_{>>0}\setminus(\mA_f\otimes_\QQ F)^\times$, we recover the action of  $\QQ^\times_{>0}\setminus \mA_f^\times$ defined previously.

Let $S$ be a finite set of places of $F$ containing all $p$-adic and Archimedean places such that $K\cap \mathrm{GL}_2(F_v)$ is a maximal compact subgroup for $v\notin S$. 
For each $K_p$, let $\TT_{K^pK_p}$ be the $\ZZ_p$-subalgebra of $\End(R\Gamma(\Sh_{K^pK_p}(\CC),\ZZ_p))$ generated by the usual spherical Hecke operators $S_v,T_v$ at places not in $S$. The inverse limit
\[\TT:=\varprojlim_{K_p} \TT_{K^pK_p}\]
acts naturally on the completed cohomology and commutes with the actions of $\mathrm{G}(\QQ_p)$ and $G_{\QQ}$.

\begin{thm} \label{thm-big T}
 $\TT$ is a Noetherian complete semi-local $\ZZ_p$-algebra and  there is  a unique continuous $2$-dimensional determinant $D:\ZZ_p[[G_{F,S}]]\to \TT$ such that for each $v\notin S$, the geometric Frobenius $\Fr_v$ at $v$ has characteristic polynomial $D(t-\Fr_v)=t^2-T_v t + \Nm(v) S_v$.
\end{thm}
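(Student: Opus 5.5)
The plan is the standard one for big Hecke algebras acting on completed cohomology: prove each claim at finite level, then pass to the inverse limit.

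\emph{Step 1: finite-level structure.} For each $K_p$, the algebra $\TT_{K^pK_p}$ is finite over $\ZZ_p$, because it acts faithfully on the finitely generated module $R\Gamma(\Sh_{K^pK_p}(\CC),\ZZ_p)$. Hence it is a product of complete local rings. Moreover $\TT_{K^pK_p}[1/p]$ is reduced modulo the torsion/Eisenstein contributions, and its maximal ideals correspond to systems of Hecke eigenvalues of cohomological Hilbert eigenforms (cuspidal, together with Eisenstein and one-dimensional contributions).

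\emph{Step 2: determinant at finite level.} For each such eigensystem we have the attached Galois representation $\rho_\pi\colon G_{F,S}\to\GL_2(\overline{\QQ_p})$, with $\det(t-\rho_\pi(\Fr_v))=t^2-T_vt+\Nm(v)S_v$. For the cuspidal part this is due to Carayol, Taylor and Blasius--Rogawski. For Eisenstein or one-dimensional eigensystems it is a sum of two characters, by class field theory. We then glue these representations to obtain a $2$-dimensional determinant valued in $\TT_{K^pK_p}$.

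The main obstacle is that $\TT_{K^pK_p}$ acts on the full integral complex, so it can carry $p$-torsion and nilpotents, and Galois representations exist a priori only over the reduced, $p$-torsion-free quotient. I would try two routes.
\begin{itemize}
\item First, quote the theorem of Scholze (torsion Galois representations), which gives a determinant valued in $\TT_{K^pK_p}/J$ with $J$ nilpotent of bounded exponent. This is the expected route in this generality.
\item Alternatively, since the statement only concerns the action on completed cohomology, replace $\TT_{K^pK_p}$ by its image in $\End$ of each cohomology group modulo $p^n$. Then use the Chenevier--Taylor gluing: a determinant is determined by its values on Frobenii, by Chebotarev and continuity. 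Apply this to the images in $\End(H^i(\Sh,\ZZ/p^n))$, which embed into products of torsion-free cohomologies of higher level via the usual Emerton/Scholze trick.
\end{itemize}
The nilpotent ideal can be absorbed in the inverse limit, because Scholze's argument gives bounded nilpotence uniformly in the level. Uniqueness in either route follows because the coefficients $T_v$ and $S_v$ generate, and the Frobenii are dense by Chebotarev.

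\emph{Step 3: pass to the limit.} The transition maps $\TT_{K^pK_p'}\to\TT_{K^pK_p}$ are surjective and compatible with the determinants, by uniqueness. So $\TT=\varprojlim\TT_{K^pK_p}$ is a profinite semi-local ring. Its maximal ideals are finite in number, because they correspond to residual determinants $\bar D$ arising at the $p$-power levels. By Shapiro/Hochschild--Serre these reduce modulo $p$ to eigensystems appearing in cohomology of a fixed pro-$p$ level $K_p$, of which there are finitely many. The determinants glue to a continuous $D\colon\ZZ_p[[G_{F,S}]]\to\TT$.

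\emph{Step 4: Noetherianity.} Localize at each maximal ideal $\mathfrak m$, with residual determinant $\bar D$. By the universal property there is a map $R_{\bar D}\to\TT_{\mathfrak m}$. This map is surjective, since its image contains $T_v=D(\Fr_v)$ and $\Nm(v)S_v=\det\Fr_v$, which topologically generate $\TT_{\mathfrak m}$ (and $\Nm(v)$ is a unit). Since $R_{\bar D}$ is Noetherian by the results of \S\ref{subsec-Defring}, so is $\TT_{\mathfrak m}$, and hence $\TT$ is Noetherian.
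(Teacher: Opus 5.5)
\textbf{Verdict: genuine gap.} The gap is your claim that the nilpotent ideal is ``absorbed in the inverse limit''. The rest of your outline is sound: finite level, then the limit, Noetherianity from the surjection $R_{\bar D}\to\TT_{\mathfrak m}$, uniqueness by Chebotarev, and finiteness of $\mathrm{Spm}\,\TT$ via the pro-$p$ Hochschild--Serre argument. Invoking Scholze's method is also what the paper does.

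\textbf{Why the limit does not remove the nilpotent ideal.} Suppose at each level you only have a determinant on $\TT_{K^pK_p}/J_{K^pK_p}$ with $J_{K^pK_p}^N=0$. Passing to the limit gives a determinant on $\TT/J$, where $J=\ker\bigl(\TT\to\varprojlim_{K_p}\TT_{K^pK_p}/J_{K^pK_p}\bigr)$ still satisfies $J^N=0$. A uniform bound on $N$ gives no reason for $J$ to vanish. If anything the obstruction grows with depth: let $J_K$ be the smallest ideal modulo which a determinant exists; pushing the determinant along $\TT_{K'}\twoheadrightarrow\TT_K$ shows $J_K\subseteq\mathrm{im}(J_{K'})$.

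So you would only prove the statement for $\TT/J$. Step 4 would then only show that $\TT_{\mathfrak m}/J$ is Noetherian. That does not imply $\TT_{\mathfrak m}$ is Noetherian, since a nilpotent ideal need not be finitely generated.

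\textbf{Route (2) does not repair this.}
\begin{itemize}
\item The map $\TT_{K^pK_p}\subseteq\End_{D(\ZZ_p)}(R\Gamma(\Sh_{K^pK_p}(\CC),\ZZ_p))\to\prod_i\End(H^i(\Sh_{K^pK_p}(\CC),\ZZ/p^n))$ itself has a nilpotent, generally non-zero, kernel.
\item There is no general trick embedding mod $p^n$ cohomology of a $d$-dimensional Hilbert modular variety, $d\geq 2$, into reductions of torsion-free cohomology at deeper $p$-level. For modular curves this works only because $H^2$ of an affine curve vanishes. The lack of such a trick in general is exactly why Scholze's perfectoid argument is needed.
\end{itemize}

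\textbf{What is needed} is an argument that produces no nilpotent ideal in the first place. Scholze's method can be made to do this if run on complexes at infinite level rather than on cohomology groups at finite level:
\begin{itemize}
\item Hecke operators away from $p$ act by honest commuting endomorphisms of $\pi_{\mathrm{HT}*}\mathcal{I}^+/p^m$ on the flag variety.
\item Hence they act by honest chain maps on a \v{C}ech complex $C^\bullet$ computing (almost) $R\Gamma_c$ at infinite level, and on the continuous-cochain complex computing $R\Gamma(K_p,C^\bullet)$.
\item Each term embeds Hecke-equivariantly into spaces of mod $p^m$ cusp forms that lift to characteristic $0$.
\item So the image of the Hecke algebra in the derived endomorphism ring is a quotient of its image in $\prod_j\End(C^j)$. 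That image carries a determinant: the product determinant lands in it by Chebotarev and continuity.
\item One then passes from $R\Gamma_c$ to $R\Gamma$ by Poincar\'e duality (twisting by $\iota_{\TT}$), and from $\mathrm{Res}_{F/\QQ}\GL_2$ to a Hodge-type group.
\end{itemize}
This is presumably what the appendix cited by the paper carries out.

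\textbf{A simpler alternative.} As the paper remarks, for its purposes it suffices to treat the image of $\TT$ in $\End(\tilde H^d(K^p)_{\QQ_p})$. By Theorem \ref{thm-(h)dense}, Propositions \ref{prop-Zdec} and \ref{prop-twist}, and Theorem \ref{thm-eigenspace}, this image embeds into a product of $\overline{\QQ_p}$'s. The product is indexed by classical cuspidal eigensystems of very regular weight, on which $\TT$ acts semisimply. The product of the $\det\rho_\lambda$ lands in the compact image of $\TT$ by Chebotarev, and no torsion or nilpotents ever appear.
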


\begin{proof}
This follows from the method in \cite{Scholze-2015}. See Appendix of \cite{Jiang2025HMF}. 
The argument in Section \ref{sec-proof of MT} below also gives a proof if $\TT$ is replaced by its image in $\End(\tilde{H}^d(K^p)_{\QQ_p})$ (which is enough for our purpose).
\end{proof}

Recall that for a homomorphism $\lambda:\TT\to \overline{\QQ_p}$, we denote by $\rho_\lambda:G_{F,S}\to\GL_2(\overline{\QQ_p})$ the semisimple representation with determinant $\lambda\circ D$, and denote by $\oInd^{\QQ}_{F}\rho_\lambda:G_{\QQ}\to \GL_{2^d}(\overline{\QQ_p})$ its tensor induction. 

\begin{thm} \label{thm-mainthm}
Let $\lambda:\TT\to \overline{\QQ_p}$ be a homomorphism such that the $\lambda$-isotypic part $\tilde{H}^d(K^p)_{\overline{\QQ_p}}[\lambda]$ of $\tilde{H}^i(K^p)\otimes_{\ZZ_p} \overline{\QQ_p}$ is non-zero. Suppose  that $\rho_\lambda$ is strongly irreducible and $\Lie \rho_\lambda(I_{F_v})\subseteq \mathfrak{gl}_2(\overline{\QQ_p})$ are not scalars for a $p$-adic place $v$ of $F$ (cf. Proposition \ref{prop-suppA(r)}).
\begin{enumerate}
\item There is a natural isomorphism of $G_{\QQ}\times \mathrm{G}(\QQ_p)$-representations
\[\tilde{H}^d(K^p)_{\overline{\QQ_p}}^{\la}[\lambda]=\oInd^{\QQ}_F \rho_\lambda\otimes_{\overline{\QQ_p}} \Pi^{\la}_{\lambda}\]
for some $\mathrm{G}(\QQ_p)$-representation $\Pi^\la_\lambda$,
where $\tilde{H}^d(K^p)_{\overline{\QQ_p}}^{\la}=\tilde{H}^i(K^p)^{\la}\otimes_{\QQ_p}\overline{\QQ_p}$, and $\tilde{H}^i(K^p)^{\la}\subseteq \tilde{H}^i(K^p)_{\QQ_p}$ denotes the subspace of $\mathrm{G}(\QQ_p)$-locally analytic vectors.
\item $\tilde{H}^{<d}(K^p)_{\overline{\QQ_p}}[\lambda]=0$.
\end{enumerate}
\end{thm}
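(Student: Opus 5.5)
We plan to apply Corollary \ref{cor-extplecSen} with $n=2$ and $\bar{D}$ the residual determinant of the maximal ideal of $\TT$ below $\lambda$. Via Theorem \ref{thm-big T}, the localized Hecke algebra $\TT_{\mathfrak m}$ becomes an $R_{\bar D}$-algebra. We take $W$ to be (a suitable Banach piece of) $\tilde{H}^d(K^p)^{\la}_{\mathfrak m}$, namely the vectors that are analytic for a fixed small open subgroup of $\mathrm{G}(\QQ_p)$. Since $\TT_{\mathfrak m}$ is Noetherian, $W$ is a module over some $R^{(r)}_{\bar D}$ for $r$ close to $1$.

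For the dense subspace $W^a$ we use the locally algebraic vectors of very general weight. By Emerton's description, these are sums of classical Hecke eigenspaces $W^a[\mathfrak p]$, each finite dimensional. By the Langlands--Kottwitz description \eqref{alignLangKott}, each is isotypic for $\oInd^{\QQ}_F\rho_{\lambda'}$, provided the classical eigensystems $\lambda'$ are cohomological in middle degree; the other degrees vanish for regular weight. Choosing the weights $(k_\tau)$ so that all signed sums $\sum \pm k_\tau$ are distinct gives conditions (2) and (3). Proposition \ref{proptenindirred}(2) then gives condition (1). Density of these very general weights inside the locally analytic vectors should follow from the usual density of locally algebraic vectors, restricting to weights in a Zariski-dense congruence class.

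For the operators $\theta_{W,\tau}$ we take the geometric partial Sen operators coming from the Hodge--Tate period map. Their sum is the Sen operator, which gives condition (4), and they commute, which gives condition (6). On classical pieces, $\theta_{W,\tau}$ acts through the $\tau$-component of the weight, so it is semisimple with eigenvalues given by the $\tau$-Hodge--Tate weights; this gives condition (5).

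Corollary \ref{cor-extplecSen} then yields a continuous $A^{(r)}$-action on $W$ extending the $G_\QQ$-action. Localizing at $\ker\lambda$ and using Corollary \ref{cor-G^plec_F maps to loc A^r}, this becomes an action of $E^{plec,(r)}_{\ker\lambda}$, and hence of $G^{plec}_F$. On the $\lambda$-eigenspace this action factors through $E^{plec}/\ker\lambda \cong \End(\Ind^{plec}_F\rho_\lambda)$, which is a matrix algebra by Lemma \ref{lem-irredcsa}. Morita theory then gives $W[\lambda]\cong \Ind^{plec}_F\rho_\lambda\otimes \Pi^{\la}_\lambda$, and restricting to $G_\QQ$ proves (1).

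For (2), we use Poincar\'e duality for completed cohomology. Duality relates $\tilde{H}^{<d}$ to the failure of the homological counterpart in degrees above $d$, or alternatively gives a spectral sequence whose $\mathrm{Ext}$ terms connect $\tilde{H}^{<d}[\lambda]$ to $\tilde{H}^d$ of the dual eigensystem. The plectic structure is then transported to the relevant degree. Once there, the argument runs as follows. A nonzero $G^{plec}_F$-representation over $\lambda$ must contain every Hodge--Tate--Sen weight of $\oInd\rho_\lambda$. However, the result of \cite{Lan-Pan} excludes some of these weights below the middle degree. This contradiction forces $\tilde{H}^{<d}_{\ker\lambda}=0$.

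The hardest step is this duality transfer, together with the density of very-general-weight locally algebraic vectors in a single Banach piece over which the Hecke action is $R^{(r)}$-linear. If the transfer fails, the fallback is to run the extension argument directly on the dual homology modules.
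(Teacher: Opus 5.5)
\textbf{Part (1).} Your plan is the paper's: apply Corollary \ref{cor-extplecSen} to $(h)$-analytic vectors, with very regular classical pieces as $W^a$ and the geometric $\theta^{\la}_\tau$ as $\theta_{W,\tau}$. However, with $W=\tilde{H}^d(K^p)^{\la}_{\mathfrak m}$ the hypotheses of the corollary are not met.
\begin{enumerate}
\item Condition (1) must hold for \emph{every} eigensystem occurring in $W^a$. Proposition \ref{proptenindirred}(2) requires $\Lie\rho_{\lambda'}(G_F)\supseteq sl_2$. CM eigensystems can occur in very regular weights, their tensor inductions are in general reducible, and then the $G_\QQ$-action does not determine an $A^{(r)}$-action. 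This is why the paper takes $W=I_{CM}\tilde H^{d}(K^p)^{(h)}$, which is closed by Lemma \ref{lem-ICMclosed} and loses nothing at $\lambda$ because $\ker\lambda\not\supseteq I_{CM}$.
\item In a fixed $(h)$-analytic Banach piece, locally algebraic vectors need not be dense, because the effective centre $H_0'\cong\ZZ_p^e$ has unknown rank $e$ (Leopoldt). So $W^a$ must include the non-algebraic central twists $\psi_{\underline l}$, realized through eigenfunctions in $\tilde H^0$ (Lemma \ref{lem-centeractonpi0}, Proposition \ref{prop-twist}).
\item Density of very regular weights needs an actual approximation argument, namely multiplication by $\bar a_1^{k_1'}\cdots\bar a_d^{k_d'}$ as in Theorem \ref{thm-(h)dense}. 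A Zariski-density remark does not suffice.
\item Isotypicity needs Nekov\'a\v{r}'s decomposition $\bigoplus_\pi \pi^\infty\otimes V(\pi^\infty)$ with $\dim V(\pi^\infty)=2^d$, not just the semisimplified \eqref{alignLangKott}.
\item The $R^{(r)}_{\bar D}$-module structure comes from \cite[Th.6.1]{Pan-2025JEMS}, not from Noetherianity of $\TT_{\mathfrak m}$.
\item The distinctness needed for condition (2) is of the sums $\sum_{\tau\in I}(k_\tau+1)$, not of $\sum\pm k_\tau$.
\end{enumerate}

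\textbf{Part (2): the genuine gap.} ``Transporting the plectic structure to the relevant degree'' is exactly what the paper says it cannot do. After tensoring the Poincar\'e duality spectral sequence with $D_h$ and $\CC_p$, $\tilde H^i$ ($i<d$) is seen only through a part $Y$, coming from the image of $\tilde H^i$, of an $I_{nsi}$-torsion-free module $X\hookrightarrow E_2^{d-i,d}=H^{d-i}_{\mathrm{cont}}(\widetilde{K_p},\tilde H^{d,(h)}\widehat\otimes\CC_p\widehat\otimes D_h)(d)$ built from $I_{CM}\tilde H^{d,(h)}$. This $Y$ is only known to be an $R^{(r)}_{\bar D}[G_\QQ]$-submodule, not $A^{(r)}$-stable. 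A $G_\QQ$-submodule need not exhibit all Hodge--Tate--Sen weights of $\oInd^{\QQ}_F\rho_f$, which may be reducible, so your weight count gives no contradiction. Your fallback also fails: below middle degree there are no very regular classical vectors to serve as $W^a$.

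The paper argues differently.
\begin{enumerate}
\item[(i)] Propositions \ref{prop-GK<d} and \ref{prop-Van>d} give a map $\phi_i:(\tilde H^{i,(h)}_c)^*\widehat\otimes\CC_p\to E_2^{d-i,d}$. Its kernel is handled by backward induction on $i$, with Lemma \ref{lem-Insikillboundary} to pass between $\tilde H^i$ and $\tilde H^i_c$. Its cokernel is killed by $\mathcal I_{Sen}^n$ (Corollary \ref{cor-LPvan}).
\item[(ii)] The sign compatibility of Proposition \ref{prop-Seneq}, together with $\iota_{Sen}$, gives $\iota_{Sen}(\mathcal I_{Sen})^nY=0$ and $\mathcal I_{Sen}^n(X/Y)=0$.
\item[(iii)] $A^{(r)}$ is used only to get $A''/\ker f\cong\End(\oInd^{\QQ}_F\rho_f)$. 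The $G_\QQ$-image $B_X$ is then projected onto $\End(V)$, where $V$ is the largest irreducible $G_\QQ$-subrepresentation; $V$ carries all the weights by Proposition \ref{proptenindirred}(1).
\item[(iv)] Propositions \ref{prop-prVIsne} and \ref{prop-A'IsenInsisim} give $a_Y\in I_Y$ and $a_Z\in I_Z$ with $\phi_X(a_Ya_Z)=1$, although $a_Ya_Z\in I_Y\cap I_Z$ is nilpotent. This forces $X=0$.
\end{enumerate}
This yields $I_{nsi,\TT}^n\tilde H^{<d}(K^p)_{\QQ_p}=0$, after a Baire argument and Schneider--Teitelbaum density, and hence $\tilde H^{<d}[\lambda]=0$.
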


\begin{rmk}
Since $\oInd^{\QQ}_{F}\rho_\lambda$ is a semisimple representation, the image of $\QQ_p[G_{\QQ}]$ in $\End\left(\tilde{H}^d(K^p)^\la_{\overline{\QQ_p}}[\lambda]\right)$ is a semisimple algebra. By  Schneider-Teitelbaum \cite{ST-2003}, $\tilde{H}^d(K^p)_{\overline{\QQ_p}}^{\la}[\lambda]\cap \tilde{H}^d(K^p)_{\QQ_p}$ is dense inside of $\tilde{H}^d(K^p)_{\overline{\QQ_p}}[\lambda]\cap \tilde{H}^d(K^p)_{\QQ_p}$. 
We see that the action of $G_\QQ$ on $\tilde{H}^d(K^p)_{\overline{\QQ_p}}[\lambda]$ is semisimple as well. This can also be deduced from Nekov\'a\v{r}'s work \cite{Nekovar-2018}. Combined with the Eichler-Shimura's relation, it shows that every irreducible subrepresentation of $\tilde{H}^d(K^p)_{\overline{\QQ_p}}[\lambda]$ must be a simple factor of $\oInd^{\QQ}_{F}\rho_\lambda$. However it's not clear to us whether his method can show that all irreducible factors of $\oInd^{\QQ}_{F}\rho_\lambda$ would appear in $\tilde{H}^d(K^p)_{\overline{\QQ_p}}[\lambda]$.
\end{rmk}

The rest of this section is devoted to the proof of Theorem \ref{thm-mainthm}. Since $\TT$ is semi-local,
\[\tilde{H}^i(K^p)=\bigoplus_{\mathfrak{m}\in \mathrm{Spm} \TT}\tilde{H}^i(K^p)_{\mathfrak{m}} \]
where $\mathrm{Spm} \TT$ denotes the set of maximal ideals of $\TT$.
Let $\mathfrak{m}\in \mathrm{Spm} \TT$. The residue field $\FF:=\TT/\mathfrak{m}$ is a finite extension of $\FF_p$. Let $\mO_L$ be the ring of Witt vectors $W(\FF)$ and $L=\mO_L[1/p]$. By Hensel's lemma, there is a natural map $\mO_L\to \TT_{\mathfrak{m}}$ lifting the identity map $\FF=\FF$. Hence $\TT_\mathfrak{m}$ is a natural $\mO_L$-algebra.

Let $\bar{D}$ be the $2$-dimensional determinant $D \mod \mathfrak{m}$ of $G_{F,S}$ valued in $\FF$. Consider the universal deformation ring $R_{\bar{D}}$  parametrizing all continuous lifts of $\bar{D}$ with universal determinant $D^u$, cf. Subsection \ref{subsec-PSoaLa}. The localization $D_\mathfrak{m}$ of $D$ at $\mathfrak{m}$ induces a surjective map
\[\phi:R_{\bar{D}}\to \TT_{\mathfrak{m}}\]
such that $D_\mathfrak{m}=\phi\circ D^u$. We will view $\tilde{H}^i(K^p)_{\mathfrak{m}}$ as an $R_{\bar{D}}$-module via $\phi$ from now on. 

\begin{lem}
The subset of $x\in \Spec R_{\bar{D}}$ such that 
\begin{itemize}
\item $\rho_x|_{G_E}:G_E\to\GL_2( \overline{k(x)})$ is reducible for a quadratic extension $E$ of $F$
\end{itemize}
is Zariski closed in $\Spec R_{\bar{D}}$. Here $ \overline{k(x)}$ is an algebraic closure of $k(x)$, and $\rho_x$ denotes the semisimple representation associated to $D\mod x$.
Similarly the subset of $x\in \Spec R_{\bar{D}}$ such that 
\begin{itemize}
\item $\rho_x|_{G_E}:G_E\to\GL_2( \overline{k(x)})$ is reducible for a CM quadratic extension $E$ of $F$
\end{itemize}
is Zariski closed in $\Spec  R_{\bar{D}}$. 
\end{lem}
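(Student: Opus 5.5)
The plan is to show that only finitely many quadratic extensions can matter, and that for each fixed one the locus is cut out by polynomial identities in the universal determinant. Every $\rho_x$ is unramified outside $S$, so any relevant $E$ can be taken inside the maximal extension of $F$ unramified outside $S$. The key is that if $\rho_x|_{G_E}$ is reducible with $E$ quadratic over $F$, then, after replacing $E$ if necessary, $E$ may be taken unramified outside $S$ (together with the primes above $2$, which we add to $S$ harmlessly).

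This reduction comes from a case split. If $\rho_x$ is itself reducible, any $E$ works, but then $\rho_x|_{G_{E}}$ is reducible for every $E$, including one fixed quadratic extension $E_0$ unramified outside $S$. If $\rho_x$ is irreducible but becomes reducible over $E$, then $\rho_x\cong \Ind_E^F\chi$. In that case $E$ is the fixed field of the kernel of the quadratic character $\omega$ with $\rho_x\otimes\omega\cong\rho_x$. Since $\rho_x$ is unramified outside $S$, so is $\omega$, because $\omega$ appears in $\Ad\rho_x$. By Hermite's theorem there are only finitely many quadratic extensions of $F$ unramified outside $S\cup\{v\mid 2\}$. Hence the locus is the finite union over these $E$ (respectively over the CM ones among them) of the subsets $Z_E=\{x:\rho_x|_{G_E}\text{ reducible}\}$.

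Next I would show that each $Z_E$ is closed. Restrict the universal determinant $D^u$ to $R_{\bar D}[[G_{E,S}]]$. A $2$-dimensional determinant over an algebraically closed field is the determinant of a reducible semisimple representation exactly when it is a sum of two characters. Equivalently, this happens exactly when the associated Cayley–Hamilton quotient fails to be a central simple algebra of rank $4$. For dimension $2$, irreducibility over $\overline{k(x)}$ is detected by the nonvanishing of some element of the form $T(g h)-\frac12\bigl(T(g)T(h)-\ldots\bigr)$. The natural choice is Chenevier's discriminant-type functions: $\rho$ is reducible if and only if $\det\bigl(T(g_ig_j)\bigr)_{i,j\le 4}=0$ for all $g_1,\dots,g_4\in G_E$, where $T$ is the trace of $D^u$. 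This uses the fact that the trace form on $M_2$ is nondegenerate, and that the span of the image is all of $M_2$ exactly when the representation is absolutely irreducible, by Burnside. Thus $Z_E$ is the vanishing locus of the ideal generated by these $4\times4$ Gram determinants, which are elements of $R_{\bar D}$, so $Z_E$ is Zariski closed. For a reducible semisimple representation, the image spans only a subalgebra of upper triangular type of dimension at most $3$, so all Gram determinants vanish. This takes care of the semisimplification subtlety, since $D\bmod x$ only sees $\rho_x^{\mathrm{ss}}$.

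I expect the main obstacle to be the finiteness reduction. Two points need care there. The first is that $\omega$ is genuinely unramified outside $S\cup\{2\}$; this follows because $\omega$ is a constituent of $\Ad\rho_x$ and $\Ad\rho_x$ is unramified outside $S$. The second is the bookkeeping of the points where $\rho_x$ is reducible, for which I include a fixed auxiliary $E_0$. The CM statement is the same argument, restricted to the finitely many CM quadratic extensions in the list. If there are none, that locus is the reducible locus $Z_{E_0}$ intersected appropriately, or is empty. Either way it is a finite union of closed sets.
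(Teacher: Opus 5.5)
Your skeleton is the paper's. Each locus $Z_E$ is closed: the paper just cites Chenevier, and your $4\times 4$ Gram determinants $\det(T(g_ig_j))$ give a correct proof in every characteristic. Only finitely many $E$ matter because $\Hom(G_{F,S},\FF_2)$ is finite, which your Hermite argument also gives.

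The gap is in how you show that the relevant $E$ lie in the maximal extension $F_S$ of $F$ unramified outside $S$. You argue via $\rho_x\otimes\omega\cong\rho_x$ and $\omega\hookrightarrow\Ad\rho_x$. That only constrains $\omega$ when $1\neq -1$ in $\overline{k(x)}$. The paper allows $p=2$, and then $\Spec R_{\bar D}$ has points of residue characteristic $2$: the closed point and every point of $\Spec R_{\bar D}/(2)$. At such $x$ the $\overline{k(x)}^\times$-valued character $\omega$ is trivial. Both statements then hold vacuously and say nothing about where $E$ ramifies, so for these points your finiteness reduction is unproved.

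The fix is the observation behind the paper's two-line proof, and it makes your $\Ind_E^F\chi$ analysis unnecessary. Since $\rho_x$ factors through $G_{F,S}$, whether $\rho_x|_{G_E}$ is reducible depends only on the image of $G_E$ in $G_{F,S}$. That image is one of two things:
\begin{enumerate}
\item all of $G_{F,S}$, giving the reducibility locus $Z_F$ of $D^u$ itself;
\item $\ker\omega$ for one of the finitely many nonzero $\omega\in\Hom(G_{F,S},\FF_2)$.
\end{enumerate}
So only finitely many distinct sets $Z_E$ occur, each closed by your Gram criterion applied to the corresponding subgroup. This works in every characteristic and needs neither the extra primes above $2$ nor the auxiliary $E_0$. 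Equivalently: if $\rho_x$ is irreducible and $\rho_x|_{G_E}$ is reducible, then $G_E\ker\rho_x\neq G_F$, so $\ker\rho_x\subseteq G_E$ and $E\subseteq F_S$.

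This also repairs your CM bookkeeping. The reducible locus is $Z_F$, not $Z_{E_0}$; for a non-CM $E_0$, the set $Z_{E_0}$ also contains irreducible points induced from $E_0$. Since $Z_F\subseteq Z_E$ for every $E$ and CM quadratic extensions of $F$ always exist, the CM locus equals $Z_F\cup\bigcup_{E\subseteq F_S\ \mathrm{CM}} Z_E$, a finite union of closed sets.
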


\begin{proof}
For each quadratic extension $E$, the subset of $x$ such that $\rho_x|_{G_E}$ is reducible is Zariski closed, cf.\cite[Example 2.20]{Chenevier-2014}. Since $H^1(G_{F,S},\FF_2)=\Hom(G_{F,S},\FF_2)$ is finite, we only need to consider finitely many quadratic extensions. Thus both loci considered in the lemma are Zariski closed because they are finite unions of Zariski closed subsets.
\end{proof}

\begin{dfn}[Non-strongly irreducible locus and CM locus] \label{dfn-idealInsi}
For each  $\mathfrak{m}\in \mathrm{Spm} \TT$,
Let $I_{nsi,\mathfrak{m}}$ (resp. $I_{CM,\mathfrak{m}}$) be the ideal of $R_{\bar{D}}$ defined by the Zariski closed subset of $x\in \Spec R_{\bar{D}}$ such that 
 $\rho_x|_{G_E}:G_E\to\GL_2( \overline{k(x)})$ is reducible for a quadratic (resp. CM quadratic) extension $E$ of $F$ (equivalently, $I_{nsi,\mathfrak{m}}$ (resp. $I_{CM,\mathfrak{m}}$) is the intersection of such $x$'s.). We define $I_{nsi,\TT}\subseteq\TT$ as the image of
 \[\prod_{\mathfrak{m}\in\mathrm{Spm}\TT} I_{nsi,\mathfrak{m}} \to \TT,\]
i.e. it is the ideal defining the non-strongly irreducible locus of $\Spec \TT$.
\end{dfn}

Fix $\mathfrak{m}\in \mathrm{Spm} \TT$ and hence  $\bar{D}$ in the rest of this section. We will drop $\mathfrak{m}$ in $I_{nsi,\mathfrak{m}}$ and  $I_{CM,\mathfrak{m}}$.

Recall that in Definition \ref{defn-A(r)}, for $r\in p^{\QQ}\cap (0,1)$ we introduce an $L$-algebra $A^{(r)}$ which receives a natural map from $R_{\bar{D}}[G_{\QQ}]$. Our main technical results of this section are the following.

\begin{thm} \label{thm-plecticactnonCM}
For every $K_p\subseteq \mathrm{G}(\QQ_p)$ uniform pro-$p$ open subgroup (for example a sufficiently small principal congruence subgroup), let  $\tilde{H}^d(K^p)^{K_p-\an}\subseteq \tilde{H}^d(K^p)_{\QQ_p}$ denote the subspace of $K_p$-analytic vectors. There exists an $r=r_{K_p}\in p^{\QQ}\cap (0,1)$ and  a  continuous  $A^{(r)}$-module structure on $(I_{CM}\tilde{H}^d(K^p)_{\QQ_p})^{K_p-\an}$  (note that $I_{CM}$ acts on $\tilde{H}^d(K^p)_{\QQ_p}$ via $I_{CM}\to \TT_{\mathfrak{m}} \to \TT$ and  $I_{CM}\tilde{H}^d(K^p)_{\QQ_p}$ is a closed subspace of $\tilde{H}^d(K^p)_{\QQ_p}$ by Lemma \ref{lem-ICMclosed} below) that extends the $R_{\bar{D}}[G_\QQ]$-module structure and is compatible with the $\mathrm{G}(\QQ_p)$-action in the following sense: 
\begin{itemize}
\item Let $g\in \mathrm{G}(\QQ_p)$ and $K'_p\subseteq gK_pg^{-1}$ a uniform pro-$p$ open subgroup. Set $r'=\max (r_{K_p},r_{K'_p})$. The composite map 
\[(I_{CM}\tilde{H}^d(K^p)_{\QQ_p})^{K_p-\an}\xrightarrow{\times g} (I_{CM}\tilde{H}^d(K^p)_{\QQ_p})^{gK_pg^{-1}-\an}\subseteq (I_{CM}\tilde{H}^d(K^p)_{\QQ_p})^{K'_p-\an}\] 
is $A^{(r')}$-linear. (Recall that there is a natural map $A^{(r')}\to A^{(r)}$ if $r<r'$.)
\end{itemize}
Moreover the $A^{(r_{K_p})}$-module structure on each $(I_{CM}\tilde{H}^d(K^p)_{\QQ_p})^{K_p-\an}$ is uniquely determined by these conditions.
\end{thm}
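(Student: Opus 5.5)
The plan is to apply Corollary \ref{cor-extplecSen} with $W=(I_{CM}\tilde{H}^d(K^p)_{\QQ_p})^{K_p-\an}$, a Banach space since $K_p$ is uniform. For the partial operators $\theta_{W,\tau}$ we take the geometric partial Sen operators of \cite{Pan-2022,RC-2025}. These are constructed on the locally analytic completed cohomology through the Hodge--Tate period map at infinite level, commute with each other, sum to the Sen operator, and on $K_p$-analytic vectors lie in $\End^{\mathrm{cont}}W\widehat\otimes_{\QQ_p}\CC_p$. This gives conditions \eqref{cor-extplecSen-4} and \eqref{cor-extplecSen-6}.

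\textbf{Step 1: choosing $r$.} Since $\tilde{H}^d(K^p)_{\mathfrak m}$ is a finitely generated $R_{\bar D}[[K_p]]$-module, the $R_{\bar D}$-action on $K_p$-analytic vectors should extend continuously to $R^{(r)}_{\bar D}$ for some $r=r_{K_p}$. The reason is that the Hecke eigenvalues of $K_p$-analytic vectors have bounded growth, so topologically nilpotent elements of $R_{\bar D}$ act with norm at most $r$. I would prove this by comparison with the Hecke action on spaces of locally analytic vectors, via Emerton's admissibility.

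\textbf{Step 2: the dense subspace.} Take $W^a$ to be the span of the locally algebraic vectors of very general weights, meaning weights $(k_\tau)$ for which the $k_\tau$'s and $-k_\tau$'s are distinct and the $2^d$ sums $\sum\pm k_\tau/2$ (shifted) are distinct. Density should follow from a Zariski density argument for locally algebraic vectors of such weights in $W$ (Emerton), after imposing the "generic" conditions, which avoid finitely many hyperplanes in each degree. By the Eichler--Shimura relations and classical results (\eqref{alignLangKott}, in its Matsushima/Brylinski--Labesse form for classical Hilbert forms), $W^a$ decomposes as a direct sum of eigenspaces $W^a[\mathfrak p]$. Each $W^a[\mathfrak p]$ is finite-dimensional and $\oInd\rho_\lambda$-isotypic, which is condition \eqref{assumption3}.

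\textbf{Step 3: the remaining conditions.} Applying $I_{CM}$ kills exactly the CM eigensystems. The remaining $\rho_\lambda$ come from non-CM classical forms and are therefore strongly irreducible. Proposition \ref{proptenindirred}(2), together with the very-general weight condition, then gives irreducibility of the tensor induction, which is \eqref{cor-extplecSen-1}. Proposition \ref{prop-wT=sumpartialwt} gives the $2^d$ distinct Hodge--Tate weights, which is \eqref{cor-extplecSen-2}. For \eqref{cor-extplecSen-5}, the geometric partial Sen operator on a classical eigenspace acts through the $\tau$-component of the automorphic vector bundle; it is semisimple with the $\tau$-Hodge--Tate weights of $\rho_\lambda$, by the computation in \cite{Pan-2022} extended componentwise. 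The corollary then yields the $A^{(r)}$-structure, unique and extending $R_{\bar D}[G_\QQ]$.

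\textbf{Step 4: $\mathrm{G}(\QQ_p)$-compatibility and uniqueness.} Both follow from uniqueness in Corollary \ref{cor-extplecSen}. The geometric $\theta_{W,\tau}$ commute with $\mathrm{G}(\QQ_p)$, and translation by $g$ preserves $W^a$ and the $G_\QQ$-action. Hence the two $A^{(r')}$-structures agree on a dense subspace, and therefore everywhere by continuity.

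The main obstacle I expect is Step 2 combined with \eqref{cor-extplecSen-5}. One must show that very general locally algebraic vectors are dense inside the $K_p$-analytic vectors of $I_{CM}\tilde H^d$, not merely inside the full locally analytic space. One must also pin down the eigenvalues of the geometric partial Sen operators on these vectors. For density I would first try an explicit Amice/Mahler-type argument on $K_p$-analytic functions twisted by algebraic representations, using that the excluded weights lie in a thin set.
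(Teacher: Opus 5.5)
Your overall architecture is the paper's: apply Corollary \ref{cor-extplecSen} with the geometric operators $\theta^{\la}_\tau$ of Theorem \ref{thm-Sendiag01}, and get uniqueness and $\mathrm{G}(\QQ_p)$-compatibility from the uniqueness in that corollary. However, Step~2 has a genuine gap, and it is where most of the real work lies.

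\textbf{Density needs a structural input.} There is no general ``Zariski density'' principle making locally algebraic vectors dense in an admissible Banach representation; a one-dimensional representation given by a non-locally-algebraic character has none. Discarding the finitely many hyperplanes of non-very-regular weights also does not by itself preserve density. The input you are missing is that, as a representation of the image $H'$ of $H=H_1\times H_0$, $\tilde H^d(K^p)_{\QQ_p}$ is a \emph{quotient} of $C(H',\QQ_p)^{\oplus m}$ \cite[Cor.~3.2.6]{Pan-2025cchmd}. Since $(h)$-analytic vectors are exact on quotients of admissible representations (Proposition \ref{prop-(h)-an}\eqref{prop-(h)-an-3}), this reduces density to $\mathrm{LA}^{(h)}(H')$. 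There it is proved by hand (Theorem \ref{thm-(h)dense}): a vector $v$ of weight $\underline k$ is multiplied by $\bar a_1^{k'_1}\cdots\bar a_d^{k'_d}$, with the $k'_i$ rapidly growing powers of $p$. The product is $p^{-N}$-close to $v$ and decomposes into very regular weights. This is also why the paper works with $(h)$-analytic vectors of a fixed $H$ rather than directly with $K_p$-analytic vectors. Exactness gives $(I_{CM}\tilde H^d)^{(h)}=I_{CM}\tilde H^{d,(h)}$, so density passes to the $I_{CM}$-part. Only at the end does the paper restrict to $K_p$-analytic vectors, using $K_p^{p^n}\subseteq H$ and the $\mathrm{G}(\QQ_p)$-compatibility.

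\textbf{The center obstructs density of locally algebraic vectors.} Even granting the above, your $W^a$ cannot be shown dense. The image $H'_0\cong\ZZ_p^e$ of the center in the automorphism group of the tower has $e=1+\delta$, where $\delta$ is the Leopoldt defect. An algebraic central character that is trivial on a finite-index subgroup of the global units must be a power of $\Nm$. Hence every locally algebraic vector of $W$ (these are $K_p$-algebraic) is fixed by $\ker(\Nm)\cap Z(K_p)$, whose image in the automorphism group of the tower has $\ZZ_p$-rank $e-1$. So is the closure of their span. Meanwhile $\tilde H^d$ contains vectors with arbitrary central character (Lemma \ref{lem-centeractonpi0}, Proposition \ref{prop-twist}). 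So your $W^a$ is dense only if Leopoldt's conjecture holds for $(F,p)$.

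\textbf{How the paper handles the center.} It enlarges $W^a$ to the $\rho_{\underline k}\otimes\psi_{\underline l}$-isotypic vectors, where $\psi_{\underline l}$ are non-algebraic characters of $H'_0$ (Definition \ref{dfn-veryregularsubspace}). These vectors are not classical, so the Langlands--Kottwitz/Eichler--Shimura input you invoke does not directly give hypotheses \eqref{cor-extplecSen-1}--\eqref{cor-extplecSen-3} and \eqref{cor-extplecSen-5} for them. The paper obtains these hypotheses as follows.
\begin{enumerate}
\item Multiply by an eigenfunction $\phi\in\tilde H^0(K^p)_E$ with prescribed central character. This identifies the twisted eigenspaces with classical ones tensored with $\oInd^{\QQ}_F\rho_\psi$, and shifts the $\tau$-Hodge--Tate weights by $w_\tau$.
\item Read off the eigenvalues of $\theta^{\la}_\tau$ from the infinitesimal character (Corollary \ref{cor-thetatauev}, Theorem \ref{thm-eigenspace}). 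This works for arbitrary, non-integral central weights.
\end{enumerate}

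Your Step~1 is only a heuristic, but there the paper also simply cites \cite[Th.~6.1]{Pan-2025JEMS}.
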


\begin{thm}\label{thm-belmidnsi}
There exists $n>0$ such that $I_{nsi,\TT}^n$ annihilates $\tilde{H}^{<d}(K^p)_{\QQ_p}$.
\end{thm}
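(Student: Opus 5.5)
We follow the strategy announced in the introduction: combine the plectic action of Theorem \ref{thm-plecticactnonCM} with Poincar\'e duality for completed cohomology and the Hodge--Tate--Sen weight constraint from \cite{Lan-Pan}. It suffices to work one maximal ideal $\mathfrak{m}$ at a time, showing that a power of $I_{nsi,\mathfrak{m}}$ kills $\tilde{H}^{<d}(K^p)_{\mathfrak{m},\QQ_p}$. Since $\TT$ is Noetherian and $\tilde{H}^{i}(K^p)_{\QQ_p}$ is admissible, it is enough to prove that no point $x\in\Spec R_{\bar{D}}[1/p]$ outside $V(I_{nsi})$ lies in the support of $\tilde{H}^{<d}(K^p)_{\mathfrak{m}}$. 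Concretely, we aim to show $\tilde{H}^{i}(K^p)_{\ker\lambda}=0$ for $i<d$ and every strongly irreducible $\lambda:\TT_{\mathfrak{m}}\to\overline{\QQ_p}$. Passing from these pointwise statements to a power of the ideal uses a finiteness argument: the support of a finitely generated module over the Noetherian Hecke algebra acting on the dual is closed. To get there, we will also need to reduce the CM-but-strongly-irreducible case to the non-CM one. Here we expect the CM points to be handled separately, since for them the points with distinct $\tau$-weights are not dense, or possibly they are automatically excluded because CM $\rho$ is induced and hence lies in $V(I_{nsi})$. The latter observation seems right, since $I_{CM}\supseteq I_{nsi}$ as ideals, so $V(I_{CM})\subseteq V(I_{nsi})$.

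Key steps. (1) Reduce to locally analytic vectors and a localization at a strongly irreducible $\lambda$ with nonzero Hecke eigenspace, via Schneider--Teitelbaum density of locally analytic vectors. (2) Invoke Poincar\'e duality for completed cohomology: in a spectral sequence $E_2^{i,j}=\Ext^i_{\Lambda}(\tilde{H}_{j}(K^p),\Lambda)\Rightarrow \tilde{H}^{i+j}_c$ relating $\tilde{H}^{<d}$ to the higher-degree homology, pick the smallest $q<d$ with $\tilde{H}^q_{\ker\lambda}\ne0$. The extremal term then produces a nonzero $\lambda$-eigenvector in $\tilde{H}^{2d-q}_c$ or, dually, a nonvanishing contribution in middle degree. (3) Use Theorem \ref{thm-plecticactnonCM} together with the main result of \cite{Lan-Pan}, which bounds which Hodge--Tate--Sen weights, i.e. which eigenvalues of $\theta$ and hence of the partial Sen operators $\theta_\tau$, can occur in $\tilde{H}^{q}$ in terms of the degree $q$. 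In middle degree, the $A^{(r)}$-module structure forces every eigenvector of $\theta$ to generate the whole $\oInd^{\QQ}_F\rho_\lambda$, so all $2^d$ combinations of partial weights appear. (4) Transport this through the duality of step (2), which should be equivariant for $G_\QQ$ and for the Sen operators up to a twist. The conclusion is that the $\lambda$-part in degree $q<d$ would also have to carry all $2^d$ sign-patterns of partial weights, contradicting \cite{Lan-Pan}, which in degree $q$ allows at most those patterns with at most $q$ ``lower'' weights.

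The main obstacle is step (4): making the duality compatible with the extra plectic/partial Sen structure. Only the $G_\QQ$- and $\mathrm{G}(\QQ_p)$-actions are evidently transported, not the $A^{(r)}$-action. Our first attempt is to avoid transporting $A^{(r)}$ altogether. We would use only the consequence of step (3) that the $\lambda$-part in middle degree is $\oInd\rho_\lambda$-isotypic, with $\oInd\rho_\lambda$ containing a $G_\QQ$-irreducible piece carrying all weights, by Proposition \ref{proptenindirred}. We would then argue via the spectral sequence that the extremal nonzero lower-degree term injects $G_\QQ$-equivariantly into a subquotient of a dual of the middle degree, and so contains such an irreducible piece. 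The remaining delicacy is handling multiplicities and the $\Ext$ terms, which are only pseudo-compact modules, so localizing at $\ker\lambda$ and using that $\TT_{\ker\lambda}$ acts semisimply on the relevant eigenspace will be needed.
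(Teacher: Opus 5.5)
Your architecture (plectic action in middle degree, Poincar\'e duality, the Lan--Pan constraint) is the paper's. You also correctly see that $A^{(r)}$ cannot be transported through the duality. But the replacement step you propose does not work, and it is exactly where the difficulty lies.

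You conclude that the lower-degree piece, being (a subquotient of) something $\oInd^{\QQ}_F\rho_\lambda$-isotypic coming from middle degree, contains the maximal constituent $V$ of Proposition \ref{proptenindirred}, and hence all Hodge--Tate--Sen weights. This is a non sequitur. Strong irreducibility of $\rho_\lambda$ does not make $\oInd^{\QQ}_F\rho_\lambda$ irreducible (e.g.\ when some $k_\tau$ coincide). A $G_\QQ$-sub or quotient of an isotypic object may consist only of the smaller constituents, which miss the extremal weights.

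The paper gets around this by controlling both sides of an extension:
\begin{enumerate}
\item By backward induction it reduces to the middle-degree term $E_2^{d-i,d}$. It replaces this by an $I_{nsi}$-torsion-free module $X$ built from $I_{CM}\tilde H^{d,(h)}$, so $X$ carries the $A^{(r)}$-action of Theorem \ref{thm-plecticactnonCM}.
\item It obtains a $G_\QQ$-stable exact sequence $0\to Y\to X\to Z\to 0$. Here $Y$, coming from the image of the dual of $\tilde H^i$, is killed by $\iota_{Sen}(\mathcal I_{Sen})^n$. And $Z$, coming from the cokernel of $\phi_i$ (i.e.\ $E_2$-terms in degrees $<d$), is killed by $\mathcal I_{Sen}^n$.
\item At a strongly irreducible point $f$ of the support, $A^{(r)}$ identifies the fibre with $\End(\oInd^{\QQ}_F\rho_f)$. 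So the image $B_X$ of $R^{(r)}_{\bar D}\widehat\otimes\CC_p[G_\QQ]$ surjects onto $\End(V)$.
\item Propositions \ref{prop-prVIsne} and \ref{prop-A'IsenInsisim} then give $a_Y\in I_Y$ and $a_Z\in I_Z$ (the annihilators of $Y$ and $Z$ in $B_X$), both mapping to $1\in\End(V)$. But $(I_Y\cap I_Z)^2=0$, a contradiction.
\end{enumerate}
You cannot drop either half. For $i=d-1$ the cokernel side involves $\tilde H^{d-1}$ itself, so induction does not apply. And $Y$ is not known to be $A^{(r)}$-stable.

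Several auxiliary steps also need changing.
\begin{itemize}
\item \emph{The Lan--Pan input.} What is actually available is that a power of $\mathcal I_{\mathfrak h}=(\prod_\tau(x_\tau-y_\tau))$ kills $\tilde H^{<d}(K^p)^{\la}\widehat\otimes\CC_p$. This is not a degree-by-degree count of partial-weight patterns.
\item \emph{Where weights can be read off.} In degree $<d$ you do not know how $Z(U(\mathfrak g))$ acts on a $\lambda$-part. So the constraint must be transported through the duality as annihilation by elements of $Z[\theta_{Sen}]$, with the twist $\iota_{Sen}$ (Proposition \ref{prop-Seneq}, Remark \ref{rmk-iotaZ}). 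It can only be evaluated on the middle-degree side, where Theorem \ref{thm-U(h)factorR} ties the infinitesimal character to $\rho_f$. The contradiction then needs only one extremal Sen weight on each side (cf.\ the choice of minimal $w$ in the proof of Proposition \ref{prop-A'IsenInsi}).
\item \emph{Direction and inputs of the induction.} It must run downward from $d-1$, not start at the smallest nonvanishing $q$. It needs $\tilde H_{>d,\QQ_p}=0$ and the grade bound of Proposition \ref{prop-GK<d} to isolate $E_2^{d-i,d}$. It also needs Lemma \ref{lem-Insikillboundary} to pass between $\tilde H^i$ and $\tilde H^i_c$.
\item \emph{From pointwise vanishing to a power of $I_{nsi,\TT}$.} This is not justified as you state it, because $\tilde H_{i,\QQ_p}$ is finitely generated over $\Lambda[1/p]$, not over $\TT$. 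The paper instead reduces to $(h)$-analytic vectors via Baire category and Schneider--Teitelbaum density. It gets the needed finiteness from $D_h$ being Noetherian and $A''$ being finite over $R^{(r)}_{\bar D}\widehat\otimes\CC_p$.
\end{itemize}
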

Clearly Theorem \ref{thm-belmidnsi} implies the second part of  Theorem \ref{thm-mainthm}.

\begin{lem}\label{lem-ICMclosed}
$I_{CM}\tilde{H}^d(K^p)_{\QQ_p}$ is a closed $\mathrm{G}(\QQ_p)$-invariant subspace of $\tilde{H}^d(K^p)_{\QQ_p}$.
\end{lem}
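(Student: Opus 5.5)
Since $R_{\bar{D}}$ is Noetherian, $I_{CM}$ is finitely generated, say by $f_1,\dots,f_k$. Then $I_{CM}\tilde{H}^d(K^p)_{\QQ_p}$ is the image of the map
\[\Phi:(\tilde{H}^d(K^p)_{\QQ_p})^{\oplus k}\to \tilde{H}^d(K^p)_{\QQ_p},\qquad (v_1,\dots,v_k)\mapsto \textstyle\sum_i \phi(f_i)v_i,\]
where the $R_{\bar{D}}$-action is through $\phi:R_{\bar{D}}\to\TT_{\mathfrak{m}}$, extended by zero on the other localizations. The plan is to show that $\Phi$ has closed image. Invariance under $\mathrm{G}(\QQ_p)$ is then immediate, because the Hecke algebra $\TT$, and hence each $\phi(f_i)$, commutes with the $\mathrm{G}(\QQ_p)$-action. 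So if $v=\sum\phi(f_i)v_i$, then $gv=\sum \phi(f_i)(gv_i)$ again lies in the image.

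For closedness, I would use the theory of admissible representations. By the discussion above, $\tilde{H}^d(K^p)_{\QQ_p}$ is an admissible unitary Banach representation of $\mathrm{G}(\QQ_p)$. Each $\phi(f_i)$ is a continuous $\mathrm{G}(\QQ_p)$-equivariant endomorphism of it, so $\Phi$ is a continuous $\mathrm{G}(\QQ_p)$-equivariant map between admissible Banach representations. Now I invoke Schneider--Teitelbaum: the category of admissible Banach representations of a compact open $K_p$ is anti-equivalent to that of finitely generated modules over $\QQ_p\otimes\ZZ_p[[K_p]]$, which is an abelian category since $\ZZ_p[[K_p]]$ is Noetherian. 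In particular, every morphism in it has closed image, and that image is again admissible. Applying this to $\Phi$ restricted to $K_p$ gives that $\operatorname{im}\Phi=I_{CM}\tilde{H}^d(K^p)_{\QQ_p}$ is closed.

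There is one subtlety: $I_{CM}\tilde{H}^d$ should mean the submodule generated by products, and in general that is only the image of $\Phi$, not its closure. But $\Phi$ is surjective onto the algebraic sum $\sum_i\phi(f_i)\tilde{H}^d$, which is exactly $I_{CM}\tilde{H}^d$, because $I_{CM}=\sum R_{\bar{D}}f_i$ and the $R_{\bar{D}}$-action preserves $\tilde{H}^d$. So no completion issue arises.

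The main point to check carefully is that the $R_{\bar{D}}$-action through $\TT_{\mathfrak{m}}$ is indeed continuous on the Banach space. This holds since $\TT$ is a profinite topological algebra acting continuously on the $p$-adically complete unit ball $\tilde{H}^d(K^p)$, with each element acting by a bounded operator. Given that, the abelian-category argument is routine.
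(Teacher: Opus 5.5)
Your proof is correct and follows the paper's argument: write $I_{CM}=(f_1,\dots,f_t)$, realize $I_{CM}\tilde{H}^d(K^p)_{\QQ_p}$ as the image of the $\mathrm{G}(\QQ_p)$-equivariant map $(\tilde{H}^d(K^p)_{\QQ_p})^{\oplus t}\to\tilde{H}^d(K^p)_{\QQ_p}$, and conclude closedness from Schneider--Teitelbaum's result that admissible Banach representations form an abelian category. Your extra remarks on $\mathrm{G}(\QQ_p)$-invariance and on the continuity of the Hecke action make explicit what the paper leaves implicit.
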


\begin{proof}
Since $R_{\bar{D}}$ is Noetherian, $I_{CM}$ is finitely generated, say by $f_1,\cdots, f_t$. Then $I_{CM}\tilde{H}^d(K^p)_{\QQ_p}$ is the image of the map of admissible Banach space representations of $\mathrm{G}(\QQ_p)$
\[(\tilde{H}^d(K^p)_{\QQ_p})^{\oplus t} \to \tilde{H}^d(K^p)_{\QQ_p}\]
sending $(x_1,\cdots,x_t)$ to $f_1x_1+\cdots+ f_tx_t$, which is closed by the main result of \cite{ST-2002}.
\end{proof}

Theorem \ref{thm-plecticactnonCM} gives the first part of  Theorem \ref{thm-mainthm}. 
Indeed, let $\lambda$ be as in Theorem \ref{thm-mainthm}. Since $\rho_\lambda$ is strongly irreducible, $\ker\lambda$ does not contain $I_{CM}$. Hence
\[(I_{CM}\tilde{H}^d(K^p)_{\QQ_p})^{K_p-\an}_{\overline{\QQ_p}}[\lambda]=\tilde{H}^d(K^p)^{K_p-\an}_{\overline{\QQ_p}}[\lambda].\]
By our assumption on $\rho_\lambda$ and Proposition \ref{prop-suppA(r)} and its proof,
\[A^{(r)}\otimes_{R_{\bar{D}},\lambda}\overline{\QQ_p}=\End_{\overline{\QQ_p}}(\Ind_F^{plec}\rho_\lambda) \]
is a simple algebra. Therefore Theorem \ref{thm-plecticactnonCM} implies that as an $A^{(r)}$-module,
\[\tilde{H}^d(K^p)^{K_p-\an}_{\overline{\QQ_p}}[\lambda]=\Ind_F^{plec}\rho_\lambda\otimes_{\overline{\QQ_p}} \Pi^{K_p-an}_{\lambda}\]
with $\Pi^{K_p-an}_{\lambda}=\Hom_{A^{(r)}\otimes_{R_{\bar{D}},\lambda} \overline{\QQ_p}}\left(\Ind_F^{plec}\rho_\lambda, \tilde{H}^d(K^p)^{K_p-\an}_{\overline{\QQ_p}}[\lambda]\right)$. By the uniqueness of the $A^{(r)}$-action, this decomposition is compatible when $K_p$ varies. We get the first part of  Theorem \ref{thm-mainthm} by taking the direct limit over $K_p$ and noting that $(\Ind_F^{plec}\rho_\lambda)|_{G_\QQ}=\oInd^{\QQ}_F \rho_\lambda$.

\begin{rmk}
The uniqueness in Theorem \ref{thm-plecticactnonCM} implies that the action of $A^{(r)}$ is compatible when shrinking $K_p$ and enlarging $r$, i.e. suppose that $ K_p$ is sufficiently small and $K'_p\subseteq K_p$ is an open uniform subgroup, and the action of $R_{\bar{D}}[G_\QQ]$ on $(I_{CM}\tilde{H}^d(K^p)_{\QQ_p})^{K'_p-\an}$ extends to an action of $A^{(r')}$ for some $r'\in p^{\QQ}\cap (r,1)$, then the natural inclusion $(I_{CM}\tilde{H}^d(K^p)_{\QQ_p})^{K'_p-\an}\subseteq (I_{CM}\tilde{H}^d(K^p)_{\QQ_p})^{K_p-\an}$ is $A^{(r')}$-equivariant with respect to the natural map $A^{(r')}\to A^{(r)}$. In particular we get a natural action of 
\[\varprojlim_{r} A^{(r)}\]
on $(I_{CM}\tilde{H}^d(K^p)_{\QQ_p})^{\la}$.
\end{rmk}

\begin{rmk}
Theorem \ref{thm-introd-plec-2} in the introduction follows from Theorem \ref{thm-plecticactnonCM} and Corollary \ref{cor-G^plec_F maps to loc A^r}.
\end{rmk}

We will prove Theorem \ref{thm-plecticactnonCM} by appealing to Corollary \ref{cor-extplecSen}. This requires a nice dense subspace ``$W^a$'' of $I_{CM}\tilde{H}^d(K^p)^{K_p-\an}$ and a construction of partial Sen operators. Both ingredients will be provided in the next two subsections. After that we will prove Theorem \ref{thm-belmidnsi}.

\subsection{A density result} \label{subsec-density}
In practice, it is more convenient to work with another subspace of analytic vectors. The reference here is \cite[Chap. IV]{CD-2014}.
 Let $H$ be a uniform pro-$p$ group, i.e. a torsion-free topologically finitely generated pro-$p$ group satisfying $[H,H]\subseteq H^{p}$ 
 if $p>2$ and $[H,H]\subseteq H^{4}$ if $p=2$. Fix a system of minimal topological generators (also called an ordered basis) $h_1,h_2,\cdots, h_n$ of $H$.
  It induces a homeomorphism $c:\ZZ_p^n\to H$ by sending $x=(x_1,\cdots,x_n)$ to $h^x=h_1^{x_1}\cdots h_n^{x_n}$. For $\alpha=(\alpha_1,\cdots,\alpha_n)\in\NN^n$, 
  define $\phi_{\alpha}:H\to \ZZ_p$ by 
  $\phi_\alpha(h^x)={\binom{x_1}{\alpha_1}}\cdots{\binom{x_n}{\alpha_n}}$. 
A classical theorem of Mahler says that a continuous function $\phi:H\to \QQ_p$ has a unique expansion
 \[\phi=\sum_{\alpha\in\NN^n} a_{\alpha}(\phi)\cdot \phi_\alpha\]
where $a_{\alpha}(\phi)\in\QQ_p$ and $\{a_{\alpha}(\phi)\}$ converges to $0$ when $|\alpha|=\alpha_1+\cdots+\alpha_n\to \infty$.

\begin{dfn}
For $h\geq 1$, let $r_h=\frac{1}{p^{h-1}(p-1)}$ and 
\[\mathrm{LA}^{(h)}(H):=\{\phi:H\to\QQ_p\mbox{ continuous},\, \lim_{|\alpha|\to\infty} (v_p(a_{\alpha}(\phi))-r_h|\alpha|)=\infty\}.\]
It is a $p$-adic Banach space with the unit ball \( \mrm{LA}^{(h)}(H)^{\circ} \) consisting of $\phi\in \mathrm{LA}^{(h)}(H)$ with $v_p(a_{\alpha}(\phi))-r_h|\alpha|\geq 0$ for all $\alpha$.
\end{dfn}
The union of all $\mathrm{LA}^{(h)}(H)$ is the space of locally analytic functions on $H$ by a classical result of Amice. In fact for each $m\geq 0$, the subspace of $H^{p^m}$-analytic functions on $H$ is contained in $\mathrm{LA}^{(m+1)}(H)$, (in particular $H$-analytic functions are contained in $\mathrm{LA}^{(1)}(H)$) and conversely, for each $h\geq 1$, the functions in $\mathrm{LA}^{(h)}(H)$ are $H^{p^m}$-analytic for some $m$. Here $H^{p^m}=\{h^{p^m},h\in H\}$ is an open normal subgroup of $H$. The space of analytic functions $C^{\an}(H,\QQ_p)$ are dense in $\mathrm{LA}^{(h)}(H)$, $h\geq 1$. Indeed, the coordinate functions $x_1,\cdots,x_n$ are clearly analytic and dense in $\mathrm{LA}^{(h)}(H)$. The inclusion map $C^{\an}(H,\QQ_p)\subseteq \mathrm{LA}^{(h)}(H)$ has norm $\leq 1$ by Amice's result.

The topological dual of $\mathrm{LA}^{(h)}(H)$ is the distribution algebra $D_{<p^{-r_h}}(H,\QQ_p)$ introduced by Schneider-Teitelbaum in \cite[\S 4]{ST-2003}. In particular $\mathrm{LA}^{(h)}(H)$ is independent of the choice of $h_1,h_2,\cdots, h_n$ and is invariant under the left and right translations of $H$.

For a $\QQ_p$-Banach space representation of $H$, we put
\[W^{(h)}:=(\mathrm{LA}^{(h)}(H)\widehat\otimes_{\QQ_p} W)^{H}\]
where $H$ acts diagonally on $H$ and on $\mathrm{LA}^{(h)}(H)$ via the left translation $(g\cdot \phi)(x)=\phi(g^{-1}x)$, $g,x\in H$. It inherits a Banach space structure from the completed tensor product.
The evaluation map at the identity element of $H$ induces a continuous injective map $W^{(h)}\to W$. Hence we will view $W^{(h)}$ as a subspace of $W$ from now on. Equivalently, $w\in W^{(h)}$ if and only if the orbit map $o_w:H\to W$ sending $g$ to $g\cdot w$ is an element in $\mathrm{LA}^{(h)}(H)\widehat\otimes_{\QQ_p} W$. We say vectors of $W^{(h)}$ are $(h)$-analytic.
Dually, by \cite[Prop. IV.13]{CD-2014}, there is a natural isomorphism
\begin{eqnarray} \label{eqn-h-analytic}
(W^{(h)})^*\cong D_{<p^{-r_h}}(H,\QQ_p)\otimes_{\ZZ_p[[H]][1/p]} W^*,
\end{eqnarray}
where $(W^{(h)})^*$ and $W^*$ denote the topological dual of $W^{(h)}$ and $W$ respectively. We note that $D_{<p^{-r_h}}(H,\QQ_p)$ is a flat algebra of $\ZZ_p[[H]][1/p]$ by \cite[Chap. 4]{ST-2003}, \cite[Prop.IV.2]{CD-2014}.

\begin{prop} \label{prop-(h)-an}
Let $W$ be a $\QQ_p$-Banach space representation of $H$.
\begin{enumerate}
\item \label{prop-(h)-an-1} For each $m\geq 1$, $W^{H^{p^m}-\an}\subseteq W^{(h)}$ for some $h$.
\item \label{prop-(h)-an-2} For each $h\geq 1$, $W^{(h)} \subseteq W^{H^{p^m}-\an}$ for some $m$.
\item \label{prop-(h)-an-3} If $W$ is an admissible representation and $W'$ is a quotient Banach space representation of $W$, the natural map $W^{(h)}\to W'^{(h)}$ is surjective.
\end{enumerate}
\end{prop}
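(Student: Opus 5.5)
We treat the three parts separately; the first two are comparisons between the Mahler-expansion description of $\mathrm{LA}^{(h)}(H)$ and analyticity on the subgroups $H^{p^m}$, and the third is a dual argument via \eqref{eqn-h-analytic}.

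\textbf{Parts (1) and (2).} A vector $w$ lies in $W^{(h)}$ exactly when its orbit map $o_w$ lies in $\mathrm{LA}^{(h)}(H)\widehat\otimes_{\QQ_p}W$. Similarly, $w\in W^{H^{p^m}-\an}$ exactly when $o_w$ lies in the space of $H^{p^m}$-analytic functions on $H$ (completed-tensored with $W$). It is therefore enough to compare these function spaces with values in a Banach space $W$. Expanding $W$-valued continuous functions in the Mahler basis $\phi_\alpha$, with coefficients in $W$, reduces this to the scalar case coefficientwise. The scalar case is recorded before the statement: $H^{p^m}$-analytic functions lie in $\mathrm{LA}^{(m+1)}(H)$, and each $\mathrm{LA}^{(h)}(H)$ lies in the $H^{p^m}$-analytic functions for some $m$. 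Both inclusions are continuous maps of Banach spaces, and they are given by explicit estimates on Mahler coefficients (Amice's theorem). Hence they persist after $\widehat\otimes W$. Concretely, writing $o_w=\sum_\alpha a_\alpha\phi_\alpha$ with $a_\alpha\in W$, both conditions become growth conditions on $\|a_\alpha\|$. Part (1) then holds with $h=m+1$, and part (2) with the $m$ from Amice.

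\textbf{Part (3).} We dualize. Write $W^*\twoheadleftarrow W'^*$ for the closed embedding $W'^*\hookrightarrow W^*$ coming from the surjection $W\to W'$. By Schneider–Teitelbaum, admissibility means $W^*$ is a finitely generated $\ZZ_p[[H]][1/p]$-module, and so is its submodule $W'^*$, since the Iwasawa algebra is Noetherian. Tensoring the injection $W'^*\hookrightarrow W^*$ with the flat $\ZZ_p[[H]][1/p]$-algebra $D_{<p^{-r_h}}(H,\QQ_p)$ keeps it injective. By \eqref{eqn-h-analytic}, this says that $(W'^{(h)})^*\to (W^{(h)})^*$ is injective.

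The main obstacle is passing from this dual injectivity back to surjectivity of $W^{(h)}\to W'^{(h)}$. Injectivity of the dual map only gives that the image is dense. We also need the image to be closed, i.e. that $W^{(h)}\to W'^{(h)}$ is strict.

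To get strictness, we would first show that the image of $(W'^{(h)})^*$ in $(W^{(h)})^*$ is closed. It is a finitely generated submodule of a finitely generated module over the Noetherian Banach algebra $D_{<p^{-r_h}}$. Such submodules carry the canonical topology, so the image is closed and the map is a topological embedding. Then, by Hahn–Banach (valid over the spherically complete field $\QQ_p$) and the open mapping theorem, a continuous map of Banach spaces whose dual is a closed topological embedding is surjective.

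As a concrete route, take the kernel $U$ of $W\to W'$, which is again admissible. Using exactness of $W\mapsto (W^{(h)})^*$ on the abelian category of admissible representations (flatness), we get an exact sequence
\[0\to (W'^{(h)})^*\to (W^{(h)})^*\to (U^{(h)})^*\to 0\]
of finitely generated $D_{<p^{-r_h}}$-modules. Dualizing this back, using reflexivity of Banach spaces of countable type (up to the usual caveats), yields $0\to U^{(h)}\to W^{(h)}\to W'^{(h)}\to 0$.
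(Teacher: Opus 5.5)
Your proposal is correct and follows the same route as the paper, which invokes Amice's comparison for parts (1)--(2) and cites Schneider--Teitelbaum \cite[Cor.~IV.14]{CD-2014} for part (3); your derivation of (3) from \eqref{eqn-h-analytic}, flatness of $D_{<p^{-r_h}}(H,\QQ_p)$ over $\ZZ_p[[H]][1/p]$, and a closed-range argument is the standard proof of that corollary. The closing ``reflexivity'' remark is wrong as stated ($c_0$ is not reflexive over $\QQ_p$), but your main argument does not need it: it relies only on closedness of submodules of finitely generated modules over the Noetherian Banach algebra, the open mapping theorem, and $p$-adic Hahn--Banach.
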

\begin{proof}
The first two claims follow from Amice's result. The last part is a result of Schneider-Teitelbaum. See Corollaire IV.14. of \cite{CD-2014}.
\end{proof}

Now we assume that $H$ is a uniform pro-$p$ open subgroup of $\mathrm{G}(\QQ_p)=\GL_2(F\otimes \QQ_p)$ of the form $H_1\times H_0$, where 
\begin{itemize}
\item $H_1=1+p^2M_2(\mathcal{O}_F\otimes\ZZ_p)\cap\SL_2(F\otimes \QQ_p)$,
\item $H_0\subseteq 1+p^2(\mathcal{O}_F\otimes\ZZ_p)\subseteq (F\otimes \QQ_p)^\times$ a pro-$p$ open subgroup in the center of $\mathrm{G}(\QQ_p)$. 
\end{itemize}
It follows that $K^pH$ is neat. The image of $H$ in $\mathrm{PGL}_2(F\otimes \QQ_p)$ is naturally isomorphic to $H_1$. Hence we will also regard $H_1$ as a uniform pro-$p$ open subgroup of $\mathrm{PGL}_2(F\otimes \QQ_p)$.

Let $E$ be a finite Galois extension of $\QQ_p$ inside of $\overline{\QQ_p}$ containing all embeddings of $F$. Let $\Sigma'$ be the set of embeddings $F\to E$. 

\begin{dfn} \label{dfn-rhokVk}
For $k\geq 0$ even, we denote by $(\rho_k, V_k)$ the irreducible algebraic representation of $\mathrm{PGL}_2(E)$ of dimension $k+1$. For $\underline{k}=\{k_\tau\}_{\tau\in \Sigma'}$ with $k_\tau\geq 0$ even, we denote by $( \rho_{\underline{k}}, V_{\underline{k}})$ the representation of $\mathrm{PGL}_2(F\otimes \QQ_p)$ with $\rho_{\underline{k}}$ the tensor product of $\mathrm{PGL}_2(F\otimes \QQ_p)\xrightarrow{\tau}  \mathrm{PGL}_2(E)\xrightarrow{\rho_{k_\tau}} \GL(V_{k_\tau})$ over all $\tau\in\Sigma'$ and $V_{\underline{k}}=\otimes_{\tau} V_{k_\tau}$. 
\end{dfn}
All irreducible algebraic representations of $\mathrm{Res}_{F/\QQ}\PGL_2(E)$ arise in this way. By abuse of notation   $(\rho_k, V_k)$ is also viewed as a representation of $H$ via $H\subseteq\mathrm{Res}_{F/\QQ}\GL_2(\QQ_p)\to  \mathrm{Res}_{F/\QQ}\PGL_2(E)$.

\begin{dfn} \label{dfn-veryregwt}
We say $( \rho_{\underline{k}}, V_{\underline{k}})$ is of very regular weight if all $k_\tau$'s are positive and all $\sum_{\tau\in I} k_{\tau}+ |I|$ are distinct when $I$ runs through all subsets of  $\Sigma'$. The sum is understood as $0$ if $I$ is empty.
\end{dfn}

\begin{rmk}
On the Galois side, this condition is equivalent to requiring that the tensor induction of the Galois representation associated to a Hilbert eigenform of weight $\{k_\tau+2\}_{\tau\in \Sigma'}$ is regular, i.e. has $2^d$ distinct Hodge-Tate weights. 
\end{rmk}

Fix an ordering of $\Sigma' = \{\tau_1,\cdots,\tau_d\}$. We will also write  $\underline{k}$ as $(k_1,\cdots,k_d)$ with $k_i=k_{\tau_i}$ below.

The action of $H$ on the tower of Hilbert modular varieties $\{\mathrm{Sh}_{K^pK_p}\}_{K_p}$ is not faithful in general. We denote by $H'_0$ the image of $H_0$ in the automorphism group of the tower. By shrinking $H_0$ if necessary, we may and will assume that $H'_0$ is torsionfree.

\begin{dfn}
Fix an isomorphism $H'_0\cong \ZZ_p^{e}$ sending $h\in H'_0$ to $(\nu_1(h),\cdots,\nu_e(h))$. Given $\underline{l}=(l_1,\cdots,l_e)\in\ZZ^e$, we let $\psi_{\underline{l}}:H'_0\to \ZZ_p^\times$ be the character 
\[h\mapsto \exp\left(p\sum_{i=1}^e l_i\nu_i(h)\right)\]
if $p\neq 2$, and $h\mapsto \exp\left(4\sum_{i=1}^e l_i\nu_i(h)\right)$ if $p=2$. By abuse of notation we will also view $\psi_{\underline{l}}$ as a character of $H$ via the quotient map $H\to H_0\to H'_0$. 
\end{dfn}

\begin{rmk}
One main reason for introducing this very unnatural definition is that we don't know the value of $e$ in general. It was predicted by Leopoldt's conjecture that $e=1$.
\end{rmk}

\begin{dfn} \label{dfn-veryregularsubspace}
Let $W$ be a Banach space representation of $H$ over $E$. We define  $W^a\subseteq W$ as the image of the natural evaluation map
\[\bigoplus_{(\underline{k},\underline{l})} \Hom_{E[H]}\left(\rho_{\underline{k}}\otimes \psi_{\underline{l}}, W\right)\otimes_E (\rho_{\underline{k}}\otimes \psi_{\underline{l}}) \to W\]
where $\underline{k}\in(2\NN)^{d}$ ranges through all very regular weights and $\underline{l}$ ranges through all $\ZZ^e$, and $\Hom_{E[H]}\left(\rho_{\underline{k}}\otimes \psi_{\underline{l}}, W\right)\otimes_E (\rho_{\underline{k}}\otimes \psi_{\underline{l}}) \to W$ maps $\varphi\otimes v$ to $\varphi(v)$.
\end{dfn}

Back to the completed cohomology. Let $\tilde{H}^d(K^p)_{E}=\tilde{H}^d(K^p)\otimes_{\ZZ_p} E$ a Banach space representation of $H$. Fix $h\geq 1$. Consider the subspace of $(h)$-analytic vectors $\tilde{H}^d(K^p)^{(h)}_{E}\subseteq \tilde{H}^d(K^p)_{E}$. Here is the main density result of this subsection.

\begin{thm} \label{thm-(h)dense}
$\tilde{H}^d(K^p)^{(h),a}_{E}$ is a dense subspace of $\tilde{H}^d(K^p)^{(h)}_{E}$.
\end{thm}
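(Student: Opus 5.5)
The natural route is Hahn--Banach duality. I will show that any continuous linear functional $\ell$ on $W^{(h)}:=\tilde{H}^d(K^p)^{(h)}_{E}$ that vanishes on $W^{(h),a}$ is zero. By \eqref{eqn-h-analytic},
\[
(W^{(h)})^*\cong D_{<p^{-r_h}}(H,E)\otimes_{E[[H]]} W^*,
\]
where $W=\tilde{H}^d(K^p)_E$.

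The key input on $W$ is the classical description of completed cohomology via Emerton's spectral sequence and Matsushima/Eichler--Shimura. For every irreducible algebraic representation $V$ of $\mathrm{G}$ (restricted to $H$, up to the center), the space $\Hom_{H}(V,\tilde{H}^d(K^p)_E)$ computes classical cohomology $H^d(\Sh_{K^pH},\mathcal{V}^\vee)$ modulo contributions from lower degrees. Heuristically, $W^*$ is close to a finitely generated module over $E[[H]]$. More precisely, $\tilde{H}^d(K^p)$ fits into a complex of finite free $\ZZ_p[[H]]$-modules computing $\tilde{H}^\bullet$ (Emerton's construction, using that $K^pH$ is neat). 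I would use the dual statement. Let $M$ be a finitely generated $D_{<p^{-r_h}}(H,E)$-module obtained as the homology of a bounded complex of finite free modules $C_\bullet\otimes D$. Then I need to show that an element killed by all evaluation maps to the very regular algebraic representations $\rho_{\underline{k}}\otimes\psi_{\underline{l}}$ is zero.

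Concretely, I plan to show two things.
\begin{enumerate}
\item In $D_{<p^{-r_h}}(H,E)$, the intersection of the kernels of all maps $D\to \End(\rho_{\underline{k}}\otimes\psi_{\underline{l}})$, for very regular $\underline{k}$ and arbitrary $\underline{l}$, is zero. This is a Zariski-density/Mahler-expansion statement. Very regular weights are still Zariski dense in the weight space of $H_1$: the excluded weights form finitely many hyperplanes $\sum_{I}k_\tau+|I|=\sum_{J}k_\tau+|J|$. The characters $\psi_{\underline{l}}$, $\underline{l}\in\ZZ^e$, separate distributions on $H'_0$, since $\exp(p\,l\cdot\nu)$ is an analytic function of $\underline{l}$ and $\ZZ^e$ is dense in the relevant disc. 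The normalization by $p$ (or by $4$) is presumably chosen exactly to make these characters lie in $\mathrm{LA}^{(1)}$. For $H_1$, the matrix coefficients of $\bigotimes_\tau V_{k_\tau}$ span the algebraic functions. The algebraic functions restricted to any nonempty Zariski-open family of weights remain dense in $\mathrm{LA}^{(h)}(H_1)$: any polynomial function lies in a finite sum of isotypic parts, and one can twist by a large very regular weight using $V_{\underline{k}}\otimes V_{\underline{k}'}$ decompositions. Hence a distribution annihilating them all vanishes.
\item This faithfulness must be promoted from $D$ to the module $M=(W^{(h)})^*$. Here I would use the Poincar\'e duality and vanishing input that $\tilde{H}^i$ for $i>d$ vanishes (Scholze). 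Then $\tilde{H}^d$ is the top nonzero cohomology of the dual complex, so $M$ is a cokernel $D^{a}\to D^{b}\to M\to0$. Combined with the classical vanishing $H^i(\Sh,\mathcal{V}_{\underline{k}})=0$ for $i\neq d$ and regular weights, this gives that $M\otimes_D \rho_{\underline{k}}$ equals the classical $H^d$ with the expected dimension. I then reduce to a statement about the finitely presented module $M$ over the Noetherian ring $D$: its elements are detected by specializations at a dense set of "points" (the very regular algebraic weights). I would argue via Krull's intersection theorem and flatness of $D$ over $\ZZ_p[[H]][1/p]$, using Proposition \ref{prop-(h)-an}(3) to pass to quotients.
\end{enumerate}

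The main obstacle is step (2). Faithfulness of $D$ on the dense family does not automatically transfer to an arbitrary finitely presented module, because torsion supported away from algebraic weights could survive. I would first try to exploit the explicit presentation coming from the top-degree cohomology of the complex: an element of $M$ killed by all very regular specializations lifts to $D^b$. By the vanishing of classical cohomology outside degree $d$ at regular weights (the Borel--Wallach / Matsushima input, which is exactly why very regular weights are used), its image in each specialization lies in the image of $D^a$. I would then use density in each coordinate to conclude that it lies in the closure of the image of $D^a$, which is closed because $D$ is Noetherian.
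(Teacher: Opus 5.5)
\paragraph{Step (1).} This is, in outline, the paper's final lemma. The paper approximates $v$ in the $\underline{k}$-isotypic part by $v\,\bar a_1^{k'_1}\cdots\bar a_d^{k'_d}$, where $\bar a_i=a_i^2/\det\varphi_i$ and the $k'_i$ are rapidly increasing powers of $p$. What makes this work is that $\bar a_i^{p^m}\to 1$ in $\mathrm{LA}^{(h)}(H_1)_E$. Zariski density of the very regular weights is not what does it; by itself that says nothing about the Banach topology.

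\paragraph{Step (2) is a genuine gap.} Write $N=\im(D^a\to D^b)$ and $I_\rho=\ker(D\to\End(\rho))$. Your hypothesis on a lift $x\in D^b$ is that $x\in N+I_\rho D^b$ for every very regular $\rho$. The conclusion $x\in N$ is then literally the statement to be proved, now for the module $D^b/N$, so the argument is circular. Closedness of $N$ does not help, because $\bigcap_\rho(N+I_\rho D^b)$ can be strictly larger than $N$.

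To see this concretely, take $g\in H'_0$ and a small scalar $c$ avoiding the countably many values $\psi_{\underline l}(g)-1$, and put $x=(g-1)-c$. Then $x$ is a non-unit of $D$, since it is killed by a locally analytic character of $H'_0$ sending $g\mapsto 1+c$. But $x$ acts invertibly on every $\rho_{\underline k}\otimes\psi_{\underline l}$. So $D/Dx\neq 0$ has all very regular specializations zero, and $M$ and $M\oplus D/Dx$ have identical fibres. The classical inputs you invoke (Matsushima, vanishing outside degree $d$ at regular weight) only compute the fibres $M\otimes_D\rho$. They cannot control the kernel of $M\to\prod_\rho M\otimes_D\rho$. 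Finite presentation of $M$ is automatic over the Noetherian ring $D$ and carries no information here.

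\paragraph{The missing idea.} What is needed is a global structural fact in the opposite direction. The paper imports it from Corollary 3.2.6 of Pan's work on completed cohomology of Hilbert modular varieties: as a representation of $H'=H_1\times H'_0$, $\tilde H^d(K^p)_{\QQ_p}$ is a quotient of $C(H',\QQ_p)^{\oplus m}$. Dually, by flatness of $D$ over $\ZZ_p[[H']][1/p]$ and \eqref{eqn-h-analytic}, $M$ is a submodule of a free module $D^{\oplus m}$.

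With this, the proof is short and needs no classical cohomology:
\begin{enumerate}
\item By admissibility and Proposition~\ref{prop-(h)-an}\eqref{prop-(h)-an-3}, the map $\mathrm{LA}^{(h)}(H')^{\oplus m}\to\tilde H^d(K^p)^{(h)}_{\QQ_p}$ remains surjective.
\item Since $(-)^a$ is functorial, the image of $(\mathrm{LA}^{(h)}(H')_E)^a$ lands in $\tilde H^d(K^p)^{(h),a}_E$.
\item It therefore suffices to show that $(\mathrm{LA}^{(h)}(H')_E)^a$ is dense in $\mathrm{LA}^{(h)}(H')_E=\mathrm{LA}^{(h)}(H_1)_E\widehat\otimes_E\mathrm{LA}^{(h)}(H'_0)_E$, which is exactly your step (1).
\end{enumerate}
In your dual language this is simply $\bigcap_\rho I_\rho M\subseteq\bigcap_\rho I_\rho D^{\oplus m}=0$. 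An embedding into a free module is precisely what excludes the torsion you were worried about.
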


\begin{proof}
Let $H'$ be the image of $H$ in the automorphism group of the tower $\{\mathrm{Sh}_{K^pK_p}\}_{K_p}$. It is well-known that the kernel is in the center so $H'=H_1\times H'_0$.  The following result is Corollary 3.2.6 of \cite{Pan-2025cchmd}

\begin{lem}
As a representation of $H'$, $\tilde{H}^d(K^p)_{\QQ_p}$ is isomorphic to a quotient of $C(H',\QQ_p)^{\oplus m}$ for some $m$, where $C(H',\QQ_p)$ is the space of continuous $\QQ_p$-valued functions on $H'$ with the left translation action of $H'$.
\end{lem}

Fix such a surjective map $C(H',\QQ_p)^{\oplus m}\to \tilde{H}^d(K^p)_{\QQ_p}$.
Since the completed cohomology is an admissible representation of $H$, by Proposition \ref{prop-(h)-an}, this map remains surjective when restricted to the $(h)$-analytic vectors, i.e. (note that $C(H',\QQ_p)^{(h)}=\mathrm{LA}^{(h)}(H')$)
\[\mathrm{LA}^{(h)}(H')^{\oplus m}\to \tilde{H}^d(K^p)^{(h)}_{\QQ_p}\]
is surjective. It suffices to prove that $(\mathrm{LA}^{(h)}(H')\otimes_{\QQ_p}E)^a$ is dense in $\mathrm{LA}^{(h)}(H')\otimes_{\QQ_p}E$. We have
\[\mathrm{LA}^{(h)}(H')_E=\mathrm{LA}^{(h)}(H_1)_E\widehat\otimes_E \mathrm{LA}^{(h)}(H'_0)_E\]
where $\mathrm{LA}^{(h)}(H')_E=\mathrm{LA}^{(h)}(H')\otimes_{\QQ_p}E$ and $\mathrm{LA}^{(h)}(H_1)_E,\mathrm{LA}^{(h)}(H'_0)_E$ are defined similarly. By unravelling the definition of $(\mathrm{LA}^{(h)}(H')\otimes_{\QQ_p}E)^a$, we need to show the following lemma.
\end{proof}

\begin{lem}
\begin{enumerate}
\item The natural map
\[\bigoplus_{\underline{l}\in\ZZ^e} \Hom_{E[H'_0]}(\psi_{\underline{l}}, \mathrm{LA}^{(h)}(H'_0)_E)\otimes \psi_{\underline{l}}\to \mathrm{LA}^{(h)}(H'_0)_E\]
has dense image.
\item The natural map
\[\bigoplus_{\underline{k}\in(2\NN)^d\mbox{ very regular}} \Hom_{E[H_1]}(\rho_{\underline{k}}, \mathrm{LA}^{(h)}(H_1)_E)\otimes \rho_{\underline{k}}\to \mathrm{LA}^{(h)}(H_1)_E\]
has dense image.
\end{enumerate}
\end{lem}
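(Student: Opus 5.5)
The plan is to reduce both parts to the density of explicit families of analytic functions in the Tate algebra of a closed polydisc, and to use that this Tate algebra maps continuously, with dense image, into $\mathrm{LA}^{(h)}$. By the discussion before Proposition \ref{prop-(h)-an}, the inclusion $C^{\an}(H,\QQ_p)\subseteq \mathrm{LA}^{(h)}(H)$ has norm $\le 1$, and the coordinate polynomials are dense in $\mathrm{LA}^{(h)}(H)$. So in each part it suffices to show that every polynomial in suitable analytic coordinates lies in the closure, taken in the analytic (Gauss) norm, of the span of the relevant isotypic vectors.

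\emph{Part (1).} Use the coordinates $\nu=(\nu_1,\dots,\nu_e)$ to identify $H'_0\cong\ZZ_p^e$, and write $q=p$ (or $q=4$ if $p=2$). An element of $\Hom_{E[H'_0]}(\psi_{\underline l},\mathrm{LA}^{(h)}(H'_0)_E)$ is determined by its value at the trivial vector, which is a scalar multiple of $\psi_{\underline l}^{-1}=\exp(-q\,\underline l\cdot\nu)$. Since $\underline l$ ranges over all of $\ZZ^e$, the image of the map in (1) is the span of the functions $\exp(q\,\underline l\cdot\nu)$. Set $y_i=(\exp(q\nu_i)-1)/q$. Each monomial in the $y_i$ is a finite $\ZZ$-linear combination of the functions $\exp(q\,\underline l\cdot \nu)$ with $\underline l\ge 0$, using the binomial expansion. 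Conversely,
\[\nu_i=q^{-1}\log(1+qy_i)=\sum_{n\ge1}(-1)^{n+1}q^{n-1}y_i^n/n,\]
and this series converges in the Gauss norm on $\ZZ_p^e$, because $y_i$ is integral there and $q^{n-1}/n\to0$. Hence every monomial $\nu^\alpha$ lies in the closure of the span, and part (1) follows.

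\emph{Part (2).} Since $H_1$ embeds into $\PGL_2(F\otimes\QQ_p)$, I use the fact that the restrictions to $H_1$ of regular functions on $\mathrm{Res}_{F/\QQ}\PGL_2$ over $E$ are dense in $C^{\an}(H_1,E)$. This holds because in exponential (or matrix-entry) coordinates on $1+p^2M_2$, the coordinate functions are recovered from polynomial functions by a $\log$-type series, exactly as in part (1). The Peter–Weyl decomposition of $E[\mathrm{Res}\,\PGL_2]$ under left translation is $\bigoplus_{\underline k}V_{\underline k}^*\otimes V_{\underline k}$ over even $\underline k$, so the $\rho_{\underline k}$-isotypic vectors are exactly the matrix coefficients of $V_{\underline k}$. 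It remains to approximate a matrix coefficient $f$ of a fixed (possibly non-very-regular) weight $\underline k'$ by sums of matrix coefficients of very regular weights.

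The idea is to multiply $f$ by functions $\varphi_N\to1$ of carefully chosen large weight. Let
\[\varphi_N=\prod_\tau\bigl(\tau(a)^2/\tau(\det)\bigr)^{p^Nc_\tau},\]
where $a$ is the upper-left entry and the $c_\tau$ are positive integers (say $c_{\tau_i}=2^{i}$) with all subset sums $\sum_{\tau\in I}c_\tau$ distinct. This $\varphi_N$ is a regular function on $\PGL_2$ lying in $\bigoplus_{\underline m}V_{\underline m}^*\otimes V_{\underline m}$ with $m_\tau\le 2p^Nc_\tau$; more precisely, it is a highest-weight matrix coefficient of weight $\underline m_N=(2p^Nc_\tau)_\tau$. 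On $H_1$ we have $a^2/\det\in1+p^2\mO$, so $\varphi_N\to1$ in the analytic norm as $N\to\infty$. The product $f\varphi_N$ is a matrix coefficient of $V_{\underline k'}\otimes V_{\underline m_N}$, whose constituents have weights $2p^Nc_\tau+k'_\tau-2j_\tau$ with $0\le j_\tau\le k'_\tau$. For $N$ large, $p^N$ dominates the bounded error terms, so all these weights are positive and have distinct subset sums $\sum_{I}(\cdot)+|I|$, i.e.\ they are very regular. Thus $f=\lim_N f\varphi_N$ lies in the closure, which proves (2).

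The main obstacle is this last step. One has to check that the approximating functions $\varphi_N$ are genuinely highest-weight matrix coefficients whose tensor product with $V_{\underline k'}$ has only very regular constituents. One also has to check that $f\varphi_N\to f$ in the topology of $\mathrm{LA}^{(h)}$ and not merely uniformly; this is handled by working in the analytic norm on $H_1$ and using the norm-$\le 1$ inclusion $C^{\an}\subseteq\mathrm{LA}^{(h)}$.
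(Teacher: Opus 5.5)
Your proposal is correct and follows the paper's route. Part (2) is exactly the paper's argument: density of all algebraic functions, then multiplying a matrix coefficient by powers of $\bar a_\tau=\tau(a)^2/\tau(\det)$ that tend to $1$ and force every Clebsch--Gordan constituent to be very regular, with your exponents $p^N2^i$ replacing the paper's inductively chosen $p$-powers $k'_i$. Part (1), which the paper leaves as an exercise, is validly filled in by your subalgebra-closure argument using $y_i=(\exp(q\nu_i)-1)/q$ and the $\log$ series.
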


\begin{proof}
The first part is easily reduced to the case $H'_0=\ZZ_p$. We leave it as an exercise. For the second part, we first note  that the natural map of summation over all the weights
\[\bigoplus_{\underline{k}\in(2\NN)^d} \Hom_{E[H_1]}(\rho_{\underline{k}}, \mathrm{LA}^{(h)}(H_1)_E)\otimes \rho_{\underline{k}}\to \mathrm{LA}^{(h)}(H_1)_E\]
has dense image. Indeed by viewing $H_1$ as a principal congruence subgroup of $\mathrm{Res}_{F/\QQ}\PGL_2(\QQ_p)$,  we see that the image contains all the algebraic functions, and hence it is dense in the space of analytic functions and $\mathrm{LA}^{(h)}(H_1)_E$ as well \cite[Appendix]{Paskunas-2014}.
Let 
\[A_{\underline{k}}:=\im\left(\Hom_{E[H_1]}(\rho_{\underline{k}}, \mathrm{LA}^{(h)}(H_1)_E)\otimes\rho_{\underline{k}}\to \mathrm{LA}^{(h)}(H_1)_E\right)\]
 It remains to show that for every $\underline{k}\in(2\NN)^d$, every $v\in A_{\underline{k}} $ can be approximated by linear combinations of vectors in $A_{\underline{k'}}$ with $\underline{k'}$ very regular. For $i\in\{1,\cdots,d\}$, consider the map 
\[\varphi_i:H\subseteq \mathrm{SL}_2(F\otimes \QQ_p)\xrightarrow{\tau_i}  \mathrm{SL}_2(E).\]
We write $\varphi_i(g)=\begin{pmatrix} a_i(g) & b_i(g) \\ c_i(g) & d_i(g)\end{pmatrix}$. Then 
\[\bar{a}_i(g):=\frac{a_i(g)^2}{\det \varphi_i(g)}\]
defines an element $\bar{a}_i\in \mathrm{LA}^{(h)}(H_1)_E$. 
\begin{lem}
$\bar{a}_i$  belongs to $A_{2e_i}$, where $e_i\in \NN^d$ has $1$ at the $i$-th place and zero elsewhere. Similarly, $\bar{a}_i^m\in A_{2me_i}$ for $m\geq 1$. Moreover $\bar{a}_i^{p^m}-1$ has norm at most $p^{-m-2}$ in $\mathrm{LA}^{(h)}(H_1)_E$.
\end{lem}

\begin{proof}
To see the first claim one looks at the matrix coefficients of $\Sym^2 \mathrm{Std}\otimes \det^{-1}$ with basis $v_1^2, v_1v_2, v_2^2$ where $v_1,v_2$ denote the standard basis of the standard representation $\mathrm{Std}$ of $\GL_2$. By considering $\Sym^{2m} \mathrm{Std}\otimes \det^{-m}$ for $m\geq 1$, one gets the claim for $\bar{a}_i^m$. For the last claim, note that $\bar{a}_i$ is analytic and $\bar{a}_i-1$ has norm $\leq p^{-2}$ in $C^{\an}(H_1,E)$ by the definition of $H_1$. Thus $\bar{a}_i-1$ has norm $\leq p^{-2}$ in $\mathrm{LA}^{(h)}(H_1)_E$ as well, and  the bound on  $\bar{a}_i^{p^m}-1$ follows by a standard argument.
\end{proof}

Now given $v\in A_{\underline{k}}$ and $\underline{k}=(k_1,\cdots,k_d)$ and $N\geq 1$, we define $k'_1,\cdots,k'_d$ inductively as follows to approximate $v$. Let $k'_1=p^N$. Suppose we already construct $k'_1,\cdots,k'_i$. Choose $k'_{i+1}$ a power of $p$ satisfying
\[k'_{i+1}>(k_1+\cdots+k_{i+1})/2+k'_1+\cdots+k'_i+d.\]
Define $v':=v\bar{a}_1^{k'_1}\bar{a}_2^{k'_2}\cdots \bar{a}_d^{k'_d}$. By our choice of $k'_i$ and the previous lemma, we have
\[||v'-v||\leq ||v|| p^{-N}.\]
We are left to show that $v'$ is a linear combinations of vectors in $A_{\underline{j}}$ with $\underline{j}$ very regular. The multiplication structure on $\mathrm{LA}^{(h)}(H_1)_E$ induces a map
\[A_{\underline{k}}\otimes_E A_{2k'_1e_1}\otimes_E \cdots \otimes_E  A_{2k'_de_d}\to \mathrm{LA}^{(h)}(H_1)_E.\] 
To understand the image, it's enough to look at the tensor product 
\[\rho_{\underline{k}}\otimes_E \rho_{2k'_1e_1}\otimes_E \cdots\otimes_E \rho_{2k'_de_d} \]
as a representation of $\mathrm{PGL}_2(F\otimes_{\QQ} E)=\prod_{i=1}^d\mathrm{PGL}_2(E)$. A simple analysis of the highest weights tells us  that its irreducible subrepresentations are of the form $\rho_{\underline{j}}$ with 
\[2k'_i-k_i\leq j_i\leq 2k'_i+k_i.\]
Such a $\underline{j}$ has to be very regular: clearly $j_i>0$. Let $I$ and $I'$ be two distinct subsets of $\Sigma'$. There exists $M\in\{1,\cdots,d\}$ such that for $i>M$, either $\tau_i\in I\cap I'$ or $\tau_i\notin I\cup I'$, and $\tau_M\in I$ and $\tau_M\notin I'$ (after possibly switching $I$ and $I'$). Then
\[\sum_{i\in I} j_i +|I| -(\sum_{i\in I'} j_i+|I'|)\geq j_M-(\sum_{i=1}^{M-1}j_i +d)\geq 2k'_M-k_M-(\sum_{i=1}^{M-1}(2k'_i+k_i) +d)> 0\]
by our choice of $k'_M$. Hence all possible $\sum_{i\in I} j_i +|I|$'s are distinct.
\end{proof}

\begin{rmk} \label{rmk-(h)dense}
If $\underline{k}=\{k_\tau\}_{\tau\in \Sigma'}$ is very regular as in Definition \ref{dfn-veryregwt}, a permutation of these $k_\tau$'s is very regular as well. Therefore the action of $\Gal(E/\QQ_p)$ on $\tilde{H}^d(K^p)^{(h)}_{E}=\tilde{H}^d(K^p)^{(h)}\otimes_{\QQ_p} E$ (by acting on $E$) preserves the dense subspace $\tilde{H}^d(K^p)^{(h),a}_{E}$. Let 
\[\tilde{H}^d(K^p)^{(h),a}:=\tilde{H}^d(K^p)^{(h),a}_{E}\cap \tilde{H}^d(K^p)^{(h)}=(\tilde{H}^d(K^p)^{(h),a}_{E})^{\Gal(E/\QQ_p)}.\]
By Galois descent,
\[\tilde{H}^d(K^p)^{(h),a}\otimes_{\QQ_p} E=\tilde{H}^d(K^p)^{(h),a}_{E},\]
and  by Theorem \ref{thm-(h)dense}, $\tilde{H}^d(K^p)^{(h),a}$ is a dense subspace of $\tilde{H}^d(K^p)^{(h)}$.
\end{rmk}

\subsection{Partial Sen operators}
We construct some commuting operators on $\tilde{H}^i(K^p)^{\la}\widehat\otimes_{\QQ_p} \CC_p:=\varinjlim_{K_p}\tilde{H}^i(K^p)^{K_p-\an}\widehat\otimes_{\QQ_p} \CC_p$. In the next subsection we will show that these operators are the sought-after partial Sen operators. In the case of modular curves ($d=1$), the possibility of such a geometric construction of the Sen operator was observed by the second author in \cite{Pan-2022}. For $d\geq 2$, we will need the work \cite{RC-2025}.  

By \cite[\S 6]{Pan-2025JEMS}, the action of $G_{\QQ}$ on $\tilde{H}^i(K^p)^{\la}$ is locally analytic, i.e. for any uniform pro-$p$ open subgroup $K_p$ of $\mathrm{G}(\QQ_p)$, the natural map  $G_{\QQ}\to \End(\tilde{H}^i(K^p)^{K_p-\an})$ is continuous with respect to the $p$-adic topology of the target. Hence by Sen's result, there is a Sen operator $\theta_{\tilde{H}^i}$ on $\tilde{H}^i(K^p)^{\la}\widehat\otimes_{\QQ_p} \CC_p$.

Let $\mathfrak{g}=\mathfrak{gl}_2(F\otimes_{\QQ} \CC_p)$. There is a natural decomposition induced by embeddings of $F$ into $\CC_p$
\[\mathfrak{g}=\prod_{\tau:F\to \CC_p} \mathfrak{g}_\tau\]
with $\mathfrak{g}_\tau\cong \mathfrak{gl}_2(\CC_p)$. Let $\mathfrak{b}\subseteq \mathfrak{g}$ be the usual upper-triangular Borel subalgebra and $\mathfrak{h}\subseteq \mathfrak{g}$ the diagonal matrices, which is a Cartan subalgebra.
Similarly there are decompositions of $\mathfrak{b}$ and $\mathfrak{h}$ according to embeddings of $F$ into $\CC_p$. We will use $\diag(a,b)$ to denote the matrix with diagonal entries $a,b$.
Let $W$ be the Weyl group of $\mathfrak{g}$ and $\rho$ the half-sum of positive roots (with respect to $\mathfrak{b}$). The Harish-Chandra isomorphism produces an injection of algebras
\[\gamma^{\mathfrak{m}}: Z(U(\mathfrak{g}))\to U(\mathfrak{h})\]
whose image consists of functions on $\mathfrak{h}^*$ invariant under the action of $W$ \textit{centered at $\rho$}. 
Note that under our normalization here, an irreducible representation of $\mathfrak{g}$ of \textit{lowest weight} $\lambda$ has infinitesimal character corresponding to the $W$-orbit $\{w(\lambda-\rho)+\rho\}_{w\in W}$. 

The Lie algebra action of $\mathrm{G}(\QQ_p)$ on $\tilde{H}^i(K^p)^{\la}$ induces a natural $\CC_p$-algebra action of $Z(U(\mathfrak{g}))$.

\begin{thm} \label{thm-Sendiag01}
There is a natural action of $U(\mathfrak{h})$ on $\tilde{H}^i(K^p)^{\la}\widehat\otimes_{\QQ_p} \CC_p$ extending the action of $Z(U(\mathfrak{g}))$ via $\gamma^{\mathfrak{m}}$. This action commutes with the action of $G_{\QQ_p}\times \mathrm{G}(\QQ_p)$ and $\TT$. Moreover the action of $\diag(0,1)\in \mathfrak{h}$ agrees with the Sen operator on $\tilde{H}^i(K^p)^{\la}\widehat\otimes_{\QQ_p} \CC_p$.
\end{thm}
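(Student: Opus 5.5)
The plan is to deduce this from the geometric Sen theory at infinite level, following \cite{Pan-2022} for $d=1$ and \cite{RC-2025} in general, applied to the Hilbert modular tower. Let $\mathcal{X}=\varprojlim_{K_p}\Sh_{K^pK_p}$ be the perfectoid Hilbert modular variety over $\CC_p$, with Hodge--Tate period map $\pi_{HT}:\mathcal{X}\to \mathscr{F}\ell=\prod_{\tau}\mathbb{P}^1_{\CC_p}$. Let $\mathcal{O}^{\la}$ be the sheaf on $\mathscr{F}\ell$ of $\mathrm{G}(\QQ_p)$-locally analytic sections of $\pi_{HT*}\mathcal{O}_{\mathcal{X}}$.

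\textbf{Step 1 (reduction to the sheaf).} By the primitive comparison theorem and the Schneider--Teitelbaum exactness of taking locally analytic vectors, as in \cite[\S 6]{Pan-2025JEMS}, there is a $G_{\QQ_p}\times\mathrm{G}(\QQ_p)\times\TT$-equivariant identification
\[\tilde{H}^i(K^p)^{\la}\widehat\otimes_{\QQ_p}\CC_p\cong H^i(\mathscr{F}\ell,\mathcal{O}^{\la}).\]
It therefore suffices to construct a $U(\mathfrak{h})$-action on the sheaf $\mathcal{O}^{\la}$, commuting with $\mathrm{G}(\QQ_p)$ and with the Hecke operators away from $p$, and then pass to cohomology.

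\textbf{Step 2 (horizontal action and centre).} The geometric Sen theory of \cite{RC-2025} gives that $\mathfrak{g}$ acts on $\mathcal{O}^{\la}$ through the derivative of the $\mathrm{G}(\QQ_p)$-action, and that the horizontal nilpotent subalgebra $\mathfrak{n}^0\subseteq\mathfrak{g}\otimes\mathcal{O}_{\mathscr{F}\ell}$ acts trivially. Consequently $\mathfrak{b}^0/\mathfrak{n}^0\cong\mathfrak{h}\otimes\mathcal{O}_{\mathscr{F}\ell}$ acts, giving $\theta_{\mathfrak{h}}:U(\mathfrak{h})\to\End(\mathcal{O}^{\la})$. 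This action commutes with $\mathrm{G}(\QQ_p)$, because $\mathrm{G}(\QQ_p)$ normalizes $\mathfrak{b}^0$ and acts trivially on the quotient torus. It commutes with $\TT$, because Hecke operators away from $p$ are given by prime-to-$p$ correspondences that commute with $\pi_{HT}$. It commutes with $G_{\QQ_p}$ by construction. To see that it extends the $Z(U(\mathfrak{g}))$-action via $\gamma^{\mathfrak{m}}$, work locally: on a $\mathfrak{b}^0$-stable filtration, $Z(U(\mathfrak{g}))$ acts through its Harish-Chandra projection to $U(\mathfrak{h})$. The lowest-weight normalization and the $\rho$-shift have to be checked on algebraic vectors, comparing with infinitesimal characters of $V_{\underline{k}}$ as explained before the theorem.

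\textbf{Step 3 (Sen operator).} The remaining claim, that $\diag(0,1)$ acts as the Sen operator, is the main input from \cite[Thm.~1.0.1]{RC-2025} and \cite[\S 5]{Pan-2022}. There the Sen operator of $\mathcal{O}^{\la}$, computed via the relative Sen theory of the pro-étale torsor $\mathcal{X}\to\Sh_{K^pK_p}$, equals the image of a specific element of $\mathfrak{h}$ under $\theta_{\mathfrak{h}}$ (the grading element determined by the Hodge cocharacter). One must then verify that, with our cohomological convention for the Hodge cocharacter and for Artin reciprocity, this element is exactly $\diag(0,1)$ in every $\tau$-factor, summed over $\tau$. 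I would check the sign and normalization on $\tilde{H}^0(K^p)$, where the Galois action is through the cyclotomic character composed with $\det$. I expect this normalization bookkeeping to be the main obstacle, together with ensuring that the sheaf-level identity of operators passes to $H^i$ compatibly with completed tensor products. The latter follows from functoriality of Sen's construction (Theorem \ref{thm-Sen-inf}) applied to Čech complexes of affinoid covers on which $\mathcal{O}^{\la}$ is acyclic.
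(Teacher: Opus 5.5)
Your proposal is correct and follows the paper's route. The paper's proof simply cites the comparison isomorphism \cite[Th.1.1.8]{RC-2025}, the horizontal $\mathfrak{h}$-action \cite[Th.1.1.9]{RC-2025} (extending $Z(U(\mathfrak{g}))$ via $\gamma^{\mathfrak{m}}$ as explained in \cite[Th.4.4.4]{Lan-Pan}), and the identification of the Sen operator \cite[Th.1.1.10]{RC-2025}; these are exactly your Steps 1--3.

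The only bookkeeping the paper records is a sign, because the reference uses the inverse class-field-theory convention; this is the normalization issue you flag. Note that a test on $\tilde{H}^0(K^p)$ fixes this sign but cannot by itself distinguish $\diag(0,1)$ from $\diag(1,0)$: they act identically there, since $\mathfrak{g}$ acts on $\tilde{H}^0$ through the trace. That distinction must instead come from tracking the conventions in the reference, or from classical vectors of non-zero weight.
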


\begin{proof}
The existence of the action of $U(\mathfrak{h})$ follows from \cite[Th.1.1.9]{RC-2025}, as explained in \cite[Th.4.4.4]{Lan-Pan}, using the isomorphism \cite[Th.1.1.8]{RC-2025}.  The claim for the Sen operator follows from  \cite[Th.1.1.10]{RC-2025}, where a sign difference comes from the fact that the convention for the class field theory used in the reference is inverse to our convention.
\end{proof}

\begin{dfn} \label{dfn-thetataula}
For each embedding $\tau:F\to \CC_p$, we define $\theta^{\la}_\tau\in \End(\tilde{H}^i(K^p)^{\la}\widehat\otimes_{\QQ_p} \CC_p)$ as the action of the element $\diag(0,1)\in\mathfrak{h}_\tau\subseteq U(\mathfrak{h})$ in the previous theorem.
\end{dfn} 
The action of $\sum_{\tau\in\Sigma'}\theta_\tau^{\la}$ on $\tilde{H}^i(K^p)^{\la}\widehat\otimes_{\QQ_p} \CC_p$ agrees the Sen operator.

\begin{cor}\label{cor-thetatauev}
For $\tau:F\to \CC_p$, let $k_\tau\in \ZZ_{\geq 0}$ and $l_\tau\in \CC_p$ and denote by $V_{k_\tau,l_\tau}$ the irreducible representation of $\mathfrak{g}_\tau=\mathfrak{gl}_2(\CC_p)$ of highest weight $(k_\tau+l_\tau,l_\tau)$. Let $V$ be the tensor product of all $V_{k_\tau,l_\tau}$, viewed as a representation of $\mathfrak{g}=\prod \mathfrak{g}_\tau$. Then the action of $\theta^{\la}_{\tau}$  on 
\[\Hom_{\mathfrak{g}}(V,\tilde{H}^i(K^p)^{\la}\widehat\otimes_{\QQ_p} \CC_p)\]
satisfies $(\theta^{\la}_\tau-k_\tau-l_\tau)(\theta^{\la}_\tau-l_\tau+1)=0$. In particular, it's a semisimple operator with eigenvalues $k_\tau+l_\tau$ and $l_\tau-1$. 
\end{cor}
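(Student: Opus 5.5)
The plan is to use the centrality of $Z(U(\mathfrak{g}))$ together with the Harish-Chandra isomorphism from Theorem \ref{thm-Sendiag01}. Set $M:=\Hom_{\mathfrak{g}}(V,\tilde{H}^i(K^p)^{\la}\widehat\otimes_{\QQ_p}\CC_p)$. Since $U(\mathfrak{h})$ acts on $\tilde{H}^i(K^p)^{\la}\widehat\otimes_{\QQ_p}\CC_p$ commuting with $\mathrm{G}(\QQ_p)$, it commutes with the derived action of $\mathfrak{g}$. Hence $M$ is stable under post-composition by $\theta^{\la}_\tau$. On $M$ the center $Z(U(\mathfrak{g}))$ acts through the infinitesimal character of $V$, because for $z\in Z(U(\mathfrak{g}))$ and $\varphi\in M$ we have $z\circ\varphi=\varphi\circ z|_V$ and $z|_V$ is a scalar. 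Since the $U(\mathfrak{h})$-action extends the $Z(U(\mathfrak{g}))$-action via $\gamma^{\mathfrak{m}}$, every element of $\gamma^{\mathfrak{m}}(Z(U(\mathfrak{g})))$ acts on $M$ by the value of the corresponding $W$-invariant (dot-shifted) polynomial at the infinitesimal character of $V$.

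The computation factors over $\tau$: $Z(U(\mathfrak{g}))=\bigotimes_\tau Z(U(\mathfrak{g}_\tau))$, and $W=\prod_\tau S_2$. Fix $\tau$ and write $\mathfrak{h}_\tau$ with coordinates $x=\diag(1,0)$ and $y=\diag(0,1)$. The image of $\gamma^{\mathfrak{m}}$ restricted to $Z(U(\mathfrak{g}_\tau))$ consists of the polynomials in $x,y$ invariant under the $\rho$-centered Weyl action. This action is a swap of $x$ and $y$ up to a shift by $\pm1$; with the normalization stated in the paper (lowest weight $\lambda$ gives orbit $\{w(\lambda-\rho)+\rho\}$), it is $(x,y)\mapsto(y+1,x-1)$ or its analogue. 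Hence the image contains $s=x+y$ and a product $p=(y-c_1)(x-c_2)$, with constants chosen to make $p$ invariant. Consequently $y=\theta^{\la}_\tau$ satisfies the monic quadratic relation
\[
y^2-(s-c)y+p'=0,
\]
with coefficients in $\gamma^{\mathfrak{m}}(Z(U(\mathfrak{g}_\tau)))$. On $M$ these coefficients become scalars determined by $V_{k_\tau,l_\tau}$.

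Next I evaluate the two roots. $V_{k_\tau,l_\tau}$ has lowest weight $(l_\tau,k_\tau+l_\tau)$, so its infinitesimal character is the orbit of $\lambda=(l_\tau,k_\tau+l_\tau)$ under the dot action. The roots of the quadratic in $y$ are the $y$-coordinates of the two points of this orbit, namely $k_\tau+l_\tau$ and $l_\tau-1$; the latter comes from reflecting $(l_\tau,k_\tau+l_\tau)$ to $(k_\tau+l_\tau+1,l_\tau-1)$. This gives $(\theta^{\la}_\tau-k_\tau-l_\tau)(\theta^{\la}_\tau-l_\tau+1)=0$ on $M$. Since $k_\tau\ge0$, the two roots differ by $k_\tau+1\neq0$, so the minimal polynomial of $\theta^{\la}_\tau$ on $M$ has distinct roots and the operator is semisimple with eigenvalues among these two values.

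The main obstacle is getting the normalization exactly right. This means pinning down the precise $\rho$-shift in $\gamma^{\mathfrak{m}}$ under the paper's lowest-weight convention, and confirming that the quadratic has roots $k+l$ and $l-1$ rather than, say, $l$ and $k+l+1$. I would check this on a trivial case, $k_\tau=l_\tau=0$ with $V$ trivial, against the known Hodge–Tate weights $0$ and $-1$ of the trivial coefficient system. A smaller issue is the phrase ``eigenvalues $k_\tau+l_\tau$ and $l_\tau-1$'': the argument only shows the eigenvalues lie in this set, so I will read the statement in that sense.
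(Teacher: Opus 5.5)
Your proposal is correct and matches the paper's argument. The paper likewise uses Theorem \ref{thm-Sendiag01} so that the center acts on $\Hom_{\mathfrak{g}}(V,\cdot)$ through the infinitesimal character of $V$. It then observes that the lowest weight $(l_\tau,k_\tau+l_\tau)$ has $\rho$-centered conjugate $(k_\tau+l_\tau+1,l_\tau-1)$, so $\diag(0,1)_\tau$ satisfies $t^2-(x_\tau+y_\tau-1)t+y_\tau(x_\tau-1)=0$ over $\gamma^{\mathfrak{m}}(Z(U(\mathfrak{g}_\tau)))$. The normalization you were worried about is already fixed by the paper's stated convention: $\rho=(1/2,-1/2)$, hence $w\cdot(a,b)=(b+1,a-1)$. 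So your sanity check against Hodge--Tate weights is unnecessary.
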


\begin{proof}
This follows directly from the previous theorem by noting that the lowest weight of $V_{k_\tau,l_\tau}$ is $(l_\tau,k_\tau+l_\tau)$ and has $(k_\tau+l_\tau+1,l_\tau-1)$ in its $W$-orbit (centered at $\rho$).
\end{proof}

\subsection{Proof of Theorem \ref{thm-plecticactnonCM}} \label{sec-proof of MT}
First we need a few results. Recall that as in Definition \ref{dfn-rhokVk}, given $\underline{k}=\{k_\tau\}_{\tau\in\Sigma'}\in (2\mathbb{N})^{\Sigma'}$, we introduced a representation $( \rho_{\underline{k}}, V_{\underline{k}})$ of $\mathrm{PGL}_2(F\otimes \QQ_p)$. 

\begin{prop} \label{prop-Zdec}
 Let $K_p$ be an open subgroup of $\mathrm{G}(\QQ_p)=\mathrm{GL}_2(F\otimes \QQ_p)$ so that $K^pK_p$ is neat. Let $\underline{k}=\{k_\tau\}_{\tau\in\Sigma'}$ be a weight such that all $k_\tau$'s are distinct and positive. Let 
 \[Z=\Hom_{E[K_p]}(V_{\underline{k}}, \tilde{H}^d(K^p)_{E})\otimes_{E}\overline{\QQ_p}\]
As a module of $\TT[G_\QQ]$, there is a natural finite decomposition
\[Z=\bigoplus_{\lambda:\TT\to \overline{\QQ_p}}Z[\lambda].\]
For $\lambda$ showing up on the right hand side, $\rho_\lambda$ has $\tau$-Hodge-Tate weights $k_\tau/2+1,-k_\tau/2$ for $\tau:F\to E\subseteq \overline{\QQ_p}$. 
Moreover if $\lambda$ is not in the CM locus, the $G_{\QQ}$-representation  $\oInd^{\QQ}_F \rho_\lambda$ is irreducible and each $Z[\lambda]$ is $\oInd^{\QQ}_F \rho_\lambda$-isotypic, i.e. 
\[I_{CM} Z[\lambda]= (\oInd^{\QQ}_F \rho_\lambda)^{\oplus n_\lambda}\]
for some $n_\lambda\geq 0$. 
\end{prop}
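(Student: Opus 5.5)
Here $Z$ is the $\rho_{\underline{k}}$-isotypic part of the completed cohomology at level $K_p$, and I would reduce it to classical cohomology. First, since $V_{\underline{k}}$ is an algebraic representation and $\tilde{H}^d(K^p)$ is built from finite-level cohomology, I would use Emerton's spectral sequence (equivalently the Hochschild--Serre description of locally algebraic vectors) to identify $\Hom_{K_p}(V_{\underline{k}},\tilde{H}^d(K^p)_E)$ with $H^d(\Sh_{K^pK_p},\mathcal{V}_{\underline{k}}^\vee)$ up to correction terms. These terms involve $\Ext^i_{K_p}$ of $V_{\underline{k}}$ against $\tilde{H}^{d-i}$. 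Because $\underline{k}$ is regular (all $k_\tau>0$), the cohomology $H^*(\Sh_{K^pK_p},\mathcal{V})$ is concentrated in degree $d$ and is entirely cuspidal; the Eisenstein part vanishes for weights with all $k_\tau>0$ in the Hilbert case. I expect the main obstacle to be controlling the comparison map with $H^d$ when the lower completed cohomology is not yet known to vanish. My first attempt would be to avoid an exact identification: Hecke eigenvalues of $\tilde{H}^{<d}$ localized appropriately are also classical (of the same weight with smaller level), so $Z$ is a subquotient of finitely many classical cohomology groups of weight $\underline{k}$. That suffices for the finiteness and eigen-decomposition claims.

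Given this, $Z$ is finite-dimensional, and $\TT$ acts through a finite quotient, which yields the decomposition $Z=\bigoplus Z[\lambda]$ into generalized eigenspaces. I would then use that the Hecke action on cuspidal cohomology in regular weight is semisimple, because cuspidal cohomology is a direct sum over automorphic representations, so the generalized eigenspaces are genuine eigenspaces. Each $\lambda$ that appears comes from a cuspidal Hilbert eigenform of weight $\{k_\tau+2\}$. By the results on Galois representations attached to such forms (Carayol, Taylor, Blasius--Rogawski), $\rho_\lambda$ is de Rham with $\tau$-Hodge--Tate weights as stated. This requires adjusting the central twist, i.e. the PGL normalization, to get the symmetric weights $k_\tau/2+1$ and $-k_\tau/2$.

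For the last assertion, take $\lambda$ outside the CM locus. The form is then non-CM, and Ribet/Nekovář-type results, or Proposition~\ref{proptenindirred} combined with the Zariski density of the image of $\rho_\lambda$, show that $\rho_\lambda$ is strongly irreducible, so $\Lie\rho_\lambda$ contains $sl_2$. The $\tau$-weight differences are $k_\tau+1$, which are pairwise distinct since the $k_\tau$ are distinct and positive; hence the $k_\tau+1$ and their negatives are all distinct. Part (2) of Proposition~\ref{proptenindirred} then gives that $\oInd^{\QQ}_F\rho_\lambda$ is irreducible.

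Isotypicity comes from the Langlands--Kottwitz/Brylinski--Labesse computation \eqref{alignLangKott}: the semisimplification of $H^d[\lambda]$ is a direct sum of copies of $\oInd\rho_\lambda$. Nekovář's semisimplicity removes the semisimplification. Alternatively, since $\oInd\rho_\lambda$ is irreducible and has regular, distinct Hodge--Tate weights, any extension inside the de Rham cohomology splits by a weight argument. Applying $I_{CM}$ removes the CM eigenspaces, so the statement about $I_{CM}Z[\lambda]$ follows.
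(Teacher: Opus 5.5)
\textbf{Verdict: genuine gap.} Your Galois-theoretic steps match the paper. These are the Hodge--Tate weights via local--global compatibility and the $\mathrm{PGL}_2$-normalization of $\det\rho_\lambda$, and the irreducibility of $\oInd^{\QQ}_F\rho_\lambda$ via strong irreducibility and Proposition~\ref{proptenindirred}(2), since the numbers $\pm(k_\tau+1)$ are pairwise distinct. The gap is in your first step, the comparison of $Z$ with classical cohomology.

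\textbf{Why the first step fails.} In Emerton's spectral sequence
$E_2^{i,j}=\Ext^i(V_{\underline k}^\vee,\tilde H^j(K^p)_E)\Rightarrow H^{i+j}_{\et}(\Sh_{K^pK_p,\overline\QQ},\mathbb L_{\underline k})$,
your $Z$ is $E_2^{0,d}$. The edge map from $H^d$ only reaches $E_\infty^{0,d}$. The quotient $E_2^{0,d}/E_\infty^{0,d}$ is filtered by the images of $d_r\colon E_r^{0,d}\to E_r^{r,d+1-r}$ for $r\ge 2$, which are subquotients of $\Ext^r(V_{\underline k}^\vee,\tilde H^{d+1-r}(K^p)_E)$.
\begin{itemize}
\item So $Z$ is not a subquotient of classical cohomology. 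It is an extension of a piece of Ext groups of the \emph{lower-degree} completed cohomology by a quotient of $H^d$.
\item There is no reason for Hecke eigensystems occurring in those Ext groups to be classical of weight $\underline k$. Your claim that eigensystems of $\tilde H^{<d}$ are ``classical of smaller level'' is unfounded.
\item Vanishing of classical $H^{d+1}$ does not help. It only says these $E_2$-terms are eventually cancelled, which is exactly the scenario where $d_r$ makes $Z$ strictly larger than the image of $H^d$.
\item In that case $Z$ could carry non-classical eigensystems or non-split extensions, and both the decomposition and the isotypicity would fail.
\item You also cannot appeal to $\tilde H^{<d}$ being supported on the non-strongly-irreducible locus. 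That is Theorem~\ref{thm-belmidnsi}, whose proof uses this proposition.
\end{itemize}

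\textbf{The missing input.} The paper uses Lan--Pan's theorem that $Z(U(\mathfrak g))$ acts on $\tilde H^{<d}(K^p)^{\la}$ through $Z(U(\mathfrak g))/I^m$, where $I$ is the ideal of non-regular infinitesimal characters.
\begin{itemize}
\item Since every $k_\tau>0$, $V_{\underline k}^\vee$ has regular infinitesimal character. A Mili\v{c}i\'c-type argument then gives $\Ext^i_{\widetilde{\mathfrak g}}(V_{\underline k}^\vee,\tilde H^{<d}(K^p)^{\la}_E)=0$ for all $i$.
\item Together with $\tilde H^{>d}(K^p)^{\la}=0$ (Rodr\'{\i}guez Camargo), the spectral sequence collapses.
\item Taking $K_p$-invariants then gives an honest isomorphism $\Hom_{E[K_p]}(V_{\underline k}^\vee,\tilde H^d(K^p)_E)\cong H^d_{\et}(\Sh_{K^pK_p,\overline\QQ},\mathbb L_{\underline k})$.
\end{itemize}

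\textbf{Two smaller corrections.}
\begin{itemize}
\item Vanishing of boundary cohomology (Harder) requires $\underline k$ to be non-parallel. Here that comes from the $k_\tau$ being distinct, not from $k_\tau>0$; in parallel weight there is Eisenstein cohomology even when all $k_\tau>0$.
\item Your alternative ``weight argument'' for splitting cannot work. Both pieces of a self-extension of $\oInd^{\QQ}_F\rho_\lambda$ have the same Hodge--Tate and Frobenius weights.
\end{itemize}
Citing Nekov\'a\v{r}'s semisimplicity is legitimate, but the paper needs less. Intersection cohomology decomposes as $\bigoplus_\pi\pi^\infty\otimes V(\pi^\infty)$ with $\dim V(\pi^\infty)=2^d$ and $V(\pi^\infty)^{\mathrm{ss}}\cong\oInd^{\QQ}_F\rho_\lambda$. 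Irreducibility of the latter then forces $V(\pi^\infty)\cong\oInd^{\QQ}_F\rho_\lambda$.
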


\begin{rmk}
By Nekov\'a\v{r}'s result, the claim holds for $\lambda$ in the CM-locus as well. 
\end{rmk}

\begin{proof}
Given a weight $\underline{k}$, the $\mathrm{PGL}_2(F\otimes \QQ_p)$-representation $( \rho_{\underline{k}}, V_{\underline{k}})$ (viewed as a representation of $K_p$) defines an \'etale $E$-local system $\mathbb{L}_{\underline{k}}$ on $\Sh_{K^pK_p}$. 

\begin{lem}
Suppose that $\underline{k}=\{k_\tau\}_{\tau\in\Sigma'}$ satisfies 
that $k_\tau>0$ for every $\tau\in\Sigma'$. There is a natural isomorphism 
\[\Hom_{E[K_p]}(V_{\underline{k}}^\vee, \tilde{H}^d(K^p)_{E})= H^d_\et(\Sh_{K^pK_p,\overline{\QQ}}, \mathbb{L}_{\underline{k}})\]
where $V_{\underline{k}}^\vee$ denotes the dual representation of $V_{\underline{k}}$ (which is isomorphic to $V_{\underline{k}}$). 
\end{lem}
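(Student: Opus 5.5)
We will deduce the isomorphism from Emerton's spectral sequence for completed cohomology together with vanishing of the cohomology of the local system $\mathbb{L}_{\underline{k}}$ outside the middle degree when $\underline{k}$ is regular.

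\emph{Step 1: the Hochschild--Serre spectral sequence.} Since $K^pK_p$ is neat and $\tilde{H}^\ast(K^p)$ is built from the tower over $\Sh_{K^pK_p}$, Emerton's spectral sequence reads
\[E_2^{a,b}=\Ext^a_{E[[K_p]]}\big(V_{\underline{k}}^\vee,\tilde{H}^b(K^p)_{E}\big)\Rightarrow H^{a+b}_\et(\Sh_{K^pK_p,\overline{\QQ}},\mathbb{L}_{\underline{k}}).\]
Here $\Ext$ denotes continuous cohomology, and $E_2^{0,b}=\Hom_{E[K_p]}(V_{\underline{k}}^\vee,\tilde{H}^b(K^p)_E)$. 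The spectral sequence is obtained from the projection formula: $R\Gamma(\Sh_{K^pK_p},\mathbb{L}_{\underline{k}})$ is computed as $R\Hom_{K_p}(V_{\underline{k}}^\vee,\tilde{R\Gamma})$ after passing to the limit over $K_p'\subseteq K_p$ and inverting $p$. It is Galois-equivariant and Hecke-equivariant.

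\emph{Step 2: reduce to vanishing below the middle degree.} The isomorphism follows from the edge map if $E_2^{a,b}$ contributes nothing else in total degree $d$. It suffices to know that $E_2^{0,d}$ survives and that nothing with $b<d$ contributes to degree $d$. I would argue in two steps.

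First, for $\underline{k}$ regular ($k_\tau>0$ for all $\tau$), the classical cohomology $H^j_\et(\Sh_{K^pK_p,\overline{\QQ}},\mathbb{L}_{\underline{k}})$ vanishes for $j\neq d$. This is the standard result for Hilbert modular varieties. The boundary and Eisenstein contributions vanish for regular weight, because parallel-weight-$0$-type contributions only occur in non-regular weights. The interior cohomology is concentrated in degree $d$ by Matsushima's formula, since regular weight forces the cohomological $\GL_2(\RR)$-representations to be discrete series.

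Second, this vanishing is combined with an induction on $b$. If $\tilde{H}^b(K^p)$ were nonzero for some $b<d$, take the minimal such $b$. Then $E_2^{0,b}=E_\infty^{0,b}$ injects into $H^b(\mathbb{L}_{\underline{k}})=0$ for all regular $\underline{k}$. The algebraic vectors of regular weight are dense in $\tilde{H}^b(K^p)^{\la}$, by the argument of Theorem~\ref{thm-(h)dense}, which only used the quotient-of-$C(H')$ structure. This would force $\tilde{H}^b(K^p)_{\QQ_p}=0$. In fact this vanishing below degree $d$ after inverting $p$ is not needed in full. It is enough to know that the differentials hitting or leaving $E^{0,d}$, and the terms $E_2^{a,d-a}$ with $a>0$, vanish. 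The cleaner route is Emerton's result that rational completed cohomology of Hilbert modular varieties vanishes in degrees $>d$. Dually, via Poincaré duality, this controls the relevant terms.

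\emph{Main obstacle.} The main difficulty is showing that $E_2^{a,d-a}$ vanishes for $a>0$, that is, controlling $\tilde{H}^{<d}(K^p)$. This is essentially the content of Theorem~\ref{thm-belmidnsi}, which is proved later and so cannot be used here. I would first try to avoid it. After applying $\Hom(V_{\underline{k}}^\vee,-)$ to the Hochschild--Serre complex, the relevant comparison needs only the top-degree edge map. Using the degree bound $\tilde{H}^{>d}=0$ together with the Poincaré duality spectral sequence relating $\tilde{H}^{d}$ to the dual complex, one obtains $E_2^{0,d}\cong H^d$ whenever $H^{d+1}(\mathbb{L}_{\underline{k}})=0$ and the $E_2^{a,d+1-a}$ terms vanish.

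If this degree bookkeeping fails, the fallback is to state the lemma only on the localization at the non-Eisenstein, non-CM part. There the known vanishing of the localized completed cohomology in degrees $\neq d$ would be invoked, citing \cite{Pan-2025cchmd}, where the quotient-of-$C(H')$ result also originates.
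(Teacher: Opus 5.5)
Your Step~1 is fine, and you correctly locate the crux. Since $\tilde H^{>d}$ vanishes rationally, everything depends on showing that the rows $b<d$ contribute nothing in total degrees $d$ and $d+1$. The argument you give for this, however, fails.

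\begin{itemize}
\item Your induction on $b$ would prove $\tilde H^b(K^p)_{\QQ_p}=0$ for every $b<d$, which is false. For instance, $\tilde H^0(K^p)$ is the nonzero space of continuous functions on $\pi_0(K^p)$; it is used in Proposition~\ref{prop-twist}.
\item The step that breaks is the claimed density of regular-weight algebraic vectors. The proof of Theorem~\ref{thm-(h)dense} rests on $\tilde H^d(K^p)_{\QQ_p}$ being a quotient of $C(H',\QQ_p)^{\oplus m}$, and that property is special to the middle degree. On $\tilde H^0$ the group $\SL_2(F\otimes\QQ_p)$ acts trivially, so it has no nonzero vector of regular algebraic weight at all.
\item Your classical input is also wrong above the middle degree. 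The lemma allows parallel weight $k_\tau=k>0$. For $d\ge 2$ one then has $H^{2d-1}_{\et}(\Sh_{K^pK_p,\overline{\QQ}},\mathbb{L}_{\underline k})\neq 0$, because it surjects onto the nonzero boundary cohomology. So the claimed vanishing of $H^j(\mathbb{L}_{\underline k})$ for all $j\neq d$ fails; Harder's vanishing of boundary cohomology needs non-parallel weight.
\item The remaining ``degree bookkeeping'' is not an argument, and the proposed fallback changes the statement of the lemma.
\end{itemize}

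The missing idea is that the lower rows need not vanish; only their Ext groups against $V_{\underline k}^\vee$ must, and infinitesimal characters force this. The paper uses Emerton's spectral sequence in its Lie-algebra form on locally analytic vectors:
$E_2^{i,j}=\Ext^i_{\widetilde{\mathfrak g}}(V_{\underline k}^\vee,\tilde H^j(K^p)^{\la}_E)\Rightarrow\varinjlim_{K_p'}H^{i+j}_{\et}(\Sh_{K^pK_p',\overline{\QQ}},\mathbb{L}_{\underline k})$.

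\begin{itemize}
\item By Lan--Pan, $Z(U(\mathfrak g))$ acts on $\tilde H^{<d}(K^p)^{\la}$ through $Z(U(\mathfrak g))/I^m$, where $I$ cuts out the non-regular infinitesimal characters.
\item $V_{\underline k}$ has regular infinitesimal character precisely because all $k_\tau>0$. A Wigner/Mili\v{c}i\'c-type argument then gives $\Ext^i_{\widetilde{\mathfrak g}}(V_{\underline k}^\vee,\tilde H^{<d}(K^p)^{\la}_E)=0$ for all $i$.
\item Combined with $\tilde H^{>d}(K^p)^{\la}=0$, the spectral sequence collapses onto the row $j=d$. This gives $\Hom_{\widetilde{\mathfrak g}}(V_{\underline k}^\vee,\tilde H^d(K^p)^{\la}_E)\cong\varinjlim_{K_p'}H^d_{\et}$, and taking $K_p$-invariants yields the lemma.
\end{itemize}

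This route needs no vanishing of $\tilde H^{<d}$ itself. It therefore avoids Theorem~\ref{thm-belmidnsi}, which, as you rightly note, would be circular here.
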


\begin{proof}
This is essentially \cite[Remark 6.1.6]{Lan-Pan}. We sketch a proof here.
Let $\widetilde{K_p}$ be the image of $K_p$ in the automorphism group of the tower $\{\Sh_{K^pK'_p}\}_{K'_p\subseteq \mathrm{G}(\QQ_p)}$ and $\tilde{\mathfrak{g}}$ be its Lie algebra which is a quotient of $\mathfrak{g}$. 
By \cite[Th.0.5, Th.1.1.3]{Emerton-2006}, there is a natural spectral sequence
\[E_2^{i,j}=\Ext^i_{\widetilde{\mathfrak{g}}}(V_{\underline{k}}^\vee, \tilde{H}^{j}(K^p)^{\la}_{E})\Longrightarrow \varinjlim_{K'_p} H^{i+j}_\et(\Sh_{K^pK'_p,\overline{\QQ}}, \mathbb{L}_{\underline{k}}).\]
(Emerton takes the direct limit over all tame levels. We can take the $K^p$-invariants of the spectral sequence to get the above  one.)
By \cite[Th.1.13]{RC-2025}, $\tilde{H}^{>d}(K^p)^{\la}=0$. Moreover since all $k_\tau$'s are not zero, the highest weight of $V_{\underline{k}}^\vee$ is regular. Theorem 1.1.2 of \cite{Lan-Pan} shows that the action of $Z(U(\mathfrak{g}))$ on $\tilde{H}^{<d}(K^p)^{\la}$ factors through $Z(U(\mathfrak{g}))/I^m$ for some $m>0$, where $I$ is the ideal  defining non-regular infinitesimal characters, and hence the infinitesimal character of $V_{\underline{k}}$ doesn't factor through $Z(U(\mathfrak{g}))/I$. The same argument as the proof of
\cite[Th.1.1]{Milicic-2014} shows that for any $i\geq0$, 
\[\Ext^{i}_{\widetilde{\mathfrak{g}}}(V_{\underline{k}}^\vee, \tilde{H}^{<d}(K^p)^{\la}_{E})=0.\]
Therefore the previous spectral sequence degenerates and implies that
\[\Hom_{\widetilde{\mathfrak{g}}}(V_{\underline{k}}^\vee, \tilde{H}^{d}(K^p)^{\la}_{E})\cong \varinjlim_{K'_p} H^{d}_\et(\Sh_{K^pK'_p,\overline{\QQ}}, \mathbb{L}_{\underline{k}}).\]
Both sides have natural actions of $K_p$ and the isomorphism is $K_p$-equivariant. We get our claim by taking the $K_p$-invariants.
\end{proof}

\begin{lem}
Let $j:\Sh_{K^pK_p}\to \Sh_{K^pK_p}^*$ denote the open immersion of $\Sh_{K^pK_p}$ into the minimal compactification. Suppose that weight $\underline{k}$ is not parallel. The natural map
\[H^d_{\et}((\Sh_{K^pK_p}^*)_{\overline{\QQ}},j_{!*}\mathbb{L}_{\underline{k}})\to H^d_{\et}(\Sh_{K^pK_p,\overline{\QQ}}, \mathbb{L}_{\underline{k}}) \]
is an isomorphism. 
\end{lem}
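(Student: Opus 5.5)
The plan is to compare both groups with the corresponding cohomology on the minimal compactification, using the boundary long exact sequence. Write $\partial=\Sh^*_{K^pK_p}\setminus\Sh_{K^pK_p}$ for the boundary; it is a finite set of cusps, i.e. zero-dimensional, and let $i:\partial\to\Sh^*_{K^pK_p}$ be the closed immersion.

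First I would factor the map through $Rj_*$. The natural morphisms $j_!\mathbb{L}_{\underline{k}}\to j_{!*}\mathbb{L}_{\underline{k}}\to Rj_*\mathbb{L}_{\underline{k}}$ show that the map in question is the composite
\[H^d(\Sh^*,j_{!*}\mathbb{L}_{\underline{k}})\to H^d(\Sh^*,Rj_*\mathbb{L}_{\underline{k}})=H^d(\Sh,\mathbb{L}_{\underline{k}}).\]
The cone of $j_{!*}\mathbb{L}_{\underline{k}}\to Rj_*\mathbb{L}_{\underline{k}}$ is supported on $\partial$. It is $i_*$ applied to the truncation $\tau_{\geq d}(i^*Rj_*\mathbb{L}_{\underline{k}})$, with the degree conventions adjusted to the perverse normalization; since the cusps are points of codimension $d$, Deligne's construction $j_{!*}=\tau_{\leq d-1}Rj_*$ applies near $\partial$. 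Because $\partial$ is zero-dimensional, cohomology of a complex supported on $\partial$ is simply the sum of its stalks. The long exact sequence then reduces the claim to showing that the stalks $\mathcal{H}^q(i^*Rj_*\mathbb{L}_{\underline{k}})$ vanish for $q=d$ and $q=d-1$. More precisely, injectivity needs vanishing in degree $d-1$ of the cone, and surjectivity needs vanishing in degree $d$.

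Second, I would compute these stalks. The stalk of $Rj_*\mathbb{L}_{\underline{k}}$ at a cusp is the cohomology of a deleted neighbourhood of that cusp. This neighbourhood is homotopic to a torus bundle, namely the boundary of the Borel–Serre compactification over that cusp. Concretely its cohomology is
\[H^*\bigl(\Gamma_B,V_{\underline{k}}\bigr),\]
where $\Gamma_B$ is a neat arithmetic subgroup of the Borel $B(F)$. It sits in an extension of a unit group $\Gamma_T\subseteq\mathcal{O}_F^\times$, of rank $d-1$, by a unipotent lattice $\Gamma_N\cong\ZZ^d$. By Kostant's theorem, $H^q(\Gamma_N,V_{\underline{k}})$ is a sum of characters $\chi_w$ of the torus, indexed by subsets $w\subseteq\Sigma'$ with $|w|=q$. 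Then $H^*(\Gamma_T,\chi_w)$ vanishes unless $\chi_w$ is trivial on a finite-index subgroup of the units. The algebraic character $\chi_w$ restricted to $\mathcal{O}_F^\times$ is $u\mapsto\prod_{\tau\in w}\tau(u)^{k_\tau+1}\prod_{\tau\notin w}\tau(u)^{-(k_\tau+1)}$, up to a central twist. By Dirichlet's unit theorem (Zariski density of units in the norm-one torus), this is trivial on units only if the exponent vector is parallel. That happens only when all $k_\tau$ are equal, which is excluded since $\underline{k}$ is not parallel. Hence $H^*(\Gamma_B,V_{\underline{k}})=0$ in all degrees, every boundary stalk vanishes, and $j_!\mathbb{L}_{\underline{k}}\cong j_{!*}\mathbb{L}_{\underline{k}}\cong Rj_*\mathbb{L}_{\underline{k}}$. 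This gives the isomorphism in all degrees, in particular in degree $d$.

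The main obstacle is the second step: making the identification of the stalk with Borel-subgroup cohomology precise in the étale setting, and handling the central character twist that comes from working with $\mathrm{PGL}_2$-representations pulled back to $\GL_2$. For the identification, I would pass to $\CC$ by the comparison theorem and use the topology of the toroidal or Borel–Serre boundary, as in Harder's computation of Eisenstein cohomology for Hilbert modular varieties. For the twist, I would check the weight condition directly on $\mathcal{O}_F^\times$: the distinct, non-parallel exponents $\pm(k_\tau+1)$ cannot all be equal up to sign. So the character is nontrivial on a finite-index subgroup of the units, and the Hochschild–Serre spectral sequence gives the vanishing.
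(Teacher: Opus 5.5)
Your proof is correct, but it takes a more local and more self-contained route than the paper.

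The paper argues globally. It quotes Harder's theorem that $\mathbb{L}_{\underline{k}}$ has no boundary cohomology for non-parallel $\underline{k}$. That gives $H^d_c\cong H^d$, and since this map factors through $H^d(j_{!*}\mathbb{L}_{\underline{k}})$, the map $H^d(j_{!*})\to H^d$ is surjective. For injectivity the paper uses a separate automorphic input: intersection cohomology only receives cuspidal contributions (Harder \S3.2).

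You instead work at the cusps. You reprove the boundary vanishing directly: Kostant's theorem for $\Gamma_N$, then Zariski density of units in the norm-one torus to kill $H^*(\Gamma_T,\chi_w)$. (Which sign of the exponent goes with $\tau\in w$ depends on how you parametrize the torus; this does not affect the argument.) Because the cusps are isolated points, this vanishing is exactly $i^*Rj_*\mathbb{L}_{\underline{k}}=0$. Hence $j_!\mathbb{L}_{\underline{k}}\cong j_{!*}\mathbb{L}_{\underline{k}}\cong Rj_*\mathbb{L}_{\underline{k}}$, and the isomorphism holds in every degree, not just $d$.

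You overstate one requirement. The cone of $j_{!*}\mathbb{L}_{\underline{k}}\to Rj_*\mathbb{L}_{\underline{k}}$ is $i_*\tau_{\geq d}i^*Rj_*\mathbb{L}_{\underline{k}}$, so its degree-$(d-1)$ cohomology vanishes automatically. Injectivity of $H^d(j_{!*})\to H^d$ therefore holds for any local system, since for isolated singularities $IH^d=\mathrm{im}(H^d_c\to H^d)$. Only the vanishing of $\mathcal{H}^d$ of the cusp stalks is needed, and only for surjectivity. In particular your approach shows that the cuspidality input in the paper's proof is not needed. The paper's route buys brevity by citing Harder instead of redoing the boundary computation.
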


\begin{proof}
By the comparison theorem, it suffices to consider the analytic variety $\Sh_{K^pK_p}(\CC)$ and prove the corresponding statement for the $\CC$-local systems.
Since $\underline{k}$ is not parallel, $\mathbb{L}_{\underline{k}}$ has no boundary cohomology \cite[Th.1]{Harder-1987}. Hence the natural map $H^d_{c,\et}(\Sh_{K^pK_p,\overline{\QQ}}, \mathbb{L}_{\underline{k}})\to H^d_{\et}(\Sh_{K^pK_p,\overline{\QQ}}, \mathbb{L}_{\underline{k}})$ is an isomorphism and factors through $H^d_{\et}((\Sh_{K^pK_p}^*)_{\overline{\QQ}},j_{!*}\mathbb{L}_{\underline{k}})$. 
Moreover the cohomology of $j_{!*}\mathbb{L}_{\underline{k}}$ only has contribution from the cuspidal representations and maps injectively into $H^d_{\et}(\Sh_{K^pK_p,\overline{\QQ}}, \mathbb{L}_{\underline{k}})$, cf. \cite[\S 3.2]{Harder-1987}. Thus $H^d_{\et}((\Sh_{K^pK_p}^*)_{\overline{\QQ}},j_{!*}\mathbb{L}_{\underline{k}})\cong H^d_{\et}(\Sh_{K^pK_p,\overline{\QQ}}, \mathbb{L}_{\underline{k}})$.
\end{proof}

\begin{lem}
Suppose that $\underline{k}=\{k_\tau\}_{\tau\in\Sigma'}$ with all $k_\tau$'s being distinct. Let 
\[X=H^d_{\et}((\Sh_{K^pK_p}^*)_{\overline{\QQ}},j_{!*}\mathbb{L}_{\underline{k}})\otimes_E \overline{\QQ_p}\]
Then $X$ is finite-dimensional and there is a natural finite decomposition
\[X=\bigoplus_{\lambda:\TT\to \overline{\QQ_p}}X[\lambda] \]
and $\rho_\lambda$ has $\tau$-Hodge-Tate weights $k_\tau/2+1,-k_\tau/2$ for $\tau:F\to E\subseteq \overline{\QQ_p}$ if $\lambda$ shows up in $X$. 
Moreover for $\lambda$ whose kernel does not contain $I_{CM}$, $\oInd^{\QQ}_F \rho_\lambda$ is an irreducible representation, and the $G_{\QQ}$-representation $X[\lambda]$ is $\oInd^{\QQ}_F \rho_\lambda$-isotypic, i.e. 
\[X[\lambda]\cong \oInd^{\QQ}_F \rho_\lambda\otimes \Hom_{\overline{\QQ_p}[G_{\QQ}]}(\oInd^{\QQ}_F\rho_\lambda, X[\lambda]).\]
\end{lem}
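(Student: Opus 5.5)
Finiteness of $X$ is standard: $X$ is the intersection cohomology of a proper variety with coefficients in a local system, so it is finite-dimensional. The Hecke algebra $\TT$ acts on it through a finite quotient. The decomposition $X=\bigoplus_\lambda X[\lambda]$ should be read as the decomposition into generalized eigenspaces. I will upgrade it to genuine eigenspaces using the Hecke semisimplicity of intersection cohomology: via Zucker's conjecture (Looijenga, Saper--Stern), $X\otimes\CC$ is identified with $L^2$-cohomology, on which the Hecke operators away from $S$ act semisimply because they are normal with respect to the Petersson inner product. Matsushima's formula then gives
\[X\otimes_{\overline{\QQ_p}}\CC\cong\bigoplus_\pi m(\pi)\,(\pi_f)^{K^pK_p}\otimes H^d(\mathfrak{g},K_\infty;\pi_\infty\otimes V_{\underline{k}}).\]
Only cuspidal $\pi$ contribute: $\underline{k}$ is non-parallel because the $k_\tau$ are distinct, which rules out one-dimensional $\pi$ (these only occur for parallel weight) and Eisenstein contributions (Harder, as in the previous lemma). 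For each such $\pi$, the $(\mathfrak{g},K_\infty)$-cohomology is $2^d$-dimensional, with cohomological weight determined by $\underline{k}$.

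For each $\lambda$, the eigensystem is that of a cuspidal Hilbert eigenform of weight $\{k_\tau+2\}$. I will then invoke the existence of the associated Galois representation $\rho_\lambda$ (Carayol, Taylor, Blasius--Rogawski), which is de Rham at $p$ with $\tau$-Hodge--Tate weights $\{k_\tau/2+1,-k_\tau/2\}$ in the paper's cohomological normalization (the $\PGL_2$-twist centers the weights). The relation with the determinant $D$ of Theorem~\ref{thm-big T} follows from Chebotarev together with Brauer--Nesbitt.

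Next, the $G_\QQ$-structure of $X[\lambda]$. By the Langlands--Kottwitz method (Brylinski--Labesse, Reimann) recalled in \eqref{alignLangKott}, the semisimplification satisfies $X[\lambda]^{ss}\cong(\oInd^{\QQ}_F\rho_\lambda)^{\oplus n}$. Now suppose $\ker\lambda\not\supseteq I_{CM}$. Then $\rho_\lambda|_{G_E}$ is irreducible for every CM quadratic extension $E$ of $F$. The other non-strongly-irreducible cases should be excluded as follows.
\begin{itemize}
\item Finite image is impossible, since the Hodge--Tate weights are distinct.
\item Induction from a non-CM quadratic extension $E/F$ is impossible for regular algebraic $\rho_\lambda$: an algebraic Hecke character of $E$ must factor through a CM subfield, which would force parallel weight.
\item Reducibility is excluded by cuspidality.
\end{itemize}
Hence $\rho_\lambda$ is strongly irreducible. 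Since the $k_\tau$ are distinct and positive, the values $\pm k_\tau$ are all distinct, so Proposition~\ref{proptenindirred}(2) shows that $\oInd^{\QQ}_F\rho_\lambda$ is absolutely irreducible.

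It remains to show that $X[\lambda]$ is semisimple, not merely semisimple up to semisimplification. This is the main obstacle. I would try first the Hodge--Tate / Sen argument. Every Jordan--Hölder factor of $X[\lambda]$ is the irreducible representation $\oInd^{\QQ}_F\rho_\lambda$, so it suffices to show that every self-extension occurring inside $X[\lambda]$ splits. The space $X[\lambda]$ is de Rham, as a subquotient of the cohomology of a proper variety with coefficients. If the $k_\tau$ could moreover be chosen so that $\oInd^{\QQ}_F\rho_\lambda$ has $2^d$ distinct Hodge--Tate weights, then $\theta$ would act semisimply with each weight having multiplicity $n$. This alone does not force splitting, so I would combine it with purity. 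Gabber's purity theorem makes $X$ pure of weight $d+\sum k_\tau$. At a place $\ell\notin S$, the Frobenius on $\oInd^{\QQ}_F\rho_\lambda$ has $\Fr$-semisimple eigenvalues, and the extension class lies in $H^1_g(G_\QQ,\End(\oInd^{\QQ}_F\rho_\lambda))$.

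Such an extension is controlled by the adjoint Selmer group. Its vanishing, for $\oInd^{\QQ}_F\rho_\lambda$ arising from a cuspidal form, is exactly Nekovář's theorem \cite{Nekovar-2018}, which is also cited in the remark after Theorem~\ref{thm-mainthm}. So the practical plan is to invoke Nekovář's semisimplicity of $G_\QQ$ acting on the $\lambda$-part of intersection cohomology. Combined with $X[\lambda]^{ss}\cong(\oInd^{\QQ}_F\rho_\lambda)^{\oplus n}$ and irreducibility, this gives the claimed isotypic decomposition $X[\lambda]\cong\oInd^{\QQ}_F\rho_\lambda\otimes\Hom_{\overline{\QQ_p}[G_\QQ]}(\oInd^{\QQ}_F\rho_\lambda,X[\lambda])$.
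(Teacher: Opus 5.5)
Your proof is correct as a standalone argument, but at the last step it takes a much heavier route than the paper, and the step you call ``the main obstacle'' is not actually an obstacle. You already wrote down everything needed: Matsushima's formula with $m(\pi)=1$ and $2^d$-dimensional $(\mathfrak g,K_\infty)$-cohomology.

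The paper, following Nekov\'a\v{r} \S 5.9--5.11, uses the resulting $\mathrm{G}(\mA_f)\times G_\QQ$-decomposition $\varinjlim_K IH^d\otimes_E\overline{\QQ_p}=\bigoplus_\pi \pi^\infty\otimes V(\pi^\infty)$. Since $G_\QQ$ commutes with $\mathrm{G}(\mA_f)$, it acts only on the multiplicity spaces $V(\pi^\infty)$, each of dimension exactly $2^d$. After taking $K^pK_p$-invariants, $X[\lambda]$ is therefore, as a $G_\QQ$-module, a direct sum of copies of the $V(\pi^\infty)$ with eigensystem $\lambda$. A $2^d$-dimensional representation whose semisimplification is the irreducible $2^d$-dimensional $\oInd^{\QQ}_F\rho_\lambda$ is itself irreducible. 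So each $V(\pi^\infty)\cong\oInd^{\QQ}_F\rho_\lambda$, and the isotypic decomposition follows with no extension problem to solve.

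Your route instead cites Nekov\'a\v{r}'s semisimplicity theorem. That is logically admissible in isolation, but it is precisely what the paper needs to avoid. The remark after Theorem \ref{thm-intro-plec1} claims the main theorem reproves Nekov\'a\v{r}'s result for non-CM forms. This lemma feeds into that theorem via Proposition \ref{prop-Zdec} and Theorem \ref{thm-eigenspace}. With your argument, that reproof would depend on the very theorem it claims to reprove. The dimension count uses only the Brylinski--Labesse description of $V(\pi^\infty)^{\mathrm{ss}}$.

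Two smaller points. Nekov\'a\v{r}'s semisimplicity is proved via partial Frobenii satisfying Eichler--Shimura relations, not via vanishing of an adjoint Selmer group, and your Sen/purity detour is unnecessary. Your other steps agree in substance with the paper: finiteness, cuspidality, Hodge--Tate weights, strong irreducibility off the CM locus, and irreducibility via Proposition \ref{proptenindirred}(2). Note that Proposition \ref{proptenindirred}(2) applies to the Hodge--Tate differences $k_\tau+1$, not to $k_\tau$ itself, though the conclusion is the same.
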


\begin{proof}
This is essentially \cite[Th.5.20]{Nekovar-2018}. We recall its argument here.
By the discussion in \cite[\S 5.9-5.11]{Nekovar-2018}, there is a natural decomposition of $\mathrm{G}(\mA_f)\times G_{\QQ}$-representation
\[\varinjlim_{K\subseteq \mathrm{G}(\mA_f)} H^d_{\et}((\Sh_{K^pK_p}^*)_{\overline{\QQ}},j_{!*}\mathbb{L}_{\underline{k}})\otimes_E \overline{\QQ_p} =\bigoplus_{\pi=\pi^{\infty}\otimes\pi_\infty} \pi^{\infty}\otimes V(\pi^\infty)\]
where $\pi$ corresponds to a representation of $\GL_2(\mathbb{A}\otimes F)$ attached (up to a twist) to a holomorphic cuspidal Hilbert modular eigenform (under a fixed isomorphism between $\CC$ and $\overline{\QQ_p}$), and $ V(\pi^\infty)$ is a representation of $G_{\QQ}$ of dimension $2^d$ whose semi-simplification 
\[ V(\pi^\infty)^{\mathrm{ss}}\cong \oInd^{\QQ}_F\rho_\lambda,\]
where $\lambda$ is the system of Hecke eigenvalues associated to $\pi$.
Note that because $\underline{k}$ is not parallel, the case $A$ in the reference cannot happen. As remarked by Nekov\'a\v{r} before (5.12), since our Hodge cocharacter is inverse to his choice, we have $\rho_\lambda$ instead of $\rho_\lambda^{\vee}(-1)$ in the tensor induction. Take the $K^pK_p$-invariants. We get
\[X=\bigoplus_{\lambda:\TT\to \overline{\QQ_p}}X[\lambda], \]
and each $\lambda$-eigenspace is a direct sum of $V(\pi^\infty)$'s with $V(\pi^\infty)^{\mathrm{ss}}\cong  \oInd^{\QQ}_F\rho_\lambda$.
Now if $\ker\lambda$ does not contain $I_{CM}$, the system of Hecke eigenvalues $\lambda$ corresponds (up to a twist) to a holomorphic non-CM cuspidal Hilbert modular eigenform. Thus by the known local-global compatibility \cite[Prop.6.1]{Nekovar-2018} $\rho_\lambda$ is strongly irreducible  and the difference of the $\tau$-Hodge-Tate weights  are $k_\tau+1$ for $\tau:F\to \overline{\QQ_p}$. Since the local system comes from a representation of $\mathrm{PGL}_2$, by Theorem \ref{thm-big T}  $\det \rho_\lambda$ is the inverse of the cyclotomic character up to a finite character. Hence $\rho_\lambda$ has $\tau$-Hodge-Tate weights  $k _\tau/2+1,-k_\tau/2$.  Since all $k_\tau$'s are distinct, it follows from Proposition \ref{proptenindirred} that $ \oInd^{\QQ}_F\rho_\lambda$ is irreducible. Hence $V(\pi^\infty)\cong  \oInd^{\QQ}_F\rho_\lambda$ if $V(\pi^\infty)^{\mathrm{ss}}\cong  \oInd^{\QQ}_F\rho_\lambda$. This concludes the proof.
\end{proof}

Proposition \ref{prop-Zdec} follows easily from the combination of previous lemmas.
\end{proof}

We also need to allow twists of $V_{\underline{k}}$ in Proposition \ref{prop-Zdec}. 

\begin{prop} \label{prop-twist}
Let $\phi\in\tilde{H}^0(K^p)_E$ be a non-zero vector on which $\mathrm{G}(\mA_f)$ acts via the character $\psi\circ\det$ for some continuous character
\[\psi: F^{\times}_{>>0}\setminus (\mA_f\otimes_{\QQ} F)^\times/\det K^p\to E^\times.\]
Set $\psi_p=\psi\circ\det|_{\mathrm{G}(\QQ_p)}$ and denote by $\rho_\psi:\Gal(\overline{F}/F)\to  E^\times$  the character corresponding to $\psi$ under the class field theory.
Let $K_p$ be an open subgroup of $\mathrm{G}(\QQ_p)$ and $\underline{k}$ a weight as before. The multiplication by $\phi$ on $\tilde{H}^d(K^p)_{E}$  induces a $G_\QQ$-equivariant isomorphism
\[\times \phi: \Hom_{E[K_p]}(V_{\underline{k}}, \tilde{H}^d(K^p)_{E}) \otimes_E (\oInd^{\QQ}_F\rho_{\psi}) \stackrel{\cong}{\to} \Hom_{E[K_p]}(V_{\underline{k}}\otimes (\psi_p\circ\det), \tilde{H}^d(K^p)_{E}), \]
mapping $\lambda$-eigenspace to $\lambda\otimes\psi$-eigenspace for $\lambda:\TT\to\overline{\QQ_p}$ showing up in $\Hom_{E[K_p]}(V_{\underline{k}}, \tilde{H}^d(K^p)_{E})\otimes_E \overline{\QQ_p}$, where $\lambda\otimes \psi:\TT\to\overline{\QQ_p}$ is determined by $\rho_{\lambda\otimes\psi}\cong \rho_\lambda\otimes \rho_{\psi}$.
\end{prop}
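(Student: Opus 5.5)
The plan is to exploit the cup-product structure of completed cohomology. The element $\phi\in\tilde{H}^0(K^p)_E$ is a compatible system of locally constant functions on the tower $\{\Sh_{K^pK'_p}\}$ (reduced mod $p^n$ and completed). Cup product gives a continuous map $\tilde{H}^0(K^p)\times\tilde{H}^d(K^p)\to\tilde{H}^d(K^p)$. This map is equivariant for $G_\QQ\times\mathrm{G}(\mA_f)$ (acting diagonally on the source) and compatible with the Hecke operators away from $S$ up to twisting by the eigencharacter of $\phi$.

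The first step is to compute the actions on $\phi$. On the group side, $\mathrm{G}(\QQ_p)$ acts on $\phi$ by $\psi_p\circ\det$. So if $f\in\Hom_{E[K_p]}(V_{\underline{k}},\tilde{H}^d(K^p)_E)$, then $v\mapsto\phi\cdot f(v)$ is $K_p$-equivariant for $V_{\underline{k}}\otimes(\psi_p\circ\det)$. On the Galois side, the recalled compatibility of the $G_\QQ$- and $\mathrm{G}(\mA_f)$-actions on $\tilde{H}^0$ (via $\QQ^\times_{>0}\setminus\mA_f^\times\to F^\times_{>>0}\setminus(\mA_f\otimes F)^\times$ and the Artin map, with geometric Frobenius normalization) shows that $G_\QQ$ acts on $E\phi$ through $\psi$ composed with the norm-type map $\QQ^\times\setminus\mA_f^\times\to F^\times\setminus(\mA_f\otimes F)^\times$. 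By class field theory, this inclusion on the idèle side corresponds to the transfer $G_\QQ^{\mathrm{ab}}\to G_F^{\mathrm{ab}}$. Remark \ref{rmk-transfer} then identifies this character with $\oInd^{\QQ}_F\rho_\psi$. Hence $f\otimes x\mapsto \phi\cdot f$ (with $x$ spanning $\oInd^{\QQ}_F\rho_\psi$) is $G_\QQ$-equivariant.

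For the Hecke side, the spherical operators $T_v,S_v$ for $v\notin S$ are sums over double cosets. Multiplication by $\phi$, which transforms under $g_v$ by $\psi(\det g_v)$, gives $T_v(\phi f)=\psi(\varpi_v)\phi\,T_v f$ and $S_v(\phi f)=\psi(\varpi_v)^2\phi\,S_vf$. This is exactly the twist $\rho_\lambda\mapsto\rho_\lambda\otimes\rho_\psi$ on the characteristic polynomials $t^2-T_vt+\Nm(v)S_v$ from Theorem \ref{thm-big T}, with geometric Frobenius conventions matched. So $\times\phi$ sends the $\lambda$-eigenspace to the $\lambda\otimes\psi$-eigenspace.

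The remaining step is bijectivity. Since $\phi$ has values that are units up to scaling (a character of a profinite group with values in $E^\times$ is bounded), the plan is to show $\phi$ is invertible in $\tilde{H}^0(K^p)_E$: the function $\phi^{-1}$ transforms by $\psi^{-1}\circ\det$ and also lies in $\tilde{H}^0(K^p)_E$. Then multiplication by $\phi^{-1}$ gives the inverse map, from $V_{\underline{k}}\otimes(\psi_p\circ\det)$-homs back to $V_{\underline{k}}$-homs.

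The main obstacle is the careful sign and normalization bookkeeping. Specifically, one must check that the Galois character on $\phi$ is the transfer of $\rho_\psi$ rather than its inverse, and that the Hecke twist matches $\rho_\lambda\otimes\rho_\psi$ under the cohomological Hodge convention. I would handle this by reducing to $\tilde{H}^0$, where everything is explicit via Deligne's description of $\pi_0$ of the tower.
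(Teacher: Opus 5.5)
Your overall route matches the paper's: bijectivity comes from invertibility of $\phi$, the Galois character on $\phi$ comes from the compatibility of the Artin map with the idelic action on $\pi_0(K^p)$ together with Remark \ref{rmk-transfer}, and the Hecke twist is a direct double-coset computation (which the paper omits).

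\textbf{Gap in the invertibility step.} Boundedness of $\psi$ only controls the absolute value of $\phi$ along a single $\mathrm{G}(\mA_f)$-orbit in $\pi_0(K^p)$. Since $\phi(g\cdot x)=\psi(\det g)^{\pm 1}\phi(x)$ with $\psi(\det g)\in E^\times$, the zero set of $\phi$ is a union of orbits. Nothing in your argument rules out $\phi$ vanishing identically on some orbits while being nonzero on others. In that case $\phi^{-1}$ would not exist, and $\times\phi$ would fail to be injective or surjective.

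\textbf{The missing ingredient.} This is precisely what the paper's proof supplies: $\mathrm{G}(\mA_f)$ acts transitively on $\pi_0(K^p)$. By Deligne's description of connected components (strong approximation for the simply connected group $\mathrm{Res}_{F/\QQ}\SL_2$),
\[\pi_0(K^p)\cong(\mA_f\otimes_{\QQ}F)^\times/\overline{F^\times_{>>0}\det K^p},\]
and $\mathrm{G}(\mA_f)$ acts on it by translation through the surjective map $\det$.

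\textbf{How it closes the argument.} With transitivity, $\phi\neq 0$ forces $\phi$ to be nowhere zero, and boundedness of $\psi$ then gives that $|\phi|$ is constant. Hence $\phi^{-1}$ is a continuous function on the profinite set $\pi_0(K^p)$, i.e.\ an element of $\tilde{H}^0(K^p)_E$. It transforms under $\psi^{-1}\circ\det$, so $\times\phi^{-1}$ is the inverse you want. You already plan to invoke Deligne's description for the normalization bookkeeping; use it here as well to get transitivity.
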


\begin{proof}
$\phi$ is a function on $\pi_0(K^p):=\varinjlim_{K'_p\subseteq{\mathrm{G}(\QQ_p)}}\pi_0(\Sh_{K^pK'_p,\overline{\QQ}})$. It follows from the description of connected components of Shimura varieties that $\mathrm{G}(\mA_f)$ acts transitively on $\pi_0(K^p)$, and hence $\phi$ is an invertible function on $\pi_0(K^p)$. Thus $\times\phi$ is an isomorphism because it has an inverse $\times\phi^{-1}$. On the other hand, we have a commutative diagram involving the natural actions of $G_{\QQ}$ and $F^\times_{>>0}\setminus  (\mA_f\otimes_{\QQ} F)^\times$ on $\pi_0(K^p)$
\[\begin{tikzcd}
\QQ^\times_{>0}\setminus \mA_f^\times \arrow[r,"\mathrm{Art}"] \arrow[d] & G_{\QQ}^{\mathrm{ab}} \arrow[d]\\
F^\times_{>>0}\setminus  (\mA_f\otimes_{\QQ} F)^\times \arrow[r] & \mathrm{Aut}(\pi_0(K^p))
\end{tikzcd}\]
where $\mathrm{Art}$ denotes the Artin map and the left vertical map is the natural inclusion. By class field theory, $G_{\QQ}$ acts on $\phi$ via the transfer of $\psi$ which is nothing but $\oInd^{\QQ}_F\rho_{\psi}$ cf. Remark \ref{rmk-transfer}. The claim for the Hecke action can be checked easily. We omit the detail here.
\end{proof}

\begin{thm} \label{thm-eigenspace}
Same setup as in the previous proposition. Assume moreover that all $k_\tau$'s are distinct and positive. The action of $\TT$ on the finite-dimensional $E$-vector space 
\[X:=\Hom_{E[K_p]}(V_{\underline{k}}\otimes (\psi_p\circ\det), \tilde{H}^d(K^p)_{E})\] 
is semisimple, and for $\lambda:\TT\to\overline{\QQ_p}$ showing up as an eigenvalue and not in the CM locus,
\begin{enumerate}
\item the $G_{\QQ}$-representation  $\oInd^{\QQ}_F \rho_\lambda$ is irreducible and the $\lambda$-eigenspace of $ X\otimes_E \overline{\QQ_p}$ is $\oInd^{\QQ}_F \rho_\lambda$-isotypic. 
\item For $\tau:F\to E\subseteq \overline{\QQ_p}$,
 $\rho_\lambda$ has $\tau$-Hodge-Tate weights $k_\tau/2+1-w_\tau,-k_\tau/2-w_\tau$, where $w_\tau\in E$ is the weight of $d\psi_p$ at $\tau$ (note that $d\psi_p: \Lie (F\otimes \QQ_p)^\times \to E$ induces an $E$-linear map $\Lie (F\otimes \QQ_p)^\times\otimes_{\QQ_p} E=F\otimes_{\QQ} E=\prod_{\tau\in\Sigma'} E\to E$.)
 \item The $G_{\QQ_p}$-representation $\oInd^{\QQ}_F \rho_\lambda$ has $2^d$ distinct Hodge-Tate-Sen weights if
$(\rho_{\underline{k}}, V_{\underline{k}})$ is of very regular weight as in Definition \ref{dfn-veryregwt}.
\item \label{thm-eigenspace-4} For $\tau\to \overline{\QQ_p}$, the action of $\theta^{\la}_\tau$ (introduced in Definition \ref{dfn-thetataula}) on $X\widehat\otimes_{E}\CC_p$ is semisimple with eigenvalues $-k_\tau/2-1+w_\tau$ and $k_\tau/2+w_\tau$. 
\item The action of $\sum_{\tau\in\Sigma'}\theta_\tau^{\la}$ on $X\widehat\otimes_{E}\CC_p$ agrees with the Sen operator.
\end{enumerate} 
\end{thm}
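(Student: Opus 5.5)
We reduce everything to Proposition \ref{prop-Zdec} via the twisting isomorphism of Proposition \ref{prop-twist}. Multiplication by $\phi$ gives a $G_\QQ$-equivariant isomorphism
\[\Hom_{E[K_p]}(V_{\underline{k}}, \tilde{H}^d(K^p)_{E})\otimes_E \oInd^{\QQ}_F\rho_\psi \cong X,\]
sending the $\lambda'$-eigenspace to the $\lambda'\otimes\psi$-eigenspace. Proposition \ref{prop-Zdec} gives a finite eigenspace decomposition of the source, and the target inherits it. This proves that $\TT$ acts semisimply on $X$, at least after extending scalars to $\overline{\QQ_p}$, which is what the decomposition means. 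For $\lambda=\lambda'\otimes\psi$ we have $\rho_\lambda=\rho_{\lambda'}\otimes\rho_\psi$. Since $\rho_\psi$ is a character, $\lambda$ lies outside the CM locus exactly when $\lambda'$ does: reducibility on $G_{E'}$ for quadratic $E'/F$ is insensitive to twisting. Tensor induction is compatible with tensor products (Remark \ref{rmk-indcomptensor}), so
\[\oInd^{\QQ}_F\rho_\lambda\cong\oInd^{\QQ}_F\rho_{\lambda'}\otimes\oInd^{\QQ}_F\rho_\psi,\]
and the second factor is a character. Irreducibility and the isotypic statement in (1) therefore transfer directly from Proposition \ref{prop-Zdec}.

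For (2), Proposition \ref{prop-Zdec} says that $\rho_{\lambda'}$ has $\tau$-weights $k_\tau/2+1$ and $-k_\tau/2$. It remains to compute the $\tau$-Hodge--Tate--Sen weight of $\rho_\psi$. Class field theory with the geometric-Frobenius convention fixed in \S\ref{subsec-Main result} identifies this weight with $-w_\tau$, where $w_\tau$ is read off from $d\psi_p$. This is a sign bookkeeping step, and the main risk is getting the convention wrong. To fix the sign we check it against the cyclotomic case: the determinant of $\GL_2$ corresponds to $\det\rho_\lambda$ being the inverse cyclotomic character up to a finite-order character. For (3), Proposition \ref{prop-wT=sumpartialwt} says the Hodge--Tate--Sen weights of $\oInd^{\QQ}_F\rho_\lambda$ are the sums $\sum_\tau a_\tau$, with $a_\tau$ a $\tau$-weight. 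The shift $-\sum_\tau w_\tau$ is common to all of them. After subtracting it, the differences between these sums are differences of the numbers $\sum_{\tau\in I}(k_\tau+1)$ over subsets $I$. Very regularity (Definition \ref{dfn-veryregwt}) says exactly that these $2^d$ numbers are distinct.

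For (4), we use Corollary \ref{cor-thetatauev}. $X\otimes\CC_p$ embeds into $\Hom_{\mathfrak g}(V,\tilde H^d(K^p)^{\la}\widehat\otimes\CC_p)$, where $V$ is the $\mathfrak g$-representation obtained by differentiating $V_{\underline k}\otimes\psi_p\circ\det$. Vectors in $X$ are locally algebraic, hence locally analytic, and $\theta^\la_\tau$ commutes with $\mathrm{G}(\QQ_p)$ by Theorem \ref{thm-Sendiag01}, so it preserves the image. At $\tau$, $V$ has highest weight $(k_\tau/2+w_\tau,-k_\tau/2+w_\tau)$: $V_{k_\tau}$ is the $\PGL_2$-representation $\Sym^{k_\tau}\otimes\det^{-k_\tau/2}$, and $\psi_p\circ\det$ shifts both entries by $w_\tau$. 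So $l_\tau=-k_\tau/2+w_\tau$, and the highest weight $k_\tau+l_\tau$ becomes $k_\tau/2+w_\tau$. Corollary \ref{cor-thetatauev} then gives that $\theta^\la_\tau$ is semisimple with eigenvalues $k_\tau/2+w_\tau$ and $l_\tau-1=-k_\tau/2-1+w_\tau$. Finally (5) is immediate: Theorem \ref{thm-Sendiag01} identifies $\sum_\tau\theta^\la_\tau$ with the action of $\diag(0,1)\in\mathfrak h$, which is the Sen operator on all of $\tilde H^d(K^p)^\la\widehat\otimes\CC_p$, and $X$ is a $G_{\QQ_p}$-stable subspace.

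The main obstacle is making the sign and normalization conventions consistent: the weight $w_\tau$ in (2) against the eigenvalues in (4), using the cohomological Hodge cocharacter and the inverse class-field-theory convention noted in the proof of Theorem \ref{thm-Sendiag01}. A consistency check is that the negatives of the eigenvalues in (4) should equal the weights in (2). This holds: $-(k_\tau/2+w_\tau)=-k_\tau/2-w_\tau$ and $-(-k_\tau/2-1+w_\tau)=k_\tau/2+1-w_\tau$.
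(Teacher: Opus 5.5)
Your proposal is correct and follows essentially the same route as the paper:
\begin{itemize}
\item (1)--(2) via Proposition \ref{prop-Zdec}, the twisting isomorphism of Proposition \ref{prop-twist}, and compatibility of tensor induction with tensor products;
\item (3) via Proposition \ref{prop-wT=sumpartialwt} and very regularity;
\item (4) via Corollary \ref{cor-thetatauev} with $\tau$-highest weight $(k_\tau/2+w_\tau,-k_\tau/2+w_\tau)$;
\item (5) via Theorem \ref{thm-Sendiag01}.
\end{itemize}
On the sign you flag, your claim that $\rho_\psi$ has $\tau$-weight $-w_\tau$ is right, but the clean way to pin it down is a direct check. With uniformizers sent to geometric Frobenii, $\chi_{\mathrm{cyc}}|_{G_F}$ corresponds to a $\psi$ whose $p$-component is the norm on units, so $w_\tau=1$. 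Since $\theta_{\QQ_p(1)}=1$, the weight is $-1=-w_\tau$. Your remark that $\det\rho_\lambda=\chi_{\mathrm{cyc}}^{-1}$ only concerns the untwisted $\PGL_2$ case, so it does not by itself fix this sign.
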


\begin{proof}
The first two parts follow  from Proposition \ref{prop-Zdec} and Proposition \ref{prop-twist} by noting that for $\lambda:\TT\to\overline{\QQ_p}$, there is an isomorphism of  $G_{\QQ}$-representations
\[\oInd^{\QQ}_F \rho_{\lambda\otimes\psi} \cong (\oInd^{\QQ}_F \rho_\lambda) \otimes (\oInd^{\QQ}_F \rho_\psi).\]
Indeed the plectic induction is compatible with tensor product (Remark \ref{rmk-resprodGtauF}), so is the tensor induction as well. It follows from Proposition \ref{prop-wT=sumpartialwt} that $\oInd^{\QQ}_F \rho_\lambda|_{G_{\QQ_p}}$ has Hodge-Tate-Sen weights $\sum_{\tau\in I}k_\tau+|I|+w$, where $w=\sum_{\tau\in\Sigma'}(-k_\tau/2-w_\tau)$ and $I$ ranges over all subsets of $\Sigma'$. Hence they are distinct by the definition of very regular weights. Part \eqref{thm-eigenspace-4} is a direct consequence of Corollary \ref{cor-thetatauev} by noting that the $\tau$-part of $V_{\underline{k}}\otimes (\psi_p\circ\det)$ has highest weight $(k_\tau/2+w_\tau,-k_\tau/2+w_\tau)$. The last claim is Theorem \ref{thm-Sendiag01}.
\end{proof}

The next lemma allows us to choose $\psi$ freely on the center of $\widetilde{K_p}$ after enlarging $E$ if necessary. 

\begin{lem} \label{lem-centeractonpi0}
Let $K_p$ be an open subgroup of $\mathrm{G}(\QQ_p)$ so that $K^pK_p$ is neat. Denote the center of $\widetilde{K_p}$ by $Z(\widetilde{K_p})$. Suppose that $Z(\widetilde{K_p})$ is torsionfree. 
\begin{enumerate}
\item The action of $Z(\widetilde{K_p})$ on $\pi_0(K^p)$  is free and has finitely many orbits.
\item After possibly enlarging $E$, for any continuous character $\psi_0:Z(\widetilde{K_p})\to \QQ_p^\times$, we can find  $\phi\in\tilde{H}^0(K^p)_E$  such that $\phi$ is an eigenfunction of $\mathrm{G}(\mA_f)$ and $Z(\widetilde{K_p})$ acts on it via $\psi_0$.
\end{enumerate}
\end{lem}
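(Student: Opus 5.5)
Both parts rest on Deligne's description of connected components of Hilbert modular varieties. Concretely, $\pi_0(K^p)$ is identified $\mathrm{G}(\mA_f)$-equivariantly, via $\det$, with the profinite set
\[
F^\times_{>>0}\setminus (\mA_f\otimes_\QQ F)^\times/\det K^p ,
\]
where the center $Z(\mathrm{G}(\QQ_p))=(F\otimes\QQ_p)^\times$ acts through $z\mapsto z^2$ (the determinant of a scalar).

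\emph{Part (1).} Set $C:=F^\times_{>>0}\setminus (\mA_f\otimes F)^\times/\det K^p$; it is a locally profinite abelian group. The group $\widetilde{K_p}$ acts through $\det(K_p)\subseteq (F\otimes\QQ_p)^\times$, and $Z(\widetilde{K_p})$ is the image of $Z(K_p)=K_p\cap (F\otimes\QQ_p)^\times$.

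1. Finitely many orbits: $C/\det(K_p)$ is the ray class group of level $K^pK_p$, hence finite. Therefore $C/Z(\widetilde{K_p})$ is an extension of a finite group by $\det(K_p)/(\text{image of } Z)$. This last group is finite because $z^2$ ranges over an open subgroup of $\det K_p$.

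2. Freeness: suppose $z\in Z(K_p)$ fixes a point of $\pi_0$. Since $C$ is a group, this happens if and only if $z^2$ lies in the closure of the image of $\mO_{F,+}^\times\cap \det K^pK_p$. Then $z$ acts trivially on every component, hence trivially on the whole tower, since the center acts on the tower through components. So its image in $\widetilde{K_p}$ is trivial.

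The main obstacle is making this precise. The stabilizer of a component in $Z(K_p)$ is exactly the part of the center that acts trivially on the tower (the closure of the units), and that part is killed in $\widetilde{K_p}$. Torsion-freeness is only needed to guarantee that $Z(\widetilde{K_p})$ is a free $\ZZ_p$-module; we should not need the hypothesis anywhere else.

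\emph{Part (2).} By (1), $\pi_0(K^p)\cong \bigsqcup_{j=1}^{s} Z(\widetilde{K_p})\cdot x_j$ as a $Z(\widetilde{K_p})$-set. Equivalently, $Z':=Z(\widetilde{K_p})$ embeds as an open subgroup of finite index in the abelian profinite group $\bar C:=C/\overline{\text{(global units)}}$, and $\bar C$ acts simply transitively on $\pi_0$ after choosing a base point. The argument proceeds in three steps.

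1. Extend $\psi_0$ from the open subgroup $Z'$ to a continuous character $\psi:\bar C\to \overline{\QQ_p}^\times$. This is possible because $\bar C$ is abelian and $Z'$ is open of finite index: first extend to a character of finite index over $Z'$ by divisibility of $\overline{\QQ_p}^\times$ (Pontryagin-type extension in the finite quotient), and check continuity using a $\ZZ_p$-basis. The values then lie in a finite extension, which is why we enlarge $E$ to contain them.

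2. Take $\phi(c\cdot x_0):=\psi(c)$. This is a continuous function on $\pi_0(K^p)$, hence an element of $\tilde{H}^0(K^p)_E$. By construction it is an eigenfunction of $\mathrm{G}(\mA_f)$ with character $\psi\circ\det$.

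3. Check that $Z'$ acts via $\psi_0$. Here we use the convention that $z$ acts through $z^2$ under $\det$. If needed, we replace $\psi_0$ by a square root: $\psi_0$ is trivial modulo $p$-power-compatible data and $Z'$ is pro-$p$ torsion-free, so square roots of characters exist after enlarging $E$.
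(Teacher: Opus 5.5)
Your finiteness-of-orbits argument in (1) is fine, once $C$ is replaced by its Hausdorff quotient $\pi_0(K^p)=(\mA_f\otimes_\QQ F)^\times/\overline{F^\times_{>>0}\det K^p}$. For (2) you follow the same route as the paper: extend the character along $\det$, enlarging $E$ by roots of unity and roots of elements of $\QQ_p$. The genuine gap is the \emph{freeness} claim in (1). Your sentence ``then $z$ acts trivially on every component, hence trivially on the whole tower, since the center acts on the tower through components'' is exactly what has to be proved, and it is false as a general principle.

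For $z\in Z(K_p)$ (closures below are in $(F\otimes\QQ_p)^\times$):
\begin{itemize}
\item $z$ fixes a point of $\pi_0(K^p)$ iff $z^2\in\overline{\mathcal{O}^\times_{F,+}\cap\det K^p}$;
\item $z$ acts trivially on the tower iff $z\in\overline{\mathcal{O}_F^\times\cap K^p}$, the closure of the scalar units lying in $K^p$.
\end{itemize}
Concretely, an element preserving the component of $[x,g]$ acts on it through some $\gamma\in\mathrm{G}(\QQ)$ with $\gamma^{-1}z\in gK^pK_pg^{-1}$, and $\gamma$ need not be central. For example, take $\gamma=\mathrm{diag}(\epsilon,1)$ with $\epsilon$ a totally positive unit that is not the square of a unit in $K^p$, and $z$ with $z^2=\epsilon$ near $1$ at $p$. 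Such a $z$ preserves every component but acts on it through $\gamma$, which is nontrivial unless $z$ lies in $\overline{\mathcal{O}_F^\times\cap K^p}$. So the center's action on the tower is not determined by its action on $\pi_0$.

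What is true, and what you are missing, comes from Dirichlet's unit theorem. The group $(\mathcal{O}_F^\times\cap K^p)^2$ has finite index $m$ in $\mathcal{O}^\times_{F,+}\cap\det K^p$, hence also after taking closures. So if $z$ fixes a component, then $z^{2m}\in\overline{(\mathcal{O}_F^\times\cap K^p)^2}\subseteq\overline{\mathcal{O}_F^\times\cap K^p}$. In other words, the stabilizer in $Z(\widetilde{K_p})$ of any point of $\pi_0(K^p)$ consists of torsion elements. The hypothesis that $Z(\widetilde{K_p})$ is torsion-free is precisely what makes this stabilizer trivial. Your remark that the hypothesis ``should not be needed anywhere else'' is therefore backwards: it is the key input for freeness. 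The paper itself just cites \cite[Lem.3.2.8]{Pan-2025cchmd} for (1).

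For (2), let $\iota:Z(\widetilde{K_p})\to\pi_0(K^p)$ be $z\mapsto\det z=z^2$, which is injective by (1) and has open image of finite index $N$. Extend $\psi_0\circ\iota^{-1}$ rather than $\psi_0$. Continuity is then automatic because the image is open, and no square roots are needed, so your Step 3 and its justification can be dropped.

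$E$ must also be enlarged \emph{independently} of $\psi_0$, since the paper later twists by all the $\psi_{\underline{l}}$ with a single $E$. Since $\psi(c)^N=\psi_0(\iota^{-1}(c^N))\in\QQ_p^\times$, it suffices that $E$ contain all $N$-th roots of elements of $\QQ_p^\times$. This is a finite extension because $\QQ_p^\times/(\QQ_p^\times)^N$ is finite.
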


\begin{proof}
The first claim is \cite[Lem.3.2.8]{Pan-2025cchmd}. It follows that the natural map (induced by  the determinant map)
$Z(\widetilde{K_p})\to \pi_0(K^p)=(\mA_f\otimes_{\QQ} F)^\times/\overline{F^{\times}_{>>0}\det K^p}$ is injective with finite cokernel, where $\overline{F^{\times}_{>>0}\det K^p}$ denotes the closure of $F^{\times}_{>>0}\det K^p$. Say $N=[\pi_0(K^p):Z(\widetilde{K_p})]$. We can take $E$ to contain all $N$-th roots of numbers in $\QQ_p$ in the second part. We omit the detail here.
\end{proof}

Now we prove Theorem \ref{thm-plecticactnonCM}.
\begin{proof}[Proof of Theorem \ref{thm-plecticactnonCM}]
 Let $H=H_1\times H_0$ be an open subgroup of $\mathrm{G}(\QQ_p)$ above Definition \ref{dfn-rhokVk} and by shrinking $H_0$ if necessary, we may assume that $Z(\widetilde{H})$ is torsionfree. Fix $h\geq 1$. We can consider the subspace of $(h)$-analytic vectors $\tilde{H}^d(K^p)^{(h)}$ as in Section \ref{subsec-density}. By \cite[Th.6.1]{Pan-2025JEMS} and Proposition \ref{prop-(h)-an}, the action of $R_{\bar{D}}$ on $\tilde{H}^d(K^p)^{(h)}$ can be extended to an action of $R_{\bar{D}}^{(r)}$ for some $r=r_h\in p^{\QQ}\cap (0,1)$. Let 
\[W:=(I_{CM}\tilde{H}^d(K^p))^{(h)}=I_{CM}\tilde{H}^d(K^p)^{(h)},\]
where the second equality follows from Proposition \ref{prop-(h)-an}\eqref{prop-(h)-an-3} and the argument in the proof of Lemma \ref{lem-ICMclosed}. Let 
\[W^a:=I_{CM} \tilde{H}^d(K^p)^{(h),a},\] 
where $\tilde{H}^d(K^p)^{(h),a}$ was introduced in Remark \ref{rmk-(h)dense} and Definition \ref{dfn-veryregularsubspace}. Since  $\tilde{H}^d(K^p)^{(h),a}$ is dense in  $\tilde{H}^d(K^p)^{(h)}$ by Remark \ref{rmk-(h)dense}, $W^a\subseteq W$ is dense as well.

We now want to apply Corollary \ref{cor-extplecSen} to the $R_{\bar{D}}[G_{\QQ}]$-submodule $W^a$ of $W$ by taking the operators $\theta_{W,\tau}$ to be $\theta_\tau^{\la}|_{W\widehat\otimes_{\QQ_p}\CC_p}$ (cf. Definition \ref{dfn-thetataula}).
To see that it satisfies all the conditions, by Theorem \ref{thm-eigenspace} and Lemma \ref{lem-centeractonpi0} and the construction of $\tilde{H}^d(K^p)^{(h),a}$, 
the action of $\TT$ on $\tilde{H}^d(K^p)^{(h),a}$ is semisimple with countable eigenvalues. Hence the action of $\TT$ on $W^a$ is semisimple with eigenvalues in the non-CM-locus, and the eigenspaces satisfy the conditions in Corollary \ref{cor-extplecSen} by Theorem \ref{thm-eigenspace}. We thus obtain a \textit{unique}  $A^{(r_h)}$-module structure on $(I_{CM}\tilde{H}^d(K^p))^{(h)}$ extending the $R_{\bar{D}}[G_{\QQ}]$-module structure. The uniqueness implies that it's compatible with the $\mathrm{G}(\QQ_p)$-action: given $g\in\mathrm{G}(\QQ_p)$, suppose that 
\[(I_{CM}\tilde{H}^d(K^p))^{(h)}\xrightarrow{\times g} g(I_{CM}\tilde{H}^d(K^p))^{(h)}\subseteq (I_{CM}\tilde{H}^d(K^p))^{(h')}\]
for some $h'\geq 1$; the composite map is $A^{(r')}$-linear for $r'=\max (r_h,r_{h'})$.  In particular the $A^{(r)}$-action on $(I_{CM}\tilde{H}^d(K^p))^{(h)}$ commutes with the $H$-action.

To prove Theorem \ref{thm-plecticactnonCM}, for any uniform pro-$p$ open subgroup $K_p$  of $\mathrm{G}(\QQ_p)$, the open subgroup $K_p^{p^n}:=\{g^{p^n}, g\in K_p\}\subseteq H$ for some $n\geq 1$ and $(I_{CM}\tilde{H}^d(K^p)_{\QQ_p})^{K_p^{p^n}-\an}\subseteq (I_{CM}\tilde{H}^d(K^p)_{\QQ_p})^{(h)}$ for some $h\geq 1$.
Therefore we can restrict the $A^{(r_h)}$-action on $(I_{CM}\tilde{H}^d(K^p)_{\QQ_p})^{(h)}$ to
\[(I_{CM}\tilde{H}^d(K^p)_{\QQ_p})^{K_p^{p^n}-\an}= ((I_{CM}\tilde{H}^d(K^p)_{\QQ_p})^{(h)})^{K_p^{p^n}-\an}.\]
(Note that $((I_{CM}\tilde{H}^d(K^p)_{\QQ_p})^{(h)})^{K_p^{p^n}-\an}$ is $A^{(r_h)}$-invariant as it commutes with $H$-action.) Using that the action of $A^{(r_h)}$ on $(I_{CM}\tilde{H}^d(K^p))^{(h)}$ is compatible with the $\mathrm{G}(\QQ_p)$-action, it's easy to see that the action of $K_p$ on $(I_{CM}\tilde{H}^d(K^p)_{\QQ_p})^{K_p^{p^n}-\an}$ is $A^{(r_h)}$-linear. Hence we can further restrict the $A^{(r_h)}$-action to $(I_{CM}\tilde{H}^d(K^p)_{\QQ_p})^{K_p-\an}$. The uniqueness is clear from the construction.
\end{proof}

We end this subsection with a result that will be used in the next subsection.
\begin{thm} \label{thm-U(h)factorR}
Same setup as in Theorem \ref{thm-plecticactnonCM}. We denote by $A'$ the image of $A^{(r)}$ in $\End((I_{CM}\tilde{H}^d(K^p)_{\QQ_p})^{K_p-\an})$.
There is a homomorphism
\[\phi:U(\mathfrak{h})\to A'\widehat\otimes_{\QQ_p}\CC_p\]
satisfying the following
\begin{enumerate}
\item The action of $U(\mathfrak{h})$ in Theorem \ref{thm-Sendiag01} on $(I_{CM}\tilde{H}^d(K^p)_{\QQ_p})^{K_p-\an}\widehat\otimes_{\QQ_p}\CC_p$  factors through $\phi$. In particular, $\phi$ maps $\diag(0,1)$ to the Sen operator.
\item $\phi|_{Z(U(\mathfrak{g}))}$ factors through $R_{\bar{D}}^{(r)}\widehat\otimes_{\QQ_p}\CC_p$, i.e. it can be written as the composite
\[Z(U(\mathfrak{g})) \xrightarrow{\phi_Z} R_{\bar{D}}^{(r)}\widehat\otimes_{\QQ_p}\CC_p \to  A^{(r)}\widehat\otimes_{\QQ_p}\CC_p\to  A'\widehat\otimes_{\QQ_p}\CC_p\]
for a homomorphism $\phi_Z:Z(U(\mathfrak{g}))\to R_{\bar{D}}^{(r)}\widehat\otimes_{\QQ_p}\CC_p$ such that for any continuous $f:R_{\bar{D}}^{(r)}\to\CC_p$, if $k_\tau,k'_\tau\in\CC_p$ denote the $\tau$-Hodge-Tate-Sen weights of $\rho_f$, the infinitesimal character $Z(U(\mathfrak{g})) \xrightarrow{\phi_Z} R_{\bar{D}}^{(r)}\xrightarrow{f\otimes 1} \CC_p$ corresponds to the $W$-orbit containing $(k_\tau+1,k'_\tau)$ (and $(k'_\tau+1,k_\tau)$ as well).
\end{enumerate}
\end{thm}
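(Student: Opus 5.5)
Since $U(\mathfrak{h})=\CC_p[\diag(1,0)_\tau,\diag(0,1)_\tau]_{\tau\in\Sigma'}$ is a commutative polynomial algebra, $\phi$ is determined by where it sends the $2d$ generators. Set $W=(I_{CM}\tilde{H}^d(K^p)_{\QQ_p})^{K_p-\an}$.

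We send $\diag(0,1)_\tau$ to the image $\theta'_\tau$ of the partial Sen operator $\theta^{(r)}_\tau$ in $A'\widehat\otimes_{\QQ_p}\CC_p$. By Proposition \ref{prop-partialSen}, $\theta^{(r)}_\tau\in A^{(r)}\widehat\otimes\CC_p$. By the proof of Theorem \ref{thm-plecticactnonCM} (Corollary \ref{cor-extplecSen}), its action on $W\widehat\otimes\CC_p$ equals $\theta^{\la}_\tau$.

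For $\diag(1,0)_\tau$, we use the element $z_\tau=\diag(1,1)_\tau$ of the center of $\mathfrak{g}_\tau$. It lies in $Z(U(\mathfrak{g}))$, and $\gamma^{\mathfrak{m}}(z_\tau)=\diag(1,1)_\tau$, since the shift by $\rho$ is trivial on the center. It acts through the center of $\mathrm{G}(\QQ_p)$. On $\tilde{H}^d$, the Hecke algebra identifies the action of the center $Z(\mathrm{G}(\QQ_p))$ with $\det D$ via the character $\psi$ and class field theory, as in Proposition \ref{prop-twist}. Concretely, the derivative of the central action at $\tau$ is given by the $\tau$-Sen weight of $\det\rho^u$. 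This is $\mathrm{tr}$ of the universal $\tau$-Sen operator, i.e. a coefficient of $\chi^{(r)}_\tau$, up to the normalisation shift $\pm 1$. So we define $\phi(\diag(1,0)_\tau)=c_\tau-\theta'_\tau$, where $c_\tau\in R^{(r)}_{\bar D}\widehat\otimes\CC_p$ is the appropriate linear coefficient of $\chi_\tau^{(r)}$ (plus a constant). The verification that $z_\tau$ acts by $c_\tau$ on $W\widehat\otimes\CC_p$ is done on the dense subspace $W^a$. There it follows from Theorem \ref{thm-eigenspace}: on a $\lambda$-eigenspace of weight $V_{\underline k}\otimes\psi_p\circ\det$, the center acts by $2w_\tau$, while the $\tau$-HTS weights of $\rho_\lambda$ sum to $1-2w_\tau$. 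Both sides are continuous, so the identity extends. The resulting map is a homomorphism because all the images commute: the $\theta'_\tau$ commute by Lemma \ref{lem-Sen=sumpartSen}, and $R^{(r)}_{\bar D}$ is central.

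For (1), the map $U(\mathfrak{h})\to\End(W\widehat\otimes\CC_p)$ of Theorem \ref{thm-Sendiag01} and $U(\mathfrak{h})\xrightarrow{\phi}A'\widehat\otimes\CC_p\to\End(W\widehat\otimes\CC_p)$ agree on generators, so they agree everywhere. The image of $A'\widehat\otimes\CC_p$ in $\End(W\widehat\otimes\CC_p)$ is injective by the argument of Lemma \ref{lemtensorCp}, so the action factors through $\phi$.

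For (2), a $W$-invariant polynomial (centered at $\rho$) in $\diag(1,0)_\tau,\diag(0,1)_\tau$ is a polynomial in $x_\tau+y_\tau$ and $(x_\tau-\tfrac12)(y_\tau+\tfrac12)$, or some similar shifted symmetric function. Under $\phi$ these become $c_\tau$ and $(c_\tau-\theta'_\tau-\tfrac12)(\theta'_\tau+\tfrac12)$. The latter equals a constant-coefficient expression of $\chi^{(r)}_\tau(\theta'_\tau)$ plus an element of $R^{(r)}_{\bar D}\widehat\otimes\CC_p$. We must show that $\chi^{(r)}_\tau(\theta'_\tau)=0$ in $A'\widehat\otimes\CC_p$, i.e. the universal Cayley--Hamilton identity. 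This holds in $E^{plec,(r)}\widehat\otimes\CC_p$, by Proposition \ref{prop-parSenev} checked on all irreducible points and on the $\mathfrak m^k$-quotients via Lemma \ref{leminccomp}. Alternatively, it is verified directly on $W^a$ using Theorem \ref{thm-eigenspace}\eqref{thm-eigenspace-4}, where $\theta^\la_\tau$ is semisimple with the right eigenvalues.

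The infinitesimal-character formula then follows by specialising at $f$: $\theta'_\tau$ has eigenvalues $-k_\tau,-k'_\tau$, matching the pairs $(k_\tau+1,k'_\tau)$ in the $\rho$-shifted orbit. The main obstacle is getting the sign and shift conventions right for $c_\tau$ and the central character, and establishing the Cayley--Hamilton identity for $\theta'_\tau$ in $A'\widehat\otimes\CC_p$ rather than merely pointwise. I would first try the density argument on $W^a$, which gives exactly what is needed in $A'$.
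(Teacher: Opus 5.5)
Your proposal is correct and is essentially the paper's proof. It constructs the same $\phi$: $y_\tau\mapsto\theta^{(r)}_\tau$ and $x_\tau+y_\tau\mapsto b_\tau+1$ (your $c_\tau$), where $\chi^{(r)}_\tau=t^2-b_\tau t+c_\tau$. It verifies both the central character and the relation $\chi^{(r)}_\tau(\theta^{(r)}_\tau)=0$ on the dense very-regular subspace via Theorem~\ref{thm-eigenspace}. The paper only packages this differently, extending $\phi_Z$ along the presentation of $U(\mathfrak{h}_\tau)$ over $Z(U(\mathfrak{g}_\tau))$ by $y_\tau$ with one quadratic relation; this is equivalent to your check that $\phi(Z(U(\mathfrak{g})))\subseteq R^{(r)}_{\bar{D}}\widehat\otimes_{\QQ_p}\CC_p$.

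Two caveats. First, rely only on that density argument. Your alternative via Proposition~\ref{prop-parSenev} and Lemma~\ref{leminccomp} is not justified, because that proposition only controls irreducible points modulo $\ker f$, not the $\mathfrak{m}^k$-quotients. Second, eigenvalues $-k_\tau,-k'_\tau$ of $\theta^{(r)}_\tau$ give the orbit of $(1-k_\tau,-k'_\tau)$, not $(k_\tau+1,k'_\tau)$. So the stated formula $(k_\tau+1,k'_\tau)$ is right only when $k_\tau,k'_\tau$ are read as the roots of $\chi^{(r)}_\tau$ rather than as the Hodge--Tate--Sen weights.
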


\begin{proof}
For each $\tau:F\to\CC_p$, we introduced the partial Sen operator $\theta^{(r)}_{\tau}\in A^{(r)}\widehat\otimes_{\QQ_p}\CC_p$ (cf. Proposition \ref{prop-partialSen}) and (partial) Sen polynomial $\chi_\tau^{(r)}(t):=t^2-b_{\tau}t +c_\tau\in R_{\bar{D}}^{(r)}\widehat\otimes_{\QQ_p}\CC_p[t]$ in Definition \ref{defn-partSenpoly}. On the other hand,
$Z(U(\mathfrak{g}))=U(\mathfrak{h})^W$ via $\gamma^{\mathfrak{m}}$. Explicitly $\mathfrak{h}=\prod_{\tau} \mathfrak{h}_\tau$ and each $\mathfrak{h}_\tau$ consists of $2\times 2$ diagonal matrices. Let $x_\tau, y_\tau\in\mathfrak{h}_\tau$ denote the elements $\diag(1,0)$ and $\diag(0,1)$ respectively. Then $Z(U(\mathfrak{g}))$ is a polynomial ring of generators by $x_\tau+y_\tau$ and $(x_\tau-y_\tau-1)^2$, $\tau:F\to\CC_p$.  Define 
\[\phi_Z:Z(U(\mathfrak{g}))\to R_{\bar{D}}^{(r)}\widehat\otimes_{\QQ_p}\CC_p\]
by sending $x_\tau+y_\tau$ to $b_\tau+1$ and sending $(x_\tau-y_\tau-1)^2$ to $b_\tau^2-4c_\tau$. It is easy to check that $\phi_Z$ has the desired property.

We claim that $\phi_Z$ extends to a homomorphism $\phi:U(\mathfrak{h})\to A'\widehat\otimes_{\QQ_p}\CC_p$ sending $y_\tau$ to $\theta^{(r)}_{\tau}$. Note that $U(\mathfrak{h}_\tau)$ is generated by $Z(U(\mathfrak{g}_\tau))$ and $y_\tau$ with the quadratic relation  $t^2-(x_\tau+y_\tau-1)t+(x_\tau+y_\tau-1)^2/4-(x_\tau-y_\tau-1)^2/4=0$ for $t=y_\tau$.  It's enough to check this quadratic relation on a dense subspace, more precisely, we can argue as in the proof of Theorem \ref{thm-plecticactnonCM} and check it on 
\[X=\Hom_{E[K_p]}(V_{\underline{k}}\otimes (\psi_p\circ\det), \tilde{H}^d(K^p)_{E})\]
as in Theorem \ref{thm-eigenspace}.
By a simple calculation,
the image of this quadratic relation under $\phi_Z$ is $\chi_\tau^{(r)}(t)$, hence is satisfied by $\theta_\tau^\la$ by Theorem \ref{thm-eigenspace}. 
Moreover a direct calculation shows that the action of $Z(U(\mathfrak{g}))$ on $X$ via $\phi_Z$ agrees with the natural action coming from the derivation of the $\mathrm{G}(\QQ_p)$-action. This finishes the proof.
\end{proof}

\begin{rmk}
A more careful study shows that $\phi_Z$ takes values in $R_{\bar{D}}^{(r)}$ when restricted to $Z(U(\Lie\mathrm{G}(\QQ_p)))$. This was first proved by Dospinescu-Paskunas-Schraen in a much more general setup (with essentially the same argument) \cite{DPS-2025}.
\end{rmk}

\subsection{Proof of Theorem \ref{thm-belmidnsi}}
Our goal is to show that $I_{nsi}^n$ annihilates $\tilde{H}^{<d}(K^p)_{\QQ_p}$ for some $n> 0$. The strategy is as follows. Using the Poincar\'e duality of the completed cohomology, we may extend the plectic symmetry $A^{(r)}$ we constructed to (some part of) the locally analytic completed cohomology below middle degree. By a result proved in the joint work of the second-name author with Kai-Wen Lan, there are some explicit elements in $A^{(r)}$ to annihilate $\tilde{H}^{<d}(K^p)^{\la}$. Finally we will show that the ideal generated by these elements contain a power of $I_{nsi}$.

Throughout this subsection we fix $K_p\subseteq \mathrm{G}(\QQ_p)$ open such that $\widetilde{K_p}$ is pro-$p$ uniform.  The completed cohomology with compact support with tame level $K^p$ is defined as
\[
\tilde{H}_c^i(K^p):=\varprojlim_n\varinjlim_{K'_p\subseteq \mathrm{G}(\QQ_p)} H_c^i(\Sh_{K^pK'_p}(\CC),\ZZ/p^n).
\]
There are natural Hecke and Galois actions on $\tilde{H}_c^i(K^p)$.
Let $\Lambda:=\ZZ_p[[\widetilde{K_p}]]$. This is a left and right Noetherian integral domain.
Let 
\[\tilde{H}_{i,\QQ_p}:=\Hom_{\ZZ_p}(\tilde{H}^i(K^p),\QQ_p)\]
\[\tilde{H}^{BM}_{i,\QQ_p}:=\Hom_{\ZZ_p}(\tilde{H}_c^i(K^p),\QQ_p)\]
be the rational completed homology and its Borel-Moore variant (cf. \cite{CE-2012}). Both are finitely generated left modules of $\Lambda[1/p]$ and have natural actions of $G_{\QQ}$ when computed using the \'etale cohomology.

\begin{prop} \label{prop-Poincaredualss}
There is a  $G_{\QQ}\times\Lambda[1/p]$-equivariant spectral sequence (called the Poincar\'e duality spectral sequence)
\begin{eqnarray} \label{eqn-Poincare}
E_2^{i,j}=\Ext^i_{\Lambda[1/p]}(\tilde{H}_{j,\QQ_p}, \Lambda[1/p])(d)\Longrightarrow \tilde{H}^{BM}_{2d-i-j,\QQ_p}
\end{eqnarray}
where $(d)$ denotes the $d$-th Tate twist. Here we compute $\Ext^i_{\Lambda[1/p]}$ using the left $\Lambda[1/p]$-module structure on $\Lambda[1/p]$, leaving a right $\Lambda[1/p]$-module structure on $E_2^{i,j}$, then we convert this to a left-module structure via the canonical anti-involution of $\ZZ_p[[\widetilde{K_p}]]$ induced by $g$ to $g^{-1}$.  The spectral sequence is Hecke equivariant with respect to the transpose of the Hecke action on $\Ext^i_{\Lambda[1/p]}(\tilde{H}_{j,\QQ_p}, \Lambda[1/p])(d)$ and the Hecke action on $\tilde{H}^{BM}_{2d-i-j,\QQ_p}$ composed by the involution $\iota_\TT:\TT\to \TT$ induced by $[KgK]\mapsto [Kg^{-1}K]$.
\end{prop}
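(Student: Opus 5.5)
We would construct the spectral sequence at finite level and then pass to the limit, following the strategy of Calegari--Emerton \cite{CE-2012}. Fix the tower $K'_p\subseteq K_p$ of open normal subgroups, cofinal among all $K'_p$ (for instance $K'_p=K_p^{p^n}$), and let $\widetilde{K'_p}$ be the image in $\widetilde{K_p}$. For each finite level, Poincar\'e--Verdier duality on the smooth $d$-dimensional variety $\Sh_{K^pK'_p,\overline{\QQ}}$ gives a $G_\QQ$-equivariant perfect pairing
\[R\Gamma_c(\Sh_{K^pK'_p,\overline{\QQ}},\ZZ/p^n)\otimes^{L} R\Gamma(\Sh_{K^pK'_p,\overline{\QQ}},\ZZ/p^n)(d)\to \ZZ/p^n[-2d].\]
Both complexes are perfect complexes of $\ZZ/p^n[\widetilde{K_p}/\widetilde{K'_p}]$-modules, because $\widetilde{K_p}/\widetilde{K'_p}$ acts freely on the components of the tower (neatness). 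The pairing is equivariant when $g$ acts on one factor and $g^{-1}$ on the other. This is exactly the source of the anti-involution in the statement.

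Next we rewrite the duality $\ZZ/p^n$-linearly. Let $C^\bullet_{K'_p}$ be a perfect complex of $\Lambda_{n,K'_p}:=\ZZ/p^n[\widetilde{K_p}/\widetilde{K'_p}]$-modules computing the cohomology, with $C^\bullet_c$ its compactly supported analogue. For the finite group ring, $\Hom_{\ZZ/p^n}(-,\ZZ/p^n)$ agrees with $\Hom_{\Lambda_{n,K'_p}}(-,\Lambda_{n,K'_p})$, with the left and right structures swapped by $g\mapsto g^{-1}$. Hence duality becomes
\[ \Hom_{\ZZ/p^n}(C^\bullet_{c},\ZZ/p^n)\cong R\Hom_{\Lambda_{n,K'_p}}(\Hom_{\ZZ/p^n}(C^\bullet,\ZZ/p^n),\Lambda_{n,K'_p})(d)[2d].\]
We then take the inverse limit over $n$ and $K'_p$. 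The homology complexes $\varprojlim \Hom(C^\bullet_{K'_p},\ZZ/p^n)$ form a perfect complex $C_\bullet$ of $\Lambda$-modules computing completed homology; by Pontryagin duality this homology is dual to $\tilde H^i(K^p)$. The Mittag-Leffler conditions hold because every term is finite, as in \cite[\S1]{CE-2012}. In the limit we obtain $C^{BM}_\bullet\simeq R\Hom_\Lambda(C_\bullet,\Lambda)(d)[2d]$. After inverting $p$, the hypercohomology spectral sequence of $R\Hom_{\Lambda[1/p]}(C_\bullet[1/p],\Lambda[1/p])$ has $E_2^{i,j}=\Ext^i(\tilde H_{j,\QQ_p},\Lambda[1/p])$, and it converges to the homology of $C^{BM}_\bullet[1/p]$ in degree $2d-i-j$. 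The degree $2d-i-j$ comes from the shift $[2d]$. Since $\Lambda$ is Noetherian and $\tilde H_j$ is finitely generated, the $\Ext$ groups behave well, and taking $\Hom_{\ZZ_p}(-,\QQ_p)$ identifies the abutment with $\tilde H^{BM}_{2d-i-j,\QQ_p}$.

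For the equivariance, $G_\QQ$ acts compatibly at every finite level, since the trace map is Galois-equivariant up to the $(d)$ twist, so it acts on the limit spectral sequence. For a Hecke operator $[K^p g K^p]$ away from $S$, the duality pairing intertwines the correspondence $T_g$ on one side with its transpose, which is $T_{g^{-1}}$, on the other. This gives the twist by $\iota_\TT$.

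We expect the main obstacle to be the limit step. One must choose genuinely finite-level complexes of perfect $\Lambda_{n,K'_p}$-modules whose transition maps are compatible strictly, not just up to homotopy, so that the inverse limit exists and commutes with $R\Hom$. We would handle this as Calegari--Emerton do, by computing $R\Gamma$ through a fixed finite triangulation of the Borel--Serre compactification pulled back along the tower; this gives strictly compatible complexes of finite free modules. The Galois action on these singular-cochain models then has to be transported via the comparison isomorphism, and that part we would state at the level of derived categories only.
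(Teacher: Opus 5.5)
Your proposal is correct and follows essentially the same route as the paper. In both, finite-level Poincar\'e duality is rewritten as an $R\Hom$ over the finite group ring $\ZZ_p[\widetilde{K_p}/\widetilde{K'_p}]$ (you use $\ZZ/p^n$ coefficients and self-injectivity, the paper uses $\ZZ_p$ coefficients and $(-\otimes_{\ZZ_p}\ZZ_p[G])^G$). The limit is then taken as in \cite[\S 1.3]{CE-2012}, and Hecke equivariance comes from transposing correspondences as in \cite[Prop.3.7]{NT-2016}.

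One small correction: perfectness over the group ring comes from $\widetilde{K_p}/\widetilde{K'_p}$ acting freely on $\Sh_{K^pK'_p}$ itself, since the cover is \'etale Galois. It does not act freely on the set of connected components; elements of determinant one fix components.
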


\begin{proof}
This is \cite[\S 1.3]{CE-2012} except for its compatibility with the Galois and Hecke action. We give a sketch of the construction here. Given an open subgroup $K'_p$ of $K_p$, the Poincar\'e pairing on $\Sh_{K^pK'_p}$ induces a natural isomorphism
\[R\Gamma(X,\ZZ_p(d))\cong R\Hom_{\ZZ_p}(R\Gamma_c(X,\ZZ_p(d)),\ZZ_p(d))[2d]\]
where $X=\Sh_{K^pK'_p,\overline{\QQ}}$. Since $X$ is an \'etale $G:=\widetilde{K_p}/\widetilde{K'_p}$-cover of $\Sh_{K^pK_p,\overline{\QQ}}$, we have 
\begin{eqnarray*}
R\Hom_{\ZZ_p}(R\Gamma_c(X,\ZZ_p(d)),\ZZ_p(d))&\cong& (R\Hom_{\ZZ_p}(R\Gamma_c(X,\ZZ_p(d)),\ZZ_p(d))\otimes_{\ZZ_p} \ZZ_p[G])^G\\
&=&  R\Hom_{\ZZ_p[G]}(R\Gamma_c(X,\ZZ_p(d)),\ZZ_p[G](d)).
\end{eqnarray*}
Hence
\[R\Gamma(X,\ZZ_p(d))\cong R\Hom_{\ZZ_p[G]}(R\Gamma_c(X,\ZZ_p(d)),\ZZ_p[G](d))[2d]. \]
Note that since $X$ is smooth, $R^i\Gamma(X,\ZZ_p(d))$ is the $(2d-i)$-th \'etale homology of $X$ and similarly $R\Gamma_c(X,\ZZ_p(d))$ is essentially the Borel-Moore homology.
The desired spectral sequence is obtained by taking inverse limits over $K'_p$ via the trace map. See \cite[\S 1.1]{CE-2012} for the construction of the completed homology and its Borel-Moore variant. 

It is clear from the construction that the spectral sequence is Galois-equivariant. For the Hecke action, see \cite[Prop.3.7]{NT-2016}.
\end{proof}

\begin{rmk} \label{rmk-iotaT}
In Theorem \ref{thm-big T}, there is  a two-dimensional determinant $D:\ZZ_p[[G_{F,S}]]\to \TT$ such that the geometric Frobenius $\Fr_v$ at $v$ has characteristic polynomial $D(t-\Fr_v)=t^2-T_v t + \Nm(v) S_v$  for $v\notin S$. If we compose it with the involution  $\iota_{\TT}$, the resulting determinant  $D\circ\iota_{\TT}(t-\Fr_v)=t^2-T_vS_v^{-1} t + \Nm(v) S^{-1}_v$. In terms of two-dimensional Galois representation, $\iota_{\TT}$ sends $\rho$ to $\rho^{\vee}(1)$. It follows that the ideal $I_{nsi,\TT}$ is $\iota_{\TT}$-stable, cf. Definition  \ref{dfn-idealInsi}.
\end{rmk}

The proof shows that the integral version of this spectral sequence holds as well. But the following result is only known after inverting $p$.

\begin{prop} \label{prop-GK<d}
$\Ext^i_{\Lambda[1/p]}(\tilde{H}_{j,\QQ_p}, \Lambda[1/p])=0$ if $i< 3(d-j)$.
\end{prop}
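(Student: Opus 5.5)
The plan is to translate the vanishing of $\Ext$ into a dimension bound for the completed homology, using homological algebra over Iwasawa algebras, and then prove that bound geometrically. Since $\widetilde{K_p}$ is a uniform pro-$p$ group, $\Lambda=\ZZ_p[[\widetilde{K_p}]]$ and $\Lambda[1/p]$ are Auslander regular of global dimension $\delta=\dim\widetilde{K_p}$ (Venjakob; Ardakov--Wadsley). Concretely, $\widetilde{K_p}$ is the image of $K_p$ in the automorphism group of the tower, so $\delta=3d+e$, where $e$ is the rank of the image of the centre, as in \S\ref{subsec-density}.

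For a finitely generated $\Lambda[1/p]$-module $M$, Auslander regularity gives
\[j(M):=\min\{i:\Ext^i_{\Lambda[1/p]}(M,\Lambda[1/p])\neq0\}=\delta-\dim M,\]
where $\dim M$ is the canonical (Gelfand--Kirillov) dimension. The proposition is therefore equivalent to
\[\dim \tilde{H}_{j,\QQ_p}\le 3j+e\qquad\text{for all } j.\]
For $j\ge d$ there is nothing to prove. For $j=0$, $\tilde H^0(K^p)$ consists of functions on $\pi_0(K^p)$. By Lemma \ref{lem-centeractonpi0}, $\tilde{H}_{0,\QQ_p}$ is finite over $\ZZ_p[[Z(\widetilde{K_p})]][1/p]$, so it has dimension $\le e$. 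This settles the extreme cases and gives a sanity check for the general bound.

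For $0<j<d$, I would bound the dimension through the growth of mod $p$ cohomology. By Calegari--Emerton / Ardakov--Wadsley, it is enough to show
\[\dim_{\FF_p}\varinjlim_{K'_p} H^j(\Sh_{K^pK'_p},\FF_p)^{\widetilde{K_p}^{p^n}}\;\ll\; p^{n(3j+e)},\]
or, equivalently, to bound the canonical dimension of the Pontryagin dual of $\tilde H^j(K^p)/p$. Next I would apply Scholze's primitive comparison at infinite level, which gives, up to almost isomorphism,
\[\tilde H^j(K^p)/p\otimes\mO_{\CC_p}/p\cong H^j\bigl(\Sh_{K^p},\mO^+/p\bigr)=H^j\bigl((\mathbb{P}^1)^{d},R\pi_{HT*}\mO^+/p\bigr),\]
where $\pi_{HT}$ is the Hodge--Tate period map to $\mathrm{Fl}=(\mathbb{P}^1)^d$. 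I would then stratify $\mathrm{Fl}$ by the subset $J\subseteq\Sigma$ of coordinates lying in $\mathbb{P}^1(\QQ_p)$-type (ordinary) loci versus Drinfeld-type loci, as in \cite{Pan-2025cchmd} and \cite{Lan-Pan}. On each stratum one controls simultaneously the cohomological dimension of the stratum and the size of the $K_p$-orbits, i.e.\ the Lie-theoretic dimension through which $\widetilde{K_p}$ acts on the pushforward. The expected accounting is that every ordinary coordinate removes three dimensions ($\mathrm{PGL}_2$ acting on a boundary point) while lowering cohomological degree by one. Degree $j$ should then only receive contributions of dimension $\le 3j+e$.

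The main obstacle is exactly this geometric bound: making the stratum-by-stratum estimate rigorous, and in particular showing that the almost-mathematics and the $\mO_{\CC_p}/p$ coefficients do not change canonical dimensions. If this route becomes too technical, my fallback would be an induction on $d-j$ using the Poincar\'e duality spectral sequence \eqref{eqn-Poincare}. That spectral sequence combines Auslander's condition ($j(N)\ge i$ for every submodule $N\subseteq\Ext^i(M,\Lambda[1/p])$) with the vanishing $\tilde H^{>d}(K^p)^{\la}=0$ from \cite{RC-2025}, applied to the Borel--Moore side. I would also compare with the known Calegari--Emerton-type bound for Hilbert modular varieties, which I expect is what the authors cite here.
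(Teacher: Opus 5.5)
\textbf{There is a genuine gap: the case $0<j<d$, which is the whole content of the proposition, is never proved.}

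Your reduction is sound. For a finitely generated $\Lambda[1/p]$-module, the grade equals $\delta-\dim$, where $\delta=\dim\widetilde{K_p}=3d+e$. (Note that $\Lambda$ itself has global dimension $\delta+1$; it is $\Lambda[1/p]$ that has global dimension $\delta$.) So the statement is equivalent to $\dim\tilde H_{j,\QQ_p}\le 3j+e$. Your treatment of $j\ge d$ is correct, and so is $j=0$ via the free action of the centre on $\pi_0(K^p)$.

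For $0<j<d$ you only offer an ``expected accounting''. Nothing in the proposal shows that the degree-$j$ part of $H^j(\mathrm{Fl},R\pi_{HT*}\mO^+/p)$ has $\FF_p[[\widetilde{K_p}]]$-dimension at most $3j+e$. Doing so would require knowing the cohomology of the fibres of $\pi_{HT}$ over each stratum and how the stabilizers act on it. The rule ``three dimensions per ordinary coordinate'' is also not what a naive orbit count gives:
\begin{itemize}
\item The $\PGL_2(\QQ_p)$-orbit of a point of $\mathbb{P}^1(\QQ_p)$ has dimension $1$, against $3$ for a generic point, so the drop is $2$, not $3$.
\item If $p$ is not split, take $a\in\mathbb{P}^1(F_v)$ and $y$ generic. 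The point $(\tau_1(a),y)$ of $\prod_{\tau\mid v}\mathbb{P}^1$ has trivial stabilizer in $\PGL_2(F_v)$, so there is no drop at all.
\end{itemize}
A secondary issue: the primitive comparison computes the mod $p$ completed cohomology $\varinjlim H^j(\cdot,\FF_p)$, not $\tilde H^j/p$. For the non-proper $\Sh_K$ it must be run on the minimal compactification with compact supports, so the boundary needs separate treatment.

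The fallback does not close the gap either. The Poincar\'e duality spectral sequence \eqref{eqn-Poincare}, Auslander's condition and $\tilde H_{>d,\QQ_p}=0$ are formal inputs, and they do not see the constant $3=\dim\PGL_2$. Already for $d=2$, nothing in them prevents a nonzero $\Ext^2_{\Lambda[1/p]}(\tilde H_{1,\QQ_p},\Lambda[1/p])$ from surviving to contribute to $\tilde H^{BM}_{1,\QQ_p}$. Its only outgoing differential lands in $\Ext^4_{\Lambda[1/p]}(\tilde H_{0,\QQ_p},\Lambda[1/p])$, which is $0$ by your own $j=0$ bound. Its only incoming differential comes from $\Hom_{\Lambda[1/p]}(\tilde H_{2,\QQ_p},\Lambda[1/p])$.

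This is why the paper uses Proposition \ref{prop-GK<d} and Proposition \ref{prop-Van>d} as separate inputs to that spectral sequence (Corollary \ref{cor-Poincare-hC}). The paper does not prove the former at all; it quotes \cite[Th.3.1.8]{Pan-2025cchmd}. As written, your proposal is a correct reformulation plus a heuristic. The missing ingredient is exactly the geometric codimension bound of that cited theorem, which you would have to either cite or actually prove.
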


\begin{proof}
This is \cite[Th.3.1.8]{Pan-2025cchmd}. See also 3.1.1 in the reference.
\end{proof}

We need the following vanishing result first proved rationally by Rodriguez Camargo in \cite{RC-2025} and later integrally by He \cite{He-2025}.
\begin{prop} \label{prop-Van>d}
$\tilde{H}_{>d,\QQ_p}=0$.
\end{prop}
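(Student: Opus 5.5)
We propose to deduce the vanishing of completed homology from the vanishing of rational completed cohomology above the middle degree. That cohomological vanishing we obtain from the locally analytic statement already used in the proof of Proposition \ref{prop-Zdec}, namely $\tilde{H}^{>d}(K^p)^{\la}=0$ \cite[Th.1.13]{RC-2025}.

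The first step is to fix $i>d$ and observe that $\tilde{H}_{i,\QQ_p}=\Hom_{\ZZ_p}(\tilde{H}^i(K^p),\QQ_p)$ vanishes if and only if $\tilde{H}^i(K^p)_{\QQ_p}=\tilde{H}^i(K^p)\otimes_{\ZZ_p}\QQ_p$ vanishes. One direction: any $\ZZ_p$-linear map $\tilde{H}^i(K^p)\to\QQ_p$ factors through $\tilde{H}^i(K^p)\otimes_{\ZZ_p}\QQ_p$, so if the latter is zero the Hom is zero. Conversely, if $\tilde{H}^i(K^p)\otimes_{\ZZ_p}\QQ_p$ is a nonzero $\QQ_p$-vector space, it admits a nonzero $\QQ_p$-linear functional. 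Composing this functional with $\tilde{H}^i(K^p)\to\tilde{H}^i(K^p)\otimes_{\ZZ_p}\QQ_p$ gives a nonzero element of $\Hom_{\ZZ_p}(\tilde{H}^i(K^p),\QQ_p)$. So it suffices to show $\tilde{H}^i(K^p)_{\QQ_p}=0$ for $i>d$.

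The second step passes from the Banach representation to its locally analytic vectors. Recall that $\tilde{H}^i(K^p)_{\QQ_p}$ is an admissible unitary Banach representation of $\mathrm{G}(\QQ_p)$. By Schneider–Teitelbaum \cite{ST-2003}, its subspace of locally analytic vectors $\tilde{H}^i(K^p)^{\la}$ is dense. Since $\tilde{H}^{i}(K^p)^{\la}=0$ for $i>d$ by \cite[Th.1.13]{RC-2025}, density forces $\tilde{H}^i(K^p)_{\QQ_p}=0$ for $i>d$, and the first step then concludes. Equivalently, one can argue dually via \eqref{eqn-h-analytic} together with Proposition \ref{prop-(h)-an}: $\tilde{H}_{i,\QQ_p}$ is a finitely generated $\Lambda[1/p]$-module, and $D_{<p^{-r_h}}(H,\QQ_p)\otimes_{\Lambda[1/p]}\tilde{H}_{i,\QQ_p}$ is the dual of the $(h)$-analytic vectors. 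These vanish, so by flatness of $D_{<p^{-r_h}}$ over $\Lambda[1/p]$ (indeed faithful flatness on finitely generated modules after passing to the Fréchet–Stein limit), the module $\tilde{H}_{i,\QQ_p}$ vanishes.

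The main obstacle is entirely in the input $\tilde{H}^{>d}(K^p)^{\la}=0$, which we take from \cite{RC-2025}. There it comes from the geometric Sen theory on the infinite-level Hilbert modular variety. The Hodge–Tate period map has fibres of dimension controlled by the flag variety, and this gives a cohomological-dimension bound of $d$ for the relevant sheaves of locally analytic functions. If one wanted an independent argument, we would first try Scholze's perfectoid method: vanishing of $\tilde{H}^{>d}_c$ modulo $p$ follows from the affineness of the Hodge–Tate period map, and the analogous statement for ordinary (non-compact-support) cohomology needs the boundary analysis carried out integrally by He \cite{He-2025}. That boundary analysis is the genuinely hard step if one does not simply cite these results.
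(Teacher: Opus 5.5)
Your proposal is correct and matches the paper, which proves this only by citing Rodr\'iguez Camargo's rational vanishing. Your unpacking of that citation is the standard route to the rational statement: the trivial duality between $\tilde{H}_{i,\QQ_p}$ and $\tilde{H}^i(K^p)_{\QQ_p}$, followed by $\tilde{H}^{>d}(K^p)^{\la}=0$ from the same reference and Schneider--Teitelbaum density. The only blemish is in your optional dual argument: plain flatness of $D_{<p^{-r_h}}$ does not suffice there, and you need faithful flatness of the full distribution algebra over $\Lambda[1/p]$, as you note parenthetically.
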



For $h\geq 1$, recall that in the beginning of Section \ref{subsec-density}, we introduced $\mathrm{LA}^{(h)}(\widetilde{K_p})$ the space of $h$-analytic functions on $\widetilde{K_p}$ and its topological dual
\[D_h:=D_{<p^{-r_h}}(\widetilde{K_p},\QQ_p)=(\mathrm{LA}^{(h)}(\widetilde{K_p}))^*,\]
which has a lattice \( D_{h}^{\circ}:=\Hom(\mrm{LA}^{(h)}(\widetilde{K_{p}})^{\circ},\ZZ_{p}) \). Following Definition \ref{dfn-weakTopology}, we equip \( D_{h}^{\circ} \) with the weak topology, and \( D_{h}=D_{h}^{\circ}[1/p] \) with the colimit topology. 

 By the results in \cite[\S 4]{ST-2003}, $D_h$ is a left and right Noetherian integral domain and flat over $\Lambda$.  It also follows from the discussion in the reference that the anti-involution of $\Lambda$ induced by $g\mapsto g^{-1}$ naturally extends to $D_h$. Hence for a finitely generated $D_h$-module $M$, we get a left $D_h$-module structure on 
\[\Ext^i_{D_h}(M, D_h)\]
 as in Proposition \ref{prop-Poincaredualss}. We will need the following technical result whose proof will be given at the end of this subsection.
 
\begin{prop} \label{prop-ext vs Hcont}
Let $W$ be an admissible Banach space representation of $\widetilde{K_p}$, equivalently the topological dual $W^*$ of $W$ is a finitely generated $\Lambda[1/p]$-module. 
Let $V$ be a Banach space representation of $\widetilde{K_p}$ so that  $V^*$ is a finitely generated $D_h$-module extending the $\widetilde{K_p}$-action. 
\begin{enumerate}
\item There is a natural isomorphism functorial in $W$:
\[\Ext^i_{\Lambda[1/p]}(W^*,\Lambda[1/p])\otimes_{\Lambda[1/p]} D_h \cong \Ext^i_{D_h}(D_h\otimes_{\Lambda[1/p]}W^*, D_h)\]
\item There is a natural isomorphism functorial in $V$:
\[  \Ext^i_{D_h}(V^*, D_h)\cong H^i_{\mathrm{cont}}(\widetilde{K_p}, V\widehat\otimes_{\QQ_p} D_h)\]
where the completed tensor product is taken with respect to the $p$-adic topology on $V$ and the weak topology on $D_h$ and $\widetilde{K_p}$ acts diagonally on it. Equivalently $V\widehat\otimes_{\QQ_p} D_h=\Hom_{\mathrm{cont}}(\mathrm{LA}^{(h)}(\widetilde{K_p}), V)$. Therefore $H^i_{\mathrm{cont}}(\widetilde{K_p}, V\widehat\otimes_{\QQ_p} D_h)$ is a finitely generated $D_h$-module. By writing it as a quotient of $D_h^{\oplus n}$ for some $n$, we endow $H^i_{\mathrm{cont}}(\widetilde{K_p}, V\widehat\otimes_{\QQ_p} D_h)$ with the quotient topology which doesn't depend on the choice of the quotient map $D_h^{\oplus n}\to H^i_{\mathrm{cont}}(\widetilde{K_p}, V\widehat\otimes_{\QQ_p} D_h)$. We call this the canonical topology.
\item There is a natural isomorphism
\[H^i_{\mathrm{cont}}(\widetilde{K_p}, V\widehat\otimes_{\QQ_p} \CC_p \widehat\otimes_{\QQ_p} D_h)\cong H^i_{\mathrm{cont}}(\widetilde{K_p}, V\widehat\otimes_{\QQ_p} D_h)\widehat\otimes_{\QQ_p} \CC_p.\]
\end{enumerate}
\end{prop}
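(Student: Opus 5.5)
Part (1) will follow from flat base change for Ext over the Noetherian ring $\Lambda[1/p]$. Since $W^*$ is finitely generated over the Noetherian ring $\Lambda[1/p]$, we can choose a resolution $P_\bullet\to W^*$ by finite free left $\Lambda[1/p]$-modules. Then $\Ext^i_{\Lambda[1/p]}(W^*,\Lambda[1/p])$ is the cohomology of the complex $\Hom_{\Lambda[1/p]}(P_\bullet,\Lambda[1/p])$ of finite free right modules. By \cite[\S 4]{ST-2003}, $D_h$ is flat over $\Lambda[1/p]$ on both sides, so $D_h\otimes P_\bullet$ is a free resolution of $D_h\otimes_{\Lambda[1/p]}W^*$. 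We also have
\[\Hom_{D_h}(D_h\otimes P_k,D_h)=\Hom_{\Lambda[1/p]}(P_k,\Lambda[1/p])\otimes_{\Lambda[1/p]}D_h\]
for finite free $P_k$, and tensoring with the flat module $D_h$ commutes with taking cohomology. Independence of the resolution and functoriality in $W$ are standard. Finally, converting the right module structure to a left one uses the anti-involution, which is compatible on $\Lambda$ and $D_h$.

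For (2), the plan is to compare both sides through a resolution of the trivial module, i.e. a Lazard-type argument. On one side, $\Hom_{\mathrm{cont}}(\mathrm{LA}^{(h)},V)=V\widehat\otimes D_h$, and the diagonal $\widetilde{K_p}$-action identifies $H^0_{\mathrm{cont}}(\widetilde{K_p},V\widehat\otimes D_h)$ with the $\widetilde{K_p}$-equivariant continuous maps $\mathrm{LA}^{(h)}\to V$. Dualizing, and using that $V$ is reflexive relative to the finitely generated $D_h$-module $V^*$ (which carries its canonical topology), this equals $\Hom_{D_h}(V^*,D_h)$. Here we use that $\widetilde{K_p}$ is dense in $D_h$, so $\widetilde{K_p}$-equivariance on the dual side is the same as $D_h$-linearity. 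This gives the statement in degree $0$.

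To pass to higher degrees, we show both sides are $\delta$-functors in $V^*$ (over finitely generated $D_h$-modules) that are effaceable on free modules. For the left side this is clear. For the right side, take $V^*=D_h$, so $V=\mathrm{LA}^{(h)}(\widetilde{K_p})$. Then $V\widehat\otimes D_h$ is a coinduced-type module, and its higher continuous cohomology vanishes, because $\mathrm{LA}^{(h)}\widehat\otimes W'\cong\mathrm{LA}^{(h)}\widehat\otimes W'_{\mathrm{triv}}$ via the untwisting map $f\mapsto(g\mapsto g^{-1}f(g))$; this is analogous to Shapiro's lemma for $C(\widetilde{K_p},-)$. Exactness of the functor $V^*\mapsto V$ on finitely generated $D_h$-modules (from the anti-equivalence of \cite{ST-2003} together with the canonical topology) gives long exact sequences.

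\textbf{Main obstacle.} The delicate points are topological. The completed tensor product with the weak topology on $D_h$ has to behave exactly, so that short exact sequences of Banach $V$ give short exact sequences of $V\widehat\otimes D_h$ with continuous sections as needed for continuous cohomology. The untwisting isomorphism for $\mathrm{LA}^{(h)}$ also has to be verified, which requires $\mathrm{LA}^{(h)}$ to be stable under translations (stated above). I would first attempt this via Mahler-basis estimates in the proof of the untwisting step. Finite generation and the canonical topology then follow from the Ext description.

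Part (3) follows from (2) applied to $V\widehat\otimes\CC_p$, after choosing an orthonormal basis of $\CC_p$ as in Lemma \ref{lemtensorCp}. The point is that $V\widehat\otimes\CC_p\cong\widehat\bigoplus_j V e_j$, and the effaceable-resolution argument computes cohomology by a complex of finite free $D_h$-modules. Completed tensoring such a complex with $\CC_p$ commutes with taking cohomology, since the differentials have closed images in the canonical topology (finitely generated submodules are closed).
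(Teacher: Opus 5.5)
Your architecture matches the paper's. For (1) you use flat base change along $\Lambda[1/p]\to D_h$. For (2) you reduce via finite free $D_h$-resolutions to $V=\mathrm{LA}^{(h)}(\widetilde{K_p})$, untwist the diagonal action on $\mathrm{LA}^{(h)}(\widetilde{K_p})\widehat\otimes_{\QQ_p}D_h$ by $f\mapsto(g\mapsto g^{-1}f(g))$ (the paper's multiplication by $Id_{\widetilde{K_p}}^{-1}$ in Lemma \ref{lem-orbiso}), and identify $H^0$ by duality and density of $\QQ_p[\widetilde{K_p}]$ in $D_h$.

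\textbf{The gap.} You claim that the higher cohomology then vanishes ``analogously to Shapiro's lemma''. After untwisting, $\widetilde{K_p}$ acts only by translation on the factor $\mathrm{LA}^{(h)}(\widetilde{K_p})$. What you actually need is $H^{>0}_{\mathrm{cont}}(\widetilde{K_p},\mathrm{LA}^{(h)}(\widetilde{K_p}))=0$, followed by a strictness argument to pass through $\widehat\otimes D_h$. This does not follow formally from untwisting:
\begin{enumerate}
\item In the classical Shapiro lemma the untwisting is the easy half. The content is the acyclicity of $C(\widetilde{K_p},W'_{\mathrm{triv}})$, proved by a contracting homotopy that evaluates cochains at points.
\item Applied to $\mathrm{LA}^{(h)}$-valued cochains, that homotopy only produces continuous functions, not $(h)$-analytic ones.
\item $\mathrm{LA}^{(h)}(\widetilde{K_p})$ is not a coinduced module, so Shapiro does not apply to it directly.
\end{enumerate}

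\textbf{The missing input} is the exactness of $W\mapsto W^{(h)}$ on admissible representations (Proposition \ref{prop-(h)-an}\eqref{prop-(h)-an-3}, i.e.\ flatness of $D_h$ over $\Lambda[1/p]$). The paper's Lemma \ref{lem-LAhvancoh} uses it by dimension shifting:
\begin{enumerate}
\item Embed an admissible $V$ as $0\to V\to C(\widetilde{K_p},\QQ_p)^{\oplus m}\to D\to 0$.
\item Shapiro \emph{does} apply to $C(\widetilde{K_p},\QQ_p)\widehat\otimes\mathrm{LA}^{(h)}(\widetilde{K_p})=C(\widetilde{K_p},\mathrm{LA}^{(h)}(\widetilde{K_p}))$. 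For continuous functions, untwisting works with any Banach coefficients.
\item The induced map on $H^0$ is $(C(\widetilde{K_p},\QQ_p)^{\oplus m})^{(h)}\to D^{(h)}$, which is surjective by that proposition. Hence $H^1$ vanishes, and induction gives all $i>0$.
\item Finally take $V=\QQ_p$.
\end{enumerate}
Without this, neither your effaceability argument in (2) nor your resolution argument for (3) goes through, since (3) needs the same acyclicity with $\CC_p$-coefficients. Note also that (2) does not literally apply to $V\widehat\otimes\CC_p$, whose dual is not finitely generated over $D_h$.

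\textbf{A smaller point on the untwisting.} It does not rest on translation-stability of $\mathrm{LA}^{(h)}$. It rests on two facts:
\begin{enumerate}
\item $\mathrm{LA}^{(h)}(\widetilde{K_p})$ is closed under pointwise products.
\item The $D_h$-valued function $g\mapsto g^{-1}$ lies in $\mathrm{LA}^{(h)}\widehat\otimes D_h$.
\end{enumerate}
Your displayed isomorphism $\mathrm{LA}^{(h)}\widehat\otimes W'\cong\mathrm{LA}^{(h)}\widehat\otimes W'_{\mathrm{triv}}$ is false for a general Banach $W'$. Its inverse applied to $1\otimes w$ is the orbit map of $w$, which would force every $w$ to lie in $W'^{(h)}$. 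It holds for $W'=D_h$ only because of the two facts above.
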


\begin{cor}\label{cor-Poincare-hC}
There is a  $G_{\QQ_p}\times D_h$ and Hecke-equivariant spectral sequence 
\[E_2^{i,j}=H^i_{\mathrm{cont}}(\widetilde{K_p}, \tilde{H}^{j,(h)} \widehat\otimes_{\QQ_p} \CC_p \widehat\otimes_{\QQ_p} D_h)(d)\Longrightarrow (\tilde{H}^{2d-i-j,(h)}_c)^*\widehat\otimes_{\QQ_p} \CC_p\]
with $E_2^{>d,j}=0$ and $E_2^{i,d-j}=0$ if $i<3j$.
\end{cor}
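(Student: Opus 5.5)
We start from the Poincaré duality spectral sequence of Proposition \ref{prop-Poincaredualss},
\[E_2^{i,j}=\Ext^i_{\Lambda[1/p]}(\tilde{H}_{j,\QQ_p},\Lambda[1/p])(d)\Longrightarrow \tilde{H}^{BM}_{2d-i-j,\QQ_p},\]
and base change it along the flat map $\Lambda[1/p]\to D_h$. Since $D_h$ is flat over $\Lambda[1/p]$, the functor $-\otimes_{\Lambda[1/p]}D_h$ is exact. Applying it termwise therefore gives a spectral sequence with $E_2$-terms $\Ext^i_{\Lambda[1/p]}(\tilde{H}_{j,\QQ_p},\Lambda[1/p])\otimes_{\Lambda[1/p]}D_h(d)$, abutting to $\tilde{H}^{BM}_{2d-i-j,\QQ_p}\otimes_{\Lambda[1/p]}D_h$. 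Galois and Hecke equivariance are preserved, because $G_{\QQ}$ and $\TT$ act $\Lambda[1/p]$-linearly and commute with the base change. To be safe, I would perform the base change at the level of the complexes $R\Hom_{\Lambda}(\cdot,\Lambda)$ from the construction in the proof of Proposition \ref{prop-Poincaredualss}, and then take the spectral sequence of $R\Hom_{D_h}(D_h\otimes C,D_h)$.

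Next we identify the terms. By Proposition \ref{prop-ext vs Hcont}(1), the $E_2$-term is $\Ext^i_{D_h}(D_h\otimes_{\Lambda[1/p]}\tilde{H}_{j,\QQ_p},D_h)$. By \eqref{eqn-h-analytic}, $D_h\otimes_{\Lambda[1/p]}\tilde{H}_{j,\QQ_p}=(\tilde{H}^{j,(h)})^*$, where we use that $\tilde{H}_{j,\QQ_p}$ is the dual of the admissible Banach representation $\tilde{H}^j(K^p)_{\QQ_p}$. Similarly, the abutment is $D_h\otimes\tilde{H}^{BM}_{k,\QQ_p}=(\tilde{H}^{k,(h)}_c)^*$. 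Then Proposition \ref{prop-ext vs Hcont}(2), applied with $V=\tilde{H}^{j,(h)}$, converts the $E_2$-term into $H^i_{\mathrm{cont}}(\widetilde{K_p},\tilde{H}^{j,(h)}\widehat\otimes D_h)(d)$. Functoriality in $V$ gives equivariance for $\TT$ and for $G_{\QQ}$, since these act on $\tilde{H}^{j,(h)}$ commuting with $\widetilde{K_p}$.

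Finally we extend scalars to $\CC_p$. We take $\widehat\otimes_{\QQ_p}\CC_p$ of every term and use Proposition \ref{prop-ext vs Hcont}(3) to move $\CC_p$ inside the cohomology. Here $G_{\QQ_p}$ now acts semilinearly and diagonally. Completed tensor product with $\CC_p$ is exact on the relevant category: all terms are finitely generated $D_h$-modules with the canonical topology, maps between them are strict, and we can use an orthonormal basis argument as in Lemma \ref{lemtensorCp}. Hence the spectral sequence survives. I expect this exactness and strictness step, including the compatibility of the canonical topologies with the differentials, to be the main technical obstacle. I would handle it by noting that morphisms of finitely generated $D_h$-modules are automatically strict for the canonical topology, which follows from the Noetherian property and an open mapping argument as in Schneider–Teitelbaum.

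The vanishing statements follow from what precedes. For $E_2^{i,d-j}=0$ when $i<3j$, Proposition \ref{prop-GK<d} gives $\Ext^i_{\Lambda[1/p]}(\tilde{H}_{d-j,\QQ_p},\Lambda[1/p])=0$ for $i<3j$, and this is preserved by flat base change and by $\widehat\otimes\CC_p$. For $E_2^{>d,j}=0$, we first note that $E_2^{i,j}=0$ for $j>d$ by Proposition \ref{prop-Van>d}. For $i>d$, Proposition \ref{prop-GK<d} alone is not enough, so the vanishing has to come from a dimension bound. I would argue that the $\Ext^i$ vanish for $i>\dim\widetilde{K_p}$, since $\Lambda[1/p]$ has global dimension $\dim\widetilde{K_p}$. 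This is insufficient for $i>d$, so I would rather combine it with duality: $\Ext^i_{\Lambda}(M,\Lambda)=0$ for $i>\dim\widetilde{K_p}-\delta(M)$, where $\delta$ is the grade. Equivalently, I would use that the completed homology has codimension at least $\dim\widetilde{K_p}-d$ in the sense of Auslander regularity, as in \cite{Pan-2025cchmd}, \cite{CE-2012}. If that fails, I would instead show directly from the convergence to $\tilde{H}^{BM}_{\ge0}$, together with Proposition \ref{prop-GK<d}, that the relevant terms are killed.
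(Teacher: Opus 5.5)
Your construction of the spectral sequence is the paper's. You tensor \eqref{eqn-Poincare} with the flat $\Lambda[1/p]$-algebra $D_h$, pass to $\widehat\otimes_{\QQ_p}\CC_p$ using strictness of maps between the finitely generated $D_h$-modules involved, and rewrite the terms via \eqref{eqn-h-analytic} and Proposition \ref{prop-ext vs Hcont}. Your derivation of $E_2^{i,d-j}=0$ for $i<3j$ from Proposition \ref{prop-GK<d} is also exactly the paper's argument.

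The problem is the last part of your final paragraph.

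\textbf{What the other vanishing should be.} The paper's proof cites only Propositions \ref{prop-GK<d} and \ref{prop-Van>d} for the vanishing, so the other vanishing is $E_2^{i,j}=0$ for $j>d$. The superscript in ``$E_2^{>d,j}$'' should be read as $E_2^{i,>d}$. This is what is used afterwards. It makes $E_2^{d-i,d}$ the first nonzero term on the antidiagonal of total degree $2d-i$, and it ensures no differentials enter that term. That is how the map $\phi_i$ and the description of its kernel and cokernel arise in the proof of Theorem \ref{thm-belmidcnsi}. You already derived this from Proposition \ref{prop-Van>d}.

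\textbf{Why vanishing for $i>d$ cannot be proved.} Vanishing in the continuous-cohomology degree $i>d$ is false, so none of the strategies you sketch can succeed. The module $\tilde H_{0,\QQ_p}\neq 0$ has canonical dimension $e$: by Lemma \ref{lem-centeractonpi0} it is finite free over the completed group algebra of the $e$-dimensional centre. Since $\dim\widetilde{K_p}=3d+e$, its grade over $\Lambda[1/p]$ is exactly $3d$. Hence $\Ext^{3d}_{\Lambda[1/p]}(\tilde H_{0,\QQ_p},\Lambda[1/p])\neq 0$, and likewise after base change to $D_h$. This term has total degree $3d>2d$, where the abutment $(\tilde H_c^{2d-i-j,(h)})^*$ is zero, so it is simply killed by differentials.

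Your proposed tools also fail on their own terms. The inequality $\Ext^i_\Lambda(M,\Lambda)=0$ for $i>\dim\widetilde{K_p}-\delta(M)$, with $\delta$ the grade, is not valid; the grade only gives vanishing for $i<\delta(M)$. The ``codimension $\geq\dim\widetilde{K_p}-d$'' claim is likewise false. Convergence of the spectral sequence constrains $E_\infty$, not $E_2$.

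Drop that part, and your argument is complete.
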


\begin{proof}
The spectral sequence is obtained by first taking the tensor product of the Poincar\'e duality spectral sequence \eqref{eqn-Poincare} with $D_h$ over $\Lambda[1/p]$ and then taking the completed tensor product with $\CC_p$. Here we use that $D_h$ is flat over $\Lambda[1/p]$ and every term appearing in the Poincar\'e duality spectral sequence is finite over $\Lambda[1/p]$, and hence all the morphisms in the tensor product of it with $D_h$ are strict with respect to the canonical topology. Thus the completed tensor product with $\CC_p$ is exact as well. Finally we apply the formula \eqref{eqn-h-analytic} and Proposition \ref{prop-ext vs Hcont} to write the resulting spectral sequence as in the claim. The vanishing result follows from Propositions \ref{prop-GK<d} and \ref{prop-Van>d}.
\end{proof}

The $G_{\QQ_p}$-action induces a Sen operator acting on $\tilde{H}^{j,(h)} \widehat\otimes_{\QQ_p} \CC_p(d)$, hence a Sen operator $\theta^{i,j}$ on $H^i_{\mathrm{cont}}(\widetilde{K_p}, \tilde{H}^{j,(h)} \widehat\otimes_{\QQ_p} \CC_p \widehat\otimes_{\QQ_p} D_h)(d)$. Similarly by writing  $(\tilde{H}^{2d-i-j,(h)}_c)^*\widehat\otimes_{\QQ_p} \CC_p$ as $\Hom_{\CC_p,\mathrm{cont}}(\tilde{H}^{2d-i-j,(h)}_c \widehat\otimes_{\QQ_p} \CC_p,\CC_p)$, we obtain a Sen operator $\theta^{2d-i-j}$ on it from the Sen operator on $\tilde{H}^{2d-i-j,(h)}_c \widehat\otimes_{\QQ_p} \CC_p$, whose existence can be shown by the same argument as in \cite[\S 6]{Pan-2025JEMS}.
\begin{prop} \label{prop-Seneq}
The spectral sequence in Corollary \ref{cor-Poincare-hC} is equivariant with respect to the actions of $\theta^{i,j}$ on $E_2^{i,j}$ and $-\theta^{2d-i-j}$ on $ (\tilde{H}^{2d-i-j,(h)}_c)^*\widehat\otimes_{\QQ_p} \CC_p$.
\end{prop}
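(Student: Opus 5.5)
The plan is to trace how the Sen operators propagate through each step of the construction of the spectral sequence in Corollary \ref{cor-Poincare-hC}, starting from the finite-level Poincar\'e duality isomorphism in the proof of Proposition \ref{prop-Poincaredualss}. At finite level $K'_p$ the isomorphism
\[R\Gamma(X,\ZZ_p(d))\cong R\Hom_{\ZZ_p[G]}(R\Gamma_c(X,\ZZ_p(d)),\ZZ_p[G](d))[2d]\]
is $G_{\QQ}$-equivariant, with $G_{\QQ}$ acting on the right-hand side by conjugation. Passing to the limit over $K'_p$, the whole Poincar\'e spectral sequence \eqref{eqn-Poincare}, and hence its base change to $D_h$, is a spectral sequence of $\Lambda[1/p]$- (resp. $D_h$-) modules carrying a semilinear-free, $\Lambda$-linear action of $G_{\QQ_p}$. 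The abutment $\tilde{H}^{BM}_{2d-i-j,\QQ_p}$ is a dual, and $G_{\QQ_p}$ acts on it by the contragredient action $g\mapsto (g^{-1})^*$. Since the Sen operator of a dual representation is minus the transpose, this already explains the sign $-\theta^{2d-i-j}$. The $E_2$-terms are $\Ext$'s of the $G_{\QQ_p}$-module $\tilde{H}_{j,\QQ_p}$ (itself a dual) into $\Lambda[1/p](d)$. Applying $\Ext(-,\Lambda(d))$ to a dual and twisting recovers, after Proposition \ref{prop-ext vs Hcont}, the representation $H^i_{\mathrm{cont}}(\widetilde{K_p},\tilde{H}^{j,(h)}\widehat\otimes D_h)(d)$. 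The two dualizations cancel, so the induced Sen operator there is $\theta^{i,j}$, with no sign.

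The key steps, in order, are as follows. First, show that the morphisms (differentials $d_r$ and the filtration maps on the abutment) are $G_{\QQ_p}$-equivariant and continuous between terms that are finitely generated $D_h$-modules with their canonical topology. Second, show that on each such term the $G_{\QQ_p}$-action is locally analytic in the sense needed for Theorem \ref{thm-Sen-inf}. Then functoriality of Sen's construction in the pair $(\rho,A)$ gives that every equivariant continuous map intertwines the Sen operators. For this I would realize each term as a $G_{\QQ_p}$-stable subquotient of a Banach module on which $G_{\QQ_p}$ acts through $\End$ continuously for the $p$-adic topology. The input is \cite[\S 6]{Pan-2025JEMS}, which gives this for $\tilde{H}^{j,(h)}$ and $\tilde{H}^{j,(h)}_c$, transported through Proposition \ref{prop-ext vs Hcont}. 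Third, note that the Sen operator is compatible with subquotients in a strict exact sequence. This uses the exactness of $-\widehat\otimes\CC_p$ on strict sequences already invoked in the proof of Corollary \ref{cor-Poincare-hC}. It then follows that the operators induced on $E_r$ and $E_\infty$ agree with those on the abutment's graded pieces.

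The main obstacle I expect is the sign bookkeeping together with the identification of the Sen operator on the $E_2$-term, which is defined via the $\CC_p$-module $H^i_{\mathrm{cont}}(\widetilde{K_p},\tilde{H}^{j,(h)}\widehat\otimes\CC_p\widehat\otimes D_h)$, with the one induced by functoriality from $\Ext^i_{D_h}(\tilde{H}_j\otimes D_h,D_h)(d)$. I would handle this by checking it first on $i=0$ and for the twist $(d)$, where the claim reduces to $\theta_{V^{*}}=-\theta_V^{t}$ and $\theta_{\QQ_p(d)}=d$. I would then use that the isomorphism of Proposition \ref{prop-ext vs Hcont} is natural in $V$ and computed by a $G_{\QQ_p}$-equivariant resolution; for instance, a resolution of $V^*$ by finite free $D_h$-modules with $G_{\QQ_p}$ acting trivially on the free generators after restricting to an open subgroup, so that Sen operators can be computed termwise. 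This reduces the general case to the case $i=0$ by dimension shifting.
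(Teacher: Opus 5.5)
Your overall frame matches the paper's first step, and you correctly isolate the crux. The Poincar\'e spectral sequence is $G_{\QQ_p}$-equivariant, so functoriality of Sen's construction makes it equivariant for the Sen operator each term carries ``in its own right''. The contragredient action on the abutment accounts for $-\theta^{2d-i-j}$. What remains is to show that on $E_2^{i,j}$ the Sen operator coincides with $\theta^{i,j}$; this Sen operator is obtained by dualizing $\Ext^i_{D_h}(D_h\otimes_{\Lambda[1/p]}\tilde{H}_{j,\QQ_p},D_h)$ to a Banach space and applying Theorem \ref{thm-Sen-inf}. Your sign count (``the two dualizations cancel'') is also right.

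There is a genuine gap: your plan for this crux does not work. A resolution of $V^*=(\tilde{H}^{j,(h)})^*$ by finite free $D_h$-modules, with an open subgroup $G'\subseteq G_{\QQ_p}$ acting trivially on the generators, cannot exist. The Galois action commutes with $D_h$, so $G'$ would act trivially on the free modules, hence on $V^*$ and on $V$. That would force the Sen operator of $\tilde{H}^{j,(h)}$ to vanish, which is false; for instance, in degree $d$ it has the nonzero eigenvalues coming from Theorem \ref{thm-eigenspace}. Naturality of Proposition \ref{prop-ext vs Hcont} in $V$ does not rescue the dimension shift either. That naturality concerns continuous $\QQ_p$-linear $\widetilde{K_p}$-maps, so it gives Galois-equivariance of the isomorphism. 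But $\theta_V$ is only a $\CC_p$-linear operator on $V\widehat\otimes_{\QQ_p}\CC_p$. Sen's construction is defined through decompletion of Banach representations and has no a priori compatibility with the derived functor $M\mapsto\Ext^i_{D_h}(M,D_h)$. So the step you flag as the main obstacle is left open.

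The missing idea is to decomplete \emph{before} applying the Ext-functor. Choose $K$ finite with $V\widehat\otimes_{\QQ_p}\CC_p\cong D(V)\widehat\otimes_K\CC_p$, where $D(V)=(V\widehat\otimes_{\QQ_p}\CC_p)^{H_K,\Gamma_0-\an}$ is a Banach representation of $\widetilde{K_p}$ on which $\theta_V$ is the action of the generator of $\Lie\Gamma_0$.

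The nontrivial input is that $D(V)^*$ is again a finitely generated $D_h$-module. The paper proves this using three ingredients:
\begin{enumerate}
\item complete Nakayama over $D_h^+$;
\item the open mapping theorem, to sandwich $D(V)^+\widehat\otimes_{\mO_K}\mO_{\CC_p}$ between $p^nV^+\widehat\otimes\mO_{\CC_p}$ and $V^+\widehat\otimes\mO_{\CC_p}$;
\item the fact that almost finitely generated $\mO_{\CC_p}$-modules form an abelian category.
\end{enumerate}

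Then Proposition \ref{prop-ext vs Hcont} applies to $D(V)$ itself and identifies $H^i_{\mathrm{cont}}(\widetilde{K_p},V\widehat\otimes\CC_p\widehat\otimes D_h)$ with $\Ext^i_{D_h}(D(V)^*,D_h)\widehat\otimes_K\CC_p$. Hence $\Ext^i_{D_h}(D(V)^*,D_h)^*$ is an $H_K$-fixed, $\Gamma_0$-analytic $K$-form of the Banach predual of the $E_2$-term, and Sen's operator there is just the $\Lie\Gamma_0$-action. That Lie algebra element acts $D_h$-linearly on $D(V)^*$, so plain functoriality of $(-)^*$, $\Ext^i_{D_h}(-,D_h)$, $(-)^*$ transports it, with each contravariant step contributing a sign. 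This gives exactly the identification with $\theta^{i,j}$ that you predicted, with no resolution of the Galois action needed.
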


\begin{proof}
First observe that there is another Sen operator on $N:=H^i_{\mathrm{cont}}(\widetilde{K_p}, \tilde{H}^{j,(h)} \widehat\otimes_{\QQ_p} \CC_p \widehat\otimes_{\QQ_p} D_h)$: using Proposition \ref{prop-ext vs Hcont}, we can rewrite it as 
\[\Ext^i_{D_h}(D_h\otimes_{\Lambda[1/p]} \tilde{H}_{j,\QQ_p}, D_h)\widehat\otimes_{\QQ_p}\CC_p\cong
\Hom_{\CC_p,\mathrm{cont}}\left(\Ext^i_{D_h}(D_h\otimes_{\Lambda[1/p]} \tilde{H}_{j,\QQ_p}, D_h)^*\widehat\otimes_{\QQ_p}\CC_p,\CC_p\right).\]
The same argument as in \cite[\S 6]{Pan-2025JEMS} shows that we can apply Sen's result in Theorem \ref{thm-Sen-inf} to the $G_{\QQ_p}$-representation $\Ext^i_{D_h}(D_h\otimes_{\Lambda[1/p]} \tilde{H}_{j,\QQ_p}, D_h)^*$ and obtain a Sen operator on $\Ext^i_{D_h}(D_h\otimes_{\Lambda[1/p]} \tilde{H}_{j,\QQ_p}, D_h)^*\widehat\otimes_{\QQ_p}\CC_p$, hence on its dual $N$. We denote it by  $\theta'$. Since the spectral sequence in Corollary \ref{cor-Poincare-hC} is $G_{\QQ_p}$-equivariant, it's easy to see that it's equivariant with respect to $\theta'$ and $\theta^{2d-i-j}$. Hence it suffices to prove that  $-\theta'$  agrees with $\theta^{i,j}$. Roughly speaking this means that the construction of the Sen operator here is compatible with taking $\Ext^i_{D_h}(\bullet^*,D_h)^*$ up to a sign.

To see this we need to recall the following characterization of the Sen operator. See the discussion below Theorem \ref{thm-Sen-inf}  for the notation. Let  $V=\tilde{H}^{j,(h)}$. There exists a finite extension $K$ of $\QQ_p$ such that 
\[V\widehat\otimes_{\QQ_p}\CC_p\cong D(V) \widehat\otimes_{K}\CC_p\]
where $D(V)=(V\widehat\otimes_{\QQ_p}\CC_p)^{H_K,\Gamma_0-\an}$ is a $\QQ_p$-Banach space representation of $\widetilde{K_p}$. The Sen operator on $V\widehat\otimes_{\QQ_p}\CC_p$  is $\CC_p$-linear and acts on $D(V)$ via the canonical generator of $\Lie\Gamma_0$.

\begin{lem}
$D(V)^*$  is a finitely generated $D_h$-module.
\end{lem}
\begin{proof}
By the construction of $D_h$ in \cite[\S IV.3]{CD-2014}, there is a natural valuation $v^{(h)}$ on $D_h$. Let $D_h^+$ be the subring of elements of non-negative valuations. It is a complete local ring with maximal ideal $J^{(h)}$ generated by $p$ and the intersection of $D_h^+$ and the augmentation ideal of $\QQ_p[\widetilde{K_p}]$. The residue field $D_h^+/ J^{(h)}=\FF_p$. Equivalently, under the duality $D_h=\mathrm{LA}^{(h)}(\widetilde{K_p})^*$, $D_h^+$ are the functionals on $\mathrm{LA}^{(h)}(\widetilde{K_p})$ of norm $\leq 1$, and $J^{(h)}$ is generated by $p$ and functionals whose kernel contain the constant functions in $\mathrm{LA}^{(h)}(\widetilde{K_p})$. Let $D(V)^+\subseteq D(V)$ be the unit ball. By the complete Nakayama's lemma, it suffices to prove that \( D(V)^{+,*} \otimes_{D_{h}^{+}}D_{h}^{+}/J^{(h)} \) is a finite-dimensional $\FF_p$-vector space, or equivalently, its $\FF_p$-linear dual space
\[\Hom_{D_h^+}(D_h^+/J^{(h)}, D(V)^+/p)\]
is finite.

Let $V^+$ denote the unit ball of $V$. It follows from the admissibility of the completed cohomology that $V^*$ is a finitely generated $D_h$-module and hence $\Hom_{D_h^+}(D_h^+/J^{(h)}, V^+/p)$ is finite-dimensional. On the other hand, by the open mapping theorem, for some $n\geq 0$,
\[p^n V^+\widehat\otimes_{\ZZ_p}\mO_{\CC_p} \subseteq D(V)^+\widehat\otimes_{\mO_K}\mO_{\CC_p} \subseteq 
V^+\widehat\otimes_{\ZZ_p}\mO_{\CC_p} .\]
Thus $\Hom_{D_h^+}(D_h^+/J^{(h)}, D(V)^+/p)\otimes_{\mO_K} \mO_{\CC_p}$ is filtered by $\mO_{\CC_p}$-submodules of 
\[\Hom_{D_h^+}(D_h^+/J^{(h)}, p^rV^+/p^{r+1}V^+)\otimes_{\FF_p} \mO_{\CC_p},\,r=0,\cdots,n.\]
Now we view all of these as almost $\mO_{\CC_p}$-modules with respect to the maximal ideal of $\mO_{\CC_p}$, then $\Hom_{D_h^+}(D_h^+/J^{(h)}, p^rV^+/p^{r+1}V^+)\otimes_{\FF_p} \mO_{\CC_p}$ is an almost finitely generated $\mO_{\CC_p}/p$-module. By \cite[Prop.2.7]{Scholze-2013}, the category of almost finitely generated $\mO_{\CC_p}$-modules is abelian. Hence $\Hom_{D_h^+}(D_h^+/J^{(h)}, D(V)^+/p)\otimes_{\mO_K} \mO_{\CC_p}$ is almost finitely generated. It follows that $\Hom_{D_h^+}(D_h^+/J^{(h)}, D(V)^+/p)$ is finite-dimensional.
\end{proof}

By Proposition \ref{prop-ext vs Hcont},
\begin{eqnarray*}
H^i_{\mathrm{cont}}(\widetilde{K_p}, \tilde{H}^{j,(h)} \widehat\otimes_{\QQ_p} \CC_p \widehat\otimes_{\QQ_p} D_h)=
H^i_{\mathrm{cont}}(\widetilde{K_p}, D(V) \widehat\otimes_{K} \CC_p \widehat\otimes_{\QQ_p} D_h)\\
\cong
H^i_{\mathrm{cont}}(\widetilde{K_p}, D(V) \widehat\otimes_{\QQ_p} D_h) \widehat\otimes_{K} \CC_p 
\cong \Ext^i_{D_h}(D(V)^*, D_h) \widehat\otimes_{K} \CC_p.
\end{eqnarray*}
Taking the continuous $\CC_p$-linear dual, we get an isomorphism of  $G_{\QQ_p}$-semilinear representations
\[\Ext^i_{D_h}(D(V)^*, D_h)^* \widehat\otimes_{K} \CC_p \cong \Ext^i_{D_h}(D_h\otimes_{\Lambda[1/p]} \tilde{H}_{j,\QQ_p}, D_h)^*\widehat\otimes_{\QQ_p}\CC_p.\]
Since $\Ext^i_{D_h}(D(V)^*, D_h)^*$ is fixed by $H_K$  and a $\Gamma_0$-analytic representation, the Sen operator on the right hand side agrees with the action of the canonical generator of $\Lie\Gamma_0$ acting on $D(V)$, hence differs from the action $\theta^{i,j}$ by a sign. (Note that each ${}^*$ and each $\Ext^i$ contribute a sign.)
\end{proof}

Next we produce some operators annihilating the locally analytic completed cohomology below middle degree. Recall that as in Theorem \ref{thm-Sendiag01}, there is a natural action of $U(\mathfrak{h})$ on $\tilde{H}^i(K^p)^{\la}\widehat\otimes_{\QQ_p} \CC_p$ extending the action of $Z(U(\mathfrak{g}))$ via $\gamma^{\mathfrak{m}}$, and the action of $\diag(0,1)\in \mathfrak{h}$ agrees with the Sen operator on $\tilde{H}^i(K^p)^{\la}\widehat\otimes_{\QQ_p} \CC_p$.

\begin{dfn}
We define $Z[\theta_{Sen}]$ as the subalgebra of $U(\mathfrak{h})$ generated by $Z(U(\mathfrak{g}))$ and $\diag(0,1)\in \mathfrak{h}$.
\end{dfn}

Recall that in the proof of Theorem \ref{thm-U(h)factorR}, we chose explicit coordinates $x_\tau,y_\tau$ of $\mathfrak{h}_\tau$ such that $Z(U(\mathfrak{g}))$ is generated by $x_\tau+y_\tau$ and $(x_\tau-y_\tau-1)^2$, and $\theta_{Sen}=\sum_\tau y_\tau$.

\begin{dfn}
We denote by $\mathcal{I}_\mathfrak{h}\subseteq U(\mathfrak{h})$ the ideal generated by $\prod_{\tau} (x_\tau-y_\tau)$ and denote by $\mathcal{I}_{Sen}$ its intersection with $Z[\theta_{Sen}]$.
\end{dfn}

The ideal $\mathcal{I}_\mathfrak{h}$ is the ideal $\mathcal{I}_\mathfrak{m}^\iota$ introduced in \cite[Def.5.1.8]{Lan-Pan} in the case of Hilbert modular varieties which parametrizes the so-called ``non-$\mu_h^\iota$-sufficiently regular characters'', cf. Definition 5.1.5 \textit{loc. it.}. (Note in this case $\mathfrak{m}$ in the reference agrees with $\mathfrak{h}$ here). See Example 5.1.7 \textit{loc. it.} for the explicit description in the Hilbert case. 

\begin{thm}
The action of $\mathcal{I}_\mathfrak{h}^n$  annihilates $\tilde{H}^{<d}(K^p)^{\la}\widehat\otimes_{\QQ_p} \CC_p$ for some $n>0$.
\end{thm}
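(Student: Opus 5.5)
This statement is essentially the specialization to Hilbert modular varieties of the main vanishing result of the joint work of the second-named author with Lan (\cite{Lan-Pan}), so the plan is to deduce it from there rather than reprove it. First I would identify the objects. In the Hilbert case the Levi $\mathfrak{m}$ of \cite{Lan-Pan} is the Cartan $\mathfrak{h}$. The $U(\mathfrak{h})$-action of Theorem \ref{thm-Sendiag01} is the one constructed in \cite[Th.4.4.4]{Lan-Pan} from \cite{RC-2025}. By \cite[Def.5.1.8, Example 5.1.7]{Lan-Pan}, the ideal $\mathcal{I}_\mathfrak{m}^\iota$ is generated by $\prod_\tau(x_\tau-y_\tau)$. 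To confirm this I would check the normalizations directly. The character $\chi=(\chi_\tau)$ of $\mathfrak{h}$ is non-$\mu_h^\iota$-sufficiently regular exactly when some $\tau$-component satisfies $\chi(x_\tau)=\chi(y_\tau)$. This matches the description in loc. cit. once one accounts for our lowest-weight and $\rho$-shift conventions from Corollary \ref{cor-thetatauev}. Hence $\mathcal{I}_\mathfrak{h}=\mathcal{I}_\mathfrak{m}^\iota$.

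Next I would recall the mechanism of the cited theorem, so the reader sees why this ideal appears. The locally analytic completed cohomology $\tilde{H}^i(K^p)^{\la}\widehat\otimes\CC_p$ is computed as the cohomology of the flag variety $\mathscr{F}\ell=\prod_\tau\mathbb{P}^1$ with coefficients in the sheaf $\mathcal{O}^{\la}$, via the Hodge--Tate period map. The horizontal action $\theta_\mathfrak{h}$ of $\mathfrak{h}$ on $\mathcal{O}^{\la}$ yields the $U(\mathfrak{h})$-action. One then stratifies $\mathscr{F}\ell$ by the Bruhat cells, which are indexed by the Weyl group $W=\{\pm1\}^{\Sigma'}$.

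The main step uses this stratification. On the stratum indexed by $w$, the computation of \cite[\S5]{Lan-Pan} via the dual BGG / Cousin-type complex shows that the contribution in degree $i$ is governed by the length $\ell(w)$. Below middle degree, only strata with some $w_\tau=1$ contribute. On such a stratum, the generalized $\mathfrak{h}$-eigencharacters satisfy $x_\tau=y_\tau$ up to the twisted Weyl action. As a result, $\prod_\tau(x_\tau-y_\tau)$ acts nilpotently, with a uniform bound on the nilpotence order. That bound is uniform because there are finitely many strata and the cohomological dimension is bounded by $d$, and \cite[Th.1.13]{RC-2025} supplies the vanishing above degree $d$. A spectral sequence or filtration argument over the finitely many strata then gives a single $n$ with $\mathcal{I}_\mathfrak{h}^n$ killing all of $\tilde{H}^{<d}(K^p)^{\la}\widehat\otimes\CC_p$.

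The main obstacle is the convention check in the first step: the sign of the Hodge cocharacter, the $\iota$-twist, and the $\rho$-shift in $\gamma^{\mathfrak{m}}$. Together these determine whether the relevant relation is $x_\tau=y_\tau$ or $x_\tau-y_\tau=1$. I would settle this by testing against classical weights. By Corollary \ref{cor-thetatauev}, a component with eigenvalues $k_\tau+l_\tau$ and $l_\tau-1$ gives $x_\tau-y_\tau\in\{\pm(k_\tau+1)\}$, which is nonzero for $k_\tau\ge0$. This is consistent with regular algebraic weights appearing only in middle degree, as shown in the proof of Proposition \ref{prop-Zdec}. Once the identification is confirmed, the theorem is exactly \cite[Th.5.1.11]{Lan-Pan}, or the corresponding main theorem there, applied to Hilbert modular varieties.
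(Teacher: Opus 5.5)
Your route is the paper's: its proof is just the citation \cite[Th.6.1.3]{Lan-Pan} (not Th.5.1.11), with $\mathcal{I}_\mathfrak{h}=\mathcal{I}_\mathfrak{m}^\iota$ read off from \cite[Def.5.1.8, Example 5.1.7]{Lan-Pan}, as stated just before the theorem. The stratification sketch is not needed. As phrased, it would not give the claim anyway: if the $\mathfrak{h}$-eigencharacters satisfied $x_\tau=y_\tau$ only ``up to the twisted Weyl action'', that would constrain only the infinitesimal character. On the translate $(y_\tau+1,x_\tau-1)$ the element $x_\tau-y_\tau$ acts by $2$, not nilpotently.

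The normalization check you call the main obstacle is miscomputed, and as designed it cannot settle the question.
\begin{itemize}
\item By Corollary \ref{cor-thetatauev}, the $\mathfrak{h}$-characters over the infinitesimal character of $V_{k_\tau,l_\tau}$ are $(x_\tau,y_\tau)=(l_\tau,k_\tau+l_\tau)$ and $(k_\tau+l_\tau+1,l_\tau-1)$. So $x_\tau-y_\tau\in\{-k_\tau,k_\tau+2\}$; it is the $\rho$-shifted quantity $x_\tau-y_\tau-1$ that equals $\pm(k_\tau+1)$.
\item For $k_\tau>0$ neither $x_\tau-y_\tau$ nor $x_\tau-y_\tau-1$ vanishes. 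So regular weights cannot distinguish ``$x_\tau=y_\tau$'' from ``$x_\tau-y_\tau=1$''.
\end{itemize}
The decisive case is $k_\tau=0$ in degree $0$. Take the $\mathrm{G}(\QQ_p)$-smooth vectors of $\tilde{H}^0(K^p)$, e.g.\ the constant function $1$. There $G_{\QQ}$ acts through finite quotients, so the Sen operator $\sum_\tau y_\tau$ vanishes. Corollary \ref{cor-thetatauev} with $k_\tau=l_\tau=0$ makes each commuting $y_\tau$ semisimple with eigenvalues in $\{0,-1\}$. Hence every $y_\tau=0$, and then $x_\tau=0$ because the central element $x_\tau+y_\tau$ acts by $0$.

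Consequently $\prod_\tau(x_\tau-y_\tau)$ kills this nonzero class, whereas $\prod_\tau(x_\tau-y_\tau-1)$ acts on it by $(-1)^d$. With your formula, $x_\tau-y_\tau=\pm1$ at $k_\tau=0$, so no power of $\mathcal{I}_\mathfrak{h}$ could kill this class and the statement would fail in degree $0<d$. Taken literally, your check argues against the identification rather than for it. Drop it (or redo it in degree $0$ as above) and take the identification from the explicit description in Lan--Pan, as the paper does.
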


\begin{proof}
 \cite[Th.6.1.3]{Lan-Pan}.
\end{proof}

\begin{cor} \label{cor-LPvan}
$\mathcal{I}_{Sen}^n$  annihilates $\tilde{H}^{<d}(K^p)^{\la}\widehat\otimes_{\QQ_p} \CC_p$ for some $n>0$.
\end{cor}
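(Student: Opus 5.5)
Since $Z[\theta_{Sen}]\subseteq U(\mathfrak{h})$ is a subalgebra and $\mathcal{I}_{Sen}=\mathcal{I}_\mathfrak{h}\cap Z[\theta_{Sen}]$, we have $\mathcal{I}_{Sen}\subseteq \mathcal{I}_\mathfrak{h}$, and hence $\mathcal{I}_{Sen}^n\subseteq \mathcal{I}_\mathfrak{h}^n$ for every $n>0$. The plan is to deduce the corollary from the preceding theorem (\cite[Th.6.1.3]{Lan-Pan}) by this inclusion, together with the compatibility of actions.

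The key point to check is that the action of $Z[\theta_{Sen}]$ on $\tilde{H}^{<d}(K^p)^{\la}\widehat\otimes_{\QQ_p}\CC_p$ is the restriction of the $U(\mathfrak{h})$-action of Theorem \ref{thm-Sendiag01}. Two facts from that theorem give this. First, $Z(U(\mathfrak{g}))$ acts through $\gamma^{\mathfrak{m}}$, so its natural action (from the derivative of the $\mathrm{G}(\QQ_p)$-action) agrees with its action as a subalgebra of $U(\mathfrak{h})$. Second, the Sen operator $\theta_{Sen}$ agrees with the action of $\diag(0,1)\in\mathfrak{h}$. So there is no ambiguity: $Z[\theta_{Sen}]$ is simply viewed inside $U(\mathfrak{h})$ acting through Theorem \ref{thm-Sendiag01}.

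Fix $n$ as in the theorem, so that $\mathcal{I}_\mathfrak{h}^n$ annihilates $\tilde{H}^{<d}(K^p)^{\la}\widehat\otimes_{\QQ_p}\CC_p$. Any element of $\mathcal{I}_{Sen}^n$ is a finite sum of products $a_1\cdots a_n$ with each $a_i\in\mathcal{I}_{Sen}\subseteq\mathcal{I}_\mathfrak{h}$. Each such product lies in $\mathcal{I}_\mathfrak{h}^n$ and therefore acts by zero, so $\mathcal{I}_{Sen}^n$ annihilates $\tilde{H}^{<d}(K^p)^{\la}\widehat\otimes_{\QQ_p}\CC_p$.

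There is no real obstacle here. The only point needing care is the compatibility of the $U(\mathfrak{h})$-action with the natural $Z(U(\mathfrak{g}))$-action and with the Sen operator, and this is exactly what Theorem \ref{thm-Sendiag01} provides.
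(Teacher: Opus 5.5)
Your proof is correct and is exactly the intended argument: the paper states this corollary without a separate proof, as an immediate consequence of the inclusion $\mathcal{I}_{Sen}=\mathcal{I}_\mathfrak{h}\cap Z[\theta_{Sen}]\subseteq\mathcal{I}_\mathfrak{h}$ together with the cited vanishing theorem of Lan--Pan. Your remark on the compatibility of actions is harmless but automatic, since $Z[\theta_{Sen}]$ is by definition a subalgebra of $U(\mathfrak{h})$ acting through Theorem \ref{thm-Sendiag01}.
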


The relationship between $\mathcal{I}_{Sen}$ and the ideal $I_{nsi}$ of $R^{(r)}_{\bar{D}}$ in Definition \ref{dfn-idealInsi} is as follows. We first explain a version that is simpler to state but will not be used, then we give the actual technical result. Recall that in (the proof of) Theorem \ref{thm-U(h)factorR}, we defined a quotient $A'$ of $A^{(r)}$ and a homomorphism $\phi:U(\mathfrak{h})\to A'\widehat\otimes_{\QQ_p}\CC_p$.
 
 \begin{prop} \label{prop-A'IsenInsi}
 Let  $f:R^{(r)}_{\bar{D}}\widehat\otimes_{\QQ_p}\CC_p\to \CC_p$ be a homomorphism of $\CC_p$-algebras such that $\rho_f:G_{F,S}\to \GL_2(\CC_p)$ is strongly irreducible and $(A'\widehat\otimes_{\QQ_p}\CC_p)/\ker f\neq 0$. For $n>0$, the image of $\phi(\mathcal{I}_{Sen}^n)$ in $(A'\widehat\otimes_{\QQ_p}\CC_p)/\ker f$ is non-zero. 
 \end{prop}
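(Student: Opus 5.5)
Let $f$ be as in the statement. The plan is to compute the fiber $(A'\widehat\otimes_{\QQ_p}\CC_p)/\ker f$ and produce one element of $\phi(\mathcal{I}_{Sen})$ that acts invertibly on it.

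First, I will describe the fiber. Since $\rho_f$ is strongly irreducible, $\Ind^{plec}_F\rho_f$ is irreducible by Remark \ref{rmk-resprodGtauF}. Lemma \ref{lem-irredcsa} then gives
$$E^{plec,(r)}\otimes_{f}\CC_p\cong\End_{\CC_p}(\Ind^{plec}_F\rho_f)=\bigotimes_\tau \End_{\CC_p}(\rho_{f,\tau}).$$
This is a simple algebra. The algebra $A'\widehat\otimes\CC_p/\ker f$ is a nonzero quotient of the image of $A^{(r)}$ in it, and that image is a $\CC_p$-subalgebra $B$. A nonzero $B$-module on which $\phi(y_\tau)$ acts through $\theta_{\tau,f}$ suffices for the argument. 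In this fiber, $\phi(y_\tau)=\theta^{(r)}_\tau$ maps to $\theta_{\tau,f}$, which acts on the $\tau$-factor $\rho_{f,\tau}$ as the $\tau$-Sen operator. By Proposition \ref{prop-parSenev} and Theorem \ref{thm-U(h)factorR}(2), $\phi(x_\tau+y_\tau)$ acts by the scalar $b_\tau(f)+1$, so $\phi(x_\tau)=b_\tau(f)+1-\theta_{\tau,f}$. Here $b_\tau(f)=-(k_\tau+k'_\tau)$ is the trace of the $\tau$-Sen operator, and $k_\tau,k'_\tau$ are the $\tau$-Hodge--Tate--Sen weights.

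Next, I will exhibit a nonzero element of $\mathcal{I}_{Sen}$. The natural candidate is $\prod_\tau(x_\tau-y_\tau)$, but it does not lie in $Z[\theta_{Sen}]$, so I will use a symmetrized power of it instead. The elements $(x_\tau-y_\tau-1)^2$ are central. Hence $P:=\prod_\tau(x_\tau-y_\tau)(x_\tau-y_\tau-2)$ lies in $Z(U(\mathfrak g))\cap\mathcal I_\mathfrak h\subseteq\mathcal I_{Sen}$, because $(x-y)(x-y-2)=(x-y-1)^2-1$. Its image is
$$\phi(P)=\prod_\tau\bigl((b_\tau^2-4c_\tau)-1\bigr),$$
a scalar in $R^{(r)}\widehat\otimes\CC_p$. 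Its value at $f$ is $\prod_\tau\bigl((k_\tau-k'_\tau)^2-1\bigr)$.

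This scalar vanishes exactly when some $k_\tau-k'_\tau=\pm1$, and that is the main obstacle: strong irreducibility alone does not rule it out. The fix is to use $\theta_{Sen}$ more cleverly and pick an element of $\mathcal I_{Sen}$ that does not vanish when the $\tau$-weights differ by one. Work in a simultaneous eigenbasis for the commuting $\theta_{\tau,f}$. An eigenvector with $\theta_{\tau,f}$-eigenvalues $\mu_\tau\in\{-k_\tau,-k'_\tau\}$ has
$$x_\tau-y_\tau=b_\tau+1-2\mu_\tau=\pm(k_\tau-k'_\tau)+1.$$
So $\prod_\tau(x_\tau-y_\tau)$ vanishes on that eigenvector only when $k_\tau-k'_\tau=\pm1$ with the matching choice of eigenvalue. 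Recall $\mathcal{I}_{Sen}=\mathcal I_\mathfrak h\cap Z[\theta_{Sen}]$. I will multiply $\prod_\tau(x_\tau-y_\tau)$ by suitable central polynomials so that the product becomes a polynomial in $Z$ and $\theta_{Sen}=\sum y_\tau$. Using the symmetric-function/Galois-descent structure of $U(\mathfrak h)$ over $Z$, I will take the norm of $\prod_\tau(x_\tau-y_\tau)$ from $U(\mathfrak h)$ down to $Z[\theta_{Sen}]$, localized where $\theta_{Sen}$ separates the $2^d$ eigenvalues.

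On this fiber the resulting element is, up to a unit, a product over simultaneous eigenvalue choices. I then need strong irreducibility together with Proposition \ref{prop-suppA(r)}: no $\tau$-weight difference is $0$, and the eigenvalue pattern on which $\prod_\tau(x_\tau-y_\tau)$ vanishes is not the whole representation. Finally, taking the $n$-th power keeps the element nonzero, because its image acts on the irreducible module $\Ind^{plec}_F\rho_f$ with a nonzero eigenvalue. The key step I expect to be hardest is this last one: checking that after descent to $Z[\theta_{Sen}]$ some element of $\mathcal I_{Sen}$ still has nonzero image at $f$ for every strongly irreducible $f$, including ones with weight differences $\pm1$.
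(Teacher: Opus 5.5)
Your first step is sound, and it is slightly stronger than the paper's Lemma \ref{lem-eqHTwIsen}. The element $P=\prod_\tau\bigl((x_\tau-y_\tau-1)^2-1\bigr)$ lies in $Z(U(\mathfrak g))\cap\mathcal I_{\mathfrak h}\subseteq\mathcal I_{Sen}$, and at $f$ it maps to the scalar $\prod_\tau((k_\tau-k'_\tau)^2-1)$. This settles every $f$ with no $k_\tau-k'_\tau=\pm1$.

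However, the remaining case is the real content of the proposition, and you leave it open. The descent you sketch cannot work as stated. An element of $Z[\theta_{Sen}]$ only sees the pair consisting of the infinitesimal character and a $\theta_{Sen}$-eigenvalue $w$. Since $\mathcal I_{Sen}=\mathcal I_{\mathfrak h}\cap Z[\theta_{Sen}]$, it is contained in the maximal ideal $(\ker f\cap Z(U(\mathfrak g)),w)$ as soon as a single weight with $\sum_\tau Y_\tau=w$ has some $X_\tau=Y_\tau$; just contract that weight's maximal ideal.

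So knowing that the vanishing pattern is not the whole representation is not enough. Multiplying $\prod_\tau(x_\tau-y_\tau)$ by central elements of the conductor also breaks down exactly where $\theta_{Sen}$ fails to separate the $2^d$ patterns. Take $d=2$ and $k_1-k'_1=k_2-k'_2=1$. The two patterns with $\theta_{Sen}$-value $-k_1-k_2+1$ collide, so every central element of the conductor vanishes at $f$.

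There are also two side errors.
\begin{itemize}
\item Strong irreducibility does not rule out equal $\tau$-weights. The paper treats that case separately, and it is harmless here because then $X_\tau-Y_\tau=1$.
\item You cannot in general describe $(A'\widehat\otimes_{\QQ_p}\CC_p)/\ker f$ as a quotient of the image of $A^{(r)}$ in $\End_{\CC_p}(\Ind^{plec}_F\rho_f)$, because $A^{(r)}\subseteq E^{plec,(r)}$ need not stay injective modulo $\ker f$. The identification needs the localization equality behind Proposition \ref{prop-suppA(r)}, i.e. some non-scalar $\Lie\rho_f(I_{F_v})$.
\end{itemize}

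The missing idea is to choose an \emph{extremal} $\theta_{Sen}$-value.
\begin{enumerate}
\item Fix a $\QQ$-linear functional $\ell$ on the $\QQ$-span of the coordinates with $\ell(1)=1$. Let $w$ minimize $\ell(\sum_\tau Y_\tau)$ over the weights $(X_\tau,Y_\tau)_\tau$ in the $W$-orbit attached to $\ker f\cap Z(U(\mathfrak g))$.
\item Every weight in the orbit with $\sum_\tau Y_\tau=w$ has all $X_\tau\neq Y_\tau$. Otherwise, replacing $(X_\tau,Y_\tau)$ by $(Y_\tau+1,X_\tau-1)$ would lower $\ell$ by $1$.
\item Since $U(\mathfrak h)$ is integral over $Z[\theta_{Sen}]$, lying over then gives $\mathcal I_{Sen}\not\subseteq\mathfrak p':=(\ker f\cap Z(U(\mathfrak g)),w)$.
\item In your remaining case some $\tau$ has distinct weights. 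So some $\Lie\rho_f(I_{F_v})$ is non-scalar, and $(A'\widehat\otimes_{\QQ_p}\CC_p)/\ker f\cong\End_{\CC_p}(\Ind^{plec}_F\rho_f)$.
\item The partial Sen operators act factorwise on $\boxtimes_\tau\rho_{f,\tau}$, so every pattern sum, in particular $w$, is an eigenvalue of the Sen operator. Hence this algebra, as a $Z[\theta_{Sen}]$-module, has $\mathfrak p'$ in its support.
\item Any $s\in\mathcal I_{Sen}\setminus\mathfrak p'$ acts invertibly on the nonzero localization at $\mathfrak p'$, so $\phi(s^n)\neq0$ in the fiber.
\end{enumerate}
This is precisely the paper's argument. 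It is applied whenever some $\tau$ has distinct weights; the all-equal case is handled by a central element, as in your first step.
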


\begin{rmk} 
In fact, the proof below shows that
for $n>0$, the two-sided ideal of $A'\widehat\otimes_{\QQ_p}\CC_p$ generated by $\phi(\mathcal{I}_{Sen}^n)$ contains a power of $I_{nsi} A'$. 
\end{rmk}

\begin{proof}
\begin{lem} \label{lem-eqHTwIsen}
If $\rho_f$ has equal $\tau$-Hodge-Tate-Sen weights for any $\tau$, the maximal ideal $\ker f\cap Z(U(\mathfrak{g}))$ does not contain $\mathcal{I}_\mathfrak{h}\cap Z(U(\mathfrak{g}))=\mathcal{I}_{Sen}\cap Z(U(\mathfrak{g}))$.
\end{lem}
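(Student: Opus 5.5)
The approach is to make everything explicit on the infinitesimal character side. I read ``for any $\tau$'' as ``for every $\tau$'', and I would exhibit a single element of $\mathcal{I}_\mathfrak{h}\cap Z(U(\mathfrak{g}))$ that does not lie in $\ker f\cap Z(U(\mathfrak{g}))$.

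First, the equality $\mathcal{I}_\mathfrak{h}\cap Z(U(\mathfrak{g}))=\mathcal{I}_{Sen}\cap Z(U(\mathfrak{g}))$ is formal. We have $Z(U(\mathfrak{g}))\subseteq Z[\theta_{Sen}]$ and $\mathcal{I}_{Sen}=\mathcal{I}_\mathfrak{h}\cap Z[\theta_{Sen}]$, so intersecting both with $Z(U(\mathfrak{g}))$ gives the same ideal.

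For the main claim I would use the coordinates $x_\tau=\diag(1,0)$ and $y_\tau=\diag(0,1)$ of $\mathfrak{h}_\tau$ from the proof of Theorem \ref{thm-U(h)factorR}. There $Z(U(\mathfrak{g}))$ (via $\gamma^{\mathfrak{m}}$) is the polynomial ring in $x_\tau+y_\tau$ and $(x_\tau-y_\tau-1)^2$. Consider
\[z:=\prod_{\tau}\big((x_\tau-y_\tau-1)^2-1\big)=\prod_\tau (x_\tau-y_\tau)(x_\tau-y_\tau-2).\]
The first expression shows $z\in Z(U(\mathfrak{g}))$. The second shows that $z$ is a multiple of $\prod_\tau(x_\tau-y_\tau)$, so $z\in\mathcal{I}_\mathfrak{h}$.

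Next I evaluate $z$ at the character $f\circ\phi_Z$ using Theorem \ref{thm-U(h)factorR}(2). That character corresponds to the $W$-orbit of the point with coordinates $(x_\tau,y_\tau)=(k_\tau+1,k'_\tau)$, where $k_\tau,k'_\tau$ are the $\tau$-Hodge-Tate-Sen weights of $\rho_f$. Since $z$ is $W$-invariant (centered at $\rho$), its value is
\[\prod_\tau\big((k_\tau-k'_\tau)^2-1\big).\]
Equivalently, by the definition of $\phi_Z$, $(x_\tau-y_\tau-1)^2$ maps to $b_\tau^2-4c_\tau$, the discriminant of the $\tau$-Sen polynomial. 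The equal-weights hypothesis makes this discriminant vanish for every $\tau$, so the value is $(-1)^d\neq 0$. Hence $z\notin\ker f$, which proves the lemma.

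The only step that needs care is the bookkeeping of the normalisation. One has to confirm that the discriminant of $\chi_\tau^{(r)}$ is exactly the image of $(x_\tau-y_\tau-1)^2$, so that the $\rho$-shift produces $-1$ rather than $0$. Conceptually, $V(\mathcal{I}_\mathfrak{h})$ is the union of the hyperplanes $x_\tau=y_\tau$. A $W$-orbit meets it only if $k_\tau-k'_\tau=\pm1$ for some $\tau$, and this is excluded when the weights are equal for every $\tau$.
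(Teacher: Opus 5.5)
Your proof is correct and follows the paper's route, which simply cites the explicit form of $\phi_Z$ from Theorem~\ref{thm-U(h)factorR}; your witness $z=\prod_\tau\bigl((x_\tau-y_\tau-1)^2-1\bigr)\in\mathcal{I}_\mathfrak{h}\cap Z(U(\mathfrak{g}))$ makes that one-line argument explicit, since its image under $f\circ\phi_Z$ is $(-1)^d$ once every discriminant $b_\tau^2-4c_\tau$ vanishes. Reading ``for any $\tau$'' as ``for every $\tau$'' is the right interpretation (it matches the case split in Proposition~\ref{prop-A'IsenInsi}, and the lemma would fail otherwise), and because $z$ depends only on the discriminants, your argument does not depend on the sign conventions in the $W$-orbit description of Theorem~\ref{thm-U(h)factorR}(2).
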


\begin{proof}
This follows from the explicit description of $\ker f\cap Z(U(\mathfrak{g}))$ in Theorem \ref{thm-U(h)factorR}.
\end{proof}

As a consequence of this lemma, $(\ker f,\phi(\mathcal{I}_{Sen}^n))=A'\widehat\otimes_{\QQ_p}\CC_p$ if $\rho_f$ has equal $\tau$-Hodge-Tate-Sen weights for any $\tau$. Our result is clear in this case.

Next we assume that $\rho_f:G_{F,S}\to \GL_2(\CC_p)$ is strongly irreducible and  $\Lie \rho_f(I_{F_v})\subseteq \mathfrak{gl}_2(\CC_p)$ are not scalars for at least one $p$-adic place $v$ of $F$. 

\begin{lem}
For any maximal ideal $\mathfrak{p}$ of $Z(U(\mathfrak{g}))$, there exists a maximal ideal of $\mathfrak{p}'$ of $Z[\theta_{Sen}]$ containing $\mathfrak{p}$ but not containing $\mathcal{I}_{Sen}$.
\end{lem}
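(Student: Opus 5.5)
Since everything is explicit, the plan is to reduce the lemma to a statement about points of $\mathfrak{h}^*$. First I will identify the maximal spectra. Using the coordinates $x_\tau,y_\tau$ from the proof of Theorem \ref{thm-U(h)factorR}, $Z(U(\mathfrak{g}))$ is the polynomial ring in $s_\tau:=x_\tau+y_\tau$ and $q_\tau:=(x_\tau-y_\tau-1)^2$. Since $y_\tau$ satisfies a monic quadratic over $\CC_p[s_\tau,q_\tau]$, we have $U(\mathfrak{h})=\CC_p[x_\tau,y_\tau]_\tau$ finite over $Z(U(\mathfrak{g}))$. Hence $Z[\theta_{Sen}]$, sitting between them, is finite over $Z(U(\mathfrak{g}))$, and $U(\mathfrak{h})$ is finite over $Z[\theta_{Sen}]$. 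By going-up, maximal ideals of $Z[\theta_{Sen}]$ are exactly the images of points $\lambda=(x_\tau,y_\tau)_\tau\in\mathfrak{h}^*=\CC_p^{2d}$. Such a point lies over $\mathfrak{p}$ precisely when it lies in the corresponding $W$-orbit, which is the orbit under independent swaps $(x_\tau,y_\tau)\mapsto(y_\tau+1,x_\tau-1)$.

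Next I translate the condition ``not containing $\mathcal{I}_{Sen}$''. Because $U(\mathfrak{h})$ is integral over $Z[\theta_{Sen}]$, the ideal $\mathcal{I}_{Sen}=\mathcal{I}_\mathfrak{h}\cap Z[\theta_{Sen}]$ has radical $\sqrt{\mathcal{I}_{Sen}}=\sqrt{\mathcal{I}_\mathfrak{h}\cdot}\cap Z[\theta_{Sen}]$. Concretely, a maximal ideal $\mathfrak{p}'$ of $Z[\theta_{Sen}]$ contains $\mathcal{I}_{Sen}$ iff some point of $\mathfrak{h}^*$ over $\mathfrak{p}'$ lies on $V(\prod_\tau(x_\tau-y_\tau))$. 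To justify this, use that the image of $V(\mathcal{I}_\mathfrak{h})$ in $\mathrm{Spec}\,Z[\theta_{Sen}]$ is closed (finite morphism) and equals $V(\mathcal{I}_{Sen})$ by lying-over applied to $Z[\theta_{Sen}]/\mathcal{I}_{Sen}\hookrightarrow U(\mathfrak{h})/\mathcal{I}_\mathfrak{h}$.

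So it suffices to find $\lambda$ in the $W$-orbit $O$ of $\mathfrak{p}$ such that no point $\mu\in\mathfrak{h}^*$ with the same image in $\mathrm{Spec}\,Z[\theta_{Sen}]$ has $\mu_x=\mu_y$ at some $\tau$. Points with the same image as $\lambda$ are points of $O$ with the same value of $\sum_\tau y_\tau$, because $Z[\theta_{Sen}]$ is generated over $Z$ by $\sum y_\tau$. Write each $\tau$-component of $O$ as $\{(a_\tau,b_\tau),(b_\tau+1,a_\tau-1)\}$. The bad points are those having some $\tau$ with $x_\tau=y_\tau$. A swap changes $y_\tau$ by $\delta_\tau:=a_\tau-1-b_\tau$, so the achievable values of $\sum y$ are $\sum_\tau b_\tau+\sum_{\tau\in J}\delta_\tau$ for $J\subseteq\Sigma'$.

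The key step, and the main obstacle, is to choose $J$ so that the value of $\sum y$ is not attained by any bad point. Such a $J$ always exists for a simple reason. If $\delta_\tau\neq0$ and neither choice at $\tau$ is bad, then $\tau$ causes no problem. A choice at $\tau$ is bad only if $a_\tau=b_\tau$ or $b_\tau+1=a_\tau-1$, i.e.\ $\delta_\tau=-1$ or $\delta_\tau=1$. In the first case $(a_\tau,b_\tau)$ is bad and the swap $(b_\tau+1,a_\tau-1)=(a_\tau+1,a_\tau-1)$ is good; in the second case the reverse holds. Each bad $\tau$ therefore has exactly one good option, and its two options differ in $y_\tau$ by $\pm1\neq0$.

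I pick $J$ as follows. At every bad $\tau$, take the good option. At the good $\tau$, order the values $\delta_\tau$ by a linear functional, such as real part of a valuation-compatible $\QQ$-linear functional, chosen so that the $\pm1$ are nonzero and all $\delta$ are real-ordered. Then choose $J$ extremal, so that $\sum_\tau y_\tau$ is strictly maximal (or minimal) among the achievable values. With this choice the good option at each bad $\tau$ must be the one of larger functional value. Any other point with the same sum must then make the same choices at every $\tau$ where the two options differ in the functional. The remaining coordinates, where $\delta_\tau=0$, are good and in fact fixed. Hence the only point over $\mathfrak{p}'$ is good, and $\mathfrak{p}'\not\supseteq\mathcal{I}_{Sen}$.

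I expect the care needed to be concentrated in making the extremal choice consistent with ``good'' at the bad $\tau$. Since the two cases $\delta_\tau=-1$ and $\delta_\tau=1$ require opposite orientations, extremality alone fails in general. Instead I would maximize the functional applied to $\sum_{\tau}\epsilon_\tau y_\tau$ for suitable signs $\epsilon_\tau$. But that quantity is not $\sum y$, so this may require a generic-choice counting argument over the finitely many subsets $J$ rather than a pure extremal one.
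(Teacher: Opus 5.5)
There is a genuine gap in the key step, caused by a sign error. Your reduction is correct and matches the paper's setup. Maximal ideals of $Z[\theta_{Sen}]$ over $\mathfrak{p}$ correspond to values $w=\sum_\tau y_\tau$ attained on the $W$-orbit. Such an ideal contains $\mathcal{I}_{Sen}$ iff some orbit point with that value of $\sum_\tau y_\tau$ has $x_\tau=y_\tau$ for some $\tau$.

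You assert that the cases $\delta_\tau=-1$ and $\delta_\tau=1$ require opposite orientations, so that extremality fails. This is false: in both cases the bad option is the one with the \emph{larger} $y_\tau$, by exactly $1$. Indeed, if $x_\tau=y_\tau$, the other option is $(y_\tau+1,x_\tau-1)$, whose second coordinate is $y_\tau-1$. In your labels:
\begin{itemize}
\item for $\delta_\tau=-1$, the bad option has $y=b_\tau$ and the good one has $y=b_\tau-1$;
\item for $\delta_\tau=1$, the bad option has $y=b_\tau+1$ and the good one has $y=b_\tau$.
\end{itemize}
Because of this miscomputation, your proposal never completes the key step. Your sentence ``the good option at each bad $\tau$ must be the one of larger functional value'' is backwards: maximizing selects the bad option at every $\tau$ that has one. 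The fallback you sketch, via $\sum_\tau\epsilon_\tau y_\tau$ and a ``generic-choice counting argument'', is not carried out. It also cannot work as stated, because $Z[\theta_{Sen}]$ only sees $\sum_\tau y_\tau$, not weighted sums.

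The fix is the paper's argument, and it is a plain minimization. Take a $\QQ$-linear functional $\ell$ on the (finite-dimensional) $\QQ$-span of $1$ and all the $a_\tau,b_\tau$, with $\ell(1)=1$; this is what the paper's reduction to rational coordinates amounts to. The normalization $\ell(1)=1$ is what keeps the $\pm1$ differences nonzero \emph{and with their signs}; your ``real part of a valuation-compatible functional'' is not needed. Let $w$ be an attained value of $\sum_\tau y_\tau$ that minimizes $\ell(w)$. Suppose some orbit point with $\sum_\tau y_\tau=w$ had $x_\tau=y_\tau$ for some $\tau$. Swapping it at that $\tau$ gives an orbit point whose sum has $\ell$-value $\ell(w)-1$, a contradiction.

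This contradiction argument also avoids your uniqueness analysis. That analysis had a further inaccuracy: the coordinates with $\ell(\delta_\tau)=0$ need not be fixed. They are harmless only because badness forces $\delta_\tau=\pm1$, hence $\ell(\delta_\tau)\neq0$.
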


\begin{proof}
The argument is almost the same as \cite[Proof of Proposition 5.1.10]{Lan-Pan}. By Hilbert's Nullstellensatz, a maximal ideal $\mathfrak{p}$ of $Z(U(\mathfrak{g}))$ is the same as a homomorphism $\lambda:Z(U(\mathfrak{g}))\to \CC_p$, and a maximal ideal of $U(\mathfrak{h})$ containing it is the same as a weight $\{(X_\tau,Y_\tau)\}_{\tau}$ in the corresponding $W$-orbit $[\lambda]$ of $\lambda$. Similarly a  maximal ideal of  $Z[\theta_{Sen}]$ containing $\mathfrak{p}$ is the same as a pair $(\lambda, w)$ with $w=\sum_{\tau} Y_\tau$ for some $\{(X_\tau,Y_\tau)\}_{\tau}\in [\lambda]$. Our goal is to produce such a pair $(\lambda, w)$ so that if $w=\sum_{\tau} Y_\tau$ with $\{(X_\tau,Y_\tau)\}_{\tau}\in [\lambda]$, then all $(X_\tau-Y_\tau)$'s are non-zero. 

By choosing a $\QQ$-basis of the $\QQ$-vector space in $\CC_p$ generated by the coordinates of $\lambda$, we may reduce the problem to the case where all the coordinates are in $\QQ$. Now fixing \( \lambda \), we choose $w$ to be the smallest possible value and claim that it satisfies our requirement. Suppose that $w=\sum_{\tau} Y_\tau$ for some $\{(X_\tau,Y_\tau)\}_{\tau}\in [\lambda]$ and $X_\tau-Y_\tau=0$ for some $\tau$. Since $(X_\tau,Y_\tau)$ and $(Y_\tau+1,X_\tau-1)$ are in the same $W$-orbit, after changing $(X_\tau,Y_\tau)$ by $(Y_\tau+1,X_\tau-1)$ and keeping other components the same, we see that the resulting $w$ has strictly smaller value. Contradiction.
\end{proof}

In the proof of Proposition \ref{prop-suppA(r)}, we showed that
\[B:=(A'\widehat\otimes_{\QQ_p}\CC_p)/\ker f\cong \End_{\CC_p}(\Ind_F^{plec}\rho_f)=\End_{\CC_p}(\oInd^{\QQ}_{F} \rho_f).\]
Since $\phi:U(\mathfrak{h})\to A'\widehat\otimes_{\QQ_p} \CC_p$ maps $\diag(0,1)$ to the Sen operator, as a $Z[\theta_{Sen}]$-module, the support of $B$ are the pairs $(\ker f\cap Z(U(\mathfrak{g})), w)$, where $w$ ranges \textit{all} the Hodge-Tate-Sen weights of $\oInd^{\QQ}_{F} \rho_f$. Equivalently, $B$ has full supports on $\Spec Z[\theta_{Sen}]/(\ker f\cap Z(U(\mathfrak{g})))$. Our claim now follows from the previous lemma.
\end{proof}

To state the more technical version, we fix $f:R^{(r)}_{\bar{D}}\widehat\otimes_{\QQ_p}\CC_p\to \CC_p$ such that $\rho_f:G_{F,S}\to \GL_2(\CC_p)$ is strongly irreducible. By Proposition \ref{proptenindirred}, $\oInd^{\QQ}_{F} \rho_f$  has a unique $G_{\QQ}$-subrepresentation $V$ of the largest dimension among all irreducible $\Lie(\oInd^{\QQ}_{F} \rho_f(G_{\QQ}))$-subrepresentations. Let $C$ denote the image of $\CC_p[G_{\QQ}]$ in $\End_{\CC_p}(\oInd^{\QQ}_{F} \rho_f)$. Consider the following commutative diagram
\[\begin{tikzcd}
\CC_p[G_{\QQ}] \arrow[r,] \arrow[d,] &C \arrow[r,"pr_V"]  \arrow[d,] &  \End_{\CC_p}(V) \\
A'\widehat\otimes_{\QQ_p}\CC_p \arrow[r, ] & (A'\widehat\otimes_{\QQ_p}\CC_p)/\ker f=\End_{\CC_p}(\oInd^{\QQ}_{F} \rho_f) &
\end{tikzcd},\]
where $pr_V$ denotes the projection map induced by restriction to $V$.
The map $\phi|_{Z[\theta_{Sen}]}:Z[\theta_{Sen}] \to A'\widehat\otimes_{\QQ_p}\CC_p \to \End_{\CC_p}(\oInd^{\QQ}_{F} \rho_f)$ factors through $C\subseteq \End_{\CC_p}(\oInd^{\QQ}_{F} \rho_f)$. Indeed $Z(U(\mathfrak{g}))$ maps to $\CC_p$ and $\diag(0,1)$ maps to the Sen operator which is in $C$. We denote it by $\phi_C: Z[\theta_{Sen}]\to C$. 

\begin{prop} \label{prop-prVIsne}
For $n>0$,
$pr_V\circ \phi_C(\mathcal{I}_{Sen}^n)\neq 0$.
\end{prop}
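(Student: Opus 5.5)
The plan is to mimic the proof of Proposition \ref{prop-A'IsenInsi}, with $\End_{\CC_p}(\oInd^{\QQ}_F\rho_f)$ replaced by $\End_{\CC_p}(V)$. The composite $pr_V\circ\phi_C$ is a homomorphism of commutative algebras from $Z[\theta_{Sen}]$ into the finite-dimensional algebra $\End_{\CC_p}(V)$. It therefore makes $\End_{\CC_p}(V)$, or more simply $V$ itself, into a finite $Z[\theta_{Sen}]$-module. Its support is a finite set of maximal ideals of $Z[\theta_{Sen}]$. Showing $pr_V\circ\phi_C(\mathcal{I}_{Sen}^n)\neq 0$ for all $n$ is equivalent to showing that this support contains a maximal ideal not containing $\mathcal{I}_{Sen}$. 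Indeed, if every point of the support contained $\mathcal{I}_{Sen}$, some power would act nilpotently and hence as zero. Conversely, a point $\mathfrak{p}'$ in the support with $\mathcal{I}_{Sen}\not\subseteq \mathfrak{p}'$ means some element of $\mathcal{I}_{Sen}$ acts invertibly on the nonzero generalized $\mathfrak{p}'$-eigenspace of $V$. Then every power of that element is nonzero there.

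First I describe the support. $Z(U(\mathfrak{g}))$ acts on $V$ via the scalar character $\ker f\cap Z(U(\mathfrak{g}))$, which is described by Theorem \ref{thm-U(h)factorR}. The element $\diag(0,1)$ acts through the Sen operator of $\oInd^{\QQ}_F\rho_f$ restricted to $V$. Since $V$ is $G_{\QQ}$-stable, this restriction is the Sen operator of $V$. So the support of $V$ consists of the pairs $(\lambda,w)$, where $\lambda$ is the infinitesimal character and $w$ ranges over the eigenvalues of the Sen operator on $V$. By Proposition \ref{proptenindirred}(1), these eigenvalues are, without multiplicity, all the Sen eigenvalues of $\oInd^{\QQ}_F\rho_f$. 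Concretely, they are all sums $\sum_\tau c_\tau$, where $c_\tau$ runs over the negatives of the $\tau$-Hodge–Tate–Sen weights of $\rho_f$ (Proposition \ref{prop-wT=sumpartialwt}). As in the proof of Proposition \ref{prop-A'IsenInsi}, these are exactly the $w=\sum_\tau Y_\tau$ for $\{(X_\tau,Y_\tau)\}\in[\lambda]$. So $V$ has full support on $\Spec Z[\theta_{Sen}]/(\ker f\cap Z(U(\mathfrak{g})))$.

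Next I split into the same two cases as before. Suppose $\rho_f$ has equal $\tau$-Hodge–Tate–Sen weights for some $\tau$. Then Lemma \ref{lem-eqHTwIsen} gives that $\mathcal{I}_{Sen}\cap Z(U(\mathfrak{g}))$ is not contained in $\ker f$. Hence some central element of $\mathcal{I}_{Sen}$ acts on $V$ by a nonzero scalar, and all of its powers are nonzero since $V\neq0$. Otherwise all $\tau$-weights are distinct. Then the lemma in the proof of Proposition \ref{prop-A'IsenInsi}, which takes $w$ minimal after reducing to rational coordinates, produces a maximal ideal $\mathfrak{p}'\supseteq \ker f\cap Z(U(\mathfrak{g}))$ of $Z[\theta_{Sen}]$ with $\mathcal{I}_{Sen}\not\subseteq\mathfrak{p}'$. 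By the full-support statement, $\mathfrak{p}'$ lies in the support of $V$, which concludes the proof.

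I expect the main obstacle to be the full-support step. One must check that the weights appearing in $V$, and not just those in $\oInd^{\QQ}_F\rho_f$, exhaust all the $w$ attached to the orbit $[\lambda]$. This is exactly what Proposition \ref{proptenindirred}(1) provides; the only care needed is identifying $\phi(\diag(0,1))$ restricted to $V$ with the Sen operator of the subrepresentation $V$, which follows from the functoriality in Theorem \ref{thm-Sen}.
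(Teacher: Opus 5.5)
Your overall route is the paper's: rerun the proof of Proposition~\ref{prop-A'IsenInsi} with $\End_{\CC_p}(V)$ in place of $\End_{\CC_p}(\oInd^{\QQ}_F\rho_f)$, and get full support from Proposition~\ref{proptenindirred}(1). However, your first case misapplies Lemma~\ref{lem-eqHTwIsen}.

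That lemma needs equal $\tau$-Hodge--Tate--Sen weights for \emph{every} $\tau$. This is the intended reading of ``for any $\tau$''; the complementary case in the paper is ``inertia non-scalar at one place''. With equality at only some $\tau$, the lemma is false. By Theorem~\ref{thm-U(h)factorR}, the orbit $[\lambda]$ is $\prod_\tau\{(k_\tau+1,k'_\tau),(k'_\tau+1,k_\tau)\}$. It therefore contains a point with $X_{\tau'}=Y_{\tau'}$ as soon as $k_{\tau'}-k'_{\tau'}=\pm1$ for a single $\tau'$. The maximal ideal of $U(\mathfrak{h})$ at that point contains $\prod_\tau(x_\tau-y_\tau)$ and lies over $\ker f\cap Z(U(\mathfrak{g}))$. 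Hence $\mathcal{I}_{\mathfrak{h}}\cap Z(U(\mathfrak{g}))\subseteq\ker f$, even if the weights at some other $\tau$ coincide. So your case-one claim that some central element of $\mathcal{I}_{Sen}$ acts on $V$ by a nonzero scalar is unjustified.

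The repair is immediate, because your second case never uses distinctness of the weights. The minimal-$w$ lemma from the proof of Proposition~\ref{prop-A'IsenInsi} holds for every maximal ideal of $Z(U(\mathfrak{g}))$. Your full-support computation uses only Theorem~\ref{thm-U(h)factorR}, Proposition~\ref{prop-wT=sumpartialwt} and Proposition~\ref{proptenindirred}(1), none of which requires distinct weights. So drop the case distinction and run that argument for every strongly irreducible $f$.

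The paper needs a split in Proposition~\ref{prop-A'IsenInsi} because identifying $(A'\widehat\otimes_{\QQ_p}\CC_p)/\ker f$ with $\End_{\CC_p}(\oInd^{\QQ}_F\rho_f)$ requires non-scalar inertia. Here the target is $\End_{\CC_p}(V)$ directly, so that input is not needed.
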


\begin{proof}
The same proof of Proposition \ref{prop-A'IsenInsi} works here by noticing that $V$ has the same set of Hodge-Tate-Sen weights as that of $\oInd^{\QQ}_{F} \rho_f$ by Proposition \ref{proptenindirred}.
\end{proof}

To describe how the action of $U(\mathfrak{h})$ interacts with the Poincar\'e duality, we introduce the following involution.
\begin{dfn} \label{dfn-iotaSen}
We let 
\[\iota_\mathfrak{h}:U(\mathfrak{h})\to U(\mathfrak{h})\] 
be the involution induced by the reflection of $\mathfrak{h}^*$ about $\rho$. Explicitly it sends $x_\tau$ to $1-x_\tau$ and sends $y_\tau$ to $-1-y_\tau$. It commutes with the Weyl group action centered at $\rho$, therefore induces an involution $\iota_Z$ of $Z(U(\mathfrak{g}))$. Since $\iota_\mathfrak{h}(\diag(0,1))=-d-\diag(0,1)$, it also induces an involution of $Z[\theta_{Sen}]$ which will be denoted by $\iota_{Sen}$.
\end{dfn}

\begin{rmk} \label{rmk-iotaZ}
The isomorphism of algebras $U(\mathfrak{g})\xrightarrow{\cong} U(\mathfrak{g})^{op}$ sending $x\in\mathfrak{g}$ to $-x$ defines an involution of $Z(U(\mathfrak{g}))$ which is nothing but $\iota_Z$.
\end{rmk}

We have results similar to lemma \ref{lem-eqHTwIsen} and Proposition \ref{prop-A'IsenInsi}. The proof is essentially identical, so we omit the detail here.

\begin{lem} \label{lem-eqHTwIsen'}
Let  $f:R^{(r)}_{\bar{D}}\widehat\otimes_{\QQ_p}\CC_p\to \CC_p$ be a homomorphism of $\CC_p$-algebras
such that $\rho_f$ has equal $\tau$-Hodge-Tate-Sen weights for any $\tau$. Then $\ker f\cap Z(U(\mathfrak{g}))$ does not contain $\iota_Z(\mathcal{I}_\mathfrak{h})\cap Z(U(\mathfrak{g}))$.
\end{lem}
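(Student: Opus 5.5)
The plan is to mimic the proof of Lemma \ref{lem-eqHTwIsen}, with the involution $\iota_Z$ inserted. By Theorem \ref{thm-U(h)factorR}, the character $\chi_f:=(f\otimes 1)\circ\phi_Z: Z(U(\mathfrak{g}))\to\CC_p$ is explicit. If $\rho_f$ has $\tau$-Hodge-Tate-Sen weights $k_\tau,k'_\tau$, then $\chi_f$ corresponds to the $W$-orbit (centered at $\rho$) of the weight $\{(k_\tau+1,k'_\tau)\}_\tau$. In the coordinates $x_\tau,y_\tau$, this orbit consists of all weights obtained by independently replacing, in each $\tau$-component, $(X_\tau,Y_\tau)$ with $(Y_\tau+1,X_\tau-1)$. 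The hypothesis says $k_\tau=k'_\tau$ for every $\tau$. So the orbit is the set of weights with $\tau$-component $(k_\tau+1,k_\tau)$ or $(k_\tau+1,k_\tau)$ again; that is, the orbit is a single point $\{(k_\tau+1,k_\tau)\}_\tau$.

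Next, I translate the condition ``$\ker\chi_f\supseteq\iota_Z(\mathcal{I}_\mathfrak{h})\cap Z(U(\mathfrak{g}))$'' into weights. Since $\iota_\mathfrak{h}$ is an automorphism of $U(\mathfrak{h})$ commuting with the centered $W$-action, it follows that $\iota_Z(\mathcal{I}_\mathfrak{h}\cap Z(U(\mathfrak{g})))=\iota_\mathfrak{h}(\mathcal{I}_\mathfrak{h})\cap Z(U(\mathfrak{g}))$. (I read $\iota_Z(\mathcal{I}_\mathfrak{h})\cap Z(U(\mathfrak{g}))$ in this sense.) The ideal $\iota_\mathfrak{h}(\mathcal{I}_\mathfrak{h})$ is generated by $\prod_\tau\iota_\mathfrak{h}(x_\tau-y_\tau)=\prod_\tau(2-x_\tau+y_\tau)$. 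Its zero locus in $\mathfrak{h}^*$ is $\{X_\tau-Y_\tau=2 \text{ for some }\tau\}$.

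Since $U(\mathfrak{h})$ is finite over $Z(U(\mathfrak{g}))=U(\mathfrak{h})^W$, going-up and the Nullstellensatz give the following criterion: a maximal ideal $\mathfrak{p}$ of $Z(U(\mathfrak{g}))$ contains $J\cap Z(U(\mathfrak{g}))$ for a $W$-stable-up-to-radical ideal $J$ exactly when some point of the $W$-orbit over $\mathfrak{p}$ lies in $V(J)$. This holds in general once one notes that $\sqrt{J\cap Z}=\sqrt{(\bigcap_w wJ)\cap Z}$. It thus suffices to check that no point of the orbit of $\chi_f$ lies in $V(\iota_\mathfrak{h}(\mathcal{I}_\mathfrak{h}))$. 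For the unique point $(k_\tau+1,k_\tau)$ we have $X_\tau-Y_\tau=1\neq 2$ for all $\tau$, so $\prod_\tau(2-X_\tau+Y_\tau)=1\neq0$. Concretely, the element $\prod_{w\in W}w\bigl(\prod_\tau(2-x_\tau+y_\tau)\bigr)$ lies in $\iota_\mathfrak{h}(\mathcal{I}_\mathfrak{h})\cap Z(U(\mathfrak{g}))$ and evaluates under $\chi_f$ to a product of values on orbit points, each equal to $1$. Hence it is not in $\ker f$.

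The main obstacle is purely bookkeeping: getting the normalization of the $W$-action centered at $\rho$ and of $\iota_\mathfrak{h}$ right, so that the orbit and the zero locus are as computed. To settle this, I would first verify on the generators $x_\tau+y_\tau$ and $(x_\tau-y_\tau-1)^2$ that $\iota_\mathfrak{h}$ preserves $Z(U(\mathfrak{g}))$. It sends $x_\tau+y_\tau\mapsto -(x_\tau+y_\tau)$ and $(x_\tau-y_\tau-1)^2\mapsto(x_\tau-y_\tau-1)^2$, which confirms the normalization. Once this is checked, the norm-element argument above gives the result directly.
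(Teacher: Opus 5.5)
Your proposal is correct and is the paper's own (omitted) argument with the details filled in: by Theorem \ref{thm-U(h)factorR}, equal $\tau$-Hodge--Tate--Sen weights for every $\tau$ force the infinitesimal character to be the single $W$-fixed point with $X_\tau-Y_\tau=1$ for all $\tau$. The generator $\prod_\tau(2-x_\tau+y_\tau)$ of $\iota_\mathfrak{h}(\mathcal{I}_\mathfrak{h})$ takes the value $1$ there, and its $W$-norm is an explicit element of $\iota_Z(\mathcal{I}_\mathfrak{h})\cap Z(U(\mathfrak{g}))$ not lying in $\ker f$. This only uses that the discriminant $b_\tau^2-4c_\tau$ maps to $0$ under $f$, so it is unaffected by the sign conventions in the paper's description of the $W$-orbit.
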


\begin{prop} \label{prop-A'IsenInsisim}
Let  $f:R^{(r)}_{\bar{D}}\widehat\otimes_{\QQ_p}\CC_p\to \CC_p$ be a homomorphism of $\CC_p$-algebras such that $\rho_f:G_{F,S}\to \GL_2(\CC_p)$ is strongly irreducible and $(A'\widehat\otimes_{\QQ_p}\CC_p)/\ker f\neq 0$. For $n>0$, the image of $\phi(\iota_{Sen}(\mathcal{I}_{Sen})^n)$ in $(A'\widehat\otimes_{\QQ_p}\CC_p)/\ker f$ is non-zero.
\end{prop}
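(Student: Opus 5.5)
The plan is to copy the proof of Proposition \ref{prop-A'IsenInsi}, conjugating every step by the involution $\iota_{\mathfrak{h}}$. First I handle the degenerate case where $\rho_f$ has equal $\tau$-Hodge-Tate-Sen weights for some $\tau$. By Lemma \ref{lem-eqHTwIsen'}, the maximal ideal $\ker f\cap Z(U(\mathfrak{g}))$ does not contain $\iota_Z(\mathcal{I}_\mathfrak{h})\cap Z(U(\mathfrak{g}))$. Since $\iota_{Sen}$ restricts to $\iota_Z$ on $Z(U(\mathfrak{g}))$, this intersection lies in $\iota_{Sen}(\mathcal{I}_{Sen})$. So the ideal generated by $\ker f$ and $\phi(\iota_{Sen}(\mathcal{I}_{Sen})^n)$ is the whole ring, because $\phi|_{Z(U(\mathfrak{g}))}$ factors through $R^{(r)}_{\bar D}\widehat\otimes\CC_p$ by Theorem \ref{thm-U(h)factorR}. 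The image in the nonzero ring $(A'\widehat\otimes\CC_p)/\ker f$ is then nonzero.

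In the remaining case, $\rho_f$ is strongly irreducible and some $\Lie\rho_f(I_{F_v})$ is not scalar, as in the remark after Proposition \ref{prop-suppA(r)}. As in the proof of Proposition \ref{prop-A'IsenInsi}, $B:=(A'\widehat\otimes\CC_p)/\ker f\cong\End_{\CC_p}(\oInd^{\QQ}_F\rho_f)$. As a $Z[\theta_{Sen}]$-module via $\phi$, $B$ has full support on $\Spec Z[\theta_{Sen}]/(\ker f\cap Z(U(\mathfrak{g})))$. The reason is that $\theta_{Sen}$ maps to the Sen operator, and every Hodge-Tate-Sen weight $w$ of the tensor induction has the form $\sum_\tau Y_\tau$ with $\{(X_\tau,Y_\tau)\}$ in the $W$-orbit determined by $f$ (Theorem \ref{thm-U(h)factorR} and Proposition \ref{prop-wT=sumpartialwt}).

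It then suffices to prove the analogue of the lemma in that proof. For every maximal ideal $\mathfrak{p}$ of $Z(U(\mathfrak{g}))$, I need a maximal ideal of $Z[\theta_{Sen}]$ over $\mathfrak{p}$ that avoids $\iota_{Sen}(\mathcal{I}_{Sen})$. Applying the automorphism $\iota_{Sen}$, this is equivalent to finding a maximal ideal over $\iota_Z(\mathfrak{p})$ avoiding $\mathcal{I}_{Sen}$. That is exactly the earlier lemma applied to the maximal ideal $\iota_Z(\mathfrak{p})$. Alternatively, one can rerun the minimization argument directly: pick the extreme value of $w$, and use that $\iota_{\mathfrak{h}}$ sends $x_\tau-y_\tau$ to $2-(x_\tau-y_\tau)$, so the bad locus becomes $X_\tau-Y_\tau=2$ and one chooses the maximal $w$ instead of the minimal one. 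Since $B$ has full support, a point in the support avoids $\iota_{Sen}(\mathcal{I}_{Sen})$. Localizing there, elements of $\iota_{Sen}(\mathcal{I}_{Sen})^n$ act invertibly on the nonzero localization of $B$, so their images are nonzero.

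The main obstacle is bookkeeping. I must check that $\iota_{Sen}$ is compatible with $\iota_Z$ on $Z(U(\mathfrak{g}))$, so that $\iota_{Sen}$ maps maximal ideals over $\mathfrak{p}$ bijectively to those over $\iota_Z(\mathfrak{p})$. I must also check that the affine shift $\theta_{Sen}\mapsto -d-\theta_{Sen}$ does not spoil the support description. It does not, because the support statement is about $B$ with the original (untwisted) $\phi$; only the ideal being tested is twisted.
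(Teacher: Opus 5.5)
Your proof is correct and follows the paper's approach. The paper states only that the argument is identical to that of Proposition \ref{prop-A'IsenInsi}. Your transport of the key lemma through the involution $\iota_{Sen}$ is exactly the required modification: a maximal ideal over $\mathfrak p$ avoiding $\iota_{Sen}(\mathcal I_{Sen})$ corresponds to one over $\iota_Z(\mathfrak p)$ avoiding $\mathcal I_{Sen}$. Your alternative of maximizing rather than minimizing $w$ also works.

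There is one slip to fix: the degenerate case must be ``equal $\tau$-Hodge--Tate--Sen weights for \emph{every} $\tau$'', not for some $\tau$. Lemma \ref{lem-eqHTwIsen'} fails under the weaker hypothesis. Suppose that at another embedding $\tau'$ the two entries in Theorem \ref{thm-U(h)factorR} satisfy $k_{\tau'}-k'_{\tau'}=\pm1$. Then the $W$-orbit attached to $f$ contains a point with $X_{\tau'}-Y_{\tau'}=2$. At that point $\iota_{\mathfrak h}\bigl(\prod_\tau(x_\tau-y_\tau)\bigr)=\prod_\tau\bigl(2-(x_\tau-y_\tau)\bigr)$ vanishes, so $\ker f\cap Z(U(\mathfrak g))$ does contain $\iota_Z(\mathcal I_{\mathfrak h})\cap Z(U(\mathfrak g))$.

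Split the cases as the paper does. Either every $\Lie\rho_f(I_{F_v})$ is scalar, which forces equal weights at all $\tau$, so Lemma \ref{lem-eqHTwIsen'} applies. Or some $\Lie\rho_f(I_{F_v})$ is non-scalar, and that is the only input your second case uses.
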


Finally we can prove Theorem \ref{thm-belmidnsi}. We restate it in a slightly more general form. 

\begin{thm} \label{thm-belmidcnsi}
For $i<d$, $I_{nsi,\TT}^n$ annihilates $\tilde{H}^{i}(K^p)_{\QQ_p}$ and $\tilde{H}^{i}_{c}(K^p)_{\QQ_p}$ (equivalently, $\tilde{H}_{i,\QQ_p}$ and $\tilde{H}^{BM}_{i,\QQ_p}$) for some $n>0$.
\end{thm}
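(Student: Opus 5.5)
The argument runs by descending induction on the degree $j$, using the Poincar\'e duality spectral sequence of Corollary \ref{cor-Poincare-hC}. The cohomological statement and the homological one are equivalent by duality: $I_{nsi,\TT}^n$ kills $\tilde H^i(K^p)_{\QQ_p}$ iff it kills $\tilde H_{i,\QQ_p}$, and similarly for compact support. By Remark \ref{rmk-iotaT} the ideal $I_{nsi,\TT}$ is $\iota_\TT$-stable, so the twist of the Hecke action by $\iota_\TT$ in Proposition \ref{prop-Poincaredualss} is harmless. We also reduce to a single $\mathfrak{m}$ and to $I_{nsi}\subseteq R_{\bar D}$. Since $\Lambda[1/p]\to D_h$ is flat, and $D_h$ is faithfully flat enough after passing to $h$-analytic vectors (Proposition \ref{prop-(h)-an}), it suffices to show that $I_{nsi}^n$ kills the terms after $\otimes D_h$ and $\widehat\otimes\CC_p$. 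There, by Proposition \ref{prop-ext vs Hcont}, the terms are continuous group cohomology of $\tilde H^{j,(h)}$.

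\emph{Step 1: the Ext-terms attached to $\tilde H^{<d}$.} The inner induction is on $k=d-j\geq 1$, applied to $E_2^{i,d-k}=H^i_{\rm cont}(\widetilde{K_p},\tilde H^{d-k,(h)}\widehat\otimes\CC_p\widehat\otimes D_h)(d)$. By Corollary \ref{cor-LPvan}, $\mathcal I_{Sen}^n$ kills $\tilde H^{<d,\la}\widehat\otimes\CC_p$. Via $Z(U(\mathfrak g))$ and the Sen operator, the $\widetilde{K_p}$-cohomology inherits this annihilation. After passing through $\Ext$ and dualities, Remark \ref{rmk-iotaZ} and Proposition \ref{prop-Seneq} show that the operators change by the involution $\iota_{Sen}$. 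Hence $\iota_{Sen}(\mathcal I_{Sen})^n$ kills all $E_2^{i,j}$ with $j<d$, and therefore their contributions to $(\tilde H_c^{m,(h)})^*\widehat\otimes\CC_p$.

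\emph{Step 2: the middle-degree column.} Consider the terms $E_2^{i,d}$ with $i\le d$; these are the only ones with $j=d$ surviving, by $E_2^{>d,j}=0$. For $i>0$ they contribute to $\tilde H^{BM}_{<d}$. On $\tilde H^{d}$ (localized at $I_{CM}$, which contains a power of $I_{nsi}$ up to the CM part, itself contained in the nsi locus), Theorem \ref{thm-U(h)factorR} lets $Z[\theta_{Sen}]$ act through $A'\widehat\otimes\CC_p$. The key input is Proposition \ref{prop-A'IsenInsisim}, in its strengthened form as in the remark following Proposition \ref{prop-A'IsenInsi}: the two-sided ideal generated by $\phi(\iota_{Sen}(\mathcal I_{Sen})^n)$ contains a power of $I_{nsi}A'$. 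To obtain it, I would apply Proposition \ref{prop-prVIsne} fiberwise at strongly irreducible points together with the $G_\QQ$-semisimplicity from the plectic structure, then a Nakayama/Lemma \ref{leminccomp} argument over $R^{(r)}_{\bar D}$ to turn the pointwise non-vanishing into containment of a power of $I_{nsi}$. Since $G_\QQ$ commutes with the spectral sequence, and $A'$ is generated by $G_\QQ$ together with the partial Sen operators, this two-sided ideal also acts compatibly on the abutment. Combining this with Step 1, the abutment $\tilde H^{BM}_{m}$ for $m<d$ is killed by a power of $I_{nsi}$. The vanishing $E_2^{i,d-j}=0$ for $i<3j$ confines the abutment in degree $m<d$ to finitely many $E_2$-terms, each handled above. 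This yields the compact-support statement.

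\emph{Step 3: from Borel--Moore back to ordinary cohomology.} Run the same spectral sequence with the roles of $H$ and $H_c$ swapped (the dual Poincar\'e spectral sequence), or use the long exact boundary sequence. The boundary cohomology of Hilbert modular varieties is Eisenstein, hence lies in the reducible locus, which is inside $V(I_{nsi})$, so $\tilde H^{i}$ and $\tilde H^i_c$ differ by pieces killed by a power of $I_{nsi}$.

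The main obstacle is Step 2: converting the pointwise statement of Proposition \ref{prop-A'IsenInsisim} into ideal containment in the noncommutative $A'\widehat\otimes\CC_p$ with only finitely many steps. Controlling the completed tensor products there will be the delicate part.
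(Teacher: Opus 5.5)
Your plan has a genuine gap in Step 2, at the sentence ``this two-sided ideal also acts compatibly on the abutment.'' The plectic algebra acts only in middle degree. Theorem \ref{thm-plecticactnonCM} gives an $A^{(r)}$-module structure on $I_{CM}\tilde H^d(K^p)^{(h)}$, and $A'$ is by definition its image in the endomorphisms of that space. Producing such an action requires the extension argument of Corollary \ref{cor-extplecSen}: a dense subspace of very-regular classical vectors on which $G_\QQ$ acts isotypically through a regular tensor induction. Nothing of this kind is available for $(\tilde H^{m,(h)}_c)^*$ with $m<d$, or for the terms $E_2^{i,j}$ with $j<d$. The fact that $G_\QQ$ commutes with the spectral sequence says nothing about the Lie algebras $\Lie(I_{\tau(F)_{\tau_p}},r)$. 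Even on $E_2^{\bullet,d}$, the paper does not know that $A^{(r)}$ is compatible with the differentials (it states that it does not know whether $Y$ is $A^{(r)}$-stable).

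In fact, if $A'$ acted on the abutment you would not need the spectral sequence at all. By Lemma \ref{lem-Insikillboundary}, $I_{nsi}(\tilde H^m_c)^*$ lies in the image of $(\tilde H^m)^*$. That image is already $\iota_{Sen}(\mathcal I_{Sen})^n$-torsion by Corollary \ref{cor-LPvan}, so the ideal containment would finish immediately. This is exactly the ``ideal'' argument that the introduction says is unavailable. Consequently, your Step 1 conclusion ($\mathcal I_{Sen}$-type torsion of the $j<d$ pieces) is never upgraded to $I_{nsi}$-torsion. The piece $E_\infty^{d-m,d}\subseteq E_2^{d-m,d}$ is not controlled at all: $E_2^{d-m,d}$ carries an $A^{(r)}$-action, but it is not a priori killed by either $\mathcal I_{Sen}^n$ or $\iota_{Sen}(\mathcal I_{Sen})^n$.

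The missing idea is to squeeze the middle-degree term between the compact-support and ordinary dualities, and then argue with $G_\QQ$ alone. In the paper, the roles are as follows.
\begin{itemize}
\item The descending induction controls the kernel of $\phi_i:(\tilde H^{i,(h)}_c)^*\widehat\otimes\CC_p\to E_2^{d-i,d}$, whose pieces come from $\tilde H^{d-j}$ with $i<d-j<d$. There the induction hypothesis is used, not $\mathcal I_{Sen}$.
\item The cokernel of $\phi_i$ is only shown to be $\mathcal I_{Sen}^n$-torsion.
\item The whole target $E_2^{d-i,d}$ is then shown to be $I_{nsi}$-power torsion.
\end{itemize}
For the last point, compose with $(\tilde H^{i,(h)})^*\to(\tilde H^{i,(h)}_c)^*$ to get $\phi'$, whose image is $\iota_{Sen}(\mathcal I_{Sen})^n$-torsion. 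Pass to the $I_{nsi}$-torsion-free module $X=I_{nsi}^kH^{d-i}_{\mathrm{cont}}(\widetilde{K_p},I_{CM}\tilde H^{d,(h)}_{\mathfrak m}\widehat\otimes D_h)$. This yields a $G_\QQ$-stable extension $0\to Y\to X\to Z\to 0$ with $\iota_{Sen}(\mathcal I_{Sen})^nY=0$ and $\mathcal I_{Sen}^nZ=0$.

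Since $Y$ need not be $A^{(r)}$-stable, one works instead with the image $B_X$ of $R^{(r)}_{\bar D}\widehat\otimes\CC_p[G_\QQ]$, its ideals $I_Y,I_Z$ with $(I_Y\cap I_Z)^2=0$, and the surjection $B_X\to\End(V)$ at a strongly irreducible point with non-scalar inertia. Propositions \ref{prop-prVIsne} and \ref{prop-A'IsenInsisim} give $a_Z\in I_Z$ and $a_Y\in I_Y$ both mapping to $1$, contradicting the nilpotence of $a_Ya_Z$. The $A^{(r)}$-action enters only through $X$ itself, to identify its fibre with $\End(\oInd^{\QQ}_{F}\rho_f)$. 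So the ideal-containment remark you single out as the main obstacle is not what the proof actually needs.
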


As explained in \cite[\S 1.5]{CE-2012}, by considering the Borel-Serre compactification $\overline{\mathrm{Sh}_{K^pK_p}(\CC)}$ of $\mathrm{Sh}_{K^pK_p}(\CC)$, we get a Hecke-equivariant long exact sequence
\[\cdots\to \tilde{H}^{i}_{c}(K^p)_{\QQ_p} \to \tilde{H}^{i}(K^p)_{\QQ_p}\to \tilde{H}^i(\partial)_{\QQ_p}\to\cdots\]
where $\tilde{H}^i(\partial)_{\QQ_{p}}:=\left(\varprojlim_n\varinjlim_{K'_p\subseteq \mathrm{G}(\QQ_p)} H_c^i(\overline{\Sh_{K^pK'_p}(\CC)}\setminus \Sh_{K^pK'_p}(\CC),\ZZ/p^n)\right)\otimes\QQ_p$ is the boundary (rational) completed cohomology. 

\begin{lem} \label{lem-Insikillboundary}
For $i\geq0$, $I_{nsi,\TT}$ annihilates $\tilde{H}^i(\partial)_{\QQ_{p}}$.
\end{lem}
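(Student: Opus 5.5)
We want to show that $I_{nsi,\TT}$ kills the boundary completed cohomology $\tilde{H}^i(\partial)_{\QQ_p}$ for every $i$. The plan is to describe the boundary of the Borel--Serre compactification in terms of the Borel subgroup $\mathrm{B}=\mathrm{Res}_{F/\QQ}\mathrm{B}_2$. Then the Hecke action, and hence the Galois determinant $D$ of Theorem \ref{thm-big T} restricted to boundary eigensystems, is seen to be reducible.

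First, recall that the boundary $\overline{\Sh_{K^pK'_p}(\CC)}\setminus \Sh_{K^pK'_p}(\CC)$ is a finite disjoint union, indexed by $\mathrm{B}(\QQ)\backslash \mathrm{G}(\mA_f)/K^pK'_p$, of locally symmetric spaces for the Levi $\mathrm{T}=\mathrm{Res}_{F/\QQ}\mathbb{G}_m^2$. Each component fibres over a torus arithmetic quotient, with nilmanifold fibres for the unipotent radical. Consequently the spherical Hecke algebra acts on the boundary cohomology at each finite level through the Satake transform $\TT\to \ZZ_p[\mathrm{T}(\mA_f^S)/\mathrm{T}(\widehat{\mO}^S)]$ followed by the action on cohomology of $\mathrm{T}$-arithmetic quotients. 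This is the usual constant-term description, after twisting by the unipotent cohomology; in the completed setting it appears in Newton--Thorne \cite{NT-2016} and \cite[\S 1.5]{CE-2012}. Passing to the inverse and direct limits, we will conclude that $\TT$ acts on $\tilde{H}^i(\partial)_{\QQ_p}$ through a quotient $\TT_\partial$. This quotient should be topologically generated by elements satisfying
\[T_v=\chi_1(\varpi_v)+\Nm(v)\chi_2(\varpi_v),\qquad \Nm(v)S_v=\Nm(v)\chi_1\chi_2(\varpi_v)\]
(up to the normalization of Theorem \ref{thm-big T}) for ``universal'' characters $\chi_1,\chi_2$ valued in the completed group algebra of the ray class group quotient.

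Second, we need the determinant $D$ modulo $\ker(\TT\to\TT_\partial)$. By Chebotarev and the uniqueness of determinants determined by Frobenius characteristic polynomials (\cite[Ex. 2.20]{Chenevier-2014}), this should be $\chi_1\oplus\chi_2(\epsilon^{-1})$, a sum of two characters of $G_{F,S}$ over $\TT_\partial[1/p]$. Here the characters come from class field theory applied to the universal characters, which are Galois characters since the torus quotients are profinite completions of ray class groups. Hence every point $x$ of $\Spec\TT_\partial[1/p]$ has $\rho_x$ reducible. In particular $\rho_x$ is reducible after restriction to any quadratic $G_E$, so $x$ lies in the non-strongly-irreducible locus. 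Therefore $I_{nsi,\TT}$ maps into the nilradical of $\TT_\partial[1/p]$. To get that $I_{nsi,\TT}$ itself annihilates, rather than just a power of it, we use reducedness: the image of $\TT$ in $\End(\tilde{H}^i(\partial)_{\QQ_p})$ is a quotient of a completed group algebra of an abelian $p$-adic analytic group $Z$, up to finite index, tensored with $\QQ_p$. The rings $\ZZ_p[[Z]][1/p]$ are reduced, and the boundary cohomology is a module over them through which the Hecke action factors. So an element vanishing at all points acts by zero, once we know the action on $\tilde{H}^i(\partial)_{\QQ_p}$ factors through $\ZZ_p[[Z]][1/p]$ itself, not a nonreduced quotient.

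The main obstacle is this last point, combined with making the constant-term description precise in the completed limit. The unipotent fibres contribute cohomology with possibly nontrivial $\mathrm{T}$-action, and the boundary strata for various $K'_p$ must be compatible. I would first write $\tilde{H}^i(\partial)$ as a finite sum of $\Ind_{\mathrm{B}(\QQ_p)}^{\mathrm{G}(\QQ_p)}$ of completed cohomology of the $\mathrm{B}$-strata. Then I would use that the completed cohomology of the torus part is $C(Z,\QQ_p)$-like, on which the Hecke operators act through $Z$ by translations, and show that the unipotent cohomology only twists by algebraic characters. If reducedness proves troublesome, a fallback is to prove only that a power $I_{nsi,\TT}^m$ annihilates. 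That weaker statement still suffices for Theorem \ref{thm-belmidcnsi}, since it requires only some power $n$.
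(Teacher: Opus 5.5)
Your proposal is correct and follows the same route as the paper. The boundary completed cohomology is induced from the completed cohomology of the Levi $\mathrm{M}=\mathrm{Res}_{F/\QQ}(\mathbb{G}_m\times\mathbb{G}_m)$ of the unique (up to conjugacy) proper parabolic \cite[\S 1.5]{CE-2012}, so the Hecke action factors through the Satake transform (the paper cites the proof of \cite[Thm.~4.2]{NT-2016}, especially Lemma 4.6 there) and the determinant on the boundary becomes a sum of two characters.

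The paper then passes straight from ``reducible determinant'' to ``$I_{nsi,\TT}$ annihilates'' without discussing the nilradical point you raise. Your reducedness argument over $\ZZ_p[[Z]][1/p]$ handles it. The unipotent twist you worry about in fact disappears, since the positive-degree cohomology of the nilmanifold fibres dies in the $p$-power limit. Your fallback of killing the boundary by a power of $I_{nsi,\TT}$ is in any case all that Theorem \ref{thm-belmidcnsi} uses.
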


\begin{proof}
$\mathrm{G}$ has only one proper rational parabolic subgroup up to $\mathrm{G}(\QQ)$-conjugacy. Fix such a parabolic subgroup $\mathrm{P}$ and let $\mathrm{M}$ denote a Levi subgroup. By the discussion at the end of  \cite[\S 1.5]{CE-2012}, the boundary completed cohomology $\tilde{H}_{\QQ_p}^\bullet(\partial)$, after passing to the direct limit over all $K^p$, can be expressed as an induced representation of the completed cohomology $\mathrm{M}\cong \mathrm{Res}_{F/\QQ}\mathbb{G}_m\times\mathbb{G}_m$ from $\mathrm{P}(\mA_f)$ to $\mathrm{G}(\mA_f)$. Consequently the Hecke action on $\tilde{H}_{\QQ_p}^\bullet(\partial)$ factors through the Hecke action on the completed cohomology $\mathrm{M}$ via the Satake transform from $\mathrm{G}$ to $\mathrm{M}$. We refer to the proof of Theorem 4.2 of \cite{NT-2016} and especially Lemma 4.6 for more details. (The reference considers cohomology of $\FF_p$-coefficients but the formulas there work equally for $\ZZ/p^n$.)

It follows that the associated determinant of $G_F$ on the image of $\TT$ in $\End(\tilde{H}_{\QQ_p}^\bullet(\partial))$ is reducible, hence $I_{nsi,\TT}$ annihilates $\tilde{H}^i(\partial)_{\QQ_{p}}$.
\end{proof}

\begin{proof}[Proof of Theorem \ref{thm-belmidcnsi}]
We do backward induction on $i$. Suppose that $I_{nsi,\TT}^n$ annihilates $\tilde{H}^{k}(K^p)_{\QQ_p}$ and $\tilde{H}_{c}^{k}(K^p)_{\QQ_p}$ for $i<k<d$ and some $n$. By Lemma \ref{lem-Insikillboundary}, it suffices to show that $I_{nsi,\TT}^n$ annihilates  $\tilde{H}_c^{i}(K^p)_{\QQ_p}$ for some $n$. In fact, it is enough to show that for each $h\geq 1$,  $\tilde{H}^{i}_c(K^p)_{\QQ_p}^{(h)}$ is annihilated by a power of $I_{nsi,\TT}$. Indeed, let $\tilde{H}^{i}(K^p)_{\QQ_p}[I_{nsi,\TT}^{\infty}]\subseteq \tilde{H}^{i}(K^p)_{\QQ_p}$ be the subspace of elements annihilated by a power of $I_{nsi,\TT}$. As a  $\mathrm{G}(\QQ_p)$-subrepresentation of the admissible representation $\tilde{H}^{i}(K^p)_{\QQ_p}$, it is a closed subspace by \cite{ST-2002}. On the other hand, 
\[\tilde{H}^{i}(K^p)_{\QQ_p}[I_{nsi,\TT}^{\infty}]=\bigcup_{n} \tilde{H}^{i}(K^p)_{\QQ_p}[I_{nsi,\TT}^{n}]\]
where $\tilde{H}^{i}(K^p)_{\QQ_p}[I_{nsi,\TT}^{n}]\subseteq \tilde{H}^{i}(K^p)_{\QQ_p}$ denotes the closed subspace annihilated by $I_{nsi,\TT}^n$. By the Baire category theorem, we have $\tilde{H}^{i}(K^p)_{\QQ_p}[I_{nsi,\TT}^{\infty}]= \tilde{H}^{i}(K^p)_{\QQ_p}[I_{nsi,\TT}^{n}]$ for some $n$.  If all of the locally analytic vectors of $ \tilde{H}^{i}(K^p)_{\QQ_p}$ belong to $\tilde{H}^{i}(K^p)_{\QQ_p}[I_{nsi,\TT}^{\infty}]=\tilde{H}^{i}(K^p)_{\QQ_p}[I_{nsi,\TT}^{n}]$, the density result of Schneider-Teitelbaum \cite{ST-2003} implies that $\tilde{H}^{i}(K^p)_{\QQ_p}$ is  annihilated by  $I_{nsi,\TT}^n$ as well.

Fix $h\geq 1$ in the rest of the proof. Consider the spectral sequence in Corollary \ref{cor-Poincare-hC}
\begin{eqnarray} \label{eqn-laPD}
E_2^{i,j}=H^i_{\mathrm{cont}}(\widetilde{K_p}, \tilde{H}^{j,(h)} \widehat\otimes_{\QQ_p} \CC_p \widehat\otimes_{\QQ_p} D_h)(d)\Longrightarrow (\tilde{H}^{2d-i-j,(h)}_c)^*\widehat\otimes_{\QQ_p} \CC_p
\end{eqnarray}
with $E_2^{>d,j}=0$ and $E_2^{i,d-j}=0$ if $i<3j$. The $Z[\theta_{Sen}]$-action on $\tilde{H}^{j,(h)} \widehat\otimes_{\QQ_p} \CC_p $ endows each $E_2^{i,j}$ with a $Z[\theta_{Sen}]$-module structure. It follows from Proposition \ref{prop-Seneq} that all the connecting homomorphisms in the spectral sequence are $Z[\theta_{Sen}]$-linear.

By our knowledge of the vanishing of $E_2^{i,j}$, there is a natural map (coming from the map $(\tilde{H}^{i,(h)}_c)^*\widehat\otimes_{\QQ_p} \CC_p \to E_\infty^{d-i,d}$)
\[\phi_i:(\tilde{H}^{i,(h)}_c)^*\widehat\otimes_{\QQ_p} \CC_p\to E_2^{d-i,d}=H^{d-i}_{\mathrm{cont}}(\widetilde{K_p}, \tilde{H}^{d,(h)} \widehat\otimes_{\QQ_p} \CC_p \widehat\otimes_{\QQ_p} D_h)(d) \]
such that  the kernel of $\phi_i$ is filtered by  subquotients of 
\[E_2^{d-i+j,d-j}=H^{d-i+j}_{\mathrm{cont}}(\widetilde{K_p}, \tilde{H}^{d-j,(h)} \widehat\otimes_{\QQ_p} \CC_p \widehat\otimes_{\QQ_p} D_h)(d),~0<j\leq (d-i)/2<d-i\]
and the  cokernel of $\phi_i$ is filtered by subquotients of 
\[E_2^{d-i+1+j,d-j}=H^{d-i+1+j}_{\mathrm{cont}}(\widetilde{K_p}, \tilde{H}^{d-j,(h)} \widehat\otimes_{\QQ_p} \CC_p \widehat\otimes_{\QQ_p} D_h)(d),~0<j.\]
By  Corollary \ref{cor-LPvan}, we see that

\begin{lem} \label{lem-cokerphii}
 The cokernel of $\phi_i$ is annihilated by a power of $\mathcal{I}_{Sen}$. 
 \end{lem}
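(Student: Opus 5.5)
The plan is to show that every graded piece of the filtration on the cokernel of $\phi_i$ is killed by a power of $\mathcal{I}_{Sen}$, and then to multiply these powers together. The relevant pieces are subquotients of
\[E_2^{d-i+1+j,d-j}=H^{d-i+1+j}_{\mathrm{cont}}(\widetilde{K_p}, \tilde{H}^{d-j,(h)} \widehat\otimes_{\QQ_p} \CC_p \widehat\otimes_{\QQ_p} D_h)(d),\qquad 0<j.\]
The vanishing $E_2^{>d,\bullet}=0$ from Corollary \ref{cor-Poincare-hC} leaves only finitely many such $j$. Each of these involves the cohomology $\tilde{H}^{d-j}$ strictly below the middle degree.

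First, I take $n$ from Corollary \ref{cor-LPvan}, so that $\mathcal{I}_{Sen}^n$ annihilates $\tilde{H}^{d-j}(K^p)^{\la}\widehat\otimes_{\QQ_p}\CC_p$. By Proposition \ref{prop-(h)-an} the space $\tilde{H}^{d-j,(h)}$ lies inside the locally analytic vectors, so $\mathcal{I}_{Sen}^n$ also kills $\tilde{H}^{d-j,(h)}\widehat\otimes_{\QQ_p}\CC_p$. The $Z[\theta_{Sen}]$-module structure on $E_2^{i,j}$ is defined through the coefficient module $\tilde{H}^{j,(h)}\widehat\otimes\CC_p$, and $\theta_{Sen}$ and $Z(U(\mathfrak{g}))$ act $\widetilde{K_p}$-equivariantly on it: $\theta_{Sen}$ by Theorem \ref{thm-Sendiag01}, and $Z(U(\mathfrak{g}))$ because it is central. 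Functoriality of $H^\bullet_{\mathrm{cont}}(\widetilde{K_p}, -\widehat\otimes D_h)$ in the coefficients then shows that $\mathcal{I}_{Sen}^n$ annihilates the whole $E_2^{d-i+1+j,d-j}$.

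Second, the later pages $E_r$ and $E_\infty$ are subquotients of $E_2$. Proposition \ref{prop-Seneq} makes the differentials $Z[\theta_{Sen}]$-linear, so every subquotient inherits the module structure and is again killed by $\mathcal{I}_{Sen}^n$.

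Third, the cokernel of $\phi_i$ carries a finite filtration, of length $m\le d$, whose graded pieces are these subquotients, and $\phi_i$ is $Z[\theta_{Sen}]$-linear. An ideal that kills every graded piece of a finite filtration kills the whole module after raising it to the $m$-th power. Hence $\mathcal{I}_{Sen}^{nm}$ kills $\operatorname{coker}\phi_i$.

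The main obstacle is justifying the $Z[\theta_{Sen}]$-linearity. The $Z(U(\mathfrak{g}))$-action on the abutment and the sign twist coming from Proposition \ref{prop-Seneq} (namely $-\theta$, which corresponds to $\iota_{Sen}$) must not disturb the action on the $E_2$ side. I only need linearity among the $E_2$-terms and their subquotients, where the structure is defined uniformly, so the twist should not enter this lemma. It would matter only later, when the argument is transported to $(\tilde{H}^{i,(h)}_c)^*$. If $\mathcal{I}_{Sen}$ is not a priori stable under the identifications used, I would fall back on the fact that $\mathcal{I}_\mathfrak{h}^n$ kills the coefficients as a $U(\mathfrak{h})$-module and intersect with $Z[\theta_{Sen}]$.
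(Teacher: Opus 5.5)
Your proposal is correct and is essentially the paper's argument. The cokernel of $\phi_i$ is filtered by subquotients of $E_2^{d-i+1+j,d-j}$ with $j>0$. Each of these is killed by $\mathcal{I}_{Sen}^n$ through its coefficients $\tilde{H}^{d-j,(h)}\widehat\otimes_{\QQ_p}\CC_p$ via Corollary \ref{cor-LPvan}, and the differentials are $Z[\theta_{Sen}]$-linear by Proposition \ref{prop-Seneq}. You just spell out the finite-filtration step, which the paper leaves implicit.
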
 
 
The induction hypothesis implies that $\ker\phi_i$ is annihilated by a power of $I_{nsi,\TT}$. By the Hecke equivariance of the spectral sequence in Proposition \ref{prop-Poincaredualss} and  Remark \ref{rmk-iotaT}, it remains to show that a power of $I_{nsi,\TT}$ annihilates  the image of $\phi_i$. This is actually true for the target of $\phi_i$ by the following proposition.
\end{proof}

\begin{prop}
$H^{d-i}_{\mathrm{cont}}(\widetilde{K_p}, \tilde{H}^{d,(h)} \widehat\otimes_{\QQ_p} \CC_p \widehat\otimes_{\QQ_p} D_h)(d)$ is annihilated by a power of $I_{nsi,\TT}$. 
\end{prop}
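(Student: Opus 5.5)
The plan is to show that $M:=H^{d-i}_{\mathrm{cont}}(\widetilde{K_p}, \tilde{H}^{d,(h)} \widehat\otimes_{\QQ_p} \CC_p \widehat\otimes_{\QQ_p} D_h)(d)$ is supported on the non-strongly irreducible locus, as a module over $R^{(r)}_{\bar{D}}\widehat\otimes_{\QQ_p}\CC_p$ (through $\TT$). Noetherianity will then upgrade this to annihilation by a power of $I_{nsi,\TT}$.

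\textbf{Structure on $M$.} First I would reduce to $I_{CM}\tilde{H}^{d,(h)}$. The quotient $\tilde{H}^{d,(h)}/I_{CM}\tilde{H}^{d,(h)}$ is killed by $I_{CM}\supseteq I_{nsi}$ (up to radicals), hence by a power of $I_{nsi}$. The functor $H^{d-i}_{\mathrm{cont}}(\widetilde{K_p},-\widehat\otimes \CC_p\widehat\otimes D_h)$ is computed as $\Ext^{d-i}_{D_h}(-^*,D_h)$ by Proposition \ref{prop-ext vs Hcont}, so the long exact sequence reduces the claim to $W=I_{CM}\tilde{H}^{d,(h)}$. On $W$, Theorem \ref{thm-plecticactnonCM} gives a continuous $A^{(r)}$-action commuting with $\widetilde{K_p}$. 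Hence $A'\widehat\otimes_{\QQ_p}\CC_p$ acts on $M':=H^{d-i}_{\mathrm{cont}}(\widetilde{K_p}, W\widehat\otimes \CC_p \widehat\otimes D_h)$, through the transpose under the $\Ext$ description. By Theorem \ref{thm-U(h)factorR}, $Z[\theta_{Sen}]$ acts through $\phi$. Keeping track of the signs in Proposition \ref{prop-Seneq} and Remark \ref{rmk-iotaZ}, the natural $Z[\theta_{Sen}]$-action on $M'$ coming from the $\Ext$/dual side is the one composed with $\iota_{Sen}$.

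\textbf{Key input: $\iota_{Sen}(\mathcal{I}_{Sen})^n$ kills $M'$.} I would try to obtain this from the Poincar\'e duality spectral sequence run with the roles of $\tilde{H}$ and $\tilde{H}_c$ interchanged. In that dual version, $\Ext^{d-i}(\tilde{H}_{d})$ with $d-i>0$ appears as a term that differentials connect only to cohomology of degree $<d$ (or to compactly supported cohomology of degree $<d$). Those groups are killed by a power of $\mathcal{I}_{Sen}$ by Corollary \ref{cor-LPvan}, or by its compact-support analogue from \cite{Lan-Pan}. Transporting this annihilation through the sign $\iota_{Sen}$ would give the claim. This is the step I expect to be the main obstacle, because the positive-degree group must be related to something below middle degree without circularity. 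If this fails, I would instead compute $H^{d-i}$ as relative Lie algebra cohomology of $\mathfrak{g}$ with values in $W\widehat\otimes D_h$. I would then use that the $U(\mathfrak{h})$-action is $\mathfrak{g}$-equivariant, so that positive-degree classes only see non-sufficiently-regular infinitesimal characters, in the spirit of \cite[Th.1.1.2]{Lan-Pan}.

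\textbf{From the key input to $I_{nsi}$.} Let $f$ be a point with $\rho_f$ strongly irreducible. If $\rho_f$ has equal $\tau$-Hodge-Tate-Sen weights for every $\tau$, then Lemma \ref{lem-eqHTwIsen'} gives an element of $\iota_Z(\mathcal{I}_{\mathfrak{h}})\cap Z(U(\mathfrak{g}))$ that is a unit modulo $\ker f$, since $Z(U(\mathfrak{g}))$ acts through $R^{(r)}_{\bar{D}}\widehat\otimes\CC_p$. Hence $M'_f=0$. Otherwise, $(A'\widehat\otimes\CC_p)/\ker f\cong\End(\Ind^{plec}_F\rho_f)$ is simple. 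By Proposition \ref{prop-A'IsenInsisim}, the image of $\phi(\iota_{Sen}(\mathcal{I}_{Sen})^n)$ is nonzero there, so the two-sided ideal it generates is everything. Therefore $M'/\ker f M'=0$, and Nakayama gives $M'_f=0$. Since $M'$ is finitely generated over $D_h$ and the Hecke action factors through a Noetherian quotient of $R^{(r)}_{\bar{D}}\widehat\otimes\CC_p$, the support statement yields a single $n$ with $I_{nsi}^nM'=0$. Passing back to $\TT$ and $I_{nsi,\TT}$ via Remark \ref{rmk-iotaT} finishes the proof.
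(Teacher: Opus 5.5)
\textbf{There is a genuine gap at your ``key input'', which you yourself flagged as the main obstacle.} The Poincar\'e duality spectral sequence does not show that a single ideal $\iota_{Sen}(\mathcal{I}_{Sen})^n$ kills $M=E_2^{d-i,d}$ in Corollary~\ref{cor-Poincare-hC}.

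Since $\tilde{H}_{>d,\QQ_p}=0$, no differentials enter $E^{d-i,d}$. So $E_\infty^{d-i,d}$ is a \emph{sub}object of $E_2^{d-i,d}$, and it is a quotient of the abutment $(\tilde{H}^{i,(h)}_c)^*\widehat\otimes_{\QQ_p}\CC_p$. There, by Proposition~\ref{prop-Seneq} and Remark~\ref{rmk-iotaZ}, the action is the $\iota_{Sen}$-twisted one. After comparing $\tilde H^i_c$ with $\tilde H^i$ at the cost of a factor $I_{nsi}$ (Lemma~\ref{lem-Insikillboundary}; no compact-support analogue of Corollary~\ref{cor-LPvan} is used), this piece is killed by $\iota_{Sen}(\mathcal{I}_{Sen})^n$.

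The quotient $E_2^{d-i,d}/E_\infty^{d-i,d}$ behaves differently. It is filtered by the images of the outgoing differentials into $E_r^{d-i+r,d-r+1}$, $r\geq 2$. These differentials are linear for the \emph{untwisted} action on the coefficients $\tilde{H}^{d-r+1,(h)}$, so this quotient is killed by $\mathcal{I}_{Sen}^n$, not by $\iota_{Sen}(\mathcal{I}_{Sen})^n$. Applying $\iota_{Sen}$ does not convert one statement into the other. All you actually get, after multiplying by $I_{nsi}$, is an extension $0\to Y\to X\to Z\to 0$ with $\iota_{Sen}(\mathcal{I}_{Sen})^nY=0$ and $\mathcal{I}_{Sen}^nZ=0$. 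Your fallback via relative Lie algebra cohomology has no mechanism behind it: \cite[Th.1.1.2]{Lan-Pan} controls $\tilde{H}^{<d}$, not positive-degree $D_h$-cohomology of $\tilde{H}^{d,(h)}$.

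\textbf{With only this extension, your final step breaks down.} Modulo $\ker f$, the images of $\mathcal{I}_{Sen}^n$ and $\iota_{Sen}(\mathcal{I}_{Sen})^n$ are polynomials in the Sen operator, and their product can vanish. Already for $d=1$, at a point whose two Hodge--Tate--Sen weights differ by $1$, the elements $x-y$ and $\iota_{\mathfrak h}(x-y)=2-x+y$ vanish on complementary Sen eigenspaces. So the product annihilator is useless. Nor can you pass to the two-sided ideals these elements generate in $A'\widehat\otimes_{\QQ_p}\CC_p$, because $Y$ is only known to be $R^{(r)}_{\bar D}[G_\QQ]$-stable, not $A^{(r)}$-stable.

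The paper gets around this as follows.
\begin{enumerate}
\item Choose $k$ so that $X=I_{nsi}^kH^{d-i}_{\mathrm{cont}}(\widetilde{K_p},I_{CM}\tilde{H}^{d,(h)}_{\mathfrak m}\widehat\otimes_{\QQ_p} D_h)$ has no $I_{nsi}$-torsion. This is also what transfers the spectral-sequence information from $M$ to your $M'$, since $M'\to M$ is injective only up to $I_{CM}$-torsion.
\item Let $B_X$ be the image of $R^{(r)}_{\bar D}\widehat\otimes_{\QQ_p}\CC_p[G_\QQ]$ in $\End(X)$. Let $I_Y,I_Z$ be the two-sided ideals of $B_X$ killing $Y$ and $Z$; then $(I_Y\cap I_Z)^2=0$.
\item Map $B_X$ onto the simple algebra $\End_{\CC_p}(V)$, where $V\subseteq\oInd^{\QQ}_F\rho_f$ is the largest $\Lie$-irreducible $G_\QQ$-subrepresentation. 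This $V$ carries all the Hodge--Tate--Sen weights.
\item Propositions~\ref{prop-prVIsne} and~\ref{prop-A'IsenInsisim} then produce $a_Z\in I_Z$ and $a_Y\in I_Y$ both mapping to $1$. Then $a_Ya_Z$ is nilpotent yet maps to $1$, a contradiction.
\end{enumerate}
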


\begin{proof}
We localize $\phi_i$ at the maximal ideal $\mathfrak{m}$ and get
\[\phi_{i,\mathfrak{m}}:(\tilde{H}^{i,(h)}_{c,\mathfrak{m}^\vee})^*\widehat\otimes_{\QQ_p} \CC_p\to H^{d-i}_{\mathrm{cont}}(\widetilde{K_p}, \tilde{H}^{d,(h)}_{\mathfrak{m}} \widehat\otimes_{\QQ_p} \CC_p \widehat\otimes_{\QQ_p} D_h)(d) \]
by Proposition \ref{prop-Poincaredualss}, where $\mathfrak{m}^\vee=\iota_{\TT}(\mathfrak{m})$. Consider the natural map 
\[(\tilde{H}^{i,(h)}_{\mathfrak{m}^\vee})^*\widehat\otimes_{\QQ_p} \CC_p\to (\tilde{H}^{i,(h)}_{c,\mathfrak{m}^\vee})^*\widehat\otimes_{\QQ_p} \CC_p\]
whose  cokernel is annihilated by $I_{nsi}$ by Lemma \ref{lem-Insikillboundary} and the composite map
\[\phi':(\tilde{H}^{i,(h)}_{\mathfrak{m}^\vee})^*\widehat\otimes_{\QQ_p} \CC_p\to (\tilde{H}^{i,(h)}_{c,\mathfrak{m}^\vee})^*\widehat\otimes_{\QQ_p} \CC_p
\xrightarrow{\phi_{i,\mathfrak{m}}} H^{d-i}_{\mathrm{cont}}(\widetilde{K_p}, \tilde{H}^{d,(h)}_{\mathfrak{m}} \widehat\otimes_{\QQ_p} \CC_p \widehat\otimes_{\QQ_p} D_h)(d).\]
We endow the source of $\phi'$ with a $Z[\theta_{Sen}]$-module structure as follows: there is a natural isomorphism
\[(\tilde{H}^{i,(h)}_{\mathfrak{m}^\vee})^*\widehat\otimes_{\QQ_p} \CC_p\cong \Hom_{\CC_p,\mathrm{cont}}(\tilde{H}^{i,(h)}_{\mathfrak{m}^\vee}\widehat\otimes_{\QQ_p} \CC_p,\CC_p)
\]
and $Z[\theta_{Sen}]$ acts on $\tilde{H}^{i,(h)}_{\mathfrak{m}^\vee}\widehat\otimes_{\QQ_p} \CC_p$ via the composite of its natural action and the involution $\iota_{Sen}$, cf. Definition \ref{dfn-iotaSen}. We claim that $\phi'$ is $Z[\theta_{Sen}]$-linear:
\begin{itemize}
\item By Proposition \ref{prop-Poincaredualss}, $\phi'$ is $K_p$-equivariant where $g\in K_p$ acts on $\tilde{H}^{i,(h)}_{\mathfrak{m}^\vee}$ via $g^{-1}$ to turn $(\tilde{H}^{i,(h)}_{\mathfrak{m}^\vee})^*$ into a left $K_p$-representation. Consequently the $Z(U(\mathfrak{g}))$-action is changed by the involution $\iota_Z$ by Remark \ref{rmk-iotaZ}.
\item For the action of $\diag(0,1)$, using the notation before Proposition \ref{prop-Seneq}, it acts on $E_2^{i,j}$ by $\theta^{i,j}-d$, where $d$ comes from the Tate twist $(d)$, and acts on $(\tilde{H}^{i,(h)}_{\mathfrak{m}^\vee})^*\widehat\otimes_{\QQ_p} \CC_p$ via $\theta^i$. By Proposition \ref{prop-Seneq}, $\phi'$ is equivariant with respect to $\theta^i$ on the source and $-\theta^{d-i,d}$ on the target. Our claim follows as $\iota_{Sen}(\diag(0,1))=-\diag(0,1)-d$.
\end{itemize} 

Therefore Corollary \ref{cor-LPvan} implies that the image of $\phi'$ is annihilated by $\iota_{Sen}(\mathcal{I}_{Sen})^n$ for some $n$. By enlarging $n$, we may assume that the cokernel of $\phi_i$ is annihilated by $\mathcal{I}_{Sen}^n$ by Lemma \ref{lem-cokerphii}. We thus prove the following (where the $I_{nsi}$ comes from the difference between $\phi'$ and $\phi_{i,\mathfrak{m}}$)
\begin{lem}
 Let $X'= I_{nsi}H^{d-i}_{\mathrm{cont}}(\widetilde{K_p}, \tilde{H}^{d,(h)}_{\mathfrak{m}} \widehat\otimes_{\QQ_p} \CC_p \widehat\otimes_{\QQ_p} D_h)(d)$ and $Y':=X'\cap \im(\phi')$  and $Z'=X'/Y'$. Then $\iota_{Sen}(\mathcal{I}_{Sen})^nY'=0$ and $\mathcal{I}_{Sen}^nZ'=0$.
\end{lem}

For $k\geq 0$, consider the natural map
\[\varphi_k:I_{nsi}^kH^{d-i}_{\mathrm{cont}}(\widetilde{K_p}, I_{CM}\tilde{H}^{d,(h)}_{\mathfrak{m}} \widehat\otimes_{\QQ_p} \CC_p \widehat\otimes_{\QQ_p} D_h)(d) 
\to
I_{nsi}^k H^{d-i}_{\mathrm{cont}}(\widetilde{K_p}, \tilde{H}^{d,(h)}_{\mathfrak{m}} \widehat\otimes_{\QQ_p} \CC_p \widehat\otimes_{\QQ_p} D_h)(d) \]
induced by 
\[\Phi:H^{d-i}_{\mathrm{cont}}(\widetilde{K_p}, I_{CM}\tilde{H}^{d,(h)}_{\mathfrak{m}} \widehat\otimes_{\QQ_p} D_h)
\to
H^{d-i}_{\mathrm{cont}}(\widetilde{K_p}, \tilde{H}^{d,(h)}_{\mathfrak{m}} \widehat\otimes_{\QQ_p} D_h).\]
Note that $I_{CM}\cdot\ker\Phi=0$ and  \( I_{nsi}\subseteq I_{CM} \). By Proposition \ref{prop-ext vs Hcont}, $H^{d-i}_{\mathrm{cont}}(\widetilde{K_p}, I_{CM}\tilde{H}^{d,(h)}_{\mathfrak{m}} \widehat\otimes_{\QQ_p} D_h)$ is a  finitely generated $D_h$-module.  Because $D_h$ is Noetherian, we can choose $k\geq 1$ sufficiently large such that $X:=I_{nsi}^k H^{d-i}_{\mathrm{cont}}(\widetilde{K_p}, I_{CM}\tilde{H}^{d,(h)}_{\mathfrak{m}} \widehat\otimes_{\QQ_p} D_h)$ has no non-zero $I_{nsi}$-torsion. Then $\varphi_k$ is injective. Since $\Phi$ is clearly $Z[\theta_{Sen}]$-linear, if we let $Y:=X\cap Y'$ and $Z:=X/Y\subseteq Z'$, 
we find that
\[\iota_{Sen}(\mathcal{I}_{Sen})^n Y= \mathcal{I}_{Sen}^n Z=0.\]
The exact sequence
\[0\to Y\to X\to Z\to 0\] is $R_{\bar{D}}^{(r)}[G_{\QQ}]$-linear by the construction. It's enough to show that $X=0$ because $I_{CM}\cdot\mathrm{coker}\Phi=0$.

Suppose that $X\neq 0$. In Theorem \ref{thm-plecticactnonCM}, we constructed an action of $A^{(r)}$ on $I_{CM}\tilde{H}^{d,(h)}$ extending the action of $R_{\bar{D}}[G_{\QQ}]$. Let $A''$ denote the image of $A^{(r)}\widehat\otimes_{\QQ_p}\CC_p$ in $\End(X)$ which is a finitely generated $R_{\bar{D}}^{(r)}\widehat\otimes_{\QQ_p}\CC_p$-module. Because $X$ has no $I_{nsi}$-torsion, there is a homomorphism $f:R_{\bar{D}}^{(r)}\widehat\otimes_{\QQ_p}\CC_p\to \CC_p$ such that $\rho_f$ is strongly irreducible and $A''/(\ker f)\neq 0$. Since $\iota_{Sen}(\mathcal{I}_{Sen})^n  \mathcal{I}_{Sen}^n X=0$, Lemma \ref{lem-eqHTwIsen} and \ref{lem-eqHTwIsen'} imply that $\Lie \rho_f(I_{F_v})\subseteq \mathfrak{gl}_2(\CC_p)$ are not scalars for at least one $p$-adic place $v$ of $F$.  Hence by Proposition \ref{prop-suppA(r)}
\[A''/\ker f\cong \End_{\CC_p}(\oInd^{\QQ}_{F} \rho_f)\]
Consider
\[B_X:=\im(R_{\bar{D}}^{(r)}\widehat\otimes_{\QQ_p}\CC_p[G_{\QQ}]\to \End(X))\subseteq A''\]
 and similarly $B_Y:=\im(R_{\bar{D}}^{(r)}\widehat\otimes_{\QQ_p}\CC_p[G_{\QQ}]\to \End(Y))$ and $B_Z:=\im(R_{\bar{D}}^{(r)}\widehat\otimes_{\QQ_p}\CC_p[G_{\QQ}]\to \End(Z))$. From the $R_{\bar{D}}^{(r)}\widehat\otimes_{\QQ_p}\CC_p [G_{\QQ}]$-linear exact sequence  $0\to Y \to X \to Z\to 0$, we see that
 \begin{enumerate}
 \item $B_Y=B_X/I_Y$ and $B_Z=B_X/I_Z$ for some two-sided  ideals $I_Y$ and $I_Z$ of $B_X$;
 \item $(I_Y\cap I_Z)^2=0$.
 \end{enumerate}
As in the discussion before Proposition \ref{prop-prVIsne}, we get a natural surjective map to a simple algebra
\[\phi_X: B_X\to B_X/\ker f \xrightarrow{pr_V} \End_{\CC_p}(V)\]
where $V$ denotes the $G_{\QQ}$-subrepresentation of $\oInd^{\QQ}_{F} \rho_f$ of the largest dimension.  Since $ \mathcal{I}_{Sen}^n Z=0$, it follows from Proposition \ref{prop-prVIsne}  that there exists an element $a_Z\in I_Z$ such that $\phi_X(a_Z)=1$. Similarly since $\iota_{Sen}(\mathcal{I}_{Sen})^n Y=0$, by Proposition \ref{prop-A'IsenInsisim}, there exists an element $a_Y\in I_Y$ such that $\phi_X(a_Y)=1$. Hence $a_Ya_Z\in I_Y\cap I_Z$ is nilpotent but $\phi_X(a_Ya_Z)=1$. Contradiction.
\end{proof}

\begin{rmk}
One main reason for us to switch from the $A^{(r)}$-action to the $R_{\bar{D}}[G_{\QQ}]$-action is that we decided not to establish the compatibility between the plectic symmetry we constructed and the Poincar\'e duality spectral sequence. Concretely, we do not know whether $Y$ is $A^{(r)}$-stable, although we believe that this should follow from a more careful study of our construction. 
\end{rmk}

\begin{rmk}
If we are able to construct  the full plectic symmetry of  on $\tilde{H}^{d,\la}$ i.e. extend the action of $R_{\bar{D}}[G_{\QQ}]$ to an action of $R_{\bar{D}}[G_{F}^{plec}]$, the same argument would show that a power of the ideal of reducible locus of $\TT$ annihilates $\tilde{H}^{<d}(K^p)_{\QQ_p}$. 
\end{rmk}

\begin{proof}[Proof of Proposition \ref{prop-ext vs Hcont}]
Note that $D_h$ is flat as a left $\Lambda[1/p]$-module.
By choosing a  resolution of the $\Lambda[1/p]$-module $W^*$ by finite free $\Lambda[1/p]$-modules and a standard argument, we reduce to the case $W=C(\widetilde{K^p},\QQ_p)=\Lambda[1/p]^*$ and the first isomorphism becomes clear. 

For the second isomorphism, similarly by choosing a  resolution of $V^*$ by finite free $D_h$-modules, we reduce to the case $V= \mathrm{LA}^{(h)}(\widetilde{K_p})$.

\begin{lem} \label{lem-orbiso}
There is a $\widetilde{K_p}$-equivariant isomorphism
\[\mathrm{LA}^{(h)}(\widetilde{K_p})\widehat\otimes_{\QQ_p} D_h \cong\mathrm{LA}^{(h)}(\widetilde{K_p})\widehat\otimes_{\QQ_p} D_h\]
with $\widetilde{K_p}$ acting on every term except $D_h$ on the right.
\end{lem}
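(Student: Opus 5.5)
The plan is to prove the lemma by the classical "untwisting" trick for induced modules. I would realize both sides as spaces of continuous maps and write down an explicit shearing map. By definition, and as recorded in Proposition \ref{prop-ext vs Hcont}(2), we have $V\widehat\otimes_{\QQ_p} D_h=\Hom_{\mathrm{cont}}(\mathrm{LA}^{(h)}(\widetilde{K_p}),V)$. So with $V=\mathrm{LA}^{(h)}(\widetilde{K_p})$, both sides identify with continuous maps
\[T:\mathrm{LA}^{(h)}(\widetilde{K_p})\to \mathrm{LA}^{(h)}(\widetilde{K_p}),\]
where $\mathrm{LA}^{(h)}$ carries the weak/colimit topology as in Definition \ref{dfn-weakTopology}. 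On the left, $g$ acts diagonally: by left translation on the target, and on $D_h$ through the natural action dual to translation on the source. On the right, only the target is translated.

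Since $\mathrm{LA}^{(h)}(\widetilde{K_p})\widehat\otimes\mathrm{LA}^{(h)}(\widetilde{K_p})$ is $\mathrm{LA}^{(h)}$ of the product $\widetilde{K_p}\times\widetilde{K_p}$, the natural approach is to work with two-variable functions. I would send $\varphi(x,y)$ to $\varphi(x,xy)$ (or $\varphi(x,x^{-1}y)$, depending on conventions) and dualize in the second variable. Concretely, I would define $\Psi(T)$ by the formula $\Psi(T)(f)(x)=T(f(x\,\cdot))(x)$, and define its inverse by the analogous formula with $x^{-1}$. Checking that $\Psi$ intertwines the diagonal action with the action on the target alone is then a formal computation: the translation of the $D_h$-factor is absorbed by the change of variables.

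The main obstacle is analytic, namely showing that the shear $(x,y)\mapsto(x,xy)$ preserves $h$-analyticity on $\widetilde{K_p}\times\widetilde{K_p}$, with bounded norm. I would prove this by appealing to the description of $\mathrm{LA}^{(h)}$ as the dual of $D_{<p^{-r_h}}$ in \cite[\S 4]{ST-2003}. Multiplication $\widetilde{K_p}\times\widetilde{K_p}\to\widetilde{K_p}$ induces the convolution, and the norm $\|\cdot\|_r$ is submultiplicative; dually this says that $m^*:\mathrm{LA}^{(h)}(\widetilde{K_p})\to\mathrm{LA}^{(h)}(\widetilde{K_p}\times\widetilde{K_p})$ is bounded. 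The shear is $(\mathrm{pr}_1,m)$, so it is bounded as well, and its inverse is handled the same way using the inversion $g\mapsto g^{-1}$, which preserves $D_h$ as noted before Proposition \ref{prop-ext vs Hcont}.

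Finally, I would check continuity for the weak topology on $D_h^\circ$. Pointwise convergence is preserved because $\Psi$ is given by evaluating $T$ on translated vectors, and translation is an isometry of $\mathrm{LA}^{(h)}(\widetilde{K_p})^\circ$. This gives the required $\widetilde{K_p}$-equivariant topological isomorphism.
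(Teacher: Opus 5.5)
Your shear is the same map as the paper's. View elements of $\mathrm{LA}^{(h)}(\widetilde{K_p})\widehat\otimes_{\QQ_p} D_h$ as $D_h$-valued functions $F$ on $\widetilde{K_p}$. Then, up to conventions, $\Psi^{\pm 1}$ is $F\mapsto \bigl(x\mapsto x^{\pm1}F(x)\bigr)$, i.e.\ left multiplication by $Id_{\widetilde{K_p}}^{\pm1}$, where $Id_{\widetilde{K_p}}(g)=g\in D_h$.

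The problem is the analytic step. Boundedness of $\mathrm{pr}_1^*$ and $m^*$ does not imply that $(\mathrm{pr}_1,m)^*$ preserves $\mathrm{LA}^{(h)}(\widetilde{K_p}\times\widetilde{K_p})$. Indeed, $(\mathrm{pr}_1,m)^*(a\otimes b)=\mathrm{pr}_1^*(a)\cdot m^*(b)$ is a pointwise product of functions on $\widetilde{K_p}\times\widetilde{K_p}$. Dually, pushforward along $(\mathrm{pr}_1,m)$ is the coproduct $g\mapsto g\otimes g$ of $D_h$ on the first factor followed by convolution on the last two. Submultiplicativity of $\|\cdot\|_r$ controls only the convolution.

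The same issue is built into your formula $\Psi(T)(f)(x)=T(f(x\,\cdot))(x)$, because evaluating at the same $x$ is restriction to the diagonal. If $x\mapsto f(x\,\cdot)$ is expanded as $\sum_k a_k\otimes g_k$ in $\mathrm{LA}^{(h)}\widehat\otimes\mathrm{LA}^{(h)}$, then $\Psi(T)(f)=\sum_k a_k\cdot T(g_k)$. Your weak-topology continuity argument also runs through this expansion.

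So the missing ingredient is that $\mathrm{LA}^{(h)}$ is closed under pointwise multiplication with bounded norm. This is exactly the first step of the paper's proof. There, the multiplication $C(\widetilde{K_p},\QQ_p)\widehat\otimes C(\widetilde{K_p},\QQ_p)\to C(\widetilde{K_p},\QQ_p)$ is equivariant for the diagonal action. Moreover, $\mathrm{LA}^{(h)}\widehat\otimes\mathrm{LA}^{(h)}$, being $(h)$-analytic for $\widetilde{K_p}\times\widetilde{K_p}$, is $(h)$-analytic for the diagonal subgroup. This gives the ring structure on $\mathrm{LA}^{(h)}(\widetilde{K_p})\widehat\otimes_{\QQ_p}D_h$ in which $Id_{\widetilde{K_p}}$ is a unit.

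The gap is easy to close, either by the paper's argument or directly. In the coordinates $h^x$, $\binom{x}{a}\binom{x}{b}$ is a $\ZZ$-linear combination of $\binom{x}{c}$ with $c\le a+b$, so the weighted Mahler norm defining $\mathrm{LA}^{(h)}$ is submultiplicative. With this added, your argument works. Your discussion of $m^*$ then makes explicit something the paper uses without comment when it asserts the ring structure: that convolution on $D_h$ is compatible with the completed tensor product.

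One caveat on that part. Submultiplicativity only shows that the weighted Mahler coefficients of $f\circ m$ are bounded, i.e.\ $v_p(a_{\alpha,\beta})-r_h(|\alpha|+|\beta|)\geq -C$. That places $f\circ m$ in a space strictly larger than $\mathrm{LA}^{(h)}(\widetilde{K_p}\times\widetilde{K_p})$. You still need these quantities to tend to $\infty$, for example by checking it on a dense subspace and passing to the closure.
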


\begin{lem} \label{lem-LAhvancoh}
$H^{>0}_{\mathrm{cont}}(\widetilde{K_p}, \mathrm{LA}^{(h)}(\widetilde{K_p}))=0$.
\end{lem}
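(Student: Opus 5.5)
We must prove Lemma \ref{lem-LAhvancoh}: $H^{>0}_{\mathrm{cont}}(\widetilde{K_p}, \mathrm{LA}^{(h)}(\widetilde{K_p}))=0$, where $\widetilde{K_p}$ acts by left translation.

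The plan is to realize $\mathrm{LA}^{(h)}(\widetilde{K_p})$ as a topological direct summand of a coinduced module, and then invoke Shapiro's lemma.

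\textbf{Step 1: reduce to a small uniform subgroup.} Write $G=\widetilde{K_p}$ with ordered basis $h_1,\dots,h_n$. By Amice's result recalled in \S\ref{subsec-density}, every $\phi\in\mathrm{LA}^{(h)}(G)$ is analytic on cosets of $G_m:=G^{p^m}$ for $m$ large (depending on $h$). Via the coset decomposition $G=\bigsqcup_{g\in G/G_m} gG_m$, I compare $\mathrm{LA}^{(h)}(G)$ with
\[\mathrm{Ind}_{G_m}^{G}\mathrm{LA}^{(h')}(G_m)=\bigoplus_{g\in G/G_m}\mathrm{LA}^{(h')}(gG_m)\]
for a suitable $h'$. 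Shapiro's lemma then gives
\[H^i_{\mathrm{cont}}\bigl(G,\mathrm{Ind}_{G_m}^G U\bigr)\cong H^i_{\mathrm{cont}}(G_m,U),\]
where $U$ is the Banach space of functions on $G_m$ in question.

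The difficulty is that $\mathrm{LA}^{(h)}(G)$ is not literally equal to such an induced space; one only has a chain of continuous inclusions. I would therefore argue more directly.

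\textbf{Step 2: compute cohomology via Mahler coordinates.} Using the coordinate homeomorphism $c:\ZZ_p^n\to G$, $x\mapsto h_1^{x_1}\cdots h_n^{x_n}$, the space $\mathrm{LA}^{(h)}(G)$ identifies with a weighted Mahler space on $\ZZ_p^n$. It is invariant under left and right translation, as noted after the definition, via duality with $D_{<p^{-r_h}}$.

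Rather than compute directly, I would use the general principle that for a compact $p$-adic Lie group, a function space $\mathcal{F}$ stable under translations, carrying a continuous \emph{right} regular action that commutes with the left action, satisfies
\[\mathcal{F}\cong \mathrm{Hom}_{\mathrm{cont}}\bigl(\ZZ_p[[G]],\mathcal{F}^{G}\text{-type}\bigr).\]
Concretely, the pair $(\mathrm{LA}^{(h)}(G),\text{left action})$ is isomorphic, via the orbit map trick of Lemma \ref{lem-orbiso}, to $\mathrm{LA}^{(h)}(G)\widehat\otimes_{\QQ_p} Y$ with $G$ acting on the first factor only, for an appropriate trivial module $Y$. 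This reduces the problem to showing that $\mathrm{LA}^{(h)}(G)$ is an injective-type (acyclic) object.

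\textbf{Step 3: duality argument (the one I would actually attempt).} By the dual description \eqref{eqn-h-analytic},
\[H^i_{\mathrm{cont}}\bigl(G,\mathrm{LA}^{(h)}(G)\bigr)=H^i_{\mathrm{cont}}\bigl(G,\,(D_h)^*\bigr).\]
For an admissible-type Banach representation $V$, continuous cohomology is computed by $\Ext^i_{\Lambda[1/p]}(\QQ_p,V)$, which is in turn dual to $\mathrm{Tor}_i^{\Lambda[1/p]}(\QQ_p,V^*)$ (Lazard, or the standard Schneider--Teitelbaum duality). With $V^*=D_h$ this becomes
\[\mathrm{Tor}_i^{\Lambda[1/p]}(\QQ_p,D_h)=\mathrm{Tor}_i^{D_h}(D_h\otimes_{\Lambda[1/p]}\QQ_p,\,D_h)=0\quad (i>0),\]
using that $D_h$ is flat over $\Lambda[1/p]$, which was recalled from \cite{ST-2003}. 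The cohomology in degree $0$ is then the dual of $D_h\otimes_{\Lambda[1/p]}\QQ_p$, namely the constants, as expected.

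\textbf{Main obstacle.} The hard step is justifying the passage from continuous cohomology of the Banach representation $\mathrm{LA}^{(h)}(G)$ to $\Tor$ of its dual, including the needed strictness and Hausdorffness. This is not covered by the admissible setting, since $\mathrm{LA}^{(h)}(G)$ is not admissible as a $G$-representation.

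I would handle it by working integrally. Take the unit ball $\mathrm{LA}^{(h)}(G)^\circ$, with dual $D_h^\circ$, which is compact for the weak topology. Continuous cohomology of this compact-dual module equals the Pontryagin dual of $\mathrm{Tor}^{\ZZ_p[[G]]}_i(\ZZ_p,D_h^\circ)$. These Tor groups are then controlled using the structure of $D_h^\circ$ as a completed algebra with filtration studied in \cite[\S IV]{CD-2014}. Flatness holds only rationally, so the final step is to show that any torsion in these Tor groups is bounded, and hence vanishes after inverting $p$.
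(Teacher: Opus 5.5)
You have a genuine gap. The only step with real content is identifying $H^i_{\mathrm{cont}}(G,\mathrm{LA}^{(h)}(G))$ (with $G=\widetilde{K_p}$) with the dual of $\mathrm{Tor}_i^{\Lambda[1/p]}(\QQ_p,D_h)$, and this is never established.

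Step 1 is abandoned. Step 2 is circular: with $Y$ trivial it just restates that $\mathrm{LA}^{(h)}(G)$ should be acyclic, and Lemma \ref{lem-orbiso} untwists $\mathrm{LA}^{(h)}\widehat\otimes_{\QQ_p} D_h$, not $\mathrm{LA}^{(h)}$ itself.

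In Step 3 the algebra is fine: $\mathrm{Tor}_i^{\Lambda[1/p]}(\QQ_p,D_h)=0$ for $i>0$ is immediate from flatness. But, as you note, $\mathrm{LA}^{(h)}(G)$ is not admissible, so the usual dictionary between continuous cohomology and Tor does not apply. Your integral fix does not supply it:
\begin{itemize}
\item The claim that $H^i_{\mathrm{cont}}(G,\mathrm{LA}^{(h)}(G)^\circ)$ is the Pontryagin dual of $\mathrm{Tor}_i^{\ZZ_p[[G]]}(\ZZ_p,D_h^\circ)$ is false as stated. For $i=0$ the left side is $\ZZ_p$ (the constants). But $\mathrm{Tor}_0$ surjects onto $\ZZ_p$ via the augmentation, so its Pontryagin dual is a torsion group containing $\QQ_p/\ZZ_p$.
\item Pontryagin duality only applies to the discrete modules $\mathrm{LA}^{(h)}(G)^\circ/p^n$, whose duals are $D_h^\circ/p^n$. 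Passing to $\varprojlim_n$ (a Milnor sequence) then involves both $\mathrm{Tor}_i(\ZZ_p,D_h^\circ)/p^n$ and $\mathrm{Tor}_{i-1}(\ZZ_p,D_h^\circ)[p^n]$.
\item What you actually need is that the torsion in these integral Tor groups has \emph{bounded exponent}. That they are $p$-power torsion is automatic from rational flatness and is not the issue. Bounded exponent is exactly what you leave to ``the structure of $D_h^\circ$'' without argument. Noetherianity of $D_h^\circ$ would make the Tor groups finitely generated and hence of bounded exponent, but you neither invoke nor prove this, nor do you carry out the limit argument.
\end{itemize}

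The paper never confronts the non-admissible module $\mathrm{LA}^{(h)}(G)$ directly. It proves the stronger claim that $H^i_{\mathrm{cont}}(G,V\widehat\otimes_{\QQ_p}\mathrm{LA}^{(h)}(G))=0$ for all $i>0$ and every \emph{admissible} Banach representation $V$, with $G$ acting diagonally; the lemma is the case $V=\QQ_p$.
\begin{itemize}
\item Base case $V=C(G,\QQ_p)$: one gets $C(G,\mathrm{LA}^{(h)}(G))$ with diagonal action. This untwists via $f\mapsto\bigl(x\mapsto x^{-1}f(x)\bigr)$ to a coinduced module, so Shapiro's lemma gives the vanishing. This is where Shapiro genuinely applies, which is what your Step 1 was reaching for.
\item General admissible $V$: choose $0\to V\to C\to D\to 0$ with $C=C(G,\QQ_p)^{\oplus m}$ and $D$ admissible. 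On $H^0$ this gives $0\to V^{(h)}\to C^{(h)}\to D^{(h)}\to 0$, which is exact by Proposition \ref{prop-(h)-an}\eqref{prop-(h)-an-3}.
\item The long exact sequence then gives $H^1=0$ for all admissible $V$, and dimension shifting gives all $i>0$.
\end{itemize}
Proposition \ref{prop-(h)-an}\eqref{prop-(h)-an-3} is, via \eqref{eqn-h-analytic}, the dual form of exactly the flatness of $D_h$ over $\Lambda[1/p]$ that you wanted to use. The point is to apply it only to admissible representations, where continuous cohomology and the module-theoretic side are already compatible.
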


Assume both lemmas at the moment. Let $C^\bullet$ be the standard complex computing the continuous  $\widetilde{K_p}$-cohomology of $\mathrm{LA}^{(h)}(\widetilde{K_p})$. It follows that the complex
\[  \mathrm{LA}^{(h)}(\widetilde{K_p})^{\widetilde{K_p}} \to C^\bullet\]
is  exact and strict. The same holds for its completed tensor product with $D_h$ as well. (We note that $C^\bullet\widehat\otimes D_h\cong \Hom_{\mathrm{cont}}(\mathrm{LA}^{(h)}(\widetilde{K_p}),C^\bullet)$.) Hence for $i>0$,
\[H^i_{\mathrm{cont}}(\widetilde{K_p}, \mathrm{LA}^{(h)}(\widetilde{K_p})\widehat\otimes_{\QQ_p} D_h)\cong H^i_{\mathrm{cont}}(\widetilde{K_p}, \mathrm{LA}^{(h)}(\widetilde{K_p})\widehat\otimes_{\QQ_p} D_h)=0,\]
where the isomorphism is induced from the isomorphism in Lemma \ref{lem-orbiso}. This agrees with that
\[ \Ext^{>0}_{D_h}(D_h\otimes_{\Lambda[1/p]} \Lambda[1/p], D_h)=0.\]
Finally for $i=0$, we have
\[ \mathrm{LA}^{(h)}(\widetilde{K_p})\widehat\otimes_{\QQ_p} D_h\cong\Hom_{\mathrm{cont}}\left(\mathrm{LA}^{(h)}(\widetilde{K_p}),\mathrm{LA}^{(h)}(\widetilde{K_p})\right)\cong\Hom_{\mathrm{cont}}(D_h,D_h)\]
where the first isomorphism comes from $D_h=(\mathrm{LA}^{(h)}(\widetilde{K_p}))^*$ and the second isomorphism is obtained by the Schikhof duality. Taking $\widetilde{K_p}$-invariants of both sides and observing that the natural map $\QQ_p[\widetilde{K_p}]\to D_h$ has dense image, we get
\[\Ext^0_{D_h}(D_h, D_h)\cong H^0_{\mathrm{cont}}(\widetilde{K_p}, \mathrm{LA}^{(h)}(\widetilde{K_p})\widehat\otimes_{\QQ_p} D_h).\]

The last part can be proved similarly and we leave it to the reader.
\end{proof}

\begin{proof}[Proof of Lemma \ref{lem-orbiso}]
First we claim that $\mathrm{LA}^{(h)}(\widetilde{K_p})\subseteq C(\widetilde{K^p},\QQ_p)$ is a subring, where the ring structure on $C(\widetilde{K^p},\QQ_p)$ is defined pointwise on $\widetilde{K^p}$. Consider the multiplication map
\[C(\widetilde{K^p},\QQ_p)\widehat\otimes_{\QQ_p} C(\widetilde{K^p},\QQ_p)\to C(\widetilde{K^p},\QQ_p)\]
which is equivariant with respect to left translation actions of $\widetilde{K^p}$ on both sides. Hence by passing to $h$-analytic vectors, it suffices to show that 
\[\mathrm{LA}^{(h)}(\widetilde{K_p})\widehat\otimes_{\QQ_p} \mathrm{LA}^{(h)}(\widetilde{K_p})\subseteq (C(\widetilde{K^p},\QQ_p)\widehat\otimes_{\QQ_p} C(\widetilde{K^p},\QQ_p))^{(h)}.\]
If $C(\widetilde{K^p},\QQ_p)\widehat\otimes_{\QQ_p} C(\widetilde{K^p},\QQ_p)$ is viewed as a $\widetilde{K_p}\times \widetilde{K_p}$-representation via left translations, its $h$-analytic vectors are nothing but $\mathrm{LA}^{(h)}(\widetilde{K_p})\widehat\otimes_{\QQ_p} \mathrm{LA}^{(h)}(\widetilde{K_p})$, hence are contained in the $h$-analytic vectors with respect to the diagonal $\widetilde{K_p}$ ($\subseteq \widetilde{K_p}\times \widetilde{K_p}$)-action. (This can be seen by using \cite[Def.IV.1]{CD-2014} and by choosing an ordered basis of $ \widetilde{K_p}$ and extending it to an ordered basis of $ \widetilde{K_p}\times \widetilde{K_p}$.)

$\mathrm{LA}^{(h)}(\widetilde{K_p})\widehat\otimes_{\QQ_p} D_h$ now has a natural ring structure. On the other hand we let $Id_{\widetilde{K_p}}\in \mathrm{LA}^{(h)}(\widetilde{K_p})\widehat\otimes_{\QQ_p} D_h$ be the element corresponding to the identity function of
\[\Hom_{\mathrm{cont}}(\mathrm{LA}^{(h)},\mathrm{LA}^{(h)})\cong \mathrm{LA}^{(h)}(\widetilde{K_p})\widehat\otimes_{\QQ_p} D_h.\]
(We emphasize that this isomorphism does not preserve the ring structure we consider.) Equivalently $Id_{\widetilde{K_p}}$ can be viewed as the $D_h$-valued function on $\widetilde{K_p}$ sending $g\in \widetilde{K_p}$ to $g\in D_h$. It's easy to see that $Id_{\widetilde{K_p}}$ is invertible in $\mathrm{LA}^{(h)}(\widetilde{K_p})\widehat\otimes_{\QQ_p} D_h$. Hence multiplication by $Id_{\widetilde{K_p}}^{-1}$ on the left is an automorphism of $\mathrm{LA}^{(h)}(\widetilde{K_p})\widehat\otimes_{\QQ_p} D_h$ and gives the isomorphism in the lemma.
\end{proof}

\begin{proof} [Proof of Lemma \ref{lem-LAhvancoh}]
This is essentially a result of Schneider-Teitelbaum. We briefly sketch a proof. See also \cite[Th.2.2.3]{Pan-2022}. It suffices to show that for any admissible Banach space representation $V$ of $\widetilde{K_p}$ and $i>0$,
\[H^{i}_{\mathrm{cont}}(\widetilde{K_p}, V\widehat\otimes_{\QQ_p}\mathrm{LA}^{(h)}(\widetilde{K_p}))=0.\]
This is true for $V=C(\widetilde{K_p},\QQ_p)$ by Shapiro's lemma. In general we can embed $V$ into $C=C(\widetilde{K_p},\QQ_p)^{\oplus m}$ for some $m>0$ and denote $C/V$ by $D$. The sequence
\[0\to H^0(\widetilde{K_p}, V\widehat\otimes\mathrm{LA}^{(h)}(\widetilde{K_p}))\to H^0(\widetilde{K_p}, C\widehat\otimes\mathrm{LA}^{(h)}(\widetilde{K_p})) \to H^0(\widetilde{K_p}, D\widehat\otimes\mathrm{LA}^{(h)}(\widetilde{K_p}))\to 0\]
is exact because it is the same as $0\to V^{(h)}\to C^{(h)} \to D^{(h)} \to 0$. By looking at the long exact sequence of the $\widetilde{K_p}$-cohomology of $0\to V\to C\to D\to 0$, we see that $H^{1}_{\mathrm{cont}}(\widetilde{K_p}, V\widehat\otimes_{\QQ_p}\mathrm{LA}^{(h)}(\widetilde{K_p}))=0$. An induction argument on $i$ gives the general case.
\end{proof}

\section*{Acknowledgements}

We would like to thank Vytautas Pa\v{s}k\={u}nas for helpful comments on the preprint. Y.J. thanks his PhD advisor, Vincent Pilloni, for his guidance. Both of us would like to thank Princeton University and the Simons Foundation for its support during various stages of this collaboration.
Y.J. was supported by a CDSN grant from ENS during his PhD at Université Paris-Saclay, and subsequently by the Max Planck Institute for Mathematics, Bonn. He is grateful to both institutions for their support and excellent working conditions.
L.P. is partially supported by a Sloan Research Fellowship.

\newpage

\printbibliography 

\end{document}